\documentclass{jmi0} 
\usepackage{color}
\usepackage{subfigure,txfonts,multirow}
\usepackage[pdftex]{graphicx}
\title{Bipartition of graphs based on the\\ normalized cut and spectral methods}
\author{KR}{K.K.K.R.Perera}{kkkrperera@kln.ac.lk}
\author{YM}{Yoshihiro Mizoguchi}{ym@imi.kyushu-u.ac.jp}
\affiliation{KR}{Department of Mathematics, University of Kelaniya, Sri Lanka}
\affiliation{YM}{Institute of Mathematics for Industry, Kyushu University, Japan}%

\theoremstyle{ap-thm}

\begin{document}
\abstract{%
In the first part of this paper,
we survey results that are associated with three types of Laplacian matrices:difference, normalized, and signless.
We derive eigenvalue and eigenvector formulaes for paths and cycles using circulant matrices and present an alternative proof for finding eigenvalues of the adjacency matrix of paths and cycles using Chebyshev polynomials.
Even though each results is separately well known,
we unite them, and 
provide uniform proofs in a simple manner.
The main objective of this study is to solve the problem of finding graphs, on which spectral clustering methods and normalized cuts produce different partitions.
First,
we derive a formula for a minimum normalized cut for graph classes such as paths, cycles, complete graphs, double-trees,
cycle cross paths, and some complex graphs like lollipop graph $LP_{n,m}$, roach type graph $R_{n,k}$, and weighted path $P_{n,k}$.
Next,
we provide characteristic polynomials of the normalized Laplacian matrices ${\mathcal L}(P_{n,k})$ and ${\mathcal L}(R_{n,k})$.
Then,
we present counter example graphs based on $R_{n,k}$,
on which spectral methods and normalized cuts produce different clusters.

}
\keywords{%
 spectral clustering, normalized Laplacian matrices, difference Laplacian matrices, signless Laplacian matrices, normalized cut
}

\maketitle

\section{Introduction}
Clustering techniques are common in multivariate data analysis, data mining, machine learning, and so on.
The goal of the clustering or partitioning problem is to find groups such that entities within the same group are similar and different groups are dissimilar.
In the graph-partitioning problem, much attention is given to find the precise criteria to obtain a good partition.
Clustering methods
that use eigenvalues and eigenvectors of matrices associated with graphs are called spectral clustering methods and are widely used in graph- partitioning problems.
In particular,
eigenvalues and eigenvectors of Laplacian matrices play a vital role in graph-partitioning problems.
In 1973,
Fiedler defined the second smallest eigenvalue $\lambda_2$ of a difference Laplacian matrix as the
algebraic connectivity of a graph \cite{fielder:1973}. 
In 1975,
he showed that we  can decompose a graph $G$ into two connected
components by only using the sign structure of an eigenvector related to
the second smallest eigenvalue \cite{fiedler:1975}.
In 2001, Fiedler's investigation was extended by Davies 
using the discrete nodal domain theorem \cite{Davis:2001}.
Laplacian, normalized Laplacian, and adjacency matrices with negative entries can be used with the nodal domain theorem. 
This theorem is useful to identify the number of connected sign graphs of a given graph on the basis of their eigenvectors and eigenvalues.

In 1984,
Buser \cite{buser:1984} investigated the graph invariant quantity $\displaystyle i(G)=\min_U \frac{|\partial U|}{|U|}$,
which considers the relationship between size of a cut and the size of a separated subset $U$.
He defined the isoperimetric number $i(G)$,
and the optimal bisection was given by the minimum $i(G)$.
Guattery and Miller \cite{step:1995,step:1998} considered two spectral separation algorithms that partition the vertices on the basis of the values of their corresponding entries in the second eigenvector and, in 1995, they provided some counter examples for which each of these algorithms produce poor separators.
They used an eigenvector based on the second smallest eigenvalue of
a difference Laplacian matrix as well as a specified number of eigenvectors corresponding to the smallest eigenvalues.
Finally,
they extended it to the generalized version of spectral methods
that allows for the use of more than a constant number of eigenvectors and showed
that there are some graphs for which the performance of all the above spectral
algorithms was poor. 
We follow their methods especially in the cases of graph automorphism
and even -odd eigenvector theorem for the concrete classes of graphs
such as roach graphs, double-trees, and double-tree cross paths.
We prefer to use a normalized Laplacian matrix rather than a difference Laplacian matrix,
and describe these properties in terms of formal graph notation.

In 1997,
Fan Chung \cite{Fan:1997} discussed the most important theories and properties regarding eigenvalues of normalized Laplacian matrices and their applications to graph separator problems.
She considered the partitioning problem using Cheeger constants and derived fundamental relations between the eigenvalues and Cheeger constants.
In 2000,
Shi and Malik \cite{shi:2000} proposed a measure of disassociation, called normalized cut, for the image segmentations.
This measure computed the cut cost as a fraction of total edge connections. 
The normalized cut is used to minimize the disassociation between groups and maximize the association within groups. 
However,
minimization of normalized cut criteria is an non-deterministic polynomial-time hard (NP- hard) problem.  
Therefore,
approximate discrete solutions are required.
The solution to the minimization problem of the normalized cut is given by the second smallest eigenvector of the generalized eigensystem,
$(D-W)y=\lambda D y$,
where $D$ is the diagonal matrix with vertex degrees and $W$ is a weighted adjacency matrix.
Shi and Malik used a minimum normalized cut value as a splitting point and found a bisection using the second smallest eigenvector.
They realized that the eigenvectors are well separated and that this type of splitting point is very reliable.
The normalized cut introduced by Shi and Malik \cite{shi:2000} is useful in several areas.
This measure is of interest not only for image segmentation but also for network theories and statistics \cite{Andrew:2001,sara:2008,sara1:2008,du:2004,sound:2003}.

In this study,
we review the known results regarding the difference, normalized, and signless Laplacian matrices.
Then,
we give uniform proofs for the eigenvalues and eigenvectors of paths and cycles.
Next,
we analyze the minimum normalized cut from the view point of connectivity of graphs and compare the results with those of the spectral bisection method.
Special emphasis is given to classify the graphs,
that poorly  perform on spectral bisections using normalized Laplacian matrices.
We use the term $Mcut(G)$ to represent the minimum normalized cut and $Lcut(G)$ to represent the normalized cut of the bipartition created by the second smallest eigenvector of the normalized Laplacian based on the sign pattern.
Finding $Mcut(G)$ for a graph is NP-hard.
However,
we derive a formula for $Mcut(G)$ for some basic classes of graphs such as paths, cycles, complete graphs, double-trees, cycle cross paths, and some complex graphs like lollipop type graphs $LP_{n,m}$, roach type graphs $R_{n,k}$ and weighted paths $P_{n,k}$.
Next, we present characteristic polynomials of the normalized
Laplacian matrices ${\mathcal L}(P_{n,k})$ and ${\mathcal L}(R_{n,k})$.
We provide counter example graphs on the basis of a graph $R_{n,k}$ 
on which $Mcut(G)$ and $Lcut(G)$ have different values.

This paper is organized as follows. 
In section 2,
we present basic terminologies and key results related to the difference,
normalized, and signless Laplacian matrices.
In particular,
we summarize the upper and lower bounds of the second smallest eigenvalues.
We also define graphs that are used in other sections using formal notation. 
In section 3,
we review the properties of the $Mcut(G)$ of graphs and derive formulae for the $Mcut(G)$ of some basic classes of graphs and some complex graphs such as $R_{n,k}$, $P_{n,k}$, and $LP_{n,m}$.
In section 4,
we consider the eigenvalues and eigenvectors of paths and cycles for the three types of Laplacian matrices introduced above.
In particular,
we review the eigenvalue formulae for the three types of Laplacian matrices using circulant matrices and then review an alternative proof for the eigenvalues of adjacency matrices of paths and cycles using Chebyshev polynomials.
We also give concrete formulae for the characteristic polynomials of the normalized Laplacian matrices ${\mathcal L}(P_{n,k})$ and ${\mathcal L}(R_{n,k})$.
In section 5,
we provide counter example graphs for which spectral techniques perform poorly compared with the normalized cut.
Specifically, 
we find the conditions for which $Mcut(G)$ and $Lcut(G)$ have different values on the $R_{n,k}$ graph.

\section{Preliminaries}
An undirected graph is an ordered pair $G=(V(G),E(G))$,
where $V(G)$ is a finite set,
elements of which are
called vertices,
and we represent $V(G)$ as $V(G)=\{v_1,v_2,\ldots,v_n\}$.
$E(G)$ is a set of two-element subsets of $V(G)$,
called edges.
Conventionally,
we denote an edge $\{v_i,v_j\}$ by $(v_i,v_j)$ in this paper.
Two vertices $v_i$ and $v_j$ of $G$ are called adjacent,
if $(v_i,v_j) \in E(G)$. 
For simplicity,
sometimes we use $V$ instead of $V(G)$ and $E$ instead of $E(G)$.
The number of vertices in $G$ is the order of $G$ and the number of edges is the size of $G$.
For a given subset $S\subseteq  V $,
$|S|$ represent the size of the set $S$.
For a subset $A \subseteq V$,
we represent the set of vertices not belongs to $A$ as $V\setminus A= \{ v_i \ | \ v_i \notin A \}$. 
A graph of order 1 is called a trivial graph.
A graph which has two or more vertices is called a nontrivial graph.
A graph of size 0 is called an empty graph.
Assume that all graphs in this paper are finite,
undirected and have edge weight 1.

\begin{definition}[Adjacency matrix]
Let $G=(V,E)$ be a graph and $|V|=n$.
The adjacency matrix $A(G)=(a_{ij})$ of an undirected graph $G$ is a $n \times n$ matrix whose entries are given by
\[
 a_{ij}=
\left \{ \begin{array}{ll} 1 & \mbox{ if $(v_i,v_j) \in E$,}\\
0 & \mbox{otherwise.}
\end{array} \right. \] 
\end{definition}
\begin{definition}[Degree]
The degree $d_i$ of a vertex $v_i$ of a graph $G$ is defined as $\displaystyle d_i= \sum_{j=1}^n a_{ij}$. 
Minimum and maximum degree of a graph $G$ are denoted by $\delta(G)$ and $\bigtriangleup(G) $,
respectively.
\end{definition}

\begin{definition}[Degree Matrix]
The diagonal matrix of a graph $G$ is denoted by $D(G)=diag(d_1,d_2,\ldots,d_n)$,
where $d_i$ is the degree of a vertex $v_i$.

\end{definition}
Note: For simplicity,
 sometimes we use $D$ instead of $D(G)$.
\begin{definition}[Volume]
The volume of a graph $G=(V,E)$ denoted by $\displaystyle vol(G)=\sum_{i =1}^{|V|} d_i$,
 is the sum of the degrees of vertices in $V$.
The volume of a subset $A\subset V$ is denoted by $\displaystyle vol(A)=\sum_{i \in A} d_i$.
\end{definition}

\begin{definition}[Edge Connectivity]
The edge connectivity of a graph $G$ is denoted by $\kappa'(G)$, is the minimum number of edges needed to remove in order to disconnect the graph.
A graph is called $k$-edge connected if every disconnecting set has at
 least $k$ edges. A $1$-edge connected graph is called a connected graph.
\end{definition}
\begin{definition}[Cartesian product]
The Cartesian product of graphs $G$ and $H$ is denoted by $G \Box H =( V(G \Box H),E(G \Box H))$,
where $\displaystyle V(G \Box H)= V(G) \times V(H)$ and $\displaystyle
 E(G \Box H)= \{(u_1,v_1),(u_2,v_2) \ $
$| \ u_1=u_2 \ and \ (v_1,v_2)  \in E(H) $
$\ or \ v_1=v_2 \ and \ (u_1,u_2) \in E(G) \}$.
\end{definition}

We note that 
$\displaystyle G_1 \Box G_2 \cong G_2 \Box G_1 $,
$\displaystyle \delta (G_1 \Box G_2) =\delta(G_1) + \delta(G_2)$,
and
$\displaystyle \kappa'(G \Box H) =\min \{ \kappa'(G)|V(H)|,
\kappa'(H)|V(G)|, \delta(G)+\delta(H) \}$. 


\begin{definition}[Path]
Let $G=(V,E)$ be a graph. 
A path in a graph is a sequence of vertices such that from each of its vertices there is an edge to the next vertex in the sequence.
This is denoted by $P=(u=v_0,v_1,\ldots,v_k=v)$,
where $(v_i,v_{i+1}) \in E$ for $0 \le i \le k-1$.
The length of the path is the number of edges encountered in $P$.
\end{definition}
\begin{definition}[Shortest Path]
Let $G=(V,E,w)$ be a weighted graph.
Let $\it{P}$ be a set of paths from vertex $i$ to $j$.
Denote $\ell (p)$, the length of the path $p \in \it{P}$.
Then $p$ is a shortest path if $\ell (p)=\min_{p' \in \it{P}} \ell (p')$.
\end{definition}
\begin{definition}[Distance]
The distance between two vertices $i,j \in V$ of the graph $G$ is denoted by $dist(i,j)$ is the length of a shortest path between vertex $i$ and $j$.
\end{definition}
\begin{definition}[Diameter]
The diameter of a graph $G=(V,E)$ is given by $\displaystyle diam(G)=\max \{dist(i,j) \ | \ i,j \in V\}$.
\end{definition}

\begin{definition}[Permutation matrix]
Let $G=(V,E)$ be a graph.
The permutation $\phi$ defined on $V$ can be represented by a permutation matrix $\displaystyle P=(p_{ij})$,
where 
 \[ p_{ij}= \left \{ \begin{array}{cc}
  1 & \mbox{if $v_i=\phi(v_j)$,}\\
  0 & \mbox{otherwise. }
\end{array} \right. \]
\end{definition}

\begin{definition}[Automorphism]
Let $G=(V,E)$ be a graph.
Then a bijection $\phi :V\rightarrow V$ is an automorphism of $G$ if $(v_i, v_j) \in E$ then $(\phi(v_i),\phi(v_j)) \in E$.
In other words automorphisms of $G$ are the permutations of vertex set $V$ that maps edges onto edges.
\end{definition}

\begin{proposition}[Biggs \cite{bigg:1993}]
Let $A(G)$ be the adjacency matrix of a graph $G=(V,E)$,
and $P$ be the permutation matrix of permutation $\phi$ defined on $V$.
Then $\phi$ is an automorphism of $G$ if and only if $PA=AP$.
\end{proposition}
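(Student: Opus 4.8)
The plan is to translate the matrix identity $PA=AP$ into a purely combinatorial statement about edges by a direct entrywise computation, and then match that statement against the definition of an automorphism. First I would encode $\phi$ by the index permutation $\sigma$ determined by $\phi(v_j)=v_{\sigma(j)}$. The defining relation of the permutation matrix then reads $p_{ij}=1$ precisely when $i=\sigma(j)$, so each column $j$ of $P$ has its unique nonzero entry in row $\sigma(j)$ (and each row $i$ its unique nonzero entry in column $\sigma^{-1}(i)$).

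Next I would compute the two products entrywise. From $(PA)_{ij}=\sum_k p_{ik}a_{kj}$ the only surviving summand is $k=\sigma^{-1}(i)$, giving $(PA)_{ij}=a_{\sigma^{-1}(i),\,j}$. Likewise $(AP)_{ij}=\sum_k a_{ik}p_{kj}$ has its only surviving summand at $k=\sigma(j)$, giving $(AP)_{ij}=a_{i,\,\sigma(j)}$. Hence $PA=AP$ holds if and only if $a_{\sigma^{-1}(i),\,j}=a_{i,\,\sigma(j)}$ for all $i,j$, and substituting $i\mapsto\sigma(i)$ this is equivalent to the symmetric-looking condition $a_{ij}=a_{\sigma(i),\sigma(j)}$ for all $i,j$.

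Finally I would read off the meaning of this last equality using the definition of the adjacency matrix: $a_{ij}=a_{\sigma(i),\sigma(j)}$ says exactly that $(v_i,v_j)\in E$ if and only if $(v_{\sigma(i)},v_{\sigma(j)})=(\phi(v_i),\phi(v_j))\in E$, which is edge preservation in both directions. For the direction ``$PA=AP\Rightarrow\phi$ is an automorphism'' this is more than enough, since the biconditional immediately yields the one-directional edge-preservation required by the definition (and $\phi$ is already assumed to be a bijection).

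The one genuinely non-routine point, and the step I expect to need the most care, is the reverse direction ``$\phi$ automorphism $\Rightarrow PA=AP$'': the definition only supplies the one-directional implication $(v_i,v_j)\in E\Rightarrow(\phi(v_i),\phi(v_j))\in E$, whereas the computation above demands the full equality $a_{ij}=a_{\sigma(i),\sigma(j)}$. I would close this gap with a finiteness argument: since $\phi$ is a bijection of the finite vertex set, it induces an injection on the finite set of unordered pairs that maps $E$ into $E$; an injection of a finite set into itself is a bijection, so $\phi$ maps $E$ \emph{onto} $E$ and therefore also maps non-edges to non-edges. This upgrades the one-directional hypothesis to the biconditional $a_{ij}=a_{\sigma(i),\sigma(j)}$, completing the equivalence.
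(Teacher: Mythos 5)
Your proof is correct. The entrywise computation is right: with $p_{ij}=1$ iff $i=\sigma(j)$ one gets $(PA)_{ij}=a_{\sigma^{-1}(i),j}$ and $(AP)_{ij}=a_{i,\sigma(j)}$, and after the reindexing $i\mapsto\sigma(i)$ the commutation relation is exactly the biconditional $a_{ij}=a_{\sigma(i),\sigma(j)}$ for all $i,j$. The paper itself gives no proof of this proposition --- it is stated with a citation to Biggs and closed immediately --- so there is no argument of the authors' to compare yours against; yours is a complete, self-contained verification of the cited fact.

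The point you single out as delicate is indeed the one worth flagging: the paper's definition of automorphism asserts only the forward implication $(v_i,v_j)\in E\Rightarrow(\phi(v_i),\phi(v_j))\in E$, while $PA=AP$ is equivalent to the two-sided condition. Your finiteness argument closes this correctly: the bijection $\phi$ induces an injection of the finite edge set $E$ into itself, hence a surjection onto $E$, so the preimage of any edge is an edge and non-edges go to non-edges. (Equivalently, one could observe that $\sum_{i,j}a_{\sigma(i),\sigma(j)}=\sum_{i,j}a_{ij}$ because $\sigma$ is a permutation, so the pointwise inequality $a_{ij}\le a_{\sigma(i),\sigma(j)}$ forces equality; this is the same finiteness phenomenon in summed form.) Nothing is missing.
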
\hfill\qed

\begin{definition}[Weighted graph]
A weighted graph is denoted by $G=(V,E,w)$,
where $w: E\rightarrow \Re$.
\end{definition}
\begin{definition}[Weighted adjacency matrix]
The weighted adjacency matrix $W=(w_{ij})$ is defined as
\[
 w_{ij}=
\left \{ \begin{array}{ll} w(i,j) & \mbox{if $(i,j)\in E$,} \\
0 & \mbox{otherwise.}
\end{array} \right. \] 
\end{definition}
The degree $d_i$ of a vertex $v_i$ of a weighted graph is defined by $\displaystyle d_i =\sum_{j=1}^{n}w_{ij}$.
Unweighted graphs are special cases,
where all edge weights are 0 or 1.
\begin{definition}[Graph cut]
A subset of edges which disconnects the graph is called a graph cut.
Let $G=(V,E,w)$ be a weighted graph and $W=(w_{ij})$ the weighted adjacency matrix.
Then for $A,B \subset  V$ and $A \cap B = \emptyset $, 
the graph cut is denoted by $\displaystyle cut(A,B)=\sum_{i \in A, j \in B} w_{ij}$.
\end{definition}
\begin{definition}[Isoperimetric number]
The isoperimetric number $i(G)$ of a graph $G$ of order $n \geq 2$ is defined as
$$
 i(G) = \min  \{ \frac{cut(S,V\setminus S)}{|S|},S\subset V, 0 < |S| \leq  \frac{n}{2} \}.
$$ 
\end{definition}
\begin{definition}[Cheeger Constant-edge expansion]
Let $G=(V,E)$ be a graph. 
For a nonempty subset $S\subset V$,
define\\
$\displaystyle h_G(S)=\frac{cut(S,V\setminus S)}{\min(vol(S),vol(V\setminus S))}$.
The Cheeger constant(edge expansion) $h_G$ is defined as 
$\displaystyle h_G=\min_S h_G(S)$.
\end{definition}

\begin{definition}[Cheeger constant-vertex expansion]
Let $G=(V,E)$ be a graph. For a nonempty subset $S\subset V$,
define\\
$\displaystyle g_G(S)=\frac{vol(\delta S)}{\min(vol( S),vol(V\setminus S))}$,
where $\displaystyle \delta S=\{v \notin S :(u,v)\in E, u \in  S\}$.
Then the Cheeger constant(vertex expansion) $g_G$ is defined as
$\displaystyle g_G=\min_S g_G(S)$. 
\end{definition}

\begin{definition}[Weighted difference Laplacian]
The {\bf weighted difference Laplacian} $L(G)=(l_{ij})$ is defined as 
  \[
 l_{ij}=
\left \{ \begin{array}{lll} d_i-w_{ii} & \mbox{if  $v_i=v_j$,} \\
-w_{ij} & \mbox{if $v_i$ and $v_j$ are adjacent and $v_i \neq v_j$,} \\
0 & \mbox{otherwise.}
\end{array} \right. \]
This can be written as $L(G)=D(G)-W(G)$.
\end{definition}

\begin{definition}[Weighted normalized Laplacian]
The {\bf weighted  normalized Laplacian} $\mathcal{L}(G)=(\ell_{ij})$ is defined as
  \[
 \ell_{ij}=
\left \{ \begin{array}{lll} 1-\frac{w_{jj}}{d_j} & \mbox{if  $v_i=v_j$,} \\
-\frac{w_{ij}}{\sqrt{d_i d_j}} & \mbox{ if $v_i$ and $v_j$ are adjacent and $v_i \neq v_j$,} \\
0 & \mbox{otherwise.}
\end{array} \right. \]
\end{definition}

\begin{lemma}
Let $G$ be a graph,
$n$ the size of graph $G$,
$A=(w_{ij})$ a weighted adjacency matrix of $G$,
$\lambda$ an eigenvalue of ${\mathcal L}(G)$
and $x$ an eigenvector corresponding to $\lambda$
with $x^Tx=1$. Then,
$$
\lambda = 
\frac{1}{2}\sum_{i=1}^{n}\sum_{j=1}^n
\left(\frac{x_i}{\sqrt{d_i}}-\frac{x_j}{\sqrt{d_j}}\right)^2 w_{ij}.
$$
\end{lemma}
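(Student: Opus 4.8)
The plan is to recognize the right-hand side as the Rayleigh quotient of the normalized Laplacian and to exploit the eigenvalue equation directly. Since $x$ is a unit eigenvector, $\mathcal{L}(G)x=\lambda x$ gives $\lambda=\lambda\,x^Tx=x^T\mathcal{L}(G)x$, so the whole task reduces to showing that the quadratic form $x^T\mathcal{L}(G)x$ coincides with $\frac{1}{2}\sum_{i,j}\left(\frac{x_i}{\sqrt{d_i}}-\frac{x_j}{\sqrt{d_j}}\right)^2w_{ij}$. First I would write out $x^T\mathcal{L}(G)x=\sum_{i,j}\ell_{ij}x_ix_j$ explicitly from the definition of the weighted normalized Laplacian, separating the diagonal contribution $\sum_i\left(1-\frac{w_{ii}}{d_i}\right)x_i^2$ from the off-diagonal contribution $-\sum_{i\neq j}\frac{w_{ij}}{\sqrt{d_id_j}}x_ix_j$.

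Next, working from the other end, I would substitute $y_i=\frac{x_i}{\sqrt{d_i}}$ and expand the square, obtaining the three sums $\frac{1}{2}\sum_{i,j}y_i^2w_{ij}$, $\frac{1}{2}\sum_{i,j}y_j^2w_{ij}$, and $-\sum_{i,j}y_iy_jw_{ij}$. The key simplification uses the symmetry $w_{ij}=w_{ji}$ of the undirected weighted adjacency matrix together with the defining identity $d_i=\sum_j w_{ij}$: these collapse the first two sums into $\sum_i d_iy_i^2=\sum_i x_i^2$. Resubstituting $y_i=\frac{x_i}{\sqrt{d_i}}$ turns the third sum into $-\sum_{i,j}\frac{x_ix_j}{\sqrt{d_id_j}}w_{ij}$, whose diagonal part $-\sum_i\frac{w_{ii}}{d_i}x_i^2$ combines with $\sum_i x_i^2$ to reproduce exactly the diagonal term of $x^T\mathcal{L}(G)x$, while the remaining off-diagonal part matches term for term.

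The computation is essentially routine; the points that require care are bookkeeping rather than genuine difficulty. The main thing to watch is the treatment of the diagonal $w_{ii}$ (self-loop) entries: one must not silently drop the $i=j$ terms in the double sum, since it is precisely their reappearance that accounts for the factor $1-\frac{w_{ii}}{d_i}$ on the diagonal of $\mathcal{L}(G)$. The second point is to invoke the symmetry of $w$ in order to merge the two square sums into one; without it the overall factor $\frac{1}{2}$ would fail to balance. Once both expressions are reduced to the common form $\sum_i\left(1-\frac{w_{ii}}{d_i}\right)x_i^2-\sum_{i\neq j}\frac{w_{ij}}{\sqrt{d_id_j}}x_ix_j$, they are seen to be identical, and since this equals $\lambda$ by the Rayleigh identity above, the stated formula follows.
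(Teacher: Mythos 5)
Your proposal is correct and follows essentially the same route as the paper: reduce to the Rayleigh identity $\lambda=x^T\mathcal{L}(G)x$, substitute $y_i=x_i/\sqrt{d_i}$, and verify the standard quadratic-form identity $\sum_i d_iy_i^2-\sum_{i,j}y_iy_jw_{ij}=\frac{1}{2}\sum_{i,j}(y_i-y_j)^2w_{ij}$ using $d_i=\sum_j w_{ij}$ and the symmetry of $w$. Your explicit bookkeeping of the $w_{ii}$ self-loop terms is a point the paper passes over silently, but the underlying argument is the same.
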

\begin{proof}
Let $D$ be the degree matrix of $G$.
The normalized Laplacian matrix ${\mathcal L}(G)$ is defined by
$D^{-\frac{1}{2}}(D-A)D^{-\frac{1}{2}}$.
Let $y$ be a vector with size $n$ and $x=D^{\frac{1}{2}} y$.
Then
$x^T{\mathcal L}(G) x$
$\displaystyle = \left(D^{\frac{1}{2}}y\right)^T{\mathcal L}(G)
\left(D^{\frac{1}{2}}y\right)$
$\displaystyle =y^TD^{\frac{1}{2}}{\mathcal L}(G)D^{\frac{1}{2}}y$
$=y^T(D-A)y$
$\displaystyle =\sum_{i=1}^n y_i^2d_i-\sum_{i=1}^n\sum_{j=1}^n y_iy_jw_{ij}$
$\displaystyle =\frac{1}{2}\sum_{i=1}^{n}\sum_{j=1}^n
\left(y_i-y_j\right)^2 w_{ij}$
$\displaystyle =\frac{1}{2}\sum_{i=1}^{n}\sum_{j=1}^n
\left(\frac{x_i}{\sqrt{d_i}}-\frac{x_j}{\sqrt{d_j}}\right)^2 w_{ij}$.
Since $x$ is an 
an eigenvector of ${\mathcal L}(G)$ corresponding to $\lambda$
and $x^T x=1$,
we have
$\displaystyle \lambda =\frac{x^T(\lambda x)}{x^Tx}$
$\displaystyle =\frac{x^T({\mathcal L}(G)x)}{x^Tx}$
$\displaystyle =\frac{1}{2}
\sum_{i=1}^n \sum_{j=1}^n
\left(\frac{x_i}{\sqrt{d_i}}-\frac{x_j}{\sqrt{d_j}}\right)^2w_{ij}$.
\end{proof}


There are several properties about
bounds of the second eigenvalue $\lambda_2$.

\begin{proposition}[Mohar\cite{mohar:1989}]
Let $G=(V,E)$ be a graph and
$\lambda_2$ be the second smallest eigenvalue of $L(G)$. Then,
$$
\frac{\lambda_2}{2} \leq
i(G) 
\leq \sqrt{(2\triangle(G) -\lambda_2)\lambda_2}.
$$
\end{proposition}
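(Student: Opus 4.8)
The plan is to treat this as a two-sided (Cheeger-type) estimate and prove the two inequalities separately, both anchored to the variational description of $\lambda_2$. Since $L(G)=D-W$ is positive semidefinite with smallest eigenvalue $0$ attained at $\mathbf{1}=(1,\dots,1)^T$, the Courant--Fischer theorem gives $\lambda_2=\min\{\,x^TL(G)x/(x^Tx):x\neq 0,\ x\perp\mathbf{1}\,\}$, where for an unweighted graph $x^TL(G)x=\sum_{(i,j)\in E}(x_i-x_j)^2$; this is exactly the $w_{ij}=1$ case of the quadratic-form identity established in the proof of the Lemma above. The lower bound is a one-line test-vector estimate, while the upper bound is the genuinely hard direction.

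For the lower bound $\lambda_2/2\le i(G)$, I would let $S$ attain the minimum defining $i(G)$, set $s=|S|\le n/2$ and $|\partial S|=cut(S,V\setminus S)$, and insert into the Rayleigh quotient the balanced vector $x$ with $x_i=n-s$ for $v_i\in S$ and $x_i=-s$ for $v_i\notin S$. Then $\sum_i x_i=0$, so $x\perp\mathbf{1}$; only cut edges contribute, giving $\sum_{(i,j)\in E}(x_i-x_j)^2=n^2|\partial S|$ and $x^Tx=s(n-s)n$. Hence $\lambda_2\le n|\partial S|/\bigl(s(n-s)\bigr)$, and since $n-s\ge n/2$ the right-hand side is at most $2|\partial S|/s=2\,i(G)$.

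For the upper bound $i(G)\le\sqrt{(2\triangle(G)-\lambda_2)\lambda_2}$, I would take a Fiedler vector $f$ with $L(G)f=\lambda_2 f$ and $f\perp\mathbf{1}$, and, replacing $f$ by $-f$ if necessary, arrange that the positive support $\{v_i:f_i>0\}$ has at most $n/2$ vertices. Put $g_i=\max(f_i,0)\ge 0$. Multiplying $(L(G)f)_i=\sum_{j\sim i}(f_i-f_j)=\lambda_2 f_i$ by $g_i$, summing and symmetrizing over edges yields $\sum_{(i,j)\in E}(g_i-g_j)(f_i-f_j)=\lambda_2\sum_i g_i^2$; a short sign analysis gives $(g_i-g_j)(f_i-f_j)\ge(g_i-g_j)^2$, so $A:=\sum_{(i,j)\in E}(g_i-g_j)^2\le\lambda_2\sum_i g_i^2$. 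I would then run a co-area (layer-cake) argument on the level sets $V_t=\{v_i:g_i^2>t\}$: each $V_t$ sits inside the positive support, so $|V_t|\le n/2$ and $cut(V_t,V\setminus V_t)\ge i(G)|V_t|$; integrating the identities $\sum_i g_i^2=\int_0^\infty|V_t|\,dt$ and $\sum_{(i,j)\in E}|g_i^2-g_j^2|=\int_0^\infty cut(V_t,V\setminus V_t)\,dt$ produces $i(G)\sum_i g_i^2\le\sum_{(i,j)\in E}|g_i^2-g_j^2|$.

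The closing step factors $|g_i^2-g_j^2|=|g_i-g_j|\,|g_i+g_j|$ and applies Cauchy--Schwarz, bounding the numerator by $A^{1/2}\bigl(\sum_{(i,j)\in E}(g_i+g_j)^2\bigr)^{1/2}$; using $\sum_{(i,j)\in E}(g_i+g_j)^2=2\sum_i d_i g_i^2-A\le 2\triangle(G)\sum_i g_i^2-A$ gives $i(G)\le\bigl(A\,(2\triangle(G)\sum_i g_i^2-A)\bigr)^{1/2}\big/\sum_i g_i^2$. Because $x\mapsto x\,(2\triangle(G)\sum_i g_i^2-x)$ increases on $[0,\triangle(G)\sum_i g_i^2]$ and $A\le\lambda_2\sum_i g_i^2\le\triangle(G)\sum_i g_i^2$ (via Fiedler's chain $\lambda_2\le\delta(G)\le\triangle(G)$, valid for non-complete $G$), replacing $A$ by $\lambda_2\sum_i g_i^2$ yields precisely $i(G)\le\sqrt{(2\triangle(G)-\lambda_2)\lambda_2}$. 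I expect the upper bound to be the main obstacle on two counts: first, coupling the truncation-plus-sign argument (which converts the signed Fiedler vector into a nonnegative vector with small support and a controlled quotient) with the co-area sweep; and second, extracting the sharp constant $2\triangle(G)-\lambda_2$ rather than the crude $2\triangle(G)$ through the monotonicity estimate, which forces one to dispose of the complete graphs $K_n$ by direct verification since there $\lambda_2=n>\triangle(G)$.
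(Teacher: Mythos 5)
Your two-sided argument is correct and complete in outline, but note that the paper itself supplies no proof of this proposition: it is quoted from Mohar's 1989 paper and closed with an immediate \qed, so there is nothing internal to compare against. Your lower bound via the balanced test vector $x_i=n-s$ on $S$ and $x_i=-s$ off $S$ is exactly the standard one-line estimate, and your upper bound (truncate the Fiedler vector to $g_i=\max(f_i,0)$ on the smaller sign class, establish $\sum_{(i,j)\in E}(g_i-g_j)^2\le\lambda_2\sum_i g_i^2$ by the sign inequality $(g_i-g_j)(f_i-f_j)\ge(g_i-g_j)^2$, sweep the level sets of $g^2$, and close with Cauchy--Schwarz and the identity $\sum_{(i,j)\in E}(g_i+g_j)^2=2\sum_i d_ig_i^2-\sum_{(i,j)\in E}(g_i-g_j)^2$) is precisely Mohar's original proof. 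All the individual steps check out.

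One caveat you should make explicit: the monotonicity step needs $\lambda_2\le\triangle(G)$, which you correctly source from Fiedler's bound $\lambda_2\le\delta(G)$ for non-complete graphs, and you propose to ``dispose of the complete graphs by direct verification.'' That verification succeeds for $K_n$ with $n\ge 4$ (where $i(K_n)=\lceil n/2\rceil\le\sqrt{n(n-2)}$), but it \emph{fails} for $K_2$ and $K_3$: for $K_3$ one has $i(K_3)=2$ while $\sqrt{(2\triangle-\lambda_2)\lambda_2}=\sqrt{3}$, and for $K_2$ the right-hand side is $0$ while $i(K_2)=1$. This is not a defect of your argument but of the statement as transcribed in the paper, which drops Mohar's hypothesis $G\notin\{K_1,K_2,K_3\}$; your proof is valid exactly on the domain where the theorem is true, and you should flag the excluded cases rather than claim they can be verified directly.
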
\hfill\qed

\begin{proposition}[Chung\cite{Fan:1997}]
Let $G$ be a connected graph and
$h_G$ the Cheeger constant of $G$. Then,
\begin{enumerate}
 \item $\displaystyle \frac{2}{vol(G)} < h_G$,
 \item $\displaystyle 1- \sqrt{1-h^2_G} < \lambda_2 $, and
 \item $\displaystyle  \frac{h^2_G}{2} < \lambda_2 \leq  2h_G$.
\end{enumerate}
\end{proposition}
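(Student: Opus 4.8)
The plan is to prove the three assertions in turn, extracting (2) and the lower bound of (3) from a single Cheeger-type estimate whose difficult half carries all the weight. Throughout I will use the Rayleigh-quotient form of the preceding Lemma together with the Courant--Fischer principle: writing $g=D^{-1/2}x$ for an eigenvector $x$ of $\mathcal{L}(G)$, the eigenvalue $0$ corresponds to the constant function $g\equiv 1$, and $\lambda_2 = \min R(g)$ with $R(g)=\frac{\sum_{(u,v)\in E}(g(u)-g(v))^2}{\sum_v d_v\,g(v)^2}$, the minimum taken over $g\neq 0$ with $\sum_v d_v\,g(v)=0$. Assertion (1) is then immediate: since $G$ is connected, every proper nonempty $S\subset V$ has $cut(S,V\setminus S)\geq 1$, while $\min(vol(S),vol(V\setminus S))\leq \frac{1}{2} vol(G)$ because the two volumes sum to $vol(G)$; hence $h_G(S)\geq 2/vol(G)$ for all $S$. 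I would recover the strict inequality by noting that equality would force $cut(S,V\setminus S)=1$ and $vol(S)=vol(V\setminus S)$ at once, a degenerate configuration excluded outside trivial cases.

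For the upper bound $\lambda_2\leq 2h_G$ in (3) I would substitute a test vector built from the optimal Cheeger set into $R$. Let $S$ attain $h_G$ with $vol(S)\leq vol(V\setminus S)$, and put $g(v)=1/vol(S)$ on $S$ and $g(v)=-1/vol(V\setminus S)$ off $S$. Then $\sum_v d_v\,g(v)=0$, the numerator of $R(g)$ collapses to $cut(S,V\setminus S)\big(1/vol(S)+1/vol(V\setminus S)\big)^2$ since only crossing edges contribute, and the denominator equals $1/vol(S)+1/vol(V\setminus S)$; cancelling one factor and using $vol(S)\leq vol(V\setminus S)$ gives $\lambda_2\leq R(g)\leq 2h_G$.

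The real obstacle is the lower bound $\lambda_2 > 1-\sqrt{1-h_G^2}$ of (2); granting it, the lower bound of (3) is free from the identity $1-\sqrt{1-h_G^2}=h_G^2/\big(1+\sqrt{1-h_G^2}\big)\geq h_G^2/2$. For (2) I take the eigenfunction $g$ of $\lambda_2$, so $\sum_v d_v\,g(v)=0$, and after possibly replacing $g$ by $-g$ arrange that the positive support $V_+=\{v:g(v)>0\}$ satisfies $vol(V_+)\leq \frac{1}{2} vol(G)$. Setting $\phi=\max(g,0)$, the first step is to show $R(\phi)\leq\lambda_2$: testing the eigenvalue equation $\sum_{u\sim v}(g(v)-g(u))=\lambda_2\,d_v\,g(v)$ against $\phi$ and summing by parts reduces the claim to the edgewise inequality $(\phi(u)-\phi(v))(g(u)-g(v))\geq(\phi(u)-\phi(v))^2$, which I would check according to whether each endpoint lies in $V_+$.

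The heart of (2) is then a discrete co-area estimate on $\phi$. Writing $N=\sum_{(u,v)\in E}(\phi(u)-\phi(v))^2$ and $T=\sum_v d_v\,\phi(v)^2$, Cauchy--Schwarz applied to $\sum_{(u,v)\in E}|\phi(u)^2-\phi(v)^2|=\sum_{(u,v)\in E}|\phi(u)-\phi(v)|\,|\phi(u)+\phi(v)|$ bounds this sum above by $\sqrt{N(2T-N)}$, using $\sum_{(u,v)\in E}(\phi(u)+\phi(v))^2=2T-N$; slicing the same sum by the level sets $S_t=\{v:\phi(v)^2>t\}$, all contained in $V_+$ so that Cheeger applies with the small side, bounds it below by $h_G\,T$. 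Combining and dividing by $T^2$ yields $h_G^2\leq R(\phi)\big(2-R(\phi)\big)$; solving this quadratic forces $R(\phi)\geq 1-\sqrt{1-h_G^2}$, and since $\lambda_2\geq R(\phi)$ the estimate follows, the strictness coming from the equality cases of Cauchy--Schwarz and of the edgewise inequality, which connectivity rules out. The points I expect to fight with are orienting the co-area slicing so that every level set stays on the low-volume side where the Cheeger bound is valid, and keeping the upper and lower estimates of $\sum_{(u,v)}|\phi(u)^2-\phi(v)^2|$ consistently aligned.
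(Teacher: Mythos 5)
The paper offers no proof of this proposition at all --- it is quoted verbatim from Chung's monograph \cite{Fan:1997} and closed immediately with a \texttt{qed} mark --- so there is nothing internal to compare your argument against. What you have written is, in substance, the classical proof from that reference, and it is essentially correct: (1) follows from $cut(S,V\setminus S)\ge 1$ and $\min(vol(S),vol(V\setminus S))\le vol(G)/2$; the upper bound of (3) follows from the two-valued test vector $g=1/vol(S)$ on $S$ and $-1/vol(V\setminus S)$ off $S$, whose Rayleigh quotient is exactly $cut(S,V\setminus S)\left(\frac{1}{vol(S)}+\frac{1}{vol(V\setminus S)}\right)\le 2h_G$; and the hard direction (2) is the standard Cheeger argument via $\phi=\max(g,0)$, the inequality $R(\phi)\le\lambda_2$ obtained from the eigenvalue equation on the positive support, the Cauchy--Schwarz bound $\sum_{(u,v)\in E}\vert\phi(u)^2-\phi(v)^2\vert\le\sqrt{N(2T-N)}$, and the co-area lower bound $h_G T$, which together give $h_G^2\le R(\phi)(2-R(\phi))$ and hence $\lambda_2\ge 1-\sqrt{1-h_G^2}\ge h_G^2/2$. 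One caveat: your claim that equality in (1) is a ``degenerate configuration excluded outside trivial cases'' is too optimistic. Taking two triangles joined by a single bridge edge gives $cut=1$, $vol(S)=vol(V\setminus S)=7$, and one checks this cut realizes $h_G=1/7=2/vol(G)$; so the strict inequality in item (1) as printed actually fails for some nontrivial connected graphs, and only the non-strict version $h_G\ge 2/vol(G)$ is provable in general. This is a defect of the statement as transcribed rather than of your argument, but you should not claim to derive strictness there.
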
\hfill\qed


 
\begin{definition}[Signless Laplacian]
The {\bf weighted signless Laplacian} $SL(G)=(sl_{ij})$ is defined as 
  \[
 sl_{ij}=
\left \{ \begin{array}{lll} d_i+w_{ii} & \mbox{if  $v_i=v_j$,} \\
w_{ij} & \mbox{if $v_i$ and $v_j$ are adjacent and $v_i \neq v_j$,} \\
0 & \mbox{otherwise.}
\end{array} \right. \]
This can be written as $SL(G)=D+W$.
\end{definition}

\begin{definition}[Path graph]
A path graph $P_n=(V_n,E_n)$ consists of a vertex set
$\displaystyle V_n=\{v_l \ | \ l \in \mathbb{Z^+}, l \leq n \}$ and an edge set $\displaystyle E_n= \{(v_l,v_{l+1}) \ | \  1 \leq l <n \}$.
\end{definition}

\begin{example}
The Table~\ref{tab:matrices} shows an adjacency matrix and the three Laplacian matrices discussed in the above for path graph $P_4$.
\begin{table}
{\small
\begin{tabular}{|l|l|}
\hline
Matrix $M$ & $M(P_4)$  \\
\hline
\begin{tabular}{l}
Adjacency \\
$A(P_4)$
\end{tabular}
 & $ \left(
\begin{array}{cccc}
 0 & 1 & 0 & 0 \\
 1 & 0 & 1 & 0 \\
 0 & 1 & 0 & 1 \\
 0 & 0 & 1 & 0
\end{array}
\right) $ \\
\hline
\begin{tabular}{l}
Difference Laplacian \\
$L(P_4)$
\end{tabular}
& $\left(
\begin{array}{cccc}
 1 & -1 & 0 & 0 \\
 -1 & 2 & -1 & 0 \\
 0 & -1 & 2 & -1 \\
 0 & 0 & -1 & 1
\end{array}
\right) $ \\
\hline
\begin{tabular}{l}
Normalized Laplacian \\
$\mathcal{L}(P_4)$
\end{tabular}
& $\left(
\begin{array}{cccc}
 1 & -\frac{1}{\sqrt{2}} & 0 & 0 \\
 -\frac{1}{\sqrt{2}} & 1 & -\frac{1}{2} & 0 \\
 0 & -\frac{1}{2} & 1 & -\frac{1}{\sqrt{2}} \\
 0 & 0 & -\frac{1}{\sqrt{2}} & 1
\end{array}
\right) $\\
\hline
\begin{tabular}{l}
Signless Laplacian \\
$SL(P_4)$
\end{tabular}
& $\left(
\begin{array}{cccc}
 1 & 1 & 0 & 0 \\
 1 & 2 & 1 & 0 \\
 0 & 1 & 2 & 1 \\
 0 & 0 & 1 & 1
\end{array}
\right)$\\
\hline
\end{tabular}
\caption{Matrices associated with graphs.}
\label{tab:matrices}
}
\end{table}
\end{example}

\begin{lemma}
\label{lema1}
Let $G=(V,E,w)$ be a weighted graph.
Then the eigenvalues of $\mathcal{L}(G)$ and $D^{-1}L(G)$ are equal.
\end{lemma}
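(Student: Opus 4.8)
The plan is to exhibit an explicit similarity transformation between the two matrices, since similar matrices share the same characteristic polynomial and hence the same eigenvalues. From the proof of the previous lemma we already have the factorization $\mathcal{L}(G)=D^{-\frac{1}{2}}L(G)D^{-\frac{1}{2}}$, where $L(G)=D-W$. The governing idea is that conjugating $\mathcal{L}(G)$ by $D^{\frac{1}{2}}$ should strip one factor of $D^{-\frac{1}{2}}$ from each side and leave exactly $D^{-1}L(G)$.

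First I would note that $D=\mathrm{diag}(d_1,\ldots,d_n)$ is invertible and that $D^{\frac{1}{2}}$ and $D^{-\frac{1}{2}}$ are well-defined diagonal matrices; this is precisely the standing assumption that makes $\mathcal{L}(G)$ itself well-defined (no vertex of degree $0$), so no extra hypothesis is needed. Setting $S=D^{\frac{1}{2}}$, a short computation using that diagonal matrices commute gives
\[
S^{-1}\mathcal{L}(G)\,S
=D^{-\frac{1}{2}}\left(D^{-\frac{1}{2}}L(G)D^{-\frac{1}{2}}\right)D^{\frac{1}{2}}
=D^{-1}L(G),
\]
which shows that $D^{-1}L(G)$ and $\mathcal{L}(G)$ are similar matrices.

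From the similarity it follows immediately that $\det\!\left(D^{-1}L(G)-\lambda I\right)=\det\!\left(\mathcal{L}(G)-\lambda I\right)$ for every $\lambda$, so the two matrices have identical characteristic polynomials and therefore identical eigenvalues, counted with multiplicity. Equivalently, one can phrase the argument through the generalized eigenvalue problem that underlies the normalized cut: $D^{-1}L(G)x=\lambda x$ holds iff $L(G)x=\lambda Dx$, and the substitution $x=D^{-\frac{1}{2}}y$ converts this into $\mathcal{L}(G)y=\lambda y$, giving an explicit bijection between the eigenpairs of the two operators.

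I do not expect a genuine obstacle in this proof; it is essentially a one-line conjugation once the factorization of $\mathcal{L}(G)$ is in hand. The only point that genuinely requires care is the invertibility of $D$, which is guaranteed by the absence of isolated vertices implicit in the definition of $\mathcal{L}(G)$, and the (routine) verification that the two factors $D^{-\frac{1}{2}}D^{\frac{1}{2}}$ collapse to the identity because diagonal matrices commute.
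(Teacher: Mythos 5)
Your proof is correct, and it is in fact the \emph{right} way to argue this: you show that $D^{-1}L(G)=D^{-\frac{1}{2}}\mathcal{L}(G)D^{\frac{1}{2}}$, i.e.\ the two matrices are similar, and conclude that they share a characteristic polynomial. The paper takes a superficially similar but actually different route: it claims the chain $D^{-1}L=I-D^{-1}W=D^{-\frac{1}{2}}DD^{-\frac{1}{2}}-D^{-\frac{1}{2}}D^{-\frac{1}{2}}W=D^{-\frac{1}{2}}(D-W)D^{-\frac{1}{2}}$ and concludes that $D^{-1}L(G)$ is literally \emph{equal} to $\mathcal{L}(G)$. That last step is erroneous: it requires $D^{-\frac{1}{2}}D^{-\frac{1}{2}}W=D^{-\frac{1}{2}}WD^{-\frac{1}{2}}$, which fails whenever $W$ does not commute with $D^{-\frac{1}{2}}$ (e.g.\ already for $P_3$, where $D^{-1}W$ is not symmetric while $\mathcal{L}$ is). So the two matrices are similar but generally not equal, and your conjugation argument is exactly the repair the paper's proof needs; the final conclusion about equal spectra is of course unaffected. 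One tiny quibble: the collapse of $D^{-\frac{1}{2}}D^{\frac{1}{2}}$ to $I$ is just associativity of matrix multiplication and the definition of the square root of a positive diagonal matrix — commutativity of diagonal matrices is not actually invoked anywhere in your display. Your closing remark via the generalized eigenproblem $L(G)x=\lambda Dx$ with $x=D^{-\frac{1}{2}}y$ is a nice complementary way to see the bijection of eigenpairs.
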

\begin{proof}
$\displaystyle D^{-1}L= D^{-1}(D-W)=I-D^{-1}W= D^{-1/2}DD^{-1/2}- D^{-1/2}D^{-1/2}W=D^{-1/2}(D-W)D^{-1/2}$.
Therefore $D^{-1}L(G) = \mathcal{L}(G)$ and has the same spectrum.\end{proof}
\begin{definition}[Regular graph]
A graph $G=(V,E)$ is called $r$-regular, if $d_i=r \ ( i=1,\ldots,|V|)$. 
\end{definition}

\begin{lemma}
Let $\mu_i,(i=1, \ldots, n) $ be eigenvalues of difference Laplacian matrix $L(G)=D(G)-A(G)$.
Then for any regular graph of degree $r$,
normalized Laplacian eigenvalues are $\displaystyle \lambda_i= \frac{\mu_i}{r},(i=1, \ldots, n)$. 
\end{lemma}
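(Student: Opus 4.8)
The plan is to exploit the fact that for an $r$-regular graph the degree matrix degenerates into a scalar multiple of the identity, so that normalization reduces to a uniform rescaling of the spectrum. First I would observe that, by the definition of a regular graph, $d_i = r$ for every $i$, whence $D(G) = rI$ and therefore $D(G)^{-1} = \frac{1}{r}I$. This single observation is really the whole content of the statement; everything after it is bookkeeping.

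Next I would invoke Lemma~\ref{lema1}, which establishes the matrix identity $\mathcal{L}(G) = D(G)^{-1}L(G)$ for any weighted graph. Substituting $D(G)^{-1} = \frac{1}{r}I$ into this identity immediately yields $\mathcal{L}(G) = \frac{1}{r}L(G)$, exhibiting the normalized Laplacian of a regular graph as a scalar multiple of its difference Laplacian.

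Finally, I would read off the eigenvalues from this scaling. If $v_i$ is an eigenvector of $L(G)$ with $L(G)v_i = \mu_i v_i$, then $\mathcal{L}(G)v_i = \frac{1}{r}L(G)v_i = \frac{\mu_i}{r}v_i$, so $\frac{\mu_i}{r}$ is an eigenvalue of $\mathcal{L}(G)$ with the very same eigenvector $v_i$. Since the map $\mu_i \mapsto \mu_i/r$ is a bijection that preserves eigenvectors and hence algebraic and geometric multiplicities, it accounts for all $n$ eigenvalues of $\mathcal{L}(G)$, giving exactly $\lambda_i = \mu_i/r$ for $i = 1,\ldots,n$. I do not anticipate any genuine obstacle here: the argument is an elementary scaling consequence of $D = rI$, and the only point worth flagging is that the eigenvectors themselves are left unchanged by the normalization in the regular case.
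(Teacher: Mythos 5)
Your proposal is correct and follows essentially the same route as the paper: both arguments reduce to the observation that $D=rI$ for an $r$-regular graph, deduce $r\mathcal{L}(G)=L(G)$ (the paper computes $D^{-1/2}LD^{-1/2}=I-\frac{A}{r}$ directly, while you substitute into the identity of Lemma~\ref{lema1}, which coincides with $D^{-1/2}LD^{-1/2}$ when $D$ is a scalar matrix), and then read off the eigenvalues from the scaling. The only caveat is that Lemma~\ref{lema1} as \emph{stated} asserts equality of spectra rather than of matrices, so you are leaning on its proof rather than its statement; in the regular case this is harmless since $D$ commutes with $L$.
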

\begin{proof}
$\displaystyle L=(D-A)=rI-A$. 
Then $\displaystyle \mathcal{L}(G)= D^{-1/2}LD^{-1/2}=\frac{I}{r^{1/2}}(rI-A)\frac{I}{r^{1/2}}=I-\frac{A}{r}$.
Then $\displaystyle r \mathcal{L}(G)=L(G)$.
If $\mu_i$ is an eigenvalue of $L$ then it is an eigenvalue of $r\mathcal{L}(G)$.
This shows that $\displaystyle \lambda(\mathcal{L}(G))= \frac{\mu_i}{r}(i=1,\ldots, n)$.\end{proof}

\begin{proposition}
Let $\mathcal{L}(G)$ be the normalized Laplacian matrix of a graph $G$ and $P$ be the permutation matrix corresponding to the automorphism $\phi$ defined on $V$.
If $U$ is an eigenvector of $\mathcal{L}(G)$ with an eigenvalue $\lambda$,
then $PU$ is also an eigenvector of $\mathcal{L}(G)$ with the same eigenvalue.
\label{prop1}
\end{proposition}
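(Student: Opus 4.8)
The plan is to reduce everything to a single commutation relation: I would show that the permutation matrix $P$ commutes with $\mathcal{L}(G)$, since once $P\mathcal{L}(G) = \mathcal{L}(G)P$ is established, the eigenvector claim is immediate. Recall from the Biggs proposition that $\phi$ being an automorphism gives $PA = AP$, where $A = A(G)$ is the adjacency matrix. Using the factorization $\mathcal{L}(G) = D^{-1/2}(D-A)D^{-1/2}$, my first step would be to leverage this relation to reduce the whole problem to showing that $P$ also commutes with the diagonal degree matrix $D$, and hence with $D^{-1/2}$.

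The key observation is that the degree vector is invariant under $\phi$. Writing $d = A\mathbf{1}$ for the vector of degrees (with $\mathbf{1}$ the all-ones vector) and using $P\mathbf{1} = \mathbf{1}$ together with $PA = AP$, I obtain $Pd = PA\mathbf{1} = AP\mathbf{1} = A\mathbf{1} = d$. Reading this entrywise shows that $\phi$ preserves degrees, i.e.\ $d_{\phi(v_i)} = d_{v_i}$ for all $i$. From here it is a short entrywise check that $PD = DP$: the $(i,j)$ entries of $PD$ and $DP$ are $p_{ij}d_j$ and $d_i p_{ij}$ respectively, and these agree because $p_{ij}=1$ forces $v_i = \phi(v_j)$, hence $d_i = d_j$. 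The identical argument, applied to the diagonal entries $d_i^{-1/2}$, yields $PD^{-1/2} = D^{-1/2}P$ (here I use implicitly that $\mathcal{L}(G)$ is defined, so the degrees are positive and $D^{-1/2}$ makes sense).

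With the three commutation relations $PA=AP$, $PD=DP$, and $PD^{-1/2}=D^{-1/2}P$ in hand, the final step is a direct computation, $P\mathcal{L}(G) = PD^{-1/2}(D-A)D^{-1/2} = D^{-1/2}(PD-PA)D^{-1/2} = D^{-1/2}(DP-AP)D^{-1/2} = D^{-1/2}(D-A)D^{-1/2}P = \mathcal{L}(G)P$. Consequently, if $\mathcal{L}(G)U = \lambda U$, then $\mathcal{L}(G)(PU) = P\mathcal{L}(G)U = P(\lambda U) = \lambda(PU)$, and since $P$ is invertible we have $PU \neq 0$, so $PU$ is genuinely an eigenvector for the eigenvalue $\lambda$.

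I expect the main obstacle to be the middle step: establishing that $P$ commutes with $D^{-1/2}$. The Biggs proposition supplies commutation only with $A$, so the crux is recognizing that automorphisms preserve degrees and then translating that fact into the commutation of $P$ with the (inverse square-root) degree matrix. Once that is secured, the remainder is purely formal manipulation of matrix products.
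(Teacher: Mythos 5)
Your proof is correct and follows essentially the same route as the paper: both arguments rest on the invariance of $\mathcal{L}(G)$ under the automorphism (the paper writes it as $P^T\mathcal{L}(G)P=\mathcal{L}(G)$, you as the equivalent commutation $P\mathcal{L}(G)=\mathcal{L}(G)P$) and then apply it to the eigenvector equation. The only difference is one of detail: the paper asserts this invariance ``from the definition of automorphism,'' whereas you actually derive it from $PA=AP$ together with the observation that automorphisms preserve degrees, so $P$ also commutes with $D$ and $D^{-1/2}$ --- a worthwhile gap to fill, but not a different approach.
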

\begin{proof}
From the definition of automorphism $P^T\mathcal{L}(G)P=\mathcal{L}(G)$.
Then $\mathcal{L}(G)U=\lambda U$ implies that $(P^T\mathcal{L}(G)P)U=\lambda U$.
Since $PP^T=I$,
 we get $ \mathcal{L}(G)PU=\lambda (PU)$.
If $U$ is an eigenvector of $\mathcal{L}(G)$ with an eigenvalue $\lambda$ then $PU$ is also an eigenvector with the same eigenvalue.\end{proof}
{\bf{Remarks.}}This result holds for any matrix associated with a graph under the automorphism defined on a vertex set.
\begin{definition}[Odd-even vectors]
Let $G=(V,E)$ be a graph and $\phi:V\rightarrow V$ be an automorphism of order 2.
A vector $x$ is called an even vector if $x_i =x_{\phi(i)}$ for all $ 1\leq i
 \leq n$ and a vector $y$ is called an odd vector  if $y_i =-y_{\phi(i)}$ for all $ 1\leq i
 \leq n$, 
where $n=|V|$.
\end{definition}

\begin{proposition}
\label{prop2}
Let $G$ be a graph,
$\phi$ be an automorphism of $G$ with order 2 and $P$ a permutation matrix of $\phi$. 
If an eigenvalue of $\mathcal{L}(G)$ is simple then the corresponding eigenvector is odd or even with respect to $\phi $. 
\end{proposition}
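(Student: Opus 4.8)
The plan is to combine Proposition~\ref{prop1} with the simplicity hypothesis to determine exactly how $P$ acts on the relevant eigenspace. First I would invoke Proposition~\ref{prop1}: if $U$ is an eigenvector of $\mathcal{L}(G)$ for the eigenvalue $\lambda$, then $PU$ is again an eigenvector for the same $\lambda$. Since $\lambda$ is assumed to be simple, its eigenspace is one-dimensional, and therefore $PU$ must be a scalar multiple of $U$; that is, $PU = cU$ for some scalar $c$. (Here $\mathcal{L}(G)$ is real symmetric, so $\lambda$ is real, $U$ may be taken real, and hence $c$ is real as well.)

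Next I would exploit the assumption that $\phi$ has order $2$. Because $\phi^2 = \mathrm{id}$, the corresponding permutation matrix satisfies $P^2 = I$. Applying $P$ to the relation $PU = cU$ gives $P^2 U = c\,PU = c^2 U$, while on the other hand $P^2 U = U$. Since $U \neq 0$, this forces $c^2 = 1$, so that $c = 1$ or $c = -1$.

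Finally I would translate these two cases into the odd/even dichotomy. Reading the permutation matrix entrywise from the convention $p_{ij}=1 \iff v_i = \phi(v_j)$ yields $(PU)_i = U_{\phi^{-1}(i)}$, and the involution property $\phi^{-1} = \phi$ gives $(PU)_i = U_{\phi(i)}$. Hence $c = 1$ means $U_i = U_{\phi(i)}$ for all $i$, so $U$ is an even vector, whereas $c = -1$ means $U_i = -U_{\phi(i)}$ for all $i$, so $U$ is an odd vector. This establishes the claim.

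The individual steps are all short, and I expect the main point requiring care to be the last translation: one must verify that the scalar relation $PU = \pm U$ matches the entrywise definitions of even and odd vectors exactly. This hinges on correctly extracting $(PU)_i = U_{\phi(i)}$ from the permutation-matrix convention and on using $\phi^{-1} = \phi$, which is precisely where the order-$2$ hypothesis enters a second time.
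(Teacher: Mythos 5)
Your proposal is correct and follows essentially the same route as the paper's proof: simplicity forces $PU=cU$, the order-$2$ hypothesis gives $P^2=I$ and hence $c=\pm 1$, and the two signs correspond to even and odd vectors. The only difference is that you spell out the entrywise identification $(PU)_i=U_{\phi(i)}$, which the paper leaves implicit; that added care is harmless and the verification is accurate.
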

\begin{proof}
Let $\lambda$ be an eigenvalue,
$U$ an eigenvector of $\mathcal{L}(G)$.
If $\lambda$ is simple then $ PU$ and $U$ are linearly dependent.
Then there exists a constant $c$ such that $PU=cU$.
Since $P^2 =I$ for an automorphism of order 2,
$\displaystyle IU =c PU=c^2U$ and  $c= \pm 1$.
Then $PU=U$ or $PU=-U$. 
Hence an eigenvector $U$ is odd or even with respect to $\phi $.
\end{proof}
\begin{definition}
Let $G=(V,E)$ be a graph,
$V=\{v_i \ |\ 1\le i \le n\}$ $(n=|V|)$
and $U=(u_1,u_2,\ldots,u_n) \in \Re^n$ a vector.
We define three subsets of $V$ as follows:
\begin{eqnarray*}
V^+(U) &=& \{ v_i \in V \ | \ u_i >0 \},\\
V^-(U) &=& \{ v_i \in V \ | \ u_i <0 \}, \ and \\
V^0(U) &=& \{ v_i \in V \ | \ u_i =0 \}.
\end{eqnarray*} 
\end{definition}
\begin{lemma}
\label{lemma4}
Let $\mathcal{L}(G)$ be the normalized Laplacian of graph $G$ and $U=(u_i),(i=1,\ldots,n)$ the second eigenvector.
If $U  \neq \mathbf{0}$ then $V^+(U) \neq \emptyset$ and $V^-(U)\neq \emptyset $.
\end{lemma}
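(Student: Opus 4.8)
The plan is to exploit the orthogonality of the second eigenvector to the first eigenvector of $\mathcal{L}(G)$, together with the strict positivity of the vertex degrees. First I would identify the eigenvector associated with the smallest eigenvalue. Writing $\mathcal{L}(G)=D^{-1/2}L(G)D^{-1/2}$ and recalling that the all-ones vector $\mathbf{1}$ satisfies $L(G)\mathbf{1}=(D-A)\mathbf{1}=\mathbf{0}$ (each row of $D-A$ sums to zero), one checks directly that $\mathcal{L}(G)\left(D^{1/2}\mathbf{1}\right)=D^{-1/2}L(G)D^{-1/2}D^{1/2}\mathbf{1}=D^{-1/2}L(G)\mathbf{1}=\mathbf{0}$. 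Hence $w_0:=D^{1/2}\mathbf{1}=(\sqrt{d_1},\ldots,\sqrt{d_n})^T$ is an eigenvector for the eigenvalue $0$. By the Rayleigh-quotient formula established in the earlier Lemma, every eigenvalue of $\mathcal{L}(G)$ is nonnegative, so $0$ is in fact the \emph{smallest} eigenvalue and $w_0$ is (up to scaling) the first eigenvector.

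Next I would invoke symmetry. Since $\mathcal{L}(G)$ is real symmetric, it admits an orthonormal basis of eigenvectors, and the second eigenvector $U$ may be taken orthogonal to $w_0$. This yields the single scalar identity $\langle U, w_0\rangle=\sum_{i=1}^{n}u_i\sqrt{d_i}=0$, which is the engine of the whole argument.

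Finally I would argue by a sign/positivity dichotomy. Because the normalized Laplacian is defined only when no vertex is isolated, every degree satisfies $d_i>0$, so each weight $\sqrt{d_i}$ is strictly positive. Suppose $V^-(U)=\emptyset$; then $u_i\ge 0$ for all $i$, and $\sum_{i=1}^n u_i\sqrt{d_i}=0$ with strictly positive weights forces $u_i=0$ for every $i$, i.e. $U=\mathbf{0}$, contradicting the hypothesis $U\neq\mathbf{0}$. The symmetric computation rules out $V^+(U)=\emptyset$. Therefore both $V^+(U)$ and $V^-(U)$ are nonempty, as claimed.

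The step I expect to need the most care is justifying that $U$ is genuinely orthogonal to $w_0$. When the second eigenvalue differs from $0$ this is automatic, since eigenvectors for distinct eigenvalues of a symmetric matrix are orthogonal; but when $\lambda_2=0$ (a disconnected $G$) one must instead appeal to the freedom in choosing an orthonormal eigenbasis, so that the vector designated as the ``second eigenvector'' is orthogonal to $w_0$ by construction. Everything else reduces to the one-line positivity argument above.
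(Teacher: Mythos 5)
Your proof is correct and follows essentially the same route as the paper: orthogonality of the second eigenvector to $D^{1/2}\vec{1}$ gives $\sum_i \sqrt{d_i}\,u_i=0$, and strict positivity of the degrees together with $U\neq\mathbf{0}$ forces entries of both signs. The extra care you take in justifying that $0$ is the smallest eigenvalue and that $U$ can be chosen orthogonal to $D^{1/2}\vec{1}$ even when $\lambda_2=0$ is a welcome refinement the paper leaves implicit.
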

\begin{proof}
The vector $D^{1/2}\vec{1}$ is an eigenvector corresponding to the zero eigenvalue.
Since the second eigenvector $U$ is orthogonal to $D^{\frac{1}{2}}\vec{1}$,
$(D^{\frac{1}{2}}\vec{1})^TU=0$ and $\sum_i \sqrt{d_i}u_i =0$.
Since $d_i >0, U \neq \mathbf{0}$,
there exist at least two values such that $u_i>0 $ and $u_j <0$ for $i \neq j$.
Hence $V^+(U) \neq \emptyset$ and $V^-(U)\neq \emptyset $.\end{proof}

\begin{lemma}
\label{lemma5}
Let $G$ be a graph with an automorphism $\phi$ of order 2.
Let $U= (u_1,u_2,\ldots,u_n)$ be an eigenvector and  $\displaystyle \phi (U)= (u_{\phi(1)},u_{\phi(2)},\ldots,u_{\phi(n)})$.
If $U  \neq \mathbf{0}$ and $\phi(U)= -U$ then $V^+(U) \neq \emptyset $ and $V^-(U)\neq \emptyset$.
\end{lemma}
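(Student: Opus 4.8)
The plan is to exploit the odd-symmetry condition $\phi(U)=-U$ directly. Unlike Lemma~\ref{lemma4}, whose proof relies on orthogonality of the second eigenvector to $D^{1/2}\vec{1}$, here the conclusion follows purely from the sign-reversing action of $\phi$ on the coordinates of $U$. In fact, the eigenvector hypothesis plays no role in the argument; only $U\neq\mathbf{0}$ and $\phi(U)=-U$ are used.

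First I would read the hypothesis $\phi(U)=-U$ componentwise. By the definition of $\phi(U)=(u_{\phi(1)},\ldots,u_{\phi(n)})$, this says $u_{\phi(i)}=-u_i$ for every index $i$. Since $U\neq\mathbf{0}$, there exists some index $i$ with $u_i\neq 0$. The key step is to observe that such an $i$ cannot be a fixed point of $\phi$: if $\phi(i)=i$, then $u_i=u_{\phi(i)}=-u_i$, which forces $u_i=0$, contradicting the choice of $i$. Hence $\phi(i)\neq i$ and, moreover, $u_{\phi(i)}=-u_i\neq 0$.

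It then remains only to split into cases on the sign of $u_i$. If $u_i>0$, then $v_i\in V^+(U)$, while $u_{\phi(i)}=-u_i<0$ gives $v_{\phi(i)}\in V^-(U)$; if $u_i<0$, the two roles are simply interchanged. In either case both $V^+(U)$ and $V^-(U)$ are nonempty, which is exactly the desired conclusion.

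There is essentially no obstacle in this argument: the whole statement is a one-line consequence of the fact that a nonzero odd vector with respect to an order-$2$ automorphism must carry at least one positive entry together with its sign-reversed negative image at the paired vertex. The only point requiring a moment's care is the verification that the chosen nonzero coordinate is not a fixed point of $\phi$, since the odd condition forces every fixed point of $\phi$ to have a zero entry.
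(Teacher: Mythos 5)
Your proof is correct and rests on the same key observation as the paper's: the relation $u_{\phi(i)}=-u_i$ forces any nonzero coordinate to be paired with one of the opposite sign. The paper phrases this as a proof by contradiction (assuming $V^+(U)=\emptyset$ or $V^-(U)=\emptyset$) while you argue directly from a nonzero entry, but the two are the same argument; your remark that fixed points of $\phi$ must carry zero entries is a correct and slightly more careful touch.
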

\begin{proof}
Assume $V^+(U)=\emptyset$.
If $u_i \leq 0,(i=1,\ldots,n)$,
$\phi(U)= -U$ implies that $u_{\phi(i)} >0$.
This contradicts that $V^+(U)=\emptyset $.
Similarly,
if we assume that $V^-(U)=\emptyset $ and $u_i \geq 0$ for $(i=1,\ldots,n)$, then
$\phi(U)= -U$ implies that $u_{\phi(i)} <0$.
Then this contradicts that $V^-(U)=\emptyset $.
If $u_i =0,(i=1,\ldots,n)$,
then $U = \mathbf{0}$ and contradicts that $U \neq \mathbf{0}$.
\end{proof}
\begin{proposition}[Guattery et al.\cite {step:1995}]
\label{prop5}
Let $P_n$ be a weighted path graph and $\mathcal{L}(P_n)$ be its normalized Laplacian matrix.
For any eigenvector $X=(x_1,x_2,\ldots,x_n)$,
\begin{enumerate}
\item $x_1=0$ implies $X=0$,
\item $x_n=0$ implies $X=0$ and,
\item $x_i=x_{i+1}=0$ implies that $X=0$.
\end{enumerate}
\end{proposition}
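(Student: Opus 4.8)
The plan is to exploit the tridiagonal structure of $\mathcal{L}(P_n)$. Since $P_n$ is a connected weighted path, there are no loops, so $w_{ii}=0$ and every diagonal entry of $\mathcal{L}(P_n)$ equals $1$, while the only nonzero off-diagonal entries are $\ell_{i,i+1}=\ell_{i+1,i}=-w_{i,i+1}/\sqrt{d_i d_{i+1}}$, which I abbreviate $a_i$. The decisive fact, used everywhere, is that each $a_i\neq 0$ (the path being connected with positive edge weights).

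First I would expand the eigenvector equation $\mathcal{L}(P_n)X=\lambda X$ row by row. For an interior index $2\le i\le n-1$ this reads $a_{i-1}x_{i-1}+a_i x_{i+1}=(\lambda-1)x_i$, whereas the two boundary rows, which carry only one off-diagonal term, give $a_1 x_2=(\lambda-1)x_1$ and $a_{n-1}x_{n-1}=(\lambda-1)x_n$. These are the only relations needed, and the argument will be uniform in $\lambda$.

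The core is statement (3), from which (1) and (2) follow. Suppose $x_i=x_{i+1}=0$ for some $1\le i\le n-1$. Propagating forward: if $i+1\le n-1$, the interior relation at index $i+1$ collapses to $a_{i+1}x_{i+2}=0$, hence $x_{i+2}=0$ since $a_{i+1}\neq 0$; now $x_{i+1}=x_{i+2}=0$ and I repeat up to $x_n$. Propagating backward: if $i\ge 2$, the interior relation at index $i$ collapses to $a_{i-1}x_{i-1}=0$, hence $x_{i-1}=0$, and I iterate down to $x_1$. Thus every coordinate vanishes and $X=0$. For (1), the boundary row at $1$ gives $a_1 x_2=0$, so $x_1=0$ forces $x_2=0$, and then (3) applied to the pair $(x_1,x_2)$ finishes. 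Statement (2) is identical using the boundary row at $n$ to deduce $x_{n-1}=0$.

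The computation is elementary, so there is no genuine obstacle; the only points requiring care are the separate treatment of the two boundary rows and the standing assumption that the edge weights, and hence every $a_i$, are nonzero. Without that hypothesis the zero cannot propagate across a vanishing off-diagonal entry and the conclusion fails, so I would state it explicitly at the outset.
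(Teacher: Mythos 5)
Your argument is correct. The paper itself gives no proof of this proposition---it is stated with a citation to Guattery and Miller and immediately closed with a tombstone---so there is no internal proof to compare against; your zero-propagation argument through the three-term recurrence $a_{i-1}x_{i-1}+a_i x_{i+1}=(\lambda-1)x_i$, with separate handling of the two boundary rows, is the standard one and is exactly what is needed. One minor remark: in this paper a ``weighted path'' may carry self-loops (as in $P_{n,k}$), so the diagonal entries of $\mathcal{L}$ need not all equal $1$; this does not affect your proof, since the diagonal term is always multiplied by a coordinate already known to vanish, and the hypothesis that genuinely matters---which you correctly isolate---is that every edge weight, hence every off-diagonal entry $a_i$, is nonzero.
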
\hfill\qed

\begin{lemma}[Guattery et al.\cite{step:1995}]
\label{lemasimple}
For a path graph $P_n$,
$\mathcal{L}(P_n)$ has $n$ simple eigenvalues.
\end{lemma}
\begin{proof}
Let $U=(u_1,u_2,\ldots,u_n)$ and $\bar{U}=(\bar{u}_1,\bar{u}_2,\ldots,\bar{u}_n)$ be two eigenvectors of $\mathcal{L}(P_n)$ with eigenvalue $\lambda$.
From the proposition~\ref{prop5},
we have $u_n \neq 0$ and $\bar{u}_n \neq 0$.
Let $\displaystyle \alpha =\frac{\bar{u_n}}{u_n}$, 
where $ \alpha \neq 0$.
Consider $\displaystyle \mathcal{L}(P_{n})(\alpha U -\bar{U})= \lambda ( \alpha U -\bar{U})$.
The $n$-th element of $\displaystyle (\alpha U -\bar{U})$ is $(\bar{u}_n u_n-\bar{u}_nu_n)=0$.
Then $\displaystyle \alpha U =\bar{U}$.
Thus $U$ and $\bar{U}$ are linearly dependent and hence $\lambda$ is simple.\end{proof}

\begin{proposition}
\label{prop3}
Let $P_n$ be the path graph and $\phi$ the automorphism of order 2
defined on $V(P_n)$.
Then any second eigenvector $U_2$ of $\mathcal{L}(P_n)$ is an odd vector.
\end{proposition}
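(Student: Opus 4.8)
The plan is to combine the simplicity of the spectrum with the odd/even dichotomy for simple eigenvalues. First I would pin down the automorphism: for $P_n$ the only order-$2$ automorphism is the reversal $\phi(v_i)=v_{n+1-i}$, under which the degree sequence is symmetric ($d_i=d_{n+1-i}$), so the eigenvector $D^{1/2}\vec{1}$ belonging to the eigenvalue $0$ is an \emph{even} vector. By Lemma~\ref{lemasimple} every eigenvalue of $\mathcal{L}(P_n)$ is simple; in particular the second smallest eigenvalue is simple, so Proposition~\ref{prop2} applies and $U_2$ is either odd or even with respect to $\phi$. The whole proposition therefore reduces to excluding the even case.

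To exclude it I would argue by the parity of the number of sign changes of $U_2$ read along the path $v_1,\dots,v_n$. If $U_2$ were even, its sign pattern would be palindromic, $\mathrm{sign}(u_i)=\mathrm{sign}(u_{n+1-i})$; I would then observe that every sign change occurs on an edge that is not fixed by the mirror — the one mirror-fixed central edge (present when $n$ is even) joins equal entries $u_m=u_{m+1}$ and hence carries no change — and that non-fixed edges occur in mirror pairs, forcing the total number of sign changes of an even vector to be \emph{even}. On the other hand, Lemma~\ref{lemma4} guarantees that $U_2$ has both a positive and a negative entry, hence at least one sign change. The key additional fact is that the second eigenvector induces exactly \emph{two} connected sign regions on the path, i.e. exactly one sign change (Fiedler's connectivity result and the discrete nodal domain theorem recalled in the introduction); since one is odd, this contradicts the even parity and forces $U_2$ to be odd.

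The delicate step — and the main obstacle — is justifying that $U_2$ produces exactly one sign change rather than merely at least one. For a proof staying entirely inside the toolkit of this section, I would instead fold the eigenvalue equation along $\phi$: an even, resp. odd, vector is determined by its first half, and the restriction of $\mathcal{L}(P_n)$ to the even, resp. odd, subspace is a Jacobi matrix $J_+$, resp. $J_-$, on the half-path (with a free boundary condition at the fold for even vectors and a vanishing value there for odd ones, the latter consistent with Proposition~\ref{prop5}). Using the Rayleigh-quotient lemma and the Courant--Fischer characterization $\lambda_2=\min\{R(x):x\perp D^{1/2}\vec{1}\}$, together with the fact that every odd vector is automatically orthogonal to the even vector $D^{1/2}\vec{1}$, one gets $\lambda_2=\min(\mu^-_1,\mu^+_2)$, where $\mu^-_1$ is the smallest eigenvalue of $J_-$ and $\mu^+_2$ the second smallest of $J_+$ (the smallest eigenvalue of $J_+$ being $0$). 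The crux is then the strict interlacing inequality $\mu^-_1<\mu^+_2$: establishing this comparison between the two half-matrices is where the real work lies, after which simplicity of $\lambda_2$ forces its eigenvector into the odd sector.

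Either route concludes that $U_2$ is odd; I would present the sign-change statement as the conceptual core and the folding/interlacing comparison as the rigorous backbone, noting that Lemma~\ref{lemma5} then recovers, as a consistency check, that an odd second eigenvector indeed has both positive and negative entries.
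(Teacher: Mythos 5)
Your argument is correct, and it is worth noting that the paper itself offers no proof of this proposition (it is stated with a bare \qed); the justification implicit in the paper comes only later, from the explicit formula of Proposition~\ref{prop:path}(3), where the second eigenvector has entries proportional to $\cos\bigl(\frac{i\pi}{n-1}\bigr)$ and the identity $\cos\bigl(\frac{(n-1-i)\pi}{n-1}\bigr)=-\cos\bigl(\frac{i\pi}{n-1}\bigr)$ exhibits oddness by direct computation. Your first route is a genuinely different, structural argument: simplicity (Lemma~\ref{lemasimple}) plus Proposition~\ref{prop2} reduces everything to excluding the even case, and the nodal-domain input you invoke is exactly the tool the paper itself uses for $P_{n,k}$ in Proposition~\ref{lemma16}, so it generalizes where the explicit cosine formula does not. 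Two small points. First, your sign-change parity bookkeeping is more delicate than necessary once zero entries are allowed (for odd $n$ the middle entry of $U_2$ is in fact $0$): the clean finish is that the single-transition structure together with Proposition~\ref{prop5} forces $u_1$ and $u_n$ to be nonzero and of opposite signs, while evenness would force $u_1=u_{\phi(1)}=u_n$ --- an immediate contradiction that also subsumes the Lemma~\ref{lemma4} step. Second, your alternative folding route is not actually completed: the strict inequality $\mu^-_1<\mu^+_2$ is the entire content of that approach and you explicitly leave it unproved, so as written it should be presented as a remark rather than a backbone. With the first route tightened as above, the proof stands and is, if anything, more self-contained than what the paper provides.
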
\hfill\qed

\begin{example}
Let \[ M= \left( \begin{array}{cccccc}
1 & -1 & 0 & 0 &0 & 0\\
-1 & 2 & -1 & 0& 0&  0\\
0 & -1 & 2 & -1 & 0& 0\\
0& 0&-1& 2&-1&   0\\
0 &0&0&-1 &2&-1 \\
0 & 0& 0& 0&  -1 &  1
\end{array} \right) \] and \[ P= \left( \begin{array}{cccccc}
0 & 0 & 0 & 0 &0 & 1\\
0 & 0 & 0 & 0& 1 & 0\\
0 & 0 & 0 & 1 & 0 & 0\\
0& 0&1& 0& 0& 0\\
0 &1 &0&0& 0& 0\\
1 & 0& 0& 0&  0&  0
\end{array} \right). \] 
If $U_M$ is a second eigenvector of $M$ then by the Proposition~\ref{prop1},
$PU_M$ is also a second eigenvector.
By the Proposition~\ref{prop2},
$PU_M= U_M$ or $PU_M=-U_M$.
By the Proposition~\ref{prop3},
$U_M$ is an odd vector and $PU_M=-U_M$.\end{example}

\begin{definition}[Weighted Path]
For $n$ ($n \ge 1$) and $k$ ($k \ge 1$),
the adjacency matrix $(P_{ij})$
of a weighted path $P_{n,k}=(V,E)$ is 
the $(n+k) \times (n+k)$ matrix such that 
\[ P_{ij} = \left \{ \begin{array}{cc}
0 & \mbox{($ i=j$ and $i \leq n$) or ($i \neq j+1$ and $j \neq i+1$),}\\
1 & \mbox{($i=j$ and $n+1 \leq i$) or ($i = j+1$ or $j = i+1$).}
\end{array} \right. \]
That is
$V=\{x_1,x_2,\ldots, x_n, x_{n+1}, \ldots, x_{n+k}\}$
and
$E=\{(x_i,x_j)\ |\ P_{ij}=1, 1 \le i,j \le n+k \}$.

\end{definition}

Let $\Sigma$ be an alphabet and
$\Sigma^*$ a set of strings over $\Sigma$
including the empty string $\epsilon $.
We denote the length of $w \in \Sigma^*$ by $|w|$.
Let $\Sigma^{< n}=\{w \in \Sigma^* | |w| < n\}$
and $\displaystyle \Sigma_1^{< n}=\{w \in \Sigma^*  | 1  \le |w| < n \}$.
In this paper,
we assume $\displaystyle \Sigma=\{0,1\}$. 

\begin{definition}[Complete binary tree]
A complete binary tree $T_n=(V,E)$ of depth $n$ is defined as follows.
\begin{eqnarray*}
V &=&\Sigma^{< n},\\
E &=& \{ (w,wu) \ | \ w \in \Sigma^{< (n-1)}, u \in \Sigma \}.
\end{eqnarray*}
\end{definition}
\begin{definition}[Double tree]
A double tree $DT_n=(V,E)$,
where $n$ is the depth of the tree,
 consists of two complete binary trees connected by their root.
We define double tree as follows.
\begin{eqnarray*}
V&=&\{x(w) \ | \ w\in \Sigma^{< n}\} \cup \{y(w) \ | \ w\in \Sigma^{< n}\},\\
E_1&=&\{(x(w),x(wu))\ | \ w \in \Sigma^{< (n-1)}, u \in \Sigma\}, \\
E_2&=&\{(y(w),y(wu))\ | \ w \in \Sigma^{< (n-1)}, u \in \Sigma\}, \\
E&=& E_1 \cup E_2 \cup \{(x(\epsilon ),y(\epsilon ))\}.
\end{eqnarray*}
\end{definition} 
\begin{definition}[Cycle]
A cycle $C_n=(V_n,E_n)$ consists of a vertex set $\displaystyle V_n=\{ v_l \ | \ l \in \mathbb{Z^+}, l \leq n \}$ and an edge set $\displaystyle E_n= \{(v_l,v_{l+1})\ | \ 1 \leq l < n \} \cup  \{(v_1,v_n)\}$.
\end{definition}

\begin{definition}[Complete graph]
A complete graph $K_n=(V_n,E_n)$ consists of a vertex set $\displaystyle V_n =\{ v_i \ | \ 1 \leq i \leq n \}$ and an edge set $\displaystyle E_n = \{ (v_i,v_j) \ | \ i \neq j \ and \ 1 \leq i \leq n,1 \leq j \leq n \} $.
\end{definition}

\begin{definition}[Graph $R_{n,k}$]
The graph $R_{n,k}(n \ge 1, k \ge 2)$ is a bounded degree planer graph with a vertex set $V=V_1 \cup V_2 $ and an edge set $E=E_1 \cup E_2 \cup E_3$.
\begin{eqnarray*}
V_1 &=& \{ x_i \ | \ 1 \leq i \leq n+k  \},\\
V_2 &=& \{ y_i \ | \ 1 \leq i \leq n+k  \}, \\
E_1 &=& \{ ( x_i,x_{i+1}) \ | \ 1 \leq i \leq n+k-1  \}, \\
E_2 &=& \{ ( y_i,y_{i+1}) \ | \ n+k+1  \leq i \leq 2(n+k)-1  \}, \\
E_3 &=& \{ ( x_i,y_{i}) \ | \ n+1 \leq i \leq n+k  \}. 
\end{eqnarray*}
\end{definition}

\begin{definition}[Cycle cross paths $C_m \Box P_n$]
Let $C_m$ be a cycle with $\displaystyle V= \{c_i \ | \ 1 \leq i \leq m \}$ and $\displaystyle E=\{(c_i,c_{i+1})\ | \ 1 \leq i < m \} \cup  \{(c_1,c_m) \}$.
Let $P_n$ be a path with $\displaystyle V= \{p_i \ | \ 1 \leq i \leq n\}$ and $\displaystyle E=\{(p_i,p_{i+1})\ | \ 1 \leq i < n \}$.
Graph $C_m \Box P_n$ has $n$ copies of cycles $C_m$,
each corresponding to the one vertex of the path graph.
A vertex set $V $ and an edge set $E=E_1 \cup E_2 \cup E_3$ of $C_m \Box P_n$ is defined as follows.
\begin{eqnarray*}
V &=& \{ (c_i,p_j) \ | \ 1 \leq i \leq m, 1 \leq j \leq n  \},\\
E_1 &=& \bigcup_{i=1}^m\{ (( c_i,p_j),(c_i,p_{j+1})) \ | \ 1 \leq j \leq n-1  \}, \\
E_2 &=& \bigcup_{j=1}^n\{ (( c_i,p_j),(c_{i+1},p_j)) \ | \ 1 \leq i \leq m-1  \}, \\
E_3 &=& \{ (( c_1,p_i),(c_m,p_i)) \ | \ 1 \leq i \leq n  \}, \\
E&=& E_1 \cup  E_2 \cup  E_3. 
\end{eqnarray*}
\end{definition}
\begin{example}
Double tree $DT_3$ shown in the Figure~\ref{fig:figure4-a} has a vertex set $V=\{ x(\epsilon),x(0),x(1),y(\epsilon),y(0),y(1),x(00),x(01),$
$x(10),x(11),y(00), y(01),y(10),y(11)\}$ and an edge set $E=\{(x(\epsilon),y(\epsilon)),(x(\epsilon),x(0)),(x(\epsilon),x(1)),(y(\epsilon),y(0)),(y(\epsilon),y(1)),$
$(x(0),x(00)),(x(0),x(01)),(x(1),x(10)),(x(1),x(11)),y(0),y(00)),$
$((y(0),y(01)),(y(1),y(10)),(y(1),y(11))$.
Graph $R_{5,5}$ shown in the Figure~\ref{fig:figure4-b} has a vertex set $V =\{ x_1,x_2,x_3,x_4,x_5,x_6,x_7,$
$x_8,x_9,x_{10},y_1,y_2,y_3,y_4,y_5,y_6,y_7,y_8,y_9,y_{10} \}$ and an edge set $E=\{(x_6,y_6),(x_7,y_7),$
$(x_8,y_8),(x_9,y_9),(x_{10},y_{10}) \}$. 
\end{example}
\begin{figure}[htb]
\begin{center}
\subfigure[Double Tree $DT_3$]{\label{fig:figure4-a}\includegraphics[scale=0.5]{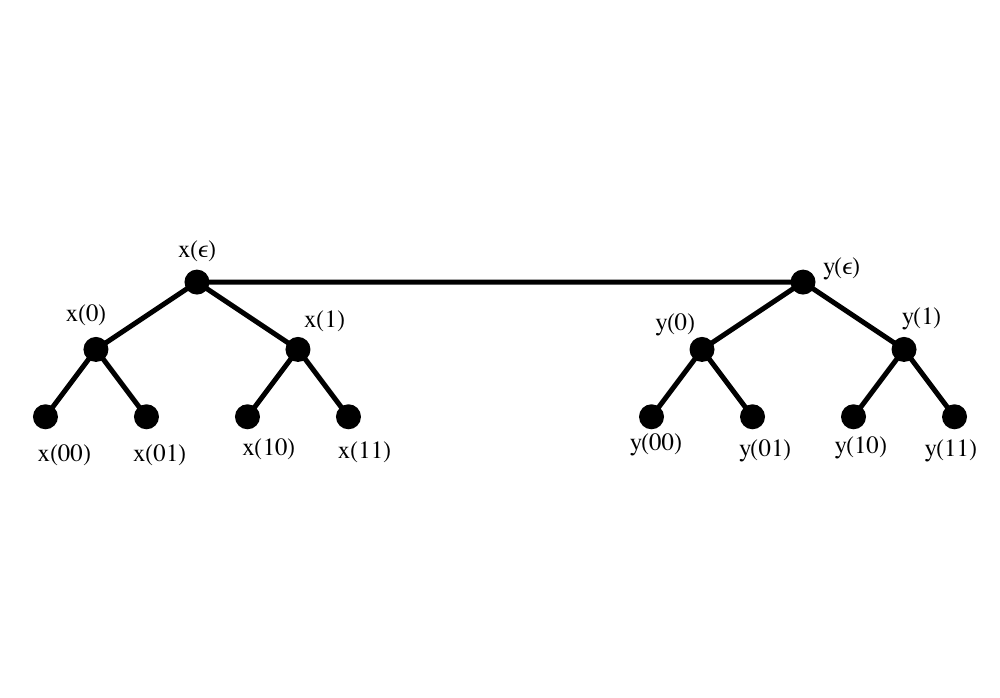}}
\subfigure[$R_{5,5}$]{\label{fig:figure4-b}\includegraphics[scale=0.5] {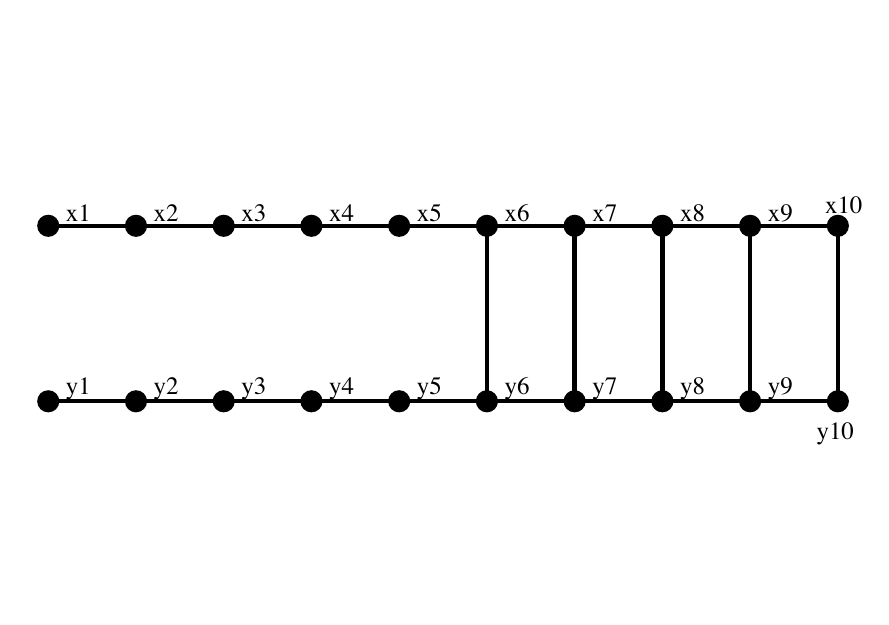}}
\caption{Double tree $DT_3$ and graph $R_{n,k}(n=5,k=5)$.}
\label{fig:figure4}
\end{center}
\end{figure}

\begin{definition}[Lollipop graph $LP_{n,m}$]
\label{def:lp}
The lollipop graph $LP_{n,m},(n \ge 3, m \ge 1)$ is obtained by
connecting  a vertex of $K_n$ to the end vertex of $P_m$
as shown in the Figure~\ref{fig:figureLP6}.
We start vertex numbering from the end vertex of the path.
Define $LP_{n,m}=(V,E)$ as follows.
\begin{eqnarray*}
V&=&\{x_1,x_2,\ldots,x_m,y_1,\ldots,y_n\},\\
E&=& \{ (x_i,x_{i+1}) \ | \ 1 \leq i \leq m-1 \} \cup \{ (y_i,y_{j}) \ | \ i \neq j,\\
&~&1 \leq i \leq n, 1 \leq j \leq n \} \cup \{(x_m,y_1)\}.
\end{eqnarray*}
\end{definition}

\begin{figure}[htb]
\begin{center}
\includegraphics[scale=0.5]{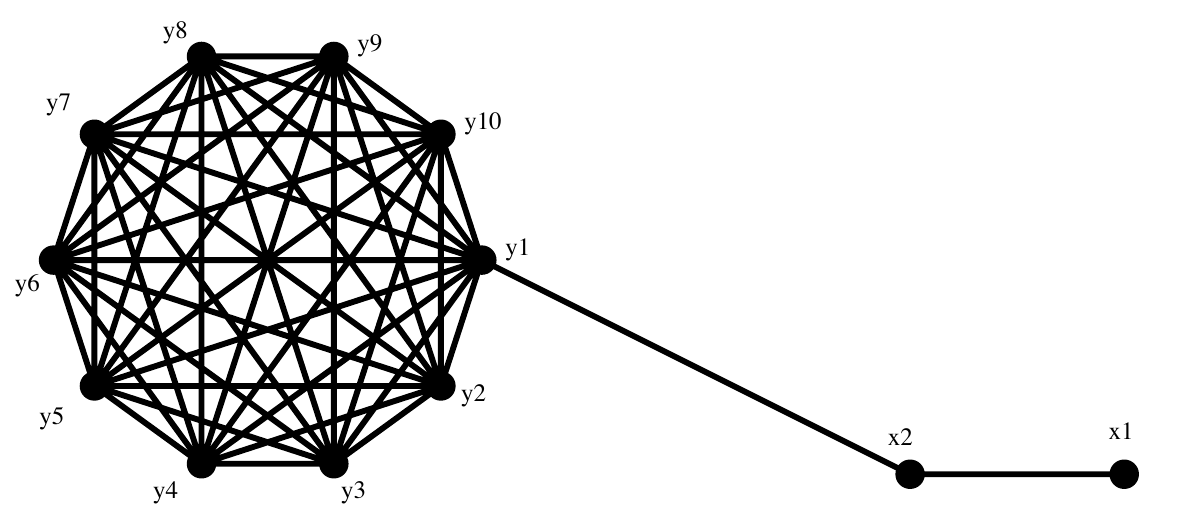}
\caption{Graph $LP_{n,m}(n=10,m=2)$}
\label{fig:figureLP6}
\end{center}
\end{figure}


\section{Minimum normalized cut of graphs}
We use the term $Mcut(G)$ to represent the minimum normalized cut.
In this section,
we review the basic properties of $Mcut(G)$ and its relation to 
the connectivity and second smallest eigenvalue of normalized Laplacian.
We derive $Mcut(G)$ of basic classes of graphs such as paths, cycles,
double trees, cycle cross paths,
complete graphs
and other graphs such as
 $R_{n,k}$, $P_{n,k}$ and $LP_{n,m}$.

\subsection{Properties of minimum normalized cut $Mcut(G)$}

\begin{definition}[Normalized cut]
Let $G=(V,E)$ be a connected graph.
Let $A, B \subset  V$, $A\neq \emptyset$, $B\neq \emptyset$ and $A \cap B =\emptyset $. 
Then the normalized cut $Ncut(A,B)$ of $G$ is defined by 
$$
Ncut(A,B)= cut(A,B)\left( \frac{1}{vol(A)}+ \frac{1}{vol(B)} \right).
$$
\end{definition}  

\begin{definition}[$Mcut(G)$]
Let $G=(V,E)$ be a connected graph.
The $Mcut(G)$ is defined by 
$$
Mcut(G)= \min \{Mcut_j(G) \mid j=1,2,\dots \}.
$$
 Where,
{\small
$$
 Mcut_j(G)=\min \{ Ncut(A,V\setminus A) \mid
  cut(A,V\setminus A)=j , A \subset V\}.
$$
}
\end{definition}

\begin{example}
Graph $G=(V,E)$ shown in the Figure~\ref{fig:ncutcom} has vertex set $V=\{1,2,3,4,5,6,7\}$ and edge set 
$ E=\{(1,2),(2,3),(3,1),$
$(3,4),(1,4),(1,5),(3,6),(6,5),(7,5),(7,6)\}$.
Volume of the graph is 20.
We compute normalized cut for the following cases.\\
$Case(1) \ A=\{1,2,3,4\},B= \{5,6,7\},vol(A)=12,vol(B)=8,$
$ cut(A,B)=2$ and $Ncut(A,B)=0.417 $.\\
$Case(2) \ A=\{1,2,3\},B= \{4,5,6,7\},vol(A)=10, vol(B)=10,$
$ cut(A,B)=4$ and $Ncut(A,B)=0.8 $.\\
$Case(3) \ A=\{1,3,4,5,6,7\},B= \{2\},vol(A)=2, vol(B)=18,$
$ cut(A,B)=2$ and $Ncut(A,B)=1.1111 $.
Comparing above 3 cases,
we obtain $Mcut(G)$ for the case(1).
\begin{figure}[htb]
\begin{center}
\subfigure[$G=(V,E)$]{\label{fig:ncutcom-a}\includegraphics[scale=0.4] {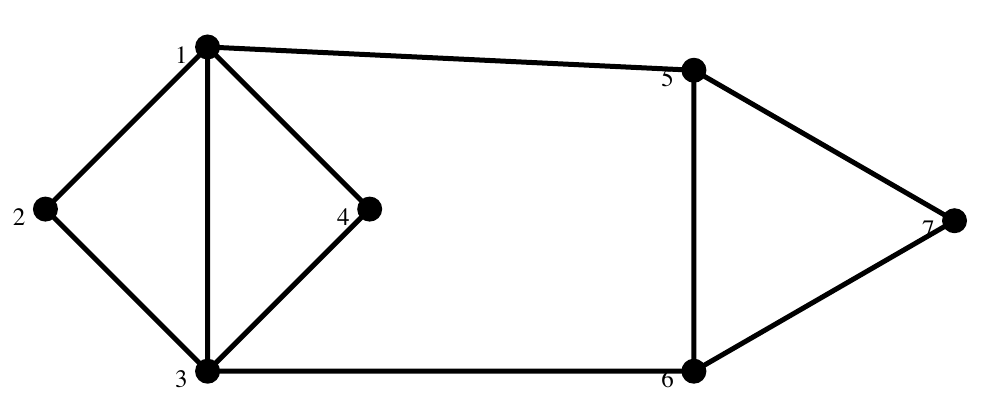}}\subfigure[$Case(1)$]{\label{fig:ncutcom-b}\includegraphics[scale=0.4] {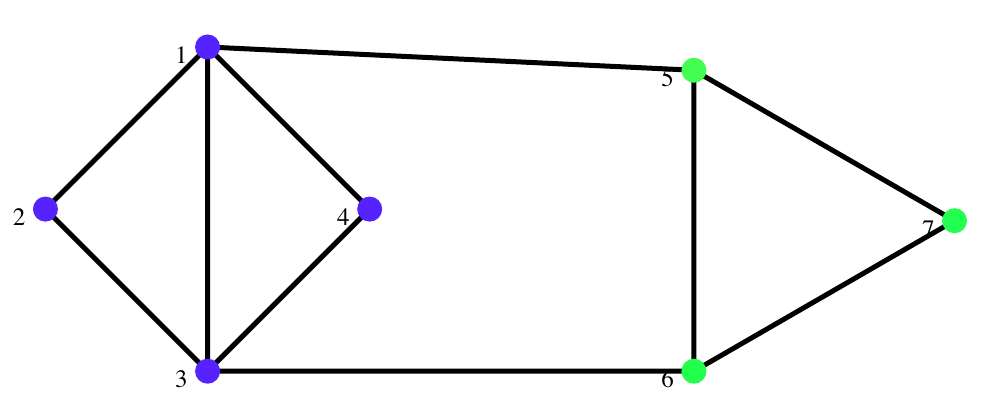}}
\subfigure[$Case(2)$ ]{\label{fig:ncutcom-c}\includegraphics[scale=0.4] {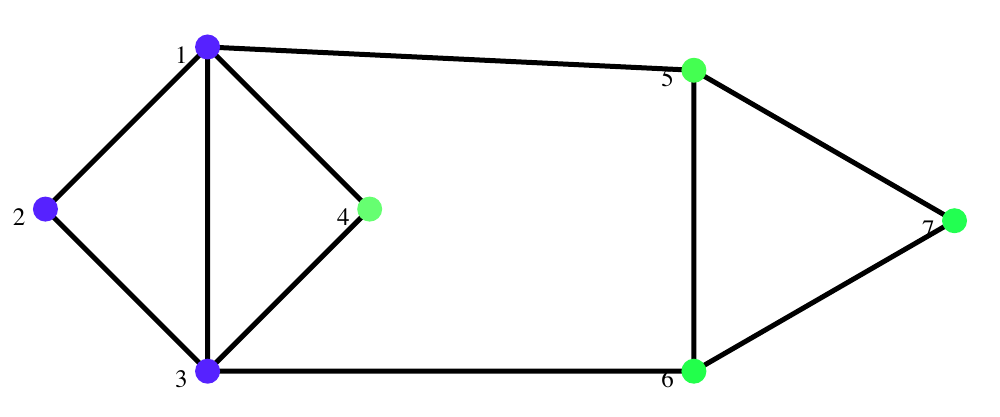}}\subfigure[$Case(3)$ ]{\label{fig:ncutcom-d}\includegraphics[scale=0.4]{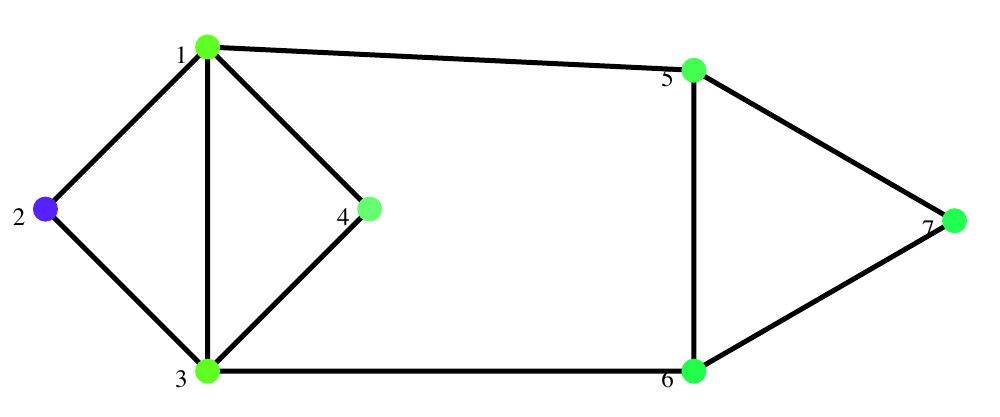}}
\caption{Normalized cut example.}
\label{fig:ncutcom}
\end{center}
\end{figure}
\end{example}

\begin{lemma}
\label{minvol}
Let $G=(V,E)$ be a connected graph.
Then
$\displaystyle \left( \frac{1}{vol(A)} + \frac{1}{vol(V \setminus A)} \right )$ is minimum when $\displaystyle vol(A)=vol(V \setminus A)=\frac{vol(G)}{2}$.
\end{lemma}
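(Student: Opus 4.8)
The plan is to treat this as a one-variable optimization problem. First I would set $c = vol(G)$, which is a fixed positive constant once the graph is given, and use the identity $vol(A) + vol(V\setminus A) = vol(G)$ to write $vol(V \setminus A) = c - vol(A)$. Putting $x = vol(A)$, the quantity to be minimized becomes $f(x) = \frac{1}{x} + \frac{1}{c-x}$ over the admissible range $0 < x < c$ (both volumes are strictly positive because $G$ is connected and $A, V\setminus A$ are nonempty).

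The key algebraic step is to combine the two fractions over a common denominator:
$$
\frac{1}{x} + \frac{1}{c-x} = \frac{(c-x) + x}{x(c-x)} = \frac{c}{x(c-x)}.
$$
Since the numerator $c$ is a positive constant, minimizing $f(x)$ is equivalent to maximizing the denominator $g(x) = x(c-x) = cx - x^2$. This is a downward-opening parabola in $x$, whose unique maximum occurs at the vertex $x = c/2$. Hence $f(x)$ attains its minimum precisely at $x = c/2$, that is, when $vol(A) = vol(V\setminus A) = vol(G)/2$.

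There is no genuine obstacle here; the argument is elementary. If one prefers a calculus proof, I would instead differentiate to get $f'(x) = -\frac{1}{x^2} + \frac{1}{(c-x)^2}$, set it to zero to obtain $(c-x)^2 = x^2$, and use positivity of both $x$ and $c-x$ to conclude $x = c/2$, with a sign check on $f'$ (or the second derivative) confirming it is a minimum. The one point worth flagging, which I would note but not belabor, is that $vol(A)$ is actually a discrete quantity (a sum of vertex degrees), so the balanced value $vol(G)/2$ need not be realized by any genuine vertex subset; the lemma is best read as the continuous convexity statement that the reciprocal-sum is minimized at the balanced point, which is exactly what the subsequent normalized-cut estimates require.
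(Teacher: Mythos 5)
Your proof is correct and is exactly the elementary argument the lemma calls for; the paper itself states Lemma~\ref{minvol} with no proof at all (it is closed with a bare \qed), so there is nothing to diverge from. Reducing to maximizing the quadratic $x(c-x)$ at $x=c/2$ is the natural route, and your closing remark that $vol(A)$ is discrete --- so the balanced value need only be an infimum over the continuous relaxation, which is all the later normalized-cut bounds use --- is a worthwhile caveat that the paper leaves implicit.
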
\hfill\qed

\begin{proposition}
Let $G=(V,E)$ be a connected graph, 
$A \subseteq  V$ and $\triangle(G)$ the maximum degree of $G$.
Then 
\begin{enumerate}
\item $\displaystyle cut(A,V \setminus A) \geq \kappa'(G) $,
\item $\displaystyle Mcut(G) \geq \frac{4\kappa'(G)}{\triangle(G) |V|}$ and
\item If $\displaystyle cut(A,V\setminus A)= \kappa'(G)$ and $\displaystyle 2vol(A)=vol(G)$ then $\displaystyle Mcut(G) = \frac{4\kappa'(G)}{vol(G)}$.
\end{enumerate}
\label{prop4}
\end{proposition}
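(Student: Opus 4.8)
The plan is to prove the three parts in order, with the later parts building on the earlier ones and on Lemma~\ref{minvol}. Throughout I would treat $A$ as a proper nonempty subset of $V$ (with $V\setminus A$ also nonempty), which is the only case in which the normalized cut is defined.

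For part (1), I would observe that the edges counted by $cut(A,V\setminus A)$ separate $A$ from $V\setminus A$ and therefore form a disconnecting set of $G$. Since $\kappa'(G)$ is by definition the minimum cardinality of any disconnecting set, this immediately gives $cut(A,V\setminus A)\ge \kappa'(G)$.

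For part (2), I would lower-bound the two factors of $Ncut(A,V\setminus A)=cut(A,V\setminus A)\left(\frac{1}{vol(A)}+\frac{1}{vol(V\setminus A)}\right)$ separately. The first factor is at least $\kappa'(G)$ by part (1). For the second, Lemma~\ref{minvol} states that $\frac{1}{vol(A)}+\frac{1}{vol(V\setminus A)}$ is minimized when $vol(A)=vol(V\setminus A)=vol(G)/2$, where it equals $\frac{4}{vol(G)}$; hence this factor is at least $\frac{4}{vol(G)}$ for every $A$. As both factors are positive, their product satisfies $Ncut(A,V\setminus A)\ge \frac{4\kappa'(G)}{vol(G)}$ for every valid $A$, and taking the minimum over $A$ gives $Mcut(G)\ge\frac{4\kappa'(G)}{vol(G)}$. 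Finally, since every degree is at most $\triangle(G)$, we have $vol(G)=\sum_i d_i\le \triangle(G)|V|$, so $\frac{4\kappa'(G)}{vol(G)}\ge\frac{4\kappa'(G)}{\triangle(G)|V|}$, which yields the stated bound.

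For part (3), the hypothesis supplies a specific set $A$ with $cut(A,V\setminus A)=\kappa'(G)$ and $vol(A)=vol(V\setminus A)=vol(G)/2$. Evaluating the normalized cut on this $A$ gives $Ncut(A,V\setminus A)=\kappa'(G)\cdot\frac{4}{vol(G)}=\frac{4\kappa'(G)}{vol(G)}$, an upper bound for $Mcut(G)$. Combining this with the lower bound $Mcut(G)\ge\frac{4\kappa'(G)}{vol(G)}$ established in part (2) forces equality, so $Mcut(G)=\frac{4\kappa'(G)}{vol(G)}$. None of these steps is genuinely difficult; the one point needing care is the independent minimization of the two factors in part (2), where the $A$ minimizing the cut need not coincide with the balanced $A$ minimizing the reciprocal-volume sum—but since each factor is bounded below by a positive constant, the product bound still holds for every $A$ and hence for the minimum.
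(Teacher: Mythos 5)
Your proposal is correct and follows essentially the same route as the paper: part (1) from the definition of edge connectivity, part (2) by bounding the cut factor by $\kappa'(G)$ and the volume factor by $\frac{4}{vol(G)}$ via Lemma~\ref{minvol} and then using $vol(G)\le \triangle(G)|V|$, and part (3) by evaluating on the given balanced set. Your treatment of part (3) is slightly more explicit than the paper's (which simply asserts the conclusion), but the argument is the same.
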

\begin{proof}
\noindent
\begin{enumerate}
\item Since $\kappa'(G)$ is the edge connectivity, 
$cut(A,V\setminus A) \geq \kappa'(G) $ for any $A \subseteq V$.
\item  From Lemma~\ref{minvol},
$\displaystyle \left( \frac{1}{vol(A)} + \frac{1}{vol(V\setminus A)} \right)$ is minimum when $vol(A)=vol(V\setminus A)$.
That is $\displaystyle \left( \frac{1}{vol(A)} + \frac{1}{vol(V\setminus A)} \right) \geq \frac{2}{vol(A)} =\frac{4}{vol(G)}$.
Since $\displaystyle vol(G)= \sum_{i=1}^{|V|} d_{i}  \leq |V|\triangle(G) $,
$\displaystyle Ncut(A,V\setminus A) = cut(A,V\setminus A)\left( \frac{1}{vol(A)} + \frac{1}{vol(V\setminus A)} \right) \geq  \frac{4\kappa'(G)}{ \triangle(G)|V|}$.
\item  If $cut(A,V\setminus A) = \kappa'(G) $ and $\displaystyle 2vol(A)=vol(G)$ then it is clear that,
 $\displaystyle Mcut(G)= \frac{4\kappa'(G)}{vol(G)}$.
\end{enumerate}
\end{proof}
 
 \begin{proposition}[Luxburg \cite{luxburg:2007}] 
 \label{mcutlema}
Let $G=(V,E)$ be a connected graph and $A \subset  V$.
Let $\lambda_1 \leq \lambda_2 \leq \cdots\leq \lambda_n$ be eigenvalues of $\mathcal{L}(G)$.
Then $\displaystyle  Mcut(G) \geq \lambda_2(\mathcal{L}(G))$.
\end{proposition}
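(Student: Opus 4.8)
The plan is to exhibit $Ncut(A, V\setminus A)$ as a Rayleigh quotient of the normalized Laplacian evaluated at a carefully chosen test vector that lies in the orthogonal complement of the first eigenvector, and then to invoke the variational (Courant--Fischer) characterization of $\lambda_2$ as the minimum of that Rayleigh quotient over this complement. The point is that $Mcut(G)$ minimizes only over discrete ``indicator-type'' vectors, whereas $\lambda_2$ minimizes the same quotient over all real vectors in the subspace; relaxing the discrete constraint can only decrease the minimum, which gives $\lambda_2 \le Mcut(G)$.

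Concretely, I would fix a partition $V = A \cup B$ with $B = V\setminus A$, both nonempty, and define $f = (f_i)$ by $f_i = \sqrt{vol(B)/vol(A)}$ for $v_i \in A$ and $f_i = -\sqrt{vol(A)/vol(B)}$ for $v_i \in B$, then set $x = D^{1/2} f$. The first task is to verify orthogonality to the kernel: since $D^{1/2}\vec 1$ spans the null space of $\mathcal{L}(G)$ (as recorded in the proof of Lemma~\ref{lemma4}), a direct computation gives $x^T D^{1/2}\vec 1 = \sum_i d_i f_i = \sqrt{vol(A)\,vol(B)} - \sqrt{vol(A)\,vol(B)} = 0$, so $x$ sits in the orthogonal complement of the first eigenvector.

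Next I would evaluate the two quadratic forms using the identity established in the first lemma of the paper, which (with $x = D^{1/2}f$) yields $x^T \mathcal{L}(G) x = \frac{1}{2}\sum_{i,j}(f_i - f_j)^2 w_{ij}$. Because $f$ is constant on $A$ and on $B$, only cut edges contribute, so this equals $cut(A,B)\,(f_A - f_B)^2$ with $f_A - f_B = \sqrt{vol(B)/vol(A)} + \sqrt{vol(A)/vol(B)}$; squaring and simplifying turns it into $cut(A,B)\,vol(V)^2/(vol(A)\,vol(B))$. A similarly short computation gives $x^T x = f^T D f = vol(A)\cdot\frac{vol(B)}{vol(A)} + vol(B)\cdot\frac{vol(A)}{vol(B)} = vol(V)$. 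Dividing and using $\frac{1}{vol(A)}+\frac{1}{vol(B)} = \frac{vol(V)}{vol(A)\,vol(B)}$ produces the key identity $x^T \mathcal{L}(G) x / x^T x = Ncut(A,B)$.

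Finally, since $x$ is orthogonal to $D^{1/2}\vec 1$, the Courant--Fischer min--max principle gives $\lambda_2 \le x^T\mathcal{L}(G)x / x^T x = Ncut(A, V\setminus A)$ for every admissible partition; taking the minimum over all partitions yields $\lambda_2 \le Mcut(G)$. Here the connectivity hypothesis is used to guarantee that the kernel of $\mathcal{L}(G)$ is one-dimensional, so that minimizing over the complement of the single eigenvector $D^{1/2}\vec 1$ really does characterize $\lambda_2$. The main obstacle---indeed the only genuinely nonroutine point---is the algebraic verification that this specially weighted indicator vector \emph{simultaneously} achieves orthogonality to $D^{1/2}\vec 1$ and reproduces $Ncut(A,B)$ exactly; the normalizing factors $\sqrt{vol(B)/vol(A)}$ and $\sqrt{vol(A)/vol(B)}$ are dictated precisely by these two demands.
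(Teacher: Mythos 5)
Your proposal is correct and follows essentially the same route as the paper's proof: both construct a two-valued test vector constant on $A$ and on $V\setminus A$ (your scaling $\pm\sqrt{vol(B)/vol(A)}$, $\mp\sqrt{vol(A)/vol(B)}$ is just a global scalar multiple of the paper's choice $a=vol(V\setminus A)$, $b=vol(A)$), verify orthogonality to $D^{1/2}\vec{1}$, show the Rayleigh quotient equals $Ncut(A,V\setminus A)$, and conclude via the variational characterization of $\lambda_2$. No gaps.
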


\begin{proof}
Let $V= \{1,2,\ldots,n\}$.
Let $A\subset V$,
$g=(g_1,\ldots,g_n) \in \Re^n $ an eigenvector and $g=D^{1/2}f$.
Define $f_i$ as \[ f_i= \left  \{ \begin{array}{ll}
 a & \mbox{if $i \in A$,}\\
-b & \mbox{if $i \notin A$.}\end{array} \right. \]
Then
\begin{eqnarray*}
\frac{\sum_{i=1}^n \sum_{j=1}^n (f_i-f_j)^2 w_{ij}}{2 \sum_{i=1}^n f_i^2 d_i} &=& \frac{2cut(A,(V \setminus A))(a+b)^2}{2(a^2vol(A)+b^2vol(V \setminus A))}.
\end{eqnarray*}
Let this as $X$.

Now let $a=vol(V \setminus A)$ and $b=vol(A)$.
Then we have
\begin{eqnarray*}
X &=& \frac{cut(A,(V \setminus A))(vol(G))^2}{vol(V \setminus A)^2vol(A)+vol(A)^2vol(V \setminus A)}\\
&=& \frac{cut(A,(V \setminus A))(vol(G))^2}{vol(V \setminus A)vol(A)(vol(A)+vol(V \setminus A)}\\
&=& \frac{cut(A,(V \setminus A))(vol(G))}{vol(V \setminus A)vol(A)}\\
&=& cut(A,(V \setminus A))\left(\frac{1}{vol(V \setminus A)}+\frac{1}{vol(A)}\right)\\
&=& Ncut(A,V \setminus A). 
\end{eqnarray*}
With the choice of $f,a,b$ we have, 
$\displaystyle (D\vec{1})^T f= \sum_{i=1}^n d_i f_i = \sum_{i \in A} d_i a - \sum_{i \notin A} d_i b =0$.
So $f \perp D \vec{1}$.
Since $\displaystyle \lambda_2= \inf _{f \perp D \vec{1}} \frac{\sum_{i=1}^n \sum_{j=1}^n (f_i-f_j)^2 w_{ij}}{2 \sum_{i=1}^n f_i^2 d_i}$,
we have
$\displaystyle \lambda_2  \leq \frac{\sum_{i=1}^n \sum_{j=1}^n (f_i-f_j)^2 w_{ij}}{2 \sum_{i=1}^n f_i^2 d_i}=  \min_A Ncut(A,(V \setminus A))=Mcut(G)$.\end{proof}



\begin{lemma}\label{lemma:mcut4}
Let $G=(V,E)$ be a connected graph, $A$ a nonempty subset of $V$.
Then
{\small
\begin{enumerate}
\item[(i)]
$ \displaystyle Ncut(A,V\setminus A)=\frac{4 cut(A,V\setminus A) \cdot vol(V)}{(vol(V))^2 -
	  (vol(A)-vol(V\setminus A))^2}
$, and
\item[(ii)]
$\displaystyle Mcut_j(G)= \frac{4 j vol(V)}{ vol(V)^2 -  X_j }$,
where \\
$X_j=\min \{(vol(A)-vol(V\setminus A))^2 \ | \ cut(A,V\setminus A)=j, \ A \subset V\}$.
\end{enumerate}
}
\end{lemma}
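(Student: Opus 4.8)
The plan is to establish (i) by a direct algebraic simplification of the definition of $Ncut$, and then obtain (ii) from (i) by a short monotonicity argument. First I would abbreviate $a=vol(A)$ and $b=vol(V\setminus A)$. Since $A$ and $V\setminus A$ partition $V$, we have $a+b=\sum_{i\in V}d_i=vol(V)$. Expanding the definition of the normalized cut gives
\[
Ncut(A,V\setminus A)=cut(A,V\setminus A)\left(\frac{1}{a}+\frac{1}{b}\right)=cut(A,V\setminus A)\cdot\frac{a+b}{ab}.
\]
The only identity needed is $ab=\frac{(a+b)^2-(a-b)^2}{4}$. Substituting $a+b=vol(V)$ in the numerator and $a-b=vol(A)-vol(V\setminus A)$ into this identity for the denominator immediately yields
\[
Ncut(A,V\setminus A)=\frac{4\,cut(A,V\setminus A)\cdot vol(V)}{(vol(V))^2-(vol(A)-vol(V\setminus A))^2},
\]
which is exactly part (i).

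For part (ii), I would start from the definition $Mcut_j(G)=\min\{Ncut(A,V\setminus A)\mid cut(A,V\setminus A)=j,\ A\subset V\}$ and substitute the closed form from (i). Over the feasible set the numerator $4j\,vol(V)$ is a fixed positive constant, so minimizing $Ncut$ reduces to \emph{maximizing} the denominator $(vol(V))^2-(vol(A)-vol(V\setminus A))^2$, which in turn is equivalent to \emph{minimizing} $(vol(A)-vol(V\setminus A))^2$ over all $A$ with $cut(A,V\setminus A)=j$. That minimum is precisely $X_j$ by definition, so substituting back produces $Mcut_j(G)=\frac{4\,j\,vol(V)}{(vol(V))^2-X_j}$.

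The one step that genuinely requires justification, and the closest thing to an obstacle, is the claim that minimizing the fraction is the same as maximizing its denominator: this is valid only while the denominator remains positive. Here the connectivity hypothesis on $G$ does the work, since it forces every proper nonempty $A$ to satisfy $0<vol(A)<vol(V)$, hence $|vol(A)-vol(V\setminus A)|<vol(V)$ and the denominator $(vol(V))^2-(vol(A)-vol(V\setminus A))^2$ is strictly positive throughout the feasible set. With positivity secured, a ratio $C/D$ with $C>0$ fixed is strictly decreasing in $D$, so the minimum is attained exactly where the denominator is largest, i.e.\ where $(vol(A)-vol(V\setminus A))^2$ equals $X_j$. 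Everything else is routine algebra, so I expect the final write-up to be brief.
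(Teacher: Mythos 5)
Your proof is correct and follows essentially the same route as the paper: part (i) is the identity $4ab=(a+b)^2-(a-b)^2$ applied to $a=vol(A)$, $b=vol(V\setminus A)$, and part (ii) is the observation that with the numerator fixed, minimizing the ratio amounts to minimizing $(vol(A)-vol(V\setminus A))^2$. Your explicit check that connectivity keeps the denominator positive is a small point the paper leaves implicit, but it does not change the argument.
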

\begin{proof}
\noindent
(i) 
Let $s=vol(V)$, $j=cut(A,V\setminus A)$,
$s_A=vol(A)$ and $s_{\bar{A}}=vol(V\setminus A)$.
Since $s=s_A+s_{\bar{A}}$, we have
$s_A-s_{\bar{A}}$
$=2s_A-s$ and
$s^2 - (s_A-s_{\bar{A}})^2$
$=4 s_A s_{\bar{A}}$.
\begin{eqnarray*}
Ncut(A,V\setminus A)
&=& j \cdot (\frac{1}{s_A}+\frac{1}{s_{\bar{A}}}) \\
&=& \frac{j(s_A+s_{\bar{A}})}{s_A s_{\bar{A}}} 
= \frac{j s}{s_{A}s_{\bar{A}}} \\
&=& \frac{4 j s}{s^2-(s_{A}-s_{\bar{A}})^2}
\end{eqnarray*}
\noindent
(ii)
It is followed by the definition of $Mcut_j(G)$ and (i).
\end{proof}

\begin{lemma}
\label{genlem4}
Let $G=(V,E)$ be a graph. 
If there exists a nonempty subset $A \subset V$ such that
$$
\left \vert vol(A)-vol(V\setminus A) \right
 \vert \leq \frac{vol(V)}{\sqrt{cut(A,V\setminus A)+1}},
$$
then
$$
\displaystyle Mcut(G)=\min\{ Mcut_j(G) \mid
 j=1,2,\ldots,cut(A,V\setminus A)\}.
$$
\end{lemma}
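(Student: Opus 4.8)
The plan is to show that for every cut value $j$ strictly larger than $c := cut(A,V\setminus A)$, the quantity $Mcut_j(G)$ cannot fall below $Mcut_c(G)$. Once this is established, the infimum defining $Mcut(G)$ is necessarily attained at some index $j \le c$, which is exactly the claimed reduction. The whole argument rests on the closed forms already proved in Lemma~\ref{lemma:mcut4}, so it reduces to comparing two explicit rational expressions in the volumes.

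First I would use the hypothesis to bound $Ncut(A,V\setminus A)$ from above. Squaring the assumed inequality gives $(vol(A)-vol(V\setminus A))^2 \le vol(V)^2/(c+1)$, hence
$$
vol(V)^2 - (vol(A)-vol(V\setminus A))^2 \ge vol(V)^2 - \frac{vol(V)^2}{c+1} = vol(V)^2 \cdot \frac{c}{c+1}.
$$
Substituting this into the formula of Lemma~\ref{lemma:mcut4}(i) yields
$$
Ncut(A,V\setminus A) = \frac{4c \cdot vol(V)}{vol(V)^2 - (vol(A)-vol(V\setminus A))^2} \le \frac{4(c+1)}{vol(V)}.
$$
Since $A$ is itself an admissible set realizing $cut(A,V\setminus A)=c$, this also gives $Mcut_c(G) \le 4(c+1)/vol(V)$.

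Next I would bound $Mcut_j(G)$ from below for each $j \ge c+1$. Because $X_j \ge 0$ in Lemma~\ref{lemma:mcut4}(ii), we have $vol(V)^2 - X_j \le vol(V)^2$, so
$$
Mcut_j(G) = \frac{4 j \cdot vol(V)}{vol(V)^2 - X_j} \ge \frac{4j}{vol(V)} \ge \frac{4(c+1)}{vol(V)}.
$$
Combining the two estimates, for every $j \ge c+1$ we obtain $Mcut_j(G) \ge 4(c+1)/vol(V) \ge Mcut_c(G)$. Consequently no term with $j > c$ can undercut $\min\{Mcut_1(G),\dots,Mcut_c(G)\}$, and therefore $Mcut(G)=\min\{Mcut_j(G)\mid j=1,\dots,c\}$, as required.

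The computations are routine once the two bounds are set up; the single genuine idea is recognizing that the hypothesis is calibrated precisely so that the upper bound on $Ncut(A,V\setminus A)$ (governed by $c+1$) meets the generic lower bound $4j/vol(V)$ for $Mcut_j$ exactly at the threshold $j=c+1$. The main obstacle is therefore pinning down this matching threshold and tracking the direction of each inequality carefully: the lower bound on $Mcut_j$ uses $X_j \ge 0$, while the upper bound on $Ncut(A,V\setminus A)$ uses the volume-difference hypothesis, and one must check that both point the right way so that the chain $Mcut_j(G)\ge 4(c+1)/vol(V)\ge Mcut_c(G)$ closes.
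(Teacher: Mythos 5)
Your proof is correct and follows essentially the same route as the paper's: both arguments use the hypothesis to bound $Ncut(A,V\setminus A)$ above by $4(c+1)/vol(V)$ via the closed form of Lemma~\ref{lemma:mcut4}, and then observe that any cut of size $j\ge c+1$ has normalized cut at least $4(c+1)/vol(V)$ because the denominator is at most $vol(V)^2$. The only cosmetic difference is that the paper phrases the second bound directly in terms of $Ncut(B,V\setminus B)$ for an arbitrary competing subset $B$ rather than through $X_j\ge 0$, which is the same estimate.
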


\begin{proof}
Let $j=cut(A,V\setminus A)$, $a=|vol(A)-vol(V\setminus A)|$,
$s=vol(V)$, $s_A=vol(A)$ and $s_{\bar{A}}=vol(V\setminus A)$.
Since
$a^2 \le \frac{s^2}{j+1}$ and
$Ncut(A,V\setminus A)=\frac{4 j s}{s^2 - a^2}$
by the Lemma~\ref{lemma:mcut4},
we have
$s^2-(j+1) a^2 \ge 0$ and
\begin{eqnarray*}
&& \frac{4(j+1)}{s} - Ncut(A,V\setminus A) \\
&=& \frac{4(j+1)}{s} - \frac{4 j s}{s^2-a^2} \\
&=& \frac{4(j+1)(s^2-a^2)-4 j s^2}{ s (s^2-a^2)} \\
&=& \frac{4(s^2-(j+1)a^2)}{s(s^2-a^2)} \ge 0.
\end{eqnarray*}
Let $B$ be a subset of $V$,
$s_B=vol(B)$, $s_{\bar{B}}=vol(V\setminus B)$
and $j_B=cut(B,V\setminus B)$.
If $j_B \ge j+1$ then 
we have the following using Lemma~\ref{lemma:mcut4}.
\begin{eqnarray*}
Ncut(B,V\setminus B) &=&
cut(B,V\setminus B)\left(\frac{1}{vol(B)}+\frac{1}{vol(V\setminus B)}\right) \\
&=& \frac{4 j_B s}{s^2-(s_B-s_{\bar{B}})^2} \\
&\ge& \frac{4 (j+1) s}{s^2-(s_B-s_{\bar{B}})^2} \\
&\ge& \frac{4 (j+1) s}{s^2} = \frac{4 (j+1)}{s} \\
&\ge& Ncut(A,V\setminus A) \ge Mcut_j(G).
\end{eqnarray*}
So we have $Mcut_{j'}(G) \ge Mcut_j(G)$
for any $j' > j$.
\end{proof}

\begin{lemma}
\label{gencut1}
Let $G=(V,E)$ be a graph with $vol(G)\geq 9$. 
If there exists a subset $A \subset V$
such that $cut(A,V\setminus A)=1$ and $|vol(A)-vol(G)/2 | \leq 3$,
then
$$
Mcut(G)=Mcut_1(G).
$$
\end{lemma}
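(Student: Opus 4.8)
The plan is to obtain this statement as an immediate specialization of Lemma~\ref{genlem4} to the case $cut(A,V\setminus A)=1$. When $j:=cut(A,V\setminus A)=1$, the index set $\{1,2,\ldots,cut(A,V\setminus A)\}$ collapses to the single value $\{1\}$, so the conclusion of Lemma~\ref{genlem4} reads $Mcut(G)=\min\{Mcut_j(G)\mid j=1\}=Mcut_1(G)$, which is exactly the desired conclusion. (Note that $cut(A,V\setminus A)=1$ already forces $A$ to be nonempty and proper, so $A$ is an admissible subset for Lemma~\ref{genlem4}.) Hence the entire task reduces to verifying that the hypothesis of Lemma~\ref{genlem4}, namely
$$
\left\vert vol(A)-vol(V\setminus A)\right\vert \le \frac{vol(V)}{\sqrt{cut(A,V\setminus A)+1}},
$$
holds under the present assumptions.

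First I would rewrite the left-hand side in terms of the balance quantity appearing in the hypothesis of this lemma. Since $vol(V\setminus A)=vol(G)-vol(A)$ and $vol(V)=vol(G)$, we have $vol(A)-vol(V\setminus A)=2\,vol(A)-vol(G)=2\bigl(vol(A)-vol(G)/2\bigr)$, so that $\left\vert vol(A)-vol(V\setminus A)\right\vert = 2\,\left\vert vol(A)-vol(G)/2\right\vert$. The assumption $\left\vert vol(A)-vol(G)/2\right\vert \le 3$ then yields $\left\vert vol(A)-vol(V\setminus A)\right\vert \le 6$.

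Next, with $cut(A,V\setminus A)=1$ the right-hand side of the required inequality becomes $vol(V)/\sqrt{2}$, so it suffices to check $6 \le vol(G)/\sqrt{2}$, i.e. $vol(G)\ge 6\sqrt{2}$. Since $6\sqrt{2}=8.485\ldots$ and the hypothesis supplies $vol(G)\ge 9$, this holds (indeed $9/\sqrt{2}=6.36\ldots>6$). Thus the hypothesis of Lemma~\ref{genlem4} is satisfied and the conclusion $Mcut(G)=Mcut_1(G)$ follows.

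The argument is essentially bookkeeping once Lemma~\ref{genlem4} is in hand; the only point requiring care is the numerical threshold, which is not really an obstacle so much as a calibration. The constant $3$ in the balance condition and the bound $vol(G)\ge 9$ are tuned precisely so that $2\cdot 3 = 6 \le vol(G)/\sqrt{2}$. The two things to watch are that the factor of $2$ coming from $\left\vert vol(A)-vol(V\setminus A)\right\vert = 2\,\left\vert vol(A)-vol(G)/2\right\vert$ is not dropped, and that the genuine threshold is $6\sqrt{2}\approx 8.49$ rather than $6$, so that it is the hypothesis $vol(G)\ge 9$ (and not a weaker bound) that makes the inequality go through.
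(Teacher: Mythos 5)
Your argument is correct and is essentially identical to the paper's own proof: both reduce the hypothesis $|vol(A)-vol(G)/2|\le 3$ to $|vol(A)-vol(V\setminus A)|\le 6$, verify $\sqrt{2}\cdot 6 = 6\sqrt{2} < 9 \le vol(G)$, and then invoke Lemma~\ref{genlem4} with $cut(A,V\setminus A)=1$. Nothing to add.
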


\begin{proof}
Let $s=vol(G)$,
$s_A=vol(A)$ and $s_{\bar{A}}=vol(V\setminus A)$.
Since $\left\vert s_A - \frac{s}{2} \right\vert \le 3$
and $s=s_A+s_{\bar{A}}$,
we have
$\left\vert s_A - s_{\bar{A}} \right\vert \le 6$.
Since $\sqrt{1+1}\left\vert s_A - s_{\bar{A}} \right\vert
$ $ \le 6\sqrt{2}$ $< 9$ $ \le s$,
we have $Mcut(G)=Mcut_1(G)$ by the Lemma~\ref{genlem4}.
\end{proof}

\begin{lemma}
\label{genmcut2}
Let $G=(V,E)$ be a graph and $vol(G) \geq 11$.
If there exists a set $A \subset V$
such that $cut(A,V\setminus A)=2$ and $|vol(A)-vol(G)/2| \leq 3$,
then
$$
Mcut(G)=\min(Mcut_1(G), Mcut_2(G)).
$$
\end{lemma}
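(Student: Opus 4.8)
The plan is to reduce everything to Lemma~\ref{genlem4}, exactly as in the proof of Lemma~\ref{gencut1}, only now with $cut(A,V\setminus A)=2$ rather than $1$. Writing $s=vol(G)$, $s_A=vol(A)$ and $s_{\bar A}=vol(V\setminus A)$, I would first turn the hypothesis $|vol(A)-vol(G)/2|\le 3$ into a bound on the volume difference. Since $s=s_A+s_{\bar A}$, we have $s_A-s_{\bar A}=2s_A-s=2(s_A-s/2)$, so that $|s_A-s_{\bar A}|=2|s_A-s/2|\le 6$.

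Next I would check that this $A$ satisfies the hypothesis of Lemma~\ref{genlem4}. With $cut(A,V\setminus A)=2$ that hypothesis reads $|s_A-s_{\bar A}|\le s/\sqrt{2+1}=s/\sqrt{3}$, equivalently $\sqrt{3}\,|s_A-s_{\bar A}|\le s$. Combining with the previous step gives $\sqrt{3}\,|s_A-s_{\bar A}|\le 6\sqrt{3}$, and the point of the calculation is the numerical estimate $6\sqrt{3}=\sqrt{108}<\sqrt{121}=11\le s$. Hence $\sqrt{3}\,|s_A-s_{\bar A}|\le 6\sqrt{3}<s$ and the hypothesis of Lemma~\ref{genlem4} is verified.

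Finally I would invoke Lemma~\ref{genlem4} with $cut(A,V\setminus A)=2$, which immediately yields $Mcut(G)=\min\{Mcut_j(G)\mid j=1,2\}=\min(Mcut_1(G),Mcut_2(G))$, as claimed. The only genuine obstacle here is numerical bookkeeping: one must confirm that the threshold $6\sqrt{3}\approx 10.39$ really lies below the stated bound $vol(G)\ge 11$. This is precisely why the hypothesis is stated with $11$ and not, say, $10$, mirroring the way the inequality $6\sqrt{2}<9$ was needed to push the argument through in Lemma~\ref{gencut1}. Beyond that single inequality, the proof is a direct substitution into the already-established machinery of Lemma~\ref{genlem4}.
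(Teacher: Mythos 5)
Your proof is correct and follows essentially the same route as the paper's: bound $|vol(A)-vol(V\setminus A)|\le 6$ from the hypothesis, verify $\sqrt{3}\cdot 6 = \sqrt{108} < 11 \le vol(G)$, and apply Lemma~\ref{genlem4} with $j=2$. The only difference is that you spell out the numerical comparison slightly more explicitly, which changes nothing of substance.
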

\begin{proof}
Since
$\left\vert vol(A) - vol(G)/2 \right\vert \leq 3$
and $vol(A)+vol(V\setminus A)=vol(G)$,
we have
$\left\vert vol(A) - vol(V\setminus A) \right\vert \leq 6$
and
$\sqrt{3}\left\vert vol(A) - vol(V\setminus A)\right\vert$
$\leq 6 \sqrt{3}$ $< 11$.
So we have
$Mcut(G)=\min(Mcut_1(G), Mcut_2(G))$
by the Lemma~\ref{genlem4}.
\end{proof}

\begin{lemma}
Let $G=(V,E)$ be a graph with $vol(G)\geq 11$.
Suppose there exists a subset $A\subset V$
such that $cut(A,V\setminus A)=2$ and $|vol(A)-vol(G)/2| \leq 3$.
If there exists no subset $B\subset V$
such that $cut(B,V\setminus B)=1$
and $\displaystyle \left\vert vol(B)-vol(G)/2 \right\vert\leq \frac{\sqrt{36+(vol(G))^2}}{2 \sqrt{2}}$,
then
$$
Mcut(G)=Mcut_2(G).
$$
\end{lemma}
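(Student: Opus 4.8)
The plan is to reduce the statement to the single inequality $Mcut_2(G)\le Mcut_1(G)$, and then to derive that inequality by comparing the quantities $X_1$ and $X_2$ of Lemma~\ref{lemma:mcut4}. Write $s=vol(V)$ throughout. The first two hypotheses (namely $vol(G)\ge 11$ and the existence of $A$ with $cut(A,V\setminus A)=2$ and $|vol(A)-vol(G)/2|\le 3$) are exactly the hypotheses of Lemma~\ref{genmcut2}, so I would begin by invoking it to obtain $Mcut(G)=\min(Mcut_1(G),Mcut_2(G))$. It then suffices to prove $Mcut_2(G)\le Mcut_1(G)$, for then the minimum equals $Mcut_2(G)$.

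By Lemma~\ref{lemma:mcut4}(ii), $Mcut_1(G)=\frac{4s}{s^2-X_1}$ and $Mcut_2(G)=\frac{8s}{s^2-X_2}$, where $X_j=\min\{(vol(A)-vol(V\setminus A))^2 \mid cut(A,V\setminus A)=j,\ A\subset V\}$. The subset $A$ supplied by hypothesis has $|vol(A)-vol(V\setminus A)|=2|vol(A)-s/2|\le 6$, giving the upper bound $X_2\le 36$. For the lower bound on $X_1$, I would use the third hypothesis in contrapositive form: every subset $B$ with $cut(B,V\setminus B)=1$ satisfies $|vol(B)-vol(G)/2|>\frac{\sqrt{36+s^2}}{2\sqrt2}$, hence $(vol(B)-vol(V\setminus B))^2=4(vol(B)-s/2)^2>\frac{36+s^2}{2}$; taking the minimum over all such $B$ yields $X_1>\frac{36+s^2}{2}$.

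With these bounds in hand, the desired inequality $\frac{8s}{s^2-X_2}\le\frac{4s}{s^2-X_1}$ is, after clearing the (positive) denominators, equivalent to $2X_1\ge s^2+X_2$; and indeed $2X_1>36+s^2\ge X_2+s^2$. Before clearing denominators I would verify positivity: $s\ge 11$ gives $s^2\ge 121>36\ge X_2$, so $s^2-X_2>0$, while $\frac{36+s^2}{2}<X_1<s^2$ (the upper bound following from the fact that both $vol(B)$ and $vol(V\setminus B)$ are positive) gives $s^2-X_1>0$. This produces $Mcut_2(G)<Mcut_1(G)$, and therefore $Mcut(G)=Mcut_2(G)$.

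The one genuinely delicate point, and where I expect to spend the most care, is the edge case in which no subset $B$ with $cut(B,V\setminus B)=1$ exists at all: then $Mcut_1(G)$ is a minimum over the empty set and equals $+\infty$ (the closed-form formula via $X_1$ no longer applies), so $Mcut_2(G)\le Mcut_1(G)$ holds trivially and the conclusion is unaffected. Apart from this bookkeeping, the argument is a short chain of elementary estimates once the reduction to comparing $X_1$ against $X_2$ has been made.
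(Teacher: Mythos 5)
Your proof is correct and follows essentially the same route as the paper's: both invoke Lemma~\ref{genmcut2} to reduce to comparing $Mcut_1(G)$ with $Mcut_2(G)$, then use $|vol(A)-vol(V\setminus A)|\le 6$ to bound $Mcut_2(G)\le \frac{8s}{s^2-36}$ and the hypothesis on $B$ to bound every cut-one $Ncut$ from below by the same quantity. Your repackaging through the quantities $X_1,X_2$ of Lemma~\ref{lemma:mcut4} and your explicit treatment of the case where no cut-one subset exists are only cosmetic refinements of the paper's argument.
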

\begin{proof}
Let $s=vol(G)$,
$s_A=vol(A)$ and $s_{\bar{A}}=vol(V\setminus A)$.
Since $\left\vert s_{A} - s/2 \right\vert \leq 3$,
we have $\left\vert s_{A} -s_{\bar{A}} \right\vert \leq 6$
and
\begin{eqnarray*}
Mcut_2(G) & \leq & Ncut(A,V\setminus A) \\
&=& \frac{8s}{s^2 - (s_A-s_{\bar{A}})^2} \\
&\leq& \frac{8s}{s^2 -36}.
\end{eqnarray*}
Let $B \subset V$ with $cut(B,V\setminus B)=1$,
$s_B=vol(B)$ and $s_{\bar{B}}=vol(V\setminus B)$.
If $B$ exists, then 
$\left\vert s_B - s/2 \right\vert > \frac{\sqrt{s^2+36}}{2\sqrt{2}}$,
by the assumption.
So we have
$\left\vert s_B - s_{\bar{B}} \right\vert > \frac{\sqrt{s^2+36}}{\sqrt{2}}$
and
\begin{eqnarray*}
Ncut(B,V\setminus B) &=& \frac{4s}{s^2 - (s_B-s_{\bar{B}})^2} \\
&\geq& \frac{4s}{s^2-\frac{s^2+36}{2}} \\
&=& \frac{8s}{s^2 -36} \geq Mcut_2(G).
\end{eqnarray*}
That is $Mcut(G)=Mcut_2(G)$ by the Lemma~\ref{genmcut2}.
\end{proof}
 
Next we derive formulae for minimum normalized cut $Mcut(G)$ of some elementary graphs.

\subsection{$Mcut(G)$ of basic classes of graphs}

\begin{theorem}
\label{propmcut}
Let $G=(V,E)$ be a graph. 
\begin{enumerate}
\item If $G$ is a regular graph of degree $d$ and $G \neq K_n, n>3$ and
$|V|=n$,
then   \[ Mcut(G) \geq  \left \{ \begin{array}{ll}
                 \frac{4}{n} & \mbox{if $n$ is even,}\\
               \frac{4n}{(n^2-1)} & \mbox{if $n$ is odd.}
               \end{array} \right. \]
               
\item
For the cycle $C_{n}$ ($n \ge 3$),
\[ Mcut(C_n)= \left \{ \begin{array}{ll}
 \frac{4}{n} & \mbox{if $n$ is even,}\\
\frac{4n}{(n^2-1)} & \mbox{if $n$ is odd.} \end{array} \right. \]
This can be written as
$\displaystyle Mcut(C_n) = 
 \frac{n}{\lfloor \frac{n}{2} \rfloor \lceil \frac{n}{2} \rceil}.$

\item For the complete graph $K_n$,
\begin{eqnarray*}
Mcut(K_n)& = & \frac{n}{n-1} \\
&=& \lambda_2.
\end{eqnarray*} 

\item For the path graph $P_n$ ($n \ge 2$),
\[ Mcut(P_n) = \left \{ \begin{array}{ll}
 \frac{2}{n-1} & \mbox{if $n$ is even,}\\
\frac{2(n-1)}{n(n-2)} & \mbox{if $n$ is odd.} \end{array} \right. \]\\
This can be written as
$$
Mcut(P_n)= \frac{2n-2}{4\lfloor \frac{n}{2} \rfloor \lceil \frac{n}{2}
      \rceil -2n+1}.
$$

\item For the cycle cross paths $G = C_m \Box  P_n $ , 
\[ Mcut(C_m \Box P_n) = \left \{ \begin{array}{cc}
\frac{2(2n-1)}{16 \lfloor{\frac{n}{2}}\rfloor \lceil{\frac{n}{2}}\rceil-4n+1} & \mbox{$2n > m$,}\\
\frac{nm}{(2n-1)\lfloor{\frac{m}{2}}\rfloor \lceil{\frac{m}{2}}\rceil} & \mbox{$2n \leq m$.} \end{array} \right. \]

\item For the double tree $DT_n$ with depth $n$,
$\displaystyle Mcut(DT_n)=\frac{2}{2^{n+1} -3}$. 
\end{enumerate}
\end{theorem}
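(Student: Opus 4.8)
The unifying tool is Lemma~\ref{lemma:mcut4}, which gives $Mcut_j(G)=\frac{4j\,vol(V)}{vol(V)^2-X_j}$ with $X_j=\min\{(vol(A)-vol(V\setminus A))^2\mid cut(A,V\setminus A)=j\}$, together with $Mcut(G)=\min_j Mcut_j(G)$. My plan for every class is the same three steps: (i) pin down the smallest feasible cut sizes $j$, bounded below by $\kappa'(G)$; (ii) find the most volume-balanced partition realizing each such $j$, so as to evaluate $X_j$; and (iii) invoke Lemma~\ref{genlem4}, or its specializations Lemma~\ref{gencut1} and Lemma~\ref{genmcut2}, to discard all large $j$, reducing the problem to a finite comparison.

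For the regular case (1) I would write $vol(V)=nd$ and note $vol(A)-vol(V\setminus A)=d(2|A|-n)$, so $X_j\ge 0$ when $n$ is even and $X_j\ge d^2$ when $n$ is odd; combined with $\frac{1}{vol(A)}+\frac{1}{vol(V\setminus A)}\ge\frac{4}{nd}$ from Lemma~\ref{minvol} and $cut\ge\kappa'(G)$, this produces the stated lower bounds. The step I would treat most carefully is that the clean constants $\frac{4}{n}$ and $\frac{4n}{n^2-1}$ rest on the crossing cut carrying at least $d$ edges; this is exactly what occurs for the cycle, where $d=2$ and there is no bridge, and it is the crux for a general $d$-regular graph. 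Part (2) is then the sharp instance: an arc of $\lfloor n/2\rfloor$ vertices realizes $cut=2$ with $X_2=0$ ($n$ even) or $X_2=4$ ($n$ odd), Lemma~\ref{genlem4} eliminates $j\ge 3$, and Lemma~\ref{lemma:mcut4} yields the formula, which rewrites as $n/(\lfloor n/2\rfloor\lceil n/2\rceil)$. For the complete graph (3) a direct computation is cleanest: a $k$-subset has $cut=k(n-k)$ and $vol=k(n-1)$, whence $Ncut=\frac{n}{n-1}$ independently of $k$, so every cut is optimal and equals $\lambda_2(\mathcal{L}(K_n))$. For the path (4) every edge is a bridge, so $\kappa'=1$; cutting the middle edge with $vol(A)=2|A|-1$ gives $X_1=0$ or $4$ by parity, and Lemma~\ref{gencut1} forces $Mcut=Mcut_1$.

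The two laborious parts are (5) and (6). For the double tree (6) I would count $vol(DT_n)=2(2^{n+1}-3)$, observe that the edge $(x(\epsilon),y(\epsilon))$ joining the roots is a bridge splitting $DT_n$ into two isomorphic halves, hence $cut=1$ and $X_1=0$ by symmetry; Lemma~\ref{gencut1} (valid for $vol\ge 9$, i.e. $n\ge 2$, with $n=1$ checked by hand) then gives $Mcut=Mcut_1=\frac{4}{vol}=\frac{2}{2^{n+1}-3}$. For the cylinder $C_m\Box P_n$ (5) I would record $vol=2m(2n-1)$ and isolate two canonical balanced cuts: a \emph{path cut} separating consecutive path-levels, which severs the $m$ edges of one $E_1$-layer, and a \emph{cycle cut} separating cycle-arcs, which severs $2$ edges per level, hence $2n$ in total. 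Balancing the path cut ($j=m$) along the $n$-direction gives $X=0$ or $16m^2$ by the parity of $n$ and produces $\frac{2(2n-1)}{16\lfloor n/2\rfloor\lceil n/2\rceil-4n+1}$, while balancing the cycle cut ($j=2n$) along the $m$-direction produces $\frac{nm}{(2n-1)\lfloor m/2\rfloor\lceil m/2\rceil}$; comparing the two gives precisely the dichotomy $2n>m$ versus $2n\le m$.

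\textbf{Main obstacle.} The hard part is step (ii)--(iii) for the cylinder: I must prove that these two symmetric cuts actually minimize $Ncut$, i.e. that no irregular partition, mixing slicing directions or using a non-interval subset of a cycle, does better. My plan is to bound, for each fixed cut size $j$, the attainable imbalance $X_j$ from above by the most balanced realizable partition and the value $j$ from below by layer/arc counting, then feed the resulting balance estimate into Lemma~\ref{genlem4} to confine attention to $j=m$ or $j=2n$ in the respective regime. Making the counting argument that ``any cut smaller than $m$ (resp. $2n$) is too unbalanced to beat the canonical cut'' fully rigorous, rather than merely exhibiting the canonical cut, is where most of the real work lies; it is also the same mechanism on which the clean regular-graph bound in (1) ultimately depends.
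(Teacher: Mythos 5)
Your proposal follows essentially the same route as the paper: the same reduction via Lemma~\ref{lemma:mcut4} together with Lemmas~\ref{genlem4}--\ref{genmcut2}, the same canonical balanced cuts for each graph class, and the same resulting formulas, all of which check out. The one place you stop at a plan rather than a proof --- ruling out irregular partitions of $C_m \Box P_n$ --- is exactly where the paper, too, resorts to direct algebraic comparison against the two additional families $B_k$ and $C_k$ before invoking the volume-balance criterion of Lemma~\ref{genlem4}.
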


\begin{proof}
\begin{enumerate}


\item For a regular graph of degree $d$,
 $\displaystyle \kappa'(G)=\triangle(G) =\delta(G) =d$.
For $A \subset  V$, 
$\displaystyle Ncut(A,V\setminus A)
 \geq \kappa'(G)\left( \frac{1}{d|A|}+\frac{1}{d|V \setminus A|}\right)
 = \frac{|V|}{|A||V\setminus A|}$.
If $cut(A,V\setminus A)=\kappa'(G)$ then we have
 $\displaystyle Ncut(A,V\setminus A)=\frac{|V|}{|A||V\setminus A|}$.
$Ncut(A,V\setminus A)$ is minimum,
 when $|A|=|V\setminus A|$ by the Lemma~\ref{minvol}.
If $V$ is even then $\displaystyle Mcut(G) \geq \frac{4}{|V|}
 =\frac{4}{n}$ by Lemma~\ref{minvol}.

If $|V|$ is odd then,
we can write $|V|$ as $\displaystyle
      |V|=\frac{|V|-1}{2}+\frac{|V|+1}{2}$,
where  $\displaystyle -1 \leq |A|-|V \setminus A| \leq 1$.
Then $\displaystyle Ncut(A,V\setminus A) \geq \kappa'(G)\left(
 \frac{2}{d(|V|-1)}+\frac{2}{d(|V|+1)}\right)=
\frac{4|V|}{(|V|+1)(|V|-1)}$.
Hence $\displaystyle Mcut(G) \geq \frac{4|V|}{(|V|+1)(|V|-1)}=\frac{4n}{n^2-1}$. \hfill\qed\\


\item
Let $A_k=\{x_i \ |\ i \le k\}$  ($k=1,\ldots,n-1$).
We note $vol(C_n)=2n$, $vol(A_k)=2k$, $vol(V\setminus A_k)=2n-2k$,
$vol(A_k)-vol(V\setminus A_k)=4k-2n$ and
$$
Ncut(A_k,V\setminus A_k)=\frac{4n}{n^2-(2k-n)^2}.
$$
If $n$ is even then $Ncut(A_{\frac{n}{2}},V\setminus A_{\frac{n}{2}})$
$=\frac{4}{n}$ is the minimum of $Ncut(A_k,V\setminus A_k)$.
If $n$ is odd then $Ncut(A_{\frac{n+1}{2}},V\setminus
     A_{\frac{n+1}{2}})$
$=Ncut(A_{\frac{n-1}{2}},V\setminus A_{\frac{n-1}{2}})$
$=\frac{4n}{n^2-1}$ is the minimum of $Ncut(A_k,V\setminus A_k)$.
Since $vol(A_{\frac{n}{2}})-vol(V\setminus A_{\frac{n}{2}})=0$,
$vol(A_{\frac{n-1}{2}})-vol(V\setminus A_{\frac{n-1}{2}})=-2$
and 
$\displaystyle \frac{vol(V)}{\sqrt{cut(A_k,V\setminus A_k)+1}}$ 
$= \frac{2n}{\sqrt{3}}$ $\ge \frac{6}{\sqrt{3}}$,
we have
$Mcut(C_n)=Mcut_2(C_n)$ by Lemma~\ref{genlem4}.

We note that for any nonempty subset $A \subset V$ with
     $cut(A,V\setminus A)=2$,
there exists a $k$
such that $Ncut(A,V\setminus A)=Ncut(A_k,V\setminus A_k)$
and $\kappa'(C_n)=2$.

For even $n$,
$\displaystyle n=\lfloor \frac{n}{2} \rfloor=\lceil \frac{n}{2} \rceil$ and for odd $n$,
$\displaystyle \frac{(n-1)}{2}=\lfloor \frac{n}{2} \rfloor$ and $\displaystyle \frac{(n+1)}{2}=\lceil\frac{n}{2} \rceil$.
Combining odd and even cases together we can write $Mcut(C_n)$ as $\displaystyle Mcut(C_n) = \frac{4n}{4\lfloor \frac{n}{2} \rfloor \lceil \frac{n}{2} \rceil}$. \hfill\qed


\item
For a complete graph $K_n$, 
$|V|=n$,
$\kappa'(K_n)=n-1$ and $vol(K_n)=n(n-1)$.
For any subset $A \subset V $,
 we have  $vol(A)=|A|(n-1)$ and 
$\displaystyle cut(A,(V\setminus A))=|A|(n-|A|)$.
Then $\displaystyle Mcut(K_n)= |A|(n-|A|) \left(
\frac{1}{|A|(n-1)}+\frac{1}{(n-|A|)(n-1)} \right)
 =\frac{n}{n-1}$.\hfill\qed


\item
Let $A_k=\{x_i \ |\ i \le k\}$  ($k=1,\ldots,n-1$).
We note that $vol(P_n)=2n-2$, $vol(A_k)=2k-1$, $vol(V\setminus A_k)=2n-2k-1$,
$vol(A_k)-vol(V\setminus A_k)=4k-2n$ and
$$
Ncut(A_k,V\setminus A_k)=\frac{2(n-1)}{(n-1)^2-(2k-n)^2}.
$$

If $n$ is even then $Ncut(A_{\frac{n}{2}},V\setminus A_{\frac{n}{2}})$
$=\frac{2}{n-1}$ is the minimum of $Ncut(A_k,V\setminus A_k)$.
If $n$ is odd then $Ncut(A_{\frac{n+1}{2}},V\setminus
     A_{\frac{n+1}{2}})$
$=Ncut(A_{\frac{n-1}{2}},V\setminus A_{\frac{n-1}{2}})$
$=\frac{2(n-1)}{(n-1)^2-1}$
$=\frac{2(n-1)}{n(n-2)}$ is the minimum of $Ncut(A_k,V\setminus A_k)$.
Since $vol(A_{\frac{n}{2}})-vol(V\setminus A_{\frac{n}{2}})=0$,
$vol(A_{\frac{n-1}{2}})-vol(V\setminus A_{\frac{n-1}{2}})=-2$
and 
$\displaystyle \frac{vol(V)}{\sqrt{cut(A_k,V\setminus A_k)+1}}$ 
$= \frac{2n-2}{\sqrt{2}}$ $\ge \frac{2}{\sqrt{2}}$,
we have
$Mcut(C_n)=Mcut_1(P_n)$ by Lemma~\ref{genlem4}.
\hfill\qed

\item The cycle cross path $G=C_m \Box P_n ( n \geq 2, m\geq 3)$ is a graph which has $n$ copies of cycles $C_m$,
 each corresponding to the one vertex of $P_n$. 
$\displaystyle \kappa'(C_m \Box P_n) =\min \{ \kappa'(C_m)|V(P_n)|, \kappa'(P_n)|V(C_m)|, \delta(C_m)+\delta(P_n) \}= \delta(C_m)+\delta(P_n) =3$.\\
\noindent{\bf{Case (i)}} Let
$\displaystyle A_1=\{(c_i,p_j)|
 1 \leq j \leq \lfloor\frac{n}{2} \rfloor, 1 \leq i \leq m \}$
 and
 $\displaystyle V\setminus A_1=\{ (c_i,p_j) |
 \lfloor\frac{n}{2} \rfloor+1 \leq j \leq n, 1 \leq i \leq m \}$.
 We note that
 $\displaystyle vol(A_1)=\lfloor\frac{n}{2} \rfloor(vol(C_m)+2m) -m$,
 $\displaystyle vol(V \setminus A_1)=\lceil\frac{n}{2}
      \rceil(vol(C_m)+2m) -m$
 and $cut(A_1,V\setminus A_1)=m$.
 Then 
 $\displaystyle Ncut(A_1,V \setminus A_1)
 =\frac{m(4mn-2m)}{(\lfloor \frac{n}{2} \rfloor 4m-m)
 ( \lceil \frac{n}{2} \rceil 4m-m)}
 =\frac{2(2n-1)}{16 \lfloor\frac{n}{2}
 \rfloor \lceil \frac{n}{2} \rceil -4n+1}$.
 When $n$ is even,
 $\displaystyle Ncut(A_1,V \setminus A_1)=\frac{2}{2n-1}$.
 When $n$ is odd,
 $\displaystyle Ncut(A_1,V \setminus A_1)=\frac{2(2n-1)}{(2n-3)(2n+1)}$.\\
 {\bf{Case (ii)}} Let $\displaystyle A_2=\{(c_i,p_j) \ |
 \ 1 \leq i \leq \lfloor\frac{m}{2} \rfloor, 1 \leq j \leq n \}$ and
 $ \displaystyle (V\setminus A_2)=\{(c_i,p_j) \ |
 \ \lfloor\frac{m}{2} \rfloor+1 \leq i \leq m, 1 \leq j \leq n \}$.
 We note that $\displaystyle vol(A_2)= n vol(C_{\lfloor\frac{m}{2}\rfloor}) + 2
 \lfloor\frac{m}{2} \rfloor(n-1) =2(2n-1)\lfloor\frac{m}{2} \rfloor$
 and $\displaystyle vol(V\setminus A_2)=2(2n-1)\lceil\frac{m}{2} \rceil$.
In this case,
the graph cut horizontally through the cycles and we have
 $cut (A_2,V\setminus A_2)=2n$.
Hence $\displaystyle Ncut(A_2,V \setminus A_2)=
\frac{nm}{(2n-1)\lfloor\frac{m}{2} \rfloor \lceil\frac{m}{2} \rceil}$.
 When $m$ is odd,
 $\displaystyle \frac{4nm}{(2n-1)(m^2-1)}$ and when $m$ is even,
$\displaystyle \frac{4n}{(2n-1)m}$.\\

\noindent{\bf{Case (iii)}} Let
$\displaystyle B_k=\{(c_i,p_1)| 1 \leq i \leq k \}$
($1 \le k < m$).
We note that
$cut(B_k,V\setminus B_k)=k+2$ and $vol(B_k)=3k$.
Since
$Ncut(A_1,V\setminus A_1)-Ncut(B_k,V\setminus B_k)=$
{\tiny$
\frac{2 (-1+2 n) \left(9 k^2+k m \left(3-16 n+4 n^2\right)+2 m \left(-3-4 n+4 n^2\right)\right)}{3 k (3 k+m (2-4 n)) \left(-3-4 n+4 n^2\right)}
$},
we can verify
$Ncut(A_1,V\setminus A_1)$
$\le Ncut(B_k,V\setminus B_k)$ for any $k$ and $m \le 2n$.

\noindent{\bf{Case (iv)}} Let
$\displaystyle C_k=\{(c_1,p_j)| 1 \leq j \leq k \}$
($1 \le k < n$).
We note that
$cut(C_k,V\setminus B_k)=2k+1$ and $vol(C_k)=4k-1$.
Since
$Ncut(C_k,V\setminus C_k)=$
{\tiny
$-\frac{2 (1+2 k) m (-1+2 n)}{(-1+4 k) (-1+4 k+m (2-4 n))}$
},
we can verify
$Ncut(A_2,V\setminus A_2)$
$\le Ncut(C_k,V\setminus C_k)$ for any $k$ and $2n \le m$.

Now compare the case (i) with case (ii).

For the case of $2n \ge m+1$, we have
$\frac{vol(G)}{\sqrt{cut(A_1, V\setminus A_1)}}$
$=\frac{2m (2n-1)}{\sqrt{m+1}}$
$\ge \frac{2m (2n-1)}{\sqrt{2n}}$
$= 2m \sqrt{\frac{4n^2-4n+1}{2n}}$
$\ge 2m \sqrt{2n -2}$
$\ge 4m $
$\ge \vert vol(A_1) - vol(V\setminus A_1) \vert$
and $Mcut_{2n}(G) > Mcut_m(G)$.
So $Mcut(G)=Ncut(A_1,V\setminus A_1)$.

If $2n \le m$, then we have
$\frac{vol(G)}{\sqrt{cut(A_2, V\setminus A_2)}}=$
$\frac{2m (2n-1)}{\sqrt{2n+1}}$
$\ge\frac{4n (2n-1)}{\sqrt{2n+1}}$
$=2(2n-1)\sqrt{\frac{4n^2}{1+2n}}$
$=2(2n-1)\sqrt{2n-\frac{2n}{2n+1}}$
$\ge 2(2n-1)\sqrt{2n-1}$
$\ge 2(2n-1)$
$\ge \vert vol(A_2) - vol(V\setminus A_2) \vert$
and $Mcut_{m}(G) > Mcut_{2n}(G)$.
So $Mcut(G)=Ncut(A_2,V\setminus A_2)$.

\hfill\qed

\item 
The size of a tree is $|T_n|=1+2+\cdots+2^n=2^n-1$ and the size of a double tree is $|DT_n|=2|T_n|=2^{n+1}-2$.
The volume of a tree is $vol(T_n)=2 vol(T_{n-1})+4$,
which can be written as 
$vol(T_n)+4 = 2(vol(T_{n-1})+4)=2^2 (vol(T_{n-2})+4)=\cdots=2^{n-1} (vol(T_1)+4)=2^{n+1}$.
Therefore the volume of a tree is $vol(T_n)=2^{n+1}-4$ and the volume of a double tree is $\displaystyle vol(DT_n)= 2vol(T_n) +2= 2^{n+2}-6$.

Let $A_1=\{x(w) \ | \ w \in \Sigma^{< n} \}$ and $ V \setminus A_1 =\{ y(w) \ | \ w \in \Sigma^{< n} \}$.
Then we have
$vol(A_1) = vol(T_n)+1$
$= 2^{n+1} -3$,
$vol(V\setminus A_1) = 2^{n+1} -3$,
$cut(A_1,V\setminus A_1)= 1$.

Therefore 
$Ncut(A_1,V\setminus A_1) =$
$\frac{2}{(vol(T_n)+1)}$
$= \frac{2}{2^{n+1} -3}$
$=\frac{4}{vol(DT_n)}$.

Here $\kappa'(DT_n)=1$ and $2vol(A_1)=vol(DT_n)$.
Then from the Proposition~\ref{minvol}, 
$\displaystyle Mcut(DT_n) = \frac{2}{2^{n+1} -3}$.

\end{enumerate}
\end{proof}


\subsection{$Mcut$ of roach type graphs $R_{n,k}$}
\label{subsec:roach}

Next, we consider the graph $R_{n,k}$ and derive a formula for $Mcut(R_{n,k})$ based on $n,k$.

\begin{theorem}
\label{propmcutg}
For $R_{n,k} (n\geq 1, k>1)$,
 $Mcut(R_{n,k})$ is given by
\[ \left\{ \begin{array}{cc}
\frac{2}{3} & \mbox{$(n=1,k=2)$,}\\
\\
\frac{4}{-2+3 k+2 n} & \mbox{$(*_1 \wedge  (k \geq 4) \wedge$}\\
& \mbox{$   (n < K_1))$,}\\
\\
\frac{4 (-2+3 k+2 n)}{(-5+3 k+2 n) (1+3 k+2 n)}& \mbox{$(*_4 \wedge ( k \geq 4) \wedge$}\\
&\mbox{$  ( n < K_4))$,}\\
\\
\frac{4 (-2+3 k+2 n)}{(-4+3 k+2 n)(3 k+2 n)} &\mbox{$(*_3 \wedge  (k \geq 4) \wedge$}\\
&\mbox{$  ( n < K_3))$,}\\
\\
\frac{4 (-2+3 k+2 n)}{(-3+3 k+2 n)(-1+3 k+2 n)}& \mbox{$(*_2\wedge  (k \geq 4) \wedge $}\\
&\mbox{$  ( n < K_2)),\vee  (n=1,k=3)$}\\
&\mbox{$ \vee (n=2,k=3)$,}\\
\\
\frac{6k+4n-4}{(2n-1)(6k+2n-3)} &  \mbox{$((k \geq 4)$}\\
&\mbox{$ \wedge ((*_1 \wedge  (K_1 \leq n))\vee$}\\
&\mbox{$(*_4 \wedge ( K_4 \leq n))\vee$}\\
&\mbox{$ (*_3 \wedge  (K_3 \leq n))\vee $}\\
&\mbox{$(*_2 \wedge  (K_2\leq n)) )) \vee$}\\
&\mbox{$  (k=2 \wedge (n\geq 2))\vee $}\\
&\mbox{$ (k=3 \wedge (n \geq 3))$},
\end{array} \right. \]
where 
\begin{eqnarray*}
*_1 &=& ((3 \mid n) \wedge (2 \mid k)),\\
*_2 &=& (3 \nmid n) \ and \ (2 \nmid k),\\
*_3 &=& (3 \nmid n) \ and \ ( 2 \mid k),\\
*_4 &=&((3 \mid n) \wedge (2 \nmid k)), \\
K_1 &=& 1-\frac{1}{\sqrt{2}}-\frac{3 k}{2}+\frac{3 k}{\sqrt{2}},\\
K_2 & =& 1-\frac{3k}{2}+\frac{\sqrt{1-12 k+18 k^2}}{2},\\
K_3 & =& 1-\frac{3 k}{2}+\frac{\sqrt{-1-6 k+9 k^2}}{\sqrt{2}},\\
K_4 &=&  1-\frac{3 k}{2}+\frac{\sqrt{-7-12 k+18 k^2}}{2}. 
\end{eqnarray*}
\end{theorem}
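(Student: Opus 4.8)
The plan is to reduce the minimization defining $Mcut(R_{n,k})$ to a comparison of two explicit cuts, exploiting the decomposition $Mcut(G)=\min_j Mcut_j(G)$ together with the closed form $Mcut_j(G)=\frac{4j\,vol(V)}{vol(V)^2-X_j}$ from Lemma~\ref{lemma:mcut4}. First I would record the degree data: each antenna $x_1,\dots,x_n$ (resp.\ $y_1,\dots,y_n$) has degrees $1,2,\dots,2$, the interior body vertices have degree $3$, and $x_{n+k},y_{n+k}$ have degree $2$; summing gives $vol(R_{n,k})=6k+4n-4$ with half-volume $3k+2n-2$. I would then single out two families of cuts: the \emph{antenna cut} $A=\{x_1,\dots,x_n\}$ with $cut(A,V\setminus A)=1$, $vol(A)=2n-1$, $vol(V\setminus A)=6k+2n-3$; and the \emph{symmetric body cut} $A_j=\{x_1,\dots,x_j,y_1,\dots,y_j\}$ for $n+1\le j\le n+k-1$, with $cut(A_j,V\setminus A_j)=2$ and $vol(A_j)=6j-2n-2$. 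For very small $k$ I would also keep the cut separating all $x_i$ from all $y_i$, which has size $k$.

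Using Lemma~\ref{lemma:mcut4} I would first compute $Mcut_1$. Only the antenna edges of $R_{n,k}$ disconnect the graph when removed, and the most balanced one is $(x_n,x_{n+1})$, so $X_1=(6k-2)^2$; since $vol(V)^2-(6k-2)^2=4(2n-1)(6k+2n-3)$, this gives $Mcut_1=\frac{vol(V)}{(2n-1)(6k+2n-3)}=\frac{6k+4n-4}{(2n-1)(6k+2n-3)}$, the last case. To eliminate every cut of size $\ge 3$, I would apply Lemma~\ref{genlem4} to a balanced body cut: its imbalance is at most $6$, and once $k\ge 4$ we have $vol(V)\ge 24>6\sqrt{3}$, so the hypothesis $|vol(A)-vol(V\setminus A)|\le vol(V)/\sqrt{3}$ holds and $Mcut(R_{n,k})=\min(Mcut_1,Mcut_2)$.

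The core of the argument is the exact value of $X_2$. For the body cuts one has $vol(A_j)-vol(V\setminus A_j)=2(6j-4n-3k)$, and as $j$ ranges over the integers $6j-4n-3k$ runs through the residue class $\equiv 2n+3k \pmod 6$; hence the least possible $|vol(A_j)-vol(V\setminus A_j)|$ equals twice the distance from $4n+3k$ to the nearest multiple of $6$. Splitting $4n+3k \bmod 6$ according to $n\bmod 3$ and $k\bmod 2$ yields minimal imbalances $0,2,4,6$ exactly in the cases $*_1,*_2,*_3,*_4$; substituting $X_2\in\{0,4,16,36\}$ into $Mcut_2=\frac{8\,vol(V)}{vol(V)^2-X_2}$ reproduces the four stated fractions. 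The delicate point is to confirm that no other size-$2$ cut (two antenna edges, an antenna edge plus a rung, or two rungs) beats these: I would show that whenever a case formula is claimed (so $n<K_i$, which forces $n<\frac{3}{2}k-3$ once $k\ge 4$), the balancing position $j\approx(4n+3k)/6$ lies in the admissible range $[\,n+1,n+k-1\,]$ while every antenna-based cut has volume at most $4n-2<vol(V)/2$; hence the symmetric body cuts are the only size-$2$ cuts reaching the balance point and $X_2$ is as claimed.

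Finally I would locate the crossover. Equating $Mcut_1$ with $Mcut_2=\frac{8\,vol(V)}{vol(V)^2-d^2}$ reduces, after clearing denominators, to $vol(V)^2=2(6k-2)^2-d^2$; solving this quadratic in $n$ (via $vol(V)=6k+4n-4$) gives precisely $n=K_1,K_2,K_3,K_4$ for $d=0,2,4,6$. Thus $Mcut_2<Mcut_1$ iff $n<K_i$, which selects the appropriate case formula, while for $n\ge K_i$ the antenna cut wins; indeed for $n\ge K_1$ one has $Mcut_1<8/vol(V)\le Mcut_2$ unconditionally, so value-$1$ dominates regardless of the fine structure of $X_2$. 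The low-volume exceptions left uncovered by the $k\ge 4$ hypothesis, namely $k=2$, $k=3$ with $n\le 2$, and the case $(n,k)=(1,2)$ in which the size-$k$ rung cut is balanced and yields $\frac{2}{3}$, I would settle by direct enumeration. I expect the exhaustive justification of $X_2$ and this exceptional bookkeeping to be the main obstacle; the remainder is bounded calculation.
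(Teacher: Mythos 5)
Your proposal is correct and follows essentially the same route as the paper: the same two families of cuts (the single antenna edge $(x_n,x_{n+1})$ and the symmetric two-edge body cuts), the same use of Lemma~\ref{lemma:mcut4} and Lemma~\ref{genlem4} to dispose of cuts of size $\ge 3$, the same four divisibility sub-cases for the optimal body position, and the same crossover thresholds $K_i$. Your bookkeeping is in places more explicit than the paper's — you obtain the minimal imbalance from the residue of $4n+3k$ modulo $6$ rather than by differentiating $c_4(\alpha)$ and rounding, and you actually derive $K_1,\dots,K_4$ by solving $s^2=2(6k-2)^2-d^2$ where the paper only asserts them — but these are refinements of the identical argument, not a different one.
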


\begin{proof}
Let $\displaystyle V(R_{n,k})= \{ x_i \ | \ 1 \leq i \leq n+k \}\cup \{ y_i \ | \ 1  \leq i \leq n+k \}$.
Volume of $R_{n,k}$ is $vol(R_{n,k})= 2(2n-1+3k-1)=6k+4n-4$.\\
We consider the following cases in order to find the $Mcut(R_{n,k})$.\\
\noindent {\bf{Case(i)}} Let $A_1\subseteq V(R_{n,k})$,
 where $\displaystyle A_1 =\{ x_i \ | \ 1 \leq i \leq n+k \}$ and $\displaystyle V\setminus A_1=\{ y_i \ | \ 1  \leq i \leq n+k \}$.
Then the volume $vol(A_1)$ is $\displaystyle \frac{vol(R_{n,k})}{2}=2n+3k-2$ and $cut(A_1,V\setminus A_1)=k$.
So we have 
\begin{eqnarray*}
Ncut(A_1,V \setminus A_1) &=& k \left(\frac{1}{2n+3k-2}+\frac{1}{3k+2n-2} \right)\\
&=& \frac{2k}{3k+2n-2}.
 \end{eqnarray*}
Let this value as $c_1$.\\
\noindent {\bf{Case(ii)}}
Let $A_2\subseteq V(R_{n,k})$ such that $\displaystyle A_2 =\{ x_i \ | \ 1 \leq i \leq n \}$ and $\displaystyle V\setminus A_2=\{ x_i \ | \ n+1 \leq i \leq n+k \}\cup \{ y_i \ | \ 1  \leq i \leq n+k \}$.
Then the volume $vol(A_2)=2n-1$, 
$vol(V\setminus A_2)=vol(R_{n,k})-vol(A_2)=2n+6k-3$ and $\displaystyle cut(A_2,(V\setminus A_2))=1$.
So we have
$$
  Ncut(A_2,V \setminus A_2)=\frac{(6k+4n-4)}{(2n-1)(6k+2n-3)}.
$$
 Let this value as $c_2$.\\
\noindent {\bf{Case(iii)}} Suppose there exists $|A_3| <n$ such that $cut(A_3,(V\setminus A_3))=1$. 
Let $vol(A_3)=2n-1-2x$,
where $x=|A_2|-|A_3|$ and $|A_2|=n$.
Then $vol(V\setminus A_3)= 6k+2n-3 +2x$.
$\displaystyle Ncut(A_3,V \setminus A_3)=\frac{1}{2n-1-2x} +\frac{1}{6k+2n-3+2x} = \frac{6k+4n-4}{(2n-1)(6k+2n-3)+4x(1-(3k+x))}$.
Since $4x(1-(3k+x)) <0$,
$Ncut(A_3,V\setminus A_3)> Ncut(A_2,V \setminus A_2) (Case(ii)<Case(iii))$.
Since $c_2$ is smaller than $Case(iii)$,
we can ignore this case.\\
\noindent {\bf{Case(iv)}} Let $\displaystyle  A_4(\alpha)=\{ x_i \ | \ 1 \leq i \leq n+\alpha  \} \cup \{ y_i \ | \ 1 \leq i \leq n+\alpha  \}$,
where $1 \leq \alpha <k$ and $\displaystyle  V \setminus A_4(\alpha)= \{ x_i \ | \ n+\alpha +1 \leq i \leq n+k \} \cup \{ y_i \ | \ n+\alpha +1 \leq i \leq n+k \}$.
Then $\displaystyle  vol(A_4(\alpha))=2(2n-1+3\alpha )=4n+6\alpha -2$,
$vol(V \setminus A_4(\alpha))=6k-2-6\alpha $ and
$cut(A_4(\alpha),V \setminus A_4(\alpha))=2$.
Then we have, 
$$
 Ncut(A_4(\alpha),V\setminus A_4(\alpha))
=\frac{(3k+2n-2)}{(2n-1+3\alpha)(3k-3\alpha-1)}.
$$
Let this value as $c_4(\alpha)$.\\
Minimum of $c_4(\alpha)$ can be obtained by differentiating with respect to $\alpha$.

$\displaystyle \frac{dc_4(\alpha)}{d\alpha} =0$ gives minimum value of $c_4(\alpha)$ at $\displaystyle \alpha_0=\frac{3k-2n}{6}$.
But $\alpha_0$ is not an integer for all $n,k$.
If  $\frac{3k-2n}{6} <1$ that is $\displaystyle  1 \leq k < \frac{6+2n}{3}$ then the minimum value is $c_4(1)$.
 Then we have
$$
c_4(1)= \frac{2-3k-2n}{8-6k+8n-6kn}.
$$

If $\displaystyle 1 \leq \frac{3k-2n}{6} < k$ that is $k \geq \frac{6+2n}{3}$ then the minimum value is $\displaystyle c_4(\frac{3k-2n}{6})$ whenever $\displaystyle \frac{3k-2n}{6} \in \mathbf{Z}$. 
$$
c_4(\frac{3k-2n}{6})= \frac{4}{-2+3k+2n}.
$$
If $k \geq \frac{6+2n}{3}$ and $2 \nmid k$ and $3 \mid n$ then the minimum value is $\displaystyle c_4(\frac{3k-2n}{6}+\frac{1}{2})=c_4(\frac{3k-2n}{6}-\frac{1}{2})$.
$$
c_4(\frac{3k-2n}{6}+\frac{1}{2})=\frac{4 (-2+3 k+2 n)}{(-5+3 k+2 n) (1+3 k+2 n)}.
$$
If $k \geq \frac{6+2n}{3}$ and $3 \nmid n$ and $2 \mid k$ then the minimum value is $\displaystyle c_4(\frac{3k-2n}{6}+\frac{1}{3})=c_4(\frac{3k-2n}{6}-\frac{1}{3})$.
$$
c_4(\frac{3k-2n}{6}-\frac{1}{3})=\frac{4 (-2+3 k+2 n)}{(-4+3 k+2 n) (3 k+2 n)}.
$$
If $k \geq \frac{6+2n}{3}$ and $3 \nmid n$ and $2 \nmid k$ then the minimum value is $\displaystyle c_4(\frac{3k-2n}{6}+\frac{1}{6})=c_4(\frac{3k-2n}{6}-\frac{1}{6})$.
$$
c_4(\frac{3k-2n}{6}-\frac{1}{6})=\frac{4 (-2+3 k+2 n)}{(-3+3 k+2 n)(-1+3 k+2 n)}.
$$ 
\noindent {\bf{Case(v)}} Let $A_5 =\{ x_i \ | \ 1 \leq i \leq n+1 \} $ and $V\setminus A_5=\{x_i \ | \ n+2 \leq i \leq n+k \}\cup \{ y_i \ | \ 1 \leq i \leq n+k \}$. 
Then $vol(A_5)= 2n+2$ and $vol(V\setminus A_5) =2n+6k-6$.
Then we have $\displaystyle Ncut(A_5,V\setminus A_5) = 2 \left( \frac{1}{2n+2} + \frac{1}{2n+6k-6} \right)=\frac{2n+3k-2}{(n+1)(n+3k-3)}$.\\
\noindent Now we can compare all cases considered above.\\
If $k=2$ and $n=1$ then it is easy show that $c_1$ is the minimum.
If $k=2$ and $n \geq 2$ then it is easy to show that $c_2$ is the minimum.
If $k=3$ and $n=1$ then $c_4(\frac{3k-2n}{6}-\frac{1}{6})$ is the minimum.
If $k=3$ and $n=2$ then $c_4(\frac{3k-2n}{6}+\frac{1}{6})$ is the minimum.
If $k=3$ and $n \geq 3$ then we can easily show that $c_2$ is the minimum.
If $k \ge 4$ and $n=1$ then $c_4$ is the minimum.
Next we assume that $k \geq 4$ and $n \ge 2$.
It is easy to check that $c_2$ is smaller than $c_1$, $c_3$ and $c_5$.
So we compare $c_2$ with $c_4$ for $k \geq 4$.
Then we have the following results. 
 If $(*_1 $ and $(n < K_1))$ then $c_4(\frac{3k-2n}{6})$ is smaller than $c_2$. 
 If $(*_2 $ and $( n < K_2))$ then $c_4(\frac{3k-2n}{6}-\frac{1}{6})$ is smaller than $c_2$.
 If $(*_3 $ and $( n < K_3))$ then $c_4(\frac{3k-2n}{6}-\frac{1}{3})$ is smaller than $c_2$.
 If $(*_4$ and $( n < K_4))$ then $c_4(\frac{3k-2n}{6}+\frac{1}{2})$ is smaller than $c_2$.
We can summarize the results as follows.
\[\left\{ \begin{array}{cc}
c_1 & \mbox{$n=1,k=2$,}\\
c_4(\frac{3k-2n}{6}) & \mbox{$(*_1 \wedge  (k \geq 4) \wedge   (n < K_1))$,}\\
c_4(\frac{3k-2n}{6}+1/2) & \mbox{$(*_4 \wedge ( k \geq 4) \wedge  ( n < K_4))$,}\\
c_4(\frac{3k-2n}{6}-1/3) & \mbox{$(*_3 \wedge  (k \geq 4) \wedge  ( n < K_3))$,}\\
c_4(\frac{3k-2n}{6}-1/6) & \mbox{$(*_2\wedge  (k \geq 4) \wedge   ( n < K_2))\vee$}\\
&\mbox{$(n=1,k=3)\vee (n=2,k=3)$,}\\
c_2& \mbox{$((k \geq 4) \wedge ((*_1 \wedge  (K_1 \leq n))\vee$}\\
&\mbox{$(*_2 \wedge  (K_2\leq n)) ))$}\\
&\mbox{$\vee (*_3 \wedge  (K_3 \leq n))\vee(*_4 \wedge ( K_4 \leq n))$}\\
& \mbox{$\vee  (k=2 \wedge (n\geq 2))\vee  (k=3 \wedge (n \geq 3))$.}
\end{array} \right. \]
Finally, 
we want to show that for any arbitrary subset $A$, 
$cut(A,V\setminus A)=1$ or $cut(A,V\setminus A)=2$ gives the minimum normalized cut.
We notice that every subset $A$ with $cut(A,V\setminus A)=1$ is $A_2$ or $A_3$ and every subset $A$ with $cut(A,V\setminus A)=2$ are $A_1,A_5,A_4$.
We consider all cases with $cut(A,V\setminus A)=1$ and the minimum occurs at $A_2$.
There may be several partitions with $cut(A,V\setminus A) \geq 2$.
Let $k \geq 4$.
Then we note that $vol(R_{n,k})\geq 24$ and there exists a subset $A_4$ in Case(iv),
which minimize the $\displaystyle \left(\frac{1}{vol(A)}+\frac{1}{vol(R_{n,k})-vol(A)}\right)$ with $cut(A,V\setminus A)=2$.
We note that $\displaystyle |vol(A_4)-\frac{vol(R_{n,k})}{2}| \leq 3$.
From Lemma~\ref{genmcut2},
 $\displaystyle 3\left(\frac{1}{vol(R_{n,k})/2}+\frac{1}{vol(R_{n,k})/2}\right)> 2\left(\frac{1}{vol(R_{n,k})/2+3}+\frac{1}{vol(R_{n,k})/2-3}\right)$ for $vol(R_{n,k}) \geq 11$.
Then we can show that there is no subset $A$ with $cut(A,V\setminus A) \geq 3$ and $Mcut(A,V\setminus A) \leq Mcut(A_4,V\setminus A_4)$.
This conclude that minimum Ncut always have cut value 2 for all cases which has cut size more than 1.
 \end{proof}
Figure~\ref{fig:rnk} shows the above regions for $n,k$.
For a given $R_{n,k}$, 
we can find $Mcut(R_{n,k})$.
\begin{figure}[htb]
\begin{center}
\includegraphics[scale=0.75]{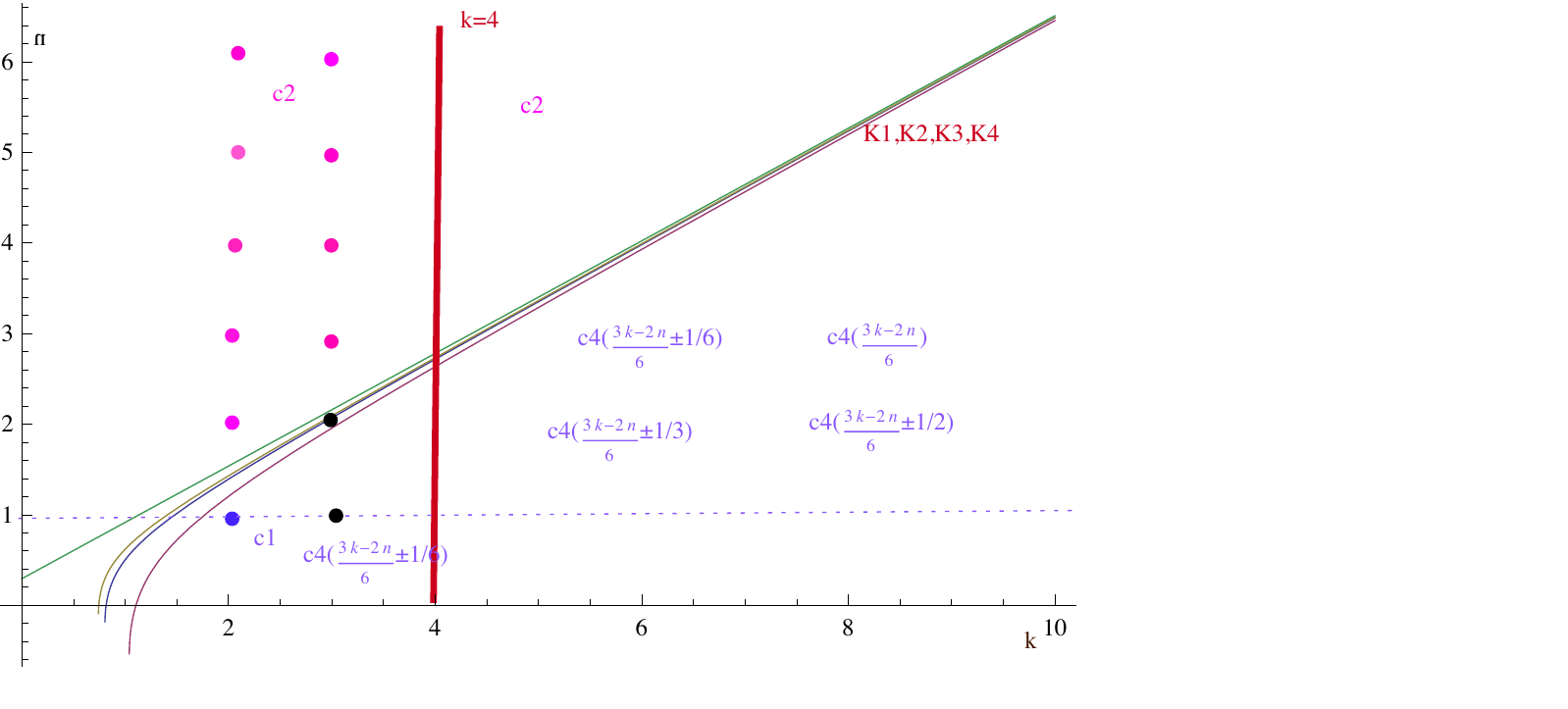}
\caption{$Mcut(R_{n,k})$.}
\label{fig:rnk}
\end{center}
\end{figure}


\subsection{$Mcut$ of weighted paths $P_{n,k}$}

In this section, 
we consider a weighted path graph $P_{n,k}$ and find a formula for $Mcut(P_{n,k})$ based on $n,k$. 
We consider subsets of $V(P_{n,k})$
defined by 
$\displaystyle A(\alpha) =\{ x_i \ | \ 1 \leq i \leq \alpha \}$
for $1 \le \alpha \le n+k-1$.
We note that every subset $A \subset V(P_{n,k})$ with
$cut(A,V\setminus A)=1$ is $A=A(\alpha)$ for some $\alpha$.

\begin{lemma}
Let $G=P_{n,k}$.
There exists a subset $A \subset V(P_{n,k})$ such that
$cut(A,V\setminus A)=1$ and $Mcut(G)=Ncut(A,V\setminus A)$.
\end{lemma}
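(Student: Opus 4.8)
The plan is to reduce the statement to Lemma~\ref{genlem4}. That lemma says that if some subset $A$ satisfies the balance bound $|vol(A)-vol(V\setminus A)| \le vol(V)/\sqrt{cut(A,V\setminus A)+1}$, then $Mcut(G)=\min\{Mcut_j(G)\mid j=1,\dots,cut(A,V\setminus A)\}$. Applied with a cut-$1$ subset this yields $Mcut(G)=Mcut_1(G)$, and since $Mcut_1(G)$ is by definition attained by a subset whose cut equals $1$, the lemma follows at once. Thus the whole task is to exhibit one sufficiently balanced cut-$1$ subset.

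First I would record the degree sequence of $P_{n,k}$: the vertex $x_1$ has degree $1$, the vertices $x_2,\dots,x_n$ have degree $2$, the vertices $x_{n+1},\dots,x_{n+k-1}$ carry a self-loop and have degree $3$, and $x_{n+k}$ has degree $2$; summing gives $vol(P_{n,k})=2n+3k-2$. As already noted before the statement, every cut-$1$ subset is $A(\alpha)=\{x_1,\dots,x_\alpha\}$ with $1\le\alpha\le n+k-1$, and a short computation gives $vol(A(\alpha))=2\alpha-1$ for $\alpha\le n$ and $vol(A(\alpha))=3\alpha-n-1$ for $n<\alpha\le n+k-1$. By Lemma~\ref{lemma:mcut4}(i) (equivalently Lemma~\ref{minvol}), among cut-$1$ subsets the normalized cut is minimized by the one whose volume is closest to $vol(V)/2$.

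The key point is that, as $\alpha$ increases, the attainable volumes $1,3,\dots,2n-1,\,2n+2,\,2n+5,\dots,2n+3k-4$ form an increasing sequence all of whose consecutive gaps are at most $3$ (the jump across the degree-$2$/degree-$3$ boundary, from $2n-1$ to $2n+2$, is also exactly $3$). Hence some $\alpha^\ast$ satisfies $|vol(A(\alpha^\ast))-vol(V)/2|\le 3/2$, that is $|vol(A(\alpha^\ast))-vol(V\setminus A(\alpha^\ast))|\le 3$. Since $vol(V)=2n+3k-2\ge 3\sqrt2$ for every $(n,k)\ne(1,1)$, this gives $|vol(A(\alpha^\ast))-vol(V\setminus A(\alpha^\ast))|\le 3\le vol(V)/\sqrt2$, which is exactly the hypothesis of Lemma~\ref{genlem4} with $j=1$, so $Mcut(G)=Mcut_1(G)$ as required.

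The only delicate points, and the places where I expect to argue rather than merely compute, are the two boundary issues: verifying that the maximal gap in the volume sequence is genuinely $3$ (so that the balance $3/2$ is guaranteed, including when $vol(V)/2$ lies beyond the largest attainable volume, where the extremal subset $A(n+k-1)$ must be used), and disposing of the degenerate case $(n,k)=(1,1)$, for which $P_{1,1}$ has only two vertices and its unique bipartition already has cut $1$, making the claim immediate. All remaining steps are the routine volume bookkeeping indicated above.
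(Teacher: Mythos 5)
Your proof is correct and follows the same underlying strategy as the paper's: reduce to the balanced-cut criterion of Lemma~\ref{genlem4} to conclude $Mcut(G)=Mcut_1(G)$, then note that $Mcut_1(G)$ is attained by one of the finitely many cut-$1$ subsets $A(\alpha)$. The paper instead invokes the specialized corollary (Lemma~\ref{gencut1}, which needs $vol(G)\ge 9$) and therefore must dispose of five small exceptional pairs $(n,k)$ by direct computation, whereas your sharper threshold $vol(G)\ge 3\sqrt{2}$ leaves only $(1,1)$. More importantly, the paper's proof silently assumes the existence of a cut-$1$ subset with $|vol(A)-vol(G)/2|\le 3$; your gap argument --- that the attainable volumes $1,3,\dots,2n-1,2n+2,2n+5,\dots,2n+3k-4$ increase with consecutive gaps at most $3$, and that $\tfrac{1}{2}vol(V)$ never overshoots the extremes by more than $\tfrac{3}{2}$ --- supplies exactly the verification the paper omits, and your volume bookkeeping ($vol(A(\alpha))=2\alpha-1$ for $\alpha\le n$ and $3\alpha-n-1$ beyond) matches the paper's. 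So your write-up is, if anything, the more complete of the two.
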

\begin{proof}
Since $vol(P_{n,k})=2n+3k-2$, if $k \ge \frac{1}{3}(11-2n)$
then $vol(P_{n,k})\ge 9$.
By the Lemma~\ref{gencut1}, we have $Mcut(G)=Mcut_1(G)$.

If $k < \frac{1}{3}(11-2n)$, we have only five cases
$(n,k)=(1,1)$, $(2,1)$, $(3,1)$, $(1,2)$ and $(2,2)$.
For each cases
$Mcut(P_{1,1})=Ncut(A(1),V\setminus A(1))$,
$Mcut(P_{2,1})=Ncut(A(2),V\setminus A(2))$,
$Mcut(P_{3,1})=Ncut(A(2),V\setminus A(2))$,
$Mcut(P_{1,2})=Ncut(A(2),V\setminus A(2))$, and
$Mcut(P_{2,2})=Ncut(A(2),V\setminus A(2))$.
\end{proof}

Let $P_{n,k}$ ($k \ge \frac{1}{3}(11-2n)$) be a weighted path graph.
We first note that
\begin{eqnarray*}
vol(P_{n,k})&=&2n+3k-2,\\
vol(A(\alpha))&=& \begin{cases}
 2 \alpha -1 & \mbox{($\alpha \le n$)} \\
 3 \alpha - n -1 & \mbox{($n+1 \le \alpha$)}
\end{cases}, and \\
{\small Ncut(A(\alpha),V\setminus A(\alpha))} &=& c(\alpha),
\end{eqnarray*}
where a function $c(t)$ ($1 \le t \le n+k$) is defined by
$$
c(t)= \begin{cases}
\frac{2n+3k-2}{(2 t-1)(2n+3k-2 t-1)} &
\mbox{($1 \le t \le n+\frac{1}{2}$)} \\
\frac{2n+3k-2}{(-n+3 t-1)(3n+3k-3 t-1)} &
\mbox{($n+\frac{1}{2} < t \le n+k-1$).}
\end{cases}
$$
We note that
$c(\alpha-x)=c(\alpha+x)$
for an integer $\alpha$ ($1 \le \alpha \le n$, $n+1< \alpha \le n+k-1$)
and a real number $x$ ($0 \le x \le \frac{1}{2}$).

We also note $vol(A_i)<vol(A_{i+1})$ ($1 \le i \le n+k-2$),
$vol(A(n))=2n-1$, $vol(A(n+1))=2n+2$ and $vol(A(n+k-1))=2n+3k-4$.
Since
{\small
\begin{eqnarray*}
&&Ncut(A(\alpha),V\setminus A(\alpha)) \\
&=&
\frac{4 vol(P_{n,k})}
{(vol(P_{n,k}))^2-(vol(A(\alpha))-vol(V\setminus A(\alpha)))^2} \\
&=&
\frac{4 vol(P_{n,k})}
{(vol(P_{n,k}))^2-4(vol(A(\alpha))-\frac{1}{2}vol(P_{n,k}))^2},
\end{eqnarray*}
}
if $Mcut(P_{n,k})=Ncut(A(\alpha_0),V\setminus A(\alpha_0))$
then 
{\small
\begin{eqnarray*}
&& \vert  vol(A(\alpha_0))-\frac{1}{2}vol(P_{n,k}) \vert \\
&=& \min \{
\vert vol(A(\alpha))-\frac{1}{2}vol(P_{n,k}) \vert \ | \ 
1 \le \alpha \le n+k-1 \}.
\end{eqnarray*}
}
We consider four cases:
Case (i) $\frac{1}{2}vol(P_{n,k}) \le vol(A(n))$,
Case (ii) $ vol(A(n)) < \frac{1}{2}vol(P_{n,k})
 \le \frac{1}{2}(vol(A(n))+vol(A(n+1)))$,
Case (iii) $\frac{1}{2}(vol(A(n))+vol(A(n+1))) < 
\frac{1}{2}vol(P_{n,k}) \le vol(A(n+1))$, and
Case (iv) $ vol(A(n+1)) < \frac{1}{2}vol(P_{n,k})<vol(A(n))$.
\\

\noindent
{\bf Case (i)}
Assume $\frac{1}{2}vol(P_{n,k}) \le vol(A(n))$.
That is $k \le \frac{2}{3} n$.
We find $\alpha$ minimizing 
$\vert  vol(A(\alpha))-\frac{1}{2}vol(P_{n,k}) \vert$
$=\vert  2\alpha -1 -(n+\frac{3}{2}k -1) \vert$.
For such $\alpha$ we have
$$
(2\alpha-1)-1 <
n+\frac{3}{2}k-1 \le (2\alpha-1)+1.
$$
That is
$$
\alpha -\frac{1}{2} < \frac{2n+3k}{4} \le \alpha + \frac{1}{2}
$$
which means $\alpha$ is the nearest integer of $\frac{2n+3k}{4}$.

We consider three cases
($K \in \mathbf{Z}$),
($K \not\in \mathbf{Z}$ and $2 \mid k$), and
($2 \nmid k$), 
where $K=\frac{2n+3k}{4}$.

If $K \in \mathbf{Z}$ then $\alpha=K$.
If $2 \nmid k$ then $\alpha=K+\frac{1}{4}$ or
$\alpha=K-\frac{1}{4}$.
If $K \not\in \mathbf{Z}$ and $2 \mid k$ then
$\alpha=K+\frac{1}{2}$ or
$\alpha=K-\frac{1}{2}$.

Since $c(\alpha-x)=c(\alpha+x)$
for an integer $\alpha$ ($1 \le \alpha \le n $)
and a real number $x$ ($0 \le x \le \frac{1}{2}$),
$Mcut(P_{n,k})$
will be 
\begin{eqnarray*}
c(K)&=&\frac{4}{-2+3 k+2 n}, \\
c(K+\frac{1}{2})&=&\frac{4 (-2+3 k+2 n)}{(-4+3 k+2 n) (3 k+2 n)}, or\\
c(K+\frac{1}{4})&=&\frac{4 (-2+3 k+2 n)}{(-3+3 k+2 n) (-1+3 k+2 n)}.
\end{eqnarray*}
following the conditions of $n$ and $k$.

\noindent
{\bf Case (ii)}
Assume $ vol(A(n)) < \frac{1}{2}vol(P_{n,k})
 \le \frac{1}{2}(vol(A(n))+vol(A(n+1)))$.
That is $\frac{2}{3}n < k \le \frac{2}{3} n + 1$.
In this case
$$
Mcut(P_{n,k})=c(n)
=\frac{3k+2n-2}{(3k-1)(2n-1)}.
$$

\noindent
{\bf Case (iii)}
Assume $\frac{1}{2}(vol(A(n))+vol(A(n+1))) < 
\frac{1}{2}vol(P_{n,k}) \le vol(A(n+1))$.
That is $\frac{2}{3}n +1 < k \le \frac{2}{3} n+2$.
In this case 
$$
Mcut(P_{n,k})=c(n+1)
=\frac{2-3k-2n}{8-6k+8n-6kn}.
$$

\noindent
{\bf Case (iv)}
Assume $ vol(A(n+1)) < \frac{1}{2}vol(P_{n,k})<vol(A(n))$.
That is $\frac{2}{3} n+2 < k$.
We find $\alpha$ minimizing 
$\vert  vol(A(\alpha))-\frac{1}{2}vol(P_{n,k}) \vert$
$=\vert  3\alpha -n -1 -(n+\frac{3}{2}k -1) \vert$.
For such $\alpha$ we have
$$
(3\alpha-n-1)-\frac{3}{2} <
n+\frac{3}{2}k-1 \le (3\alpha-n-1)+\frac{3}{2}.
$$
That is
$$
\alpha -\frac{1}{2} < \frac{4n+3k}{6} \le \alpha + \frac{1}{2}
$$
which means $\alpha$ is the nearest integer of $\frac{4n+3k}{6}$.

We consider four cases
($K' \in \mathbf{Z}$),
($3 \nmid n$ and $2 \mid k$),
($3 \mid n$ and $2 \nmid k$), and
($3 \nmid n$ and $2 \nmid k$),
where $K'=\frac{4n+3k}{6}$.

If $K' \in \mathbf{Z}$ then $\alpha=K'$.
If $3 \nmid n$ and $2 \mid k$ then
$\alpha=K'+\frac{1}{3}$ or
$\alpha=K'-\frac{1}{3}$.
If $3 \mid n$ and $2 \nmid k$ then
$\alpha=K'+\frac{1}{2}$ or
$\alpha=K'-\frac{1}{2}$.
If $3 \nmid n$ and $2 \nmid k$ then
$\alpha=K'+\frac{1}{6}$ or
$\alpha=K'-\frac{1}{6}$.
Since $c(K'-x)$ $=c(K'+x)$ ($x=\frac{1}{2},\frac{1}{3},\frac{1}{6}$),
we have $Mcut(P_{n,k})$ as one of
\begin{eqnarray*}
c(K')&=&\frac{4}{-2+3 k+2 n},\\
c(K'+\frac{1}{2})&=&\frac{4 (-2+3 k+2 n)}{(-5+3 k+2 n) (1+3 k+2 n)},\\
c(K'+\frac{1}{3})&=&\frac{4 (-2+3 k+2 n)}{(-4+3 k+2 n) (3 k+2 n)}, or\\
c(K'+\frac{1}{6})&=&\frac{4 (-2+3 k+2 n)}{(-3+3 k+2 n) (-1+3 k+2 n)}
\end{eqnarray*}
following the conditions of $n$ and $k$.

We note
$c(K)=c(K')$,
$c(K+\frac{1}{2})=c(K'+\frac{1}{3})$ and
$c(K+\frac{1}{4})=c(K'+\frac{1}{6})$ before
summarizing them as a proposition.

\begin{theorem}
\label{pathnk}
For $P_{n,k}$,$(n,k \geq 1$, $k \ge \frac{1}{3}(11-2n))$,
 $Mcut(P_{n,k})$ is given by

\[ \left \{ \begin{array}{cc}
\frac{4}{-2+3 k+2 n} &   \mbox{$(((3 \mid n) \wedge (2 \mid k) \wedge (R_3 < k))\vee$}\\
&\mbox{$(o_1\wedge (k \le R_1))$}\\
\\
\frac{4 (-2+3 k+2 n)}{(-5+3 k+2 n) (1+3 k+2 n)} &
\mbox{$((3 \mid n) \wedge  (2 \nmid k)\wedge (R_3 < k))$, }\\
\\
\frac{4 (-2+3 k+2 n)}{(-4+3 k+2 n) (3 k+2 n)}&
\mbox{$((3 \nmid n)\wedge  (2 \mid k)\wedge (R_3 < k))\vee$}\\
&\mbox{$(o_2\wedge (2 \mid k)\wedge (k \le R_1))$,}\\
\\
\frac{4 (-2+3 k+2 n)}{(-3+3 k+2 n) (-1+3 k+2 n)} &
\mbox{$((3 \nmid n)\wedge  (2 \nmid k)\wedge (R_3 < k))\vee$}\\
&\mbox{$((2 \nmid k) \wedge (k \le R_1))$}\\
\\
\frac{3k+2n-2}{(3k-1)(2n-1)} &
\mbox{$(R_1 < k \le R_2)$}\\
\\
\frac{2-3k-2n}{8-6k+8n-6kn}&
\mbox{$(R_2 < k \le R_3)$},\\
\end{array}\right. \]
where
\begin{eqnarray*}
o_1 &=& (\frac{3k+2n}{4} \in \mathbf{Z}),\\
o_2 &=& (\frac{3k+2n}{4} \not\in \mathbf{Z}), \\
R_1&=& \frac{2n}{3},\\
R_2 &=& \frac{2n}{3}+1,\\
R_3 &=&\frac{2n}{3}+2.
\end{eqnarray*}
\end{theorem}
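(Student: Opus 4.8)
The plan is to reduce the problem to a one-dimensional nearest-integer minimization and then carry out the bookkeeping of divisibility cases. By the lemma proved above, $Mcut(P_{n,k})=Mcut_1(P_{n,k})$, so it suffices to minimize $Ncut$ over subsets cutting a single edge. Since $P_{n,k}$ is a path, every $A\subset V(P_{n,k})$ with $cut(A,V\setminus A)=1$ equals $A(\alpha)=\{x_1,\dots,x_\alpha\}$ for some $1\le\alpha\le n+k-1$, and therefore $Mcut(P_{n,k})=\min_\alpha c(\alpha)$, where $c(\alpha)=Ncut(A(\alpha),V\setminus A(\alpha))$ is the piecewise function already recorded before the statement.

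First I would rewrite $c(\alpha)$ via Lemma~\ref{lemma:mcut4}(i) in the balanced form $c(\alpha)=\frac{4\,vol(P_{n,k})}{vol(P_{n,k})^2-(2\,vol(A(\alpha))-vol(P_{n,k}))^2}$, which exhibits $c$ as an increasing function of $|\,vol(A(\alpha))-\frac{1}{2}vol(P_{n,k})\,|$. Hence the minimizer is the integer $\alpha$ whose volume $vol(A(\alpha))$ is closest to $\frac{1}{2}vol(P_{n,k})=n+\frac{3}{2}k-1$. The essential subtlety is the jump in the volume sequence between $vol(A(n))=2n-1$ and $vol(A(n+1))=2n+2$, caused by the self-loops on $x_{n+1},\dots,x_{n+k}$ raising the interior degree from $2$ to $3$: the target $\frac{1}{2}vol(P_{n,k})$ may land inside this gap. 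Comparing $\frac{1}{2}vol(P_{n,k})$ against $vol(A(n))$, the midpoint $\frac{1}{2}(vol(A(n))+vol(A(n+1)))$, and $vol(A(n+1))$ splits the argument into the four cases (i)--(iv), which translate precisely into the ranges $k\le R_1$, $R_1<k\le R_2$, $R_2<k\le R_3$, and $R_3<k$.

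In the two generic cases (i) (target near $K=\frac{2n+3k}{4}$, so $\alpha\le n$) and (iv) (target near $K'=\frac{4n+3k}{6}$, so $\alpha>n$) I would solve the nearest-integer problem explicitly. Which integer is nearest is dictated by whether $K$ (resp.\ $K'$) is itself an integer and, when it is not, by the fractional offset, which is governed by the parity of $k$ and the residue of $n$ modulo $3$. Using the symmetry $c(\alpha-x)=c(\alpha+x)$ for $0\le x\le\frac{1}{2}$ noted above, one evaluates $c$ at $K,\,K\pm\frac{1}{4},\,K\pm\frac{1}{2}$ and at $K',\,K'\pm\frac{1}{6},\,K'\pm\frac{1}{3},\,K'\pm\frac{1}{2}$ to obtain the four candidate rational values. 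The boundary cases (ii) and (iii), where the target lies in the degree-jump gap, force $\alpha=n$ and $\alpha=n+1$ respectively and yield the two remaining branches at once.

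Finally I would assemble these into the stated table. The key merging step uses the identities $c(K)=c(K')$, $c(K+\frac{1}{2})=c(K'+\frac{1}{3})$ and $c(K+\frac{1}{4})=c(K'+\frac{1}{6})$ recorded above, so that corresponding sub-cases of (i) and (iv) collapse to a single expression; one then checks that the divisibility predicates agree, for instance $K'\in\mathbf{Z}$ exactly when $(3\mid n)\wedge(2\mid k)$, and $o_1$ exactly when $K\in\mathbf{Z}$. I expect the main obstacle to be precisely this bookkeeping: verifying that the offset conditions ($2\mid k$ versus $2\nmid k$, $3\mid n$ versus $3\nmid n$, and $K,K'$ integral or not) really do single out the nearest integer in every case, and that the half-open ranges delimited by $R_1,R_2,R_3$ match up consistently at their endpoints so that each pair $(n,k)$ satisfying $k\ge\frac{1}{3}(11-2n)$ falls under exactly one branch.
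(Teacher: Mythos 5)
Your proposal follows essentially the same route as the paper: reduce to single-edge cuts via the preceding lemma, rewrite $Ncut(A(\alpha),V\setminus A(\alpha))$ in the balanced form of Lemma~\ref{lemma:mcut4} so that the minimizer is the $\alpha$ whose volume is nearest to $\frac{1}{2}vol(P_{n,k})$, split into the four cases determined by the degree jump between $vol(A(n))$ and $vol(A(n+1))$, solve the nearest-integer problems around $K=\frac{2n+3k}{4}$ and $K'=\frac{4n+3k}{6}$ using the symmetry $c(\alpha-x)=c(\alpha+x)$, and merge the branches via $c(K)=c(K')$, $c(K+\frac{1}{2})=c(K'+\frac{1}{3})$, $c(K+\frac{1}{4})=c(K'+\frac{1}{6})$. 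This is exactly the paper's argument, and your identification of the remaining work as divisibility bookkeeping is accurate.
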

\hfill\qed

Figure~\ref{fig:patha} shows minimum $Mcut(P_{n,k})$ for each $n,k$.

\begin{figure}[htb]
\begin{center}
\includegraphics[scale=0.7]{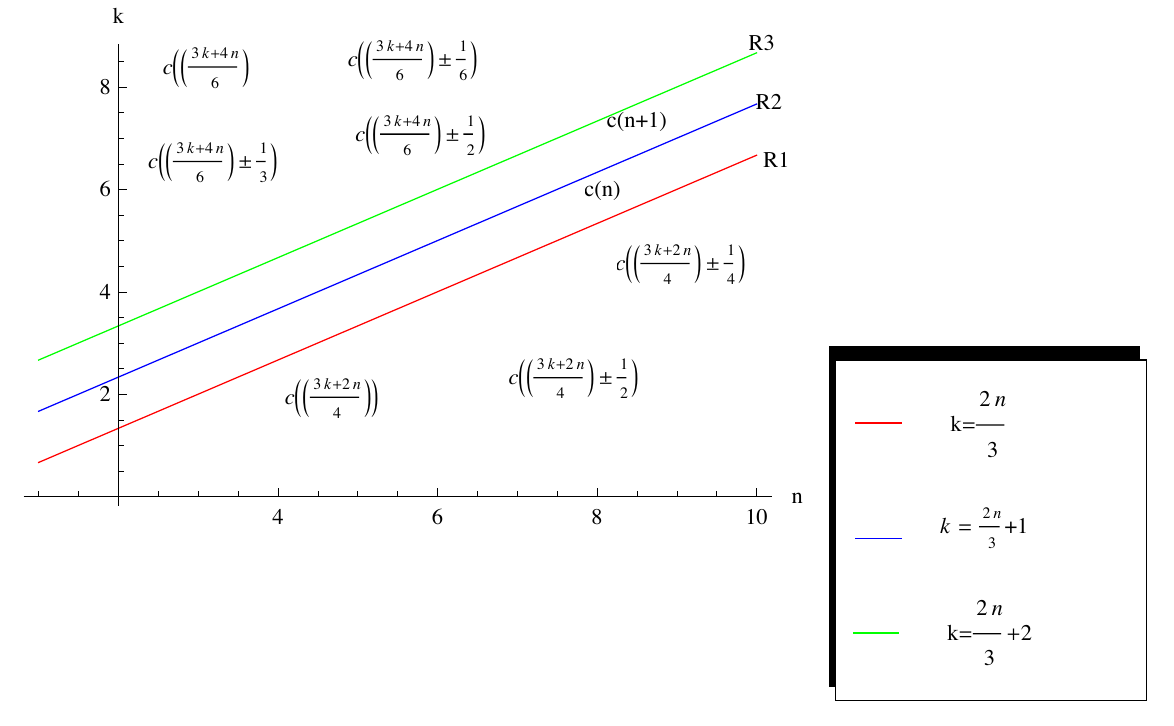}
\caption{$Mcut(P_{n,k})$.}
\label{fig:patha}
\end{center}
\end{figure}

\begin{corollary}
For $P_{2k,k}$,
 \[ Mcut(P_{2k,k})= \left \{ \begin{array}{cc}
\frac{4}{-2+7k} & \mbox{$(4 \mid k) $,}\\ 
\\
\frac{4 (-2+7 k)}{(-4+7k) (7k)} & \mbox{$(4 \nmid k)\wedge (2 \mid k)$,}\\ 
\\
\frac{4 (-2+7 k)}{(-3+7k) (-1+7k)} & \mbox{$(2 \nmid k)$.}\\ 
\end{array} \right. \]
\end{corollary}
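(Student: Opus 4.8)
The plan is to obtain this corollary as the direct specialization $n = 2k$ of Theorem~\ref{pathnk}, reducing everything to elementary checks on the divisibility conditions and the threshold quantities $R_1, R_2, R_3$. First I would verify that the hypothesis of Theorem~\ref{pathnk}, namely $k \ge \frac{1}{3}(11 - 2n)$, holds under the substitution $n = 2k$: it becomes $k \ge \frac{1}{3}(11 - 4k)$, equivalently $7k \ge 11$, so it is satisfied for all $k \ge 2$. The single remaining value $k = 1$ corresponds to $P_{2,1}$, which is already covered by the small-case lemma preceding Theorem~\ref{pathnk}; a one-line computation (for $A(2) = \{x_1, x_2\}$ one has $vol(A(2)) = 3$, $vol(V \setminus A(2)) = 2$ and $cut(A(2),V\setminus A(2)) = 1$, hence $Ncut = \frac{5}{6}$) confirms that it agrees with the $2 \nmid k$ branch of the stated formula.

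Next I would evaluate the auxiliary quantities of Theorem~\ref{pathnk} at $n = 2k$. The thresholds become $R_1 = \frac{4k}{3}$, $R_2 = \frac{4k}{3} + 1$ and $R_3 = \frac{4k}{3} + 2$. Since $k \le \frac{4k}{3}$ for every $k \ge 1$, the inequality $k \le R_1$ always holds, while $R_1 < k$, $R_2 < k$ and $R_3 < k$ never hold. Consequently the two middle-volume branches (those guarded by $R_1 < k \le R_2$ and $R_2 < k \le R_3$) and every branch carrying the clause $R_3 < k$ are vacuous, so only the three branches guarded by $(k \le R_1)$ survive. For the parity flag I would use $\frac{3k + 2n}{4} = \frac{7k}{4}$, so that $o_1$ (membership in $\mathbf{Z}$) is equivalent to $4 \mid k$ and $o_2$ to $4 \nmid k$; combined with the explicit $2 \mid k$ and $2 \nmid k$ tests, this splits $k \ge 1$ into the three disjoint, exhaustive classes $4 \mid k$, $(2 \mid k) \wedge (4 \nmid k)$, and $2 \nmid k$ appearing in the corollary.

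Finally I would substitute $3k + 2n = 7k$ (together with the shifted forms $7k-1$, $7k-2$, $7k-3$, $7k-4$) into the three surviving closed forms $c(K)$, $c(K+\frac{1}{2})$, $c(K+\frac{1}{4})$ from Case~(i) of Theorem~\ref{pathnk}, where $K = \frac{3k+2n}{4} = \frac{7k}{4}$; this yields $\frac{4}{-2+7k}$, $\frac{4(-2+7k)}{(-4+7k)(7k)}$ and $\frac{4(-2+7k)}{(-3+7k)(-1+7k)}$ respectively, matching the three cases verbatim. There is no genuine analytic obstacle here, since Theorem~\ref{pathnk} has already performed the optimization over $\alpha$. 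The only place that demands care is the bookkeeping of the middle paragraph: checking that the whole line $n = 2k$ lies in the region $k \le R_1$ (so that we stay entirely in Case~(i)) and that the divisibility of $\frac{7k}{4}$ correctly reproduces the $4 \mid k$ trichotomy, ensuring that no spurious branch is retained and none of the needed ones is dropped.
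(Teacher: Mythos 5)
Your proposal is correct and follows essentially the same route as the paper: substitute $n=2k$ into Theorem~\ref{pathnk}, check that the thresholds $R_1,R_2,R_3$ eliminate every branch except those guarded by $k\le R_1$, and read off the three closed forms via the divisibility of $\frac{7k}{4}$. Your additional verification of the hypothesis $k\ge\frac{1}{3}(11-2n)$ and the separate treatment of $k=1$ is a small refinement the paper's own proof omits, but it does not alter the argument.
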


\begin{proof}
By substituting $n=2k$ to the formula given for $Mcut(P_{n,k})$,
we can directly obtain the result.
According to the Theorem~\ref{pathnk},
 for $n=2k$,
$k>R_3$ that is $\displaystyle k > \frac{2n}{3}+2$ implies that $k \leq -6$.
Since $k \geq 1$,
this does not holds.
For $R_1 <k \le R_2$ that is 
$\displaystyle  \frac{2n}{3} < k \le \frac{2n}{3}+1$ implies that $ -3 \le k < 0$.
Since $k \geq 1$, 
this does not holds.
For $R_2 <k \le R_3$ that is 
$\displaystyle  \frac{2n}{3}+1 < k \le \frac{2n}{3}+2$ implies that $ -6 \le k < -3$.
Since $k \geq 1$, 
this does not holds.
Therefore the only case,
 which holds for $n=2k$ is,
 $k \le R_1$ that is 
$\displaystyle k  \le \frac{4k}{3}$.
This implies that $k \ge 0$.
Substituting $n=2k$ in the Theorem~\ref{pathnk},
we have,
\[ Mcut(P_{2k,k})= \left\{ \begin{array}{cc}
\frac{4}{-2+7k} & \mbox{ $(4 \mid k)$,}\\ 
\frac{4 (-2+7 k)}{(-4+7k) (7k)} & \mbox{$(4 \nmid k)\wedge (2 \mid k)$,}\\ 
\frac{4 (-2+7 k)}{(-3+7k) (-1+7k)} & \mbox{$(2 \nmid k)$.}
\end{array} \right. \] 
\end{proof}


\subsection{$Mcut$ of graph $LP_{n,m}$}
Here,
we consider lollipop graph $LP_{n,m}$
and derive a formula for $Mcut(LP_{n,m})$.
A lollipop graph $LP_{n,m}$ defined in Definition~\ref{def:lp}
is constructed by joining an end vertex of
a path graph $P_m$ to a vertex of a complete graph $K_n$.

We consider three kinds of subsets of $V(LP_{n,m})$
defined by 
$A_1(\alpha)=\{x_i\ |\ 1 \le i \le \alpha\}$
for $1\le \alpha \le m$,
$A_2(\beta)=\{x_i\ |\ 1 \le i \le m\}$
$\cup \{y_i\ |\ 1 \le i \le \beta\}$
for $1 \le \beta < n$,
and,
$B(\alpha,\beta)=\{x_i\ |\ 1 \le i \le \alpha\}$
$\cup \{x_m\}$
$\cup \{y_i \ |\ 1 \le i \le \beta\}$
for $1 \le \alpha < m-1$, $1 \le \beta < n$.

\begin{lemma}
Let $A$ be a subset of $V(LP_{n,m})$.
\begin{enumerate}
\item If $y_i \in A$ and $y_{i+1} \not\in A$ for some $i$ ($2 \le i \le n-1$)
then $Ncut(A',V\setminus A')$ $=Ncut(A,V\setminus A)$,
where
$A'=(A\setminus\{y_i\})\cup\{y_{i+1}\}$.
\item If $x_i \in A$,
$x_{i+1},\ldots,x_{j} \not\in A$,
and $x_{j+1} \in A$ for some $i,j$ ($1 \le i < j \le m-1$)
then $Ncut(A',V\setminus A')$
$\le Ncut(A,V\setminus A)$,
where
$A'=(A\setminus\{x_{j+1}\})\cup\{x_{i+1}\}$.
\item There exists a subset
$A_1(\alpha)$, $A_2(\beta)$ or $B(\alpha,\beta)$ 
such that 
$Mcut(LP_{n,m})=Ncut(A_1(\alpha),V\setminus A_1(\alpha))$,
$Mcut(LP_{n,m})=Ncut(A_2(\beta),V\setminus A_2(\beta))$, or
$Mcut(LP_{n,m})=Ncut(B(\alpha,\beta),V\setminus B(\alpha,\beta))$.
\end{enumerate}
\end{lemma}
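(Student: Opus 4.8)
The plan is to read (1) and (2) as two ``moves'' that alter a subset without increasing its normalized cut, and then to apply them repeatedly to an optimal subset until it is driven into one of the three stated shapes. Throughout I use the identity $Ncut(A,V\setminus A)=cut(A,V\setminus A)\left(\frac{1}{vol(A)}+\frac{1}{vol(V\setminus A)}\right)$, so that controlling a move amounts to controlling the cut size together with the two volumes.

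For (1) I would observe that, because $y_1$ is the only clique vertex carrying an extra edge (the bridge $(x_m,y_1)$), any two of $y_2,\dots,y_n$ are interchangeable. Concretely, for $2\le i\le n-1$ the transposition $\phi$ that swaps $y_i$ and $y_{i+1}$ and fixes every other vertex is an automorphism of $LP_{n,m}$. Since $y_i\in A$ and $y_{i+1}\notin A$, a direct check gives $\phi(A)=A'$ and $\phi(V\setminus A)=V\setminus A'$. An automorphism preserves adjacency and preserves degrees ($d_{y_i}=d_{y_{i+1}}=n-1$), hence preserves both $cut$ and $vol$; therefore $Ncut(A',V\setminus A')=Ncut(A,V\setminus A)$.

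For (2) the key point is that $x_{i+1}$ and $x_{j+1}$ are path vertices of degree $2$ (when the hypotheses are non-vacuous one has $2\le i+1\le m-1$, while $j+1\le m$ and $x_m$ also has degree $2$), so $vol(A')=vol(A)$ and it remains only to show $cut(A')\le cut(A)$. I would count cut edges along the path by recording, at each consecutive pair, whether the two vertices lie on the same side; a change of side is a cut edge. Putting $x_{i+1}$ into $A$ destroys the cut edge on $(x_i,x_{i+1})$ and creates at most one on $(x_{i+1},x_{i+2})$, a net change $\le 0$; removing $x_{j+1}$ from $A$ destroys the cut edge on $(x_j,x_{j+1})$ and alters $(x_{j+1},x_{j+2})$ by at most $+1$, again net $\le 0$. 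When $j+1=m$ the edge $(x_{j+1},x_{j+2})$ is replaced by the bridge $(x_m,y_1)$, and the same bookkeeping still gives net $\le 0$. Hence $cut(A')\le cut(A)$, and with equal volumes $Ncut(A',V\setminus A')\le Ncut(A,V\setminus A)$.

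For (3) I would start from a subset $A$ attaining $Mcut(LP_{n,m})$ (the minimum ranges over finitely many subsets, so it is attained) and normalise it with non-increasing $Ncut$, so that the result is still optimal. Since $Ncut$ is symmetric under $A\leftrightarrow V\setminus A$, I may replace $A$ by its complement; I use this freedom to arrange that $A$ contains the free endpoint $x_1$. Applying (1) repeatedly I then make $A\cap\{y_1,\dots,y_n\}$ an initial segment $\{y_1,\dots,y_\beta\}$ (the only invariants of those transpositions are whether $y_1\in A$ and how many clique vertices lie in $A$). Applying (2) repeatedly I merge the path blocks of $A$ into the prefix block anchored at $x_1$, leaving at most the single vertex $x_m$ held on the clique side through the bridge; reading off the resulting shapes yields exactly $A_1(\alpha)$ (no clique vertices), $A_2(\beta)$ (the full path together with a clique block), or $B(\alpha,\beta)$ (a path prefix, the isolated $x_m$, and a clique block). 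The main obstacle is precisely this last bookkeeping: move (2) can shift and merge gaps but cannot pull a block off the endpoint $x_1$, which is why the complementation step is essential, and the bridge $(x_m,y_1)$ couples the path end to the clique so that pulling $x_m$ into the prefix is only neutral rather than strictly helpful. I therefore expect the delicate part to be verifying that, once the clique block is nonempty, $x_m$ must either merge into the prefix (giving $A_2$) or persist as the extra anchored vertex (giving $B$), with no genuinely different irreducible configuration surviving the reductions.
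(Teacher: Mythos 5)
Your treatment of parts (1) and (2) is correct and in fact more careful than the paper's, which dismisses both with ``it is easy to check'': the transposition of $y_i$ and $y_{i+1}$ ($2\le i\le n-1$) is indeed an automorphism preserving $cut$ and $vol$, and your edge-by-edge bookkeeping for the path move, including the case $j+1=m$ where the bridge $(x_m,y_1)$ plays the role of the next edge, is sound.

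Part (3) is where the gap lies. The paper's own proof is a single sentence (``using 1.\ and 2.\ we have a subset $A'$ which is one of $A_1(\alpha)$, $A_2(\beta)$ or $B(\alpha,\beta)$''), so the gap is shared, but your writeup asserts a reduction that the stated moves cannot perform. Move (1) replaces $y_i$ by $y_{i+1}$ and never touches $y_1$; as your own parenthetical notes, membership of $y_1$ in $A$ is an invariant. Hence when $y_1\notin A$ but $A$ meets the clique, repeated application of (1) normalizes $A\cap\{y_1,\ldots,y_n\}$ only to a block inside $\{y_2,\ldots,y_n\}$, never to the initial segment $\{y_1,\ldots,y_\beta\}$ you claim to reach. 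Complementation does not rescue this: for a set such as $A=\{x_1,\ldots,x_\alpha\}\cup\{y_2\}$, $A$ contains $x_1$ but not $y_1$, while $V\setminus A$ contains $y_1$ but not $x_1$, and both memberships are invariant under moves (1) and (2); yet every listed shape contains $x_1$ and contains $y_1$ whenever it meets the clique, so neither $A$ nor its complement can be driven into the list. A second irreducible configuration you do not account for is $\{x_1,\ldots,x_\alpha\}\cup\{y_1,\ldots,y_\beta\}$ with $\alpha<m$ and $x_m\notin A$: neither move applies (beyond harmless relabelling of $y_2,\ldots,y_n$), and it is not $A_1$, $A_2$ or $B$. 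Closing the lemma requires an extra comparison showing such configurations never beat a listed shape --- for the latter one, note it has the same cut $\beta(n-\beta)+2$ as $B(\alpha,\beta)$ and volume smaller by exactly $2$, so by the identity $Ncut=\frac{4\,cut\cdot vol(V)}{vol(V)^2-(vol(A)-vol(V\setminus A))^2}$ one must check which volume lies closer to $\frac{1}{2}vol(V)$. You correctly sense that the reduction has a delicate step, but you locate it at the behaviour of $x_m$ (which move (2) in fact always handles, since $j+1=m$ is permitted), whereas the genuinely unclosed cases concern $y_1$.
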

\begin{proof}
\begin{enumerate}
\item It is easy to check
$vol(A)=vol(A')$ and $cut(A,V\setminus A)=cut(A',V\setminus A')$.
\item 
It is easy to check $vol(A)=vol(A')$ and
$cut(A',V\setminus A') \le cut(A,V\setminus A)$.
\item Let $A$ be a subset of $V(LP_{n,m})$ such that
$Mcut(LP_{n,m})=Ncut(A,V\setminus A)$.
Using the above results 1. and 2., we have a subset $A'$
which is one of $A_1(\alpha)$, $A_2(\alpha)$ or $B(\alpha,\beta)$
such that $Ncut(A',V\setminus A')=Mcut(LP_{n,m})$.
\end{enumerate}
\end{proof}

Let $G=LP_{n,m}$ ($n \ge 3$, $m \ge 1$) a lollipop graph.
We first note that
{\small
\begin{eqnarray*}
vol(LP_{n,m})&=&2m+n(n-1), \\
vol(A_1(\alpha))&=&2\alpha -1, \\
cut(A_1(\alpha),V\setminus A_1(\alpha))&=& 1, \\
vol(A_2(\beta))&=&2m+\beta(n-1), \\
cut(A_2(\beta),V\setminus A_2(\beta))&=& \beta(n-\beta), \\
vol(B(\alpha,\beta))&=& 2\alpha + 2+\beta(n-1), \\
cut(B(\alpha,\beta,V\setminus B(\alpha,\beta)))
&=& \beta(n-\beta) + 2, and \\
Ncut(A_1(\alpha),V\setminus A_1(\alpha)) 
&=& c(\alpha),
\end{eqnarray*}}
where a function $c(t)$ ($1 \le t \le m$) is defined by
$$
c(t)=\frac{2 m-n+n^2}{\left(1+2 m-n+n^2-2 t\right) (-1+2 t)}.
$$
It is also showed that
{\small \begin{eqnarray*}
&&Ncut(A_2(\beta),V\setminus A_2(\beta))\\
&=&
\frac{\beta \left(2 m-n+n^2\right)}{(-1+n) (-\beta+2 m+n \beta)}, and \\
&&Ncut (B(\alpha,\beta),V\setminus B(\alpha,\beta))\\
&=&
\frac{\left(2 m-n+n^2\right) \left(2+n \beta -\beta ^2\right)}{(2+2 \alpha -\beta +n \beta ) \left(2 m-n+n^2-2 \alpha +\beta -n \beta - 2 \right)}.
\end{eqnarray*}
}

\begin{lemma}
\label{lemma:lp}
Let $G=LP_{n,m}$ $(n \ge 3$, $m \ge 2)$.
\begin{enumerate}
\item $c(\alpha-1)) < c(\alpha)$ iff
$m > \frac{1}{2}(n^2-n+4)$ \\
$(2 \le \alpha \le m)$.
\item $c(m) \le \frac{1}{2}vol(LP_{n,m})$ iff
$m \le \frac{1}{2}(n^2-n+4)$.
\item $c(m) \le Ncut(A_2(\beta),V\setminus A_2(\beta))$ $(1 \le \beta < n)$.
\item If $m \le \frac{1}{2}(n^2-n+2)$ then 
$$
c(m) \le Ncut(B(\alpha,\beta),V\setminus B(\alpha,\beta)),
$$
$(1 \le \alpha \le m-2$, $1 \le \beta < n)$.
\end{enumerate}
\end{lemma}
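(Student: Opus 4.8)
The plan is to normalize every quantity by $M:=vol(LP_{n,m})=2m+n^2-n$, after which each part becomes an elementary algebraic comparison. First I would record $M+1-2m=n^2-n+1=vol(V\setminus A_1(m))$ and rewrite the given function as $c(t)=\dfrac{M}{(M+1-2t)(2t-1)}$. Setting $f(t)=(M+1-2t)(2t-1)$ and $u=2t-1$ gives $f=(M-u)u$, a downward parabola in $u$ with vertex $u=M/2$, i.e. at $t^\ast=\tfrac{M}{4}+\tfrac12$; on $1\le t\le m$ both factors of $f$ are positive (since $M+1-2t\ge n^2-n+1>0$), so $c=M/f$ is positive and unimodal with minimum at $t^\ast$. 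This picture drives parts 1 and 2.

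Parts 1 and 2 both come from the forward difference. Using $f=(M-u)u$ one gets $f(\alpha)-f(\alpha-1)=2\bigl(M-4(\alpha-1)\bigr)$, and since the $f$-values are positive the sign of $c(\alpha)-c(\alpha-1)$ is opposite to that of $M-4(\alpha-1)$; hence $c(\alpha-1)<c(\alpha)$ iff $M<4(\alpha-1)$. At the endpoint $\alpha=m$ this is $M<4(m-1)$, i.e. $m>\tfrac12(n^2-n+4)$, which is part 1, and its negation gives $c(m)\le c(m-1)$ iff $m\le\tfrac12(n^2-n+4)$. For part 2 I would substitute $c(m)=\dfrac{M}{(n^2-n+1)(2m-1)}$; clearing denominators against $\tfrac12 vol(LP_{n,m})=\tfrac{M}{2}$ leaves the elementary inequality $(n^2-n+1)(2m-1)\ge2$, which I would record alongside the companion threshold $m\le\tfrac12(n^2-n+4)$ governing $c(m)\le c(m-1)$.

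For part 3 I would cross-multiply $c(m)\le Ncut(A_2(\beta),V\setminus A_2(\beta))$ using the stated closed forms. After cancelling $M$ this reads $(n-1)\bigl(2m+\beta(n-1)\bigr)\le \beta(n^2-n+1)(2m-1)$, equivalently $2m\bigl[\beta(n^2-n+1)-(n-1)\bigr]\ge \beta(2n^2-3n+2)$. The bracket is at least $(n-1)^2+1>0$, so the left side increases in $m$; evaluating at the smallest admissible $m=2$ reduces the claim to $\beta(2n^2-n+2)\ge 4(n-1)$, which increases in $\beta$ and already holds at $\beta=1$ because $2n^2-5n+6>0$ for every $n$. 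This settles part 3 for all $1\le\beta<n$.

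Part 4 is the main obstacle, since $Ncut(B(\alpha,\beta),\cdot)$ carries two parameters. Using Lemma~\ref{lemma:mcut4}(i) I would put every normalized cut in the form $4\,cut\cdot M/(M^2-d^2)$ with $d$ the signed volume difference; then $c(m)\le Ncut(B(\alpha,\beta),\cdot)$ becomes $cut(B)\,(M^2-d_A^2)\ge M^2-d_B^2$, where $cut(B)=2+\beta(n-\beta)\ge n+1$, $d_A=n^2-n+2-2m\ge0$ (this is where the hypothesis $m\le\tfrac12(n^2-n+2)$ enters, forcing $vol(A_1(m))\le M/2$), and $d_B=M-2\,vol(B)$ with $vol(B)=2\alpha+2+\beta(n-1)$ affine in $\alpha$. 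Because $vol(B)\,vol(V\setminus B)$ is concave in $\alpha$, for fixed $\beta$ the smallest value of $Ncut(B)$ occurs at an endpoint $\alpha\in\{1,m-2\}$ or at the balance point $vol(B)=M/2$, so I would dispose of $\alpha$ first and then verify a one-parameter inequality in $\beta$. The delicate point I anticipate is that the crude estimate $M^2-d_B^2\le M^2$ with $cut(B)\ge n+1$ is \emph{not} enough (it fails for small $m$ and large $n$): one must keep $d_B$ and argue by dichotomy, using the large value of $cut(B)$ when $\beta$ is near $n/2$ and the large imbalance $d_B^2$ when $\beta$ is near $1$ or $n-1$. Establishing $(cut(B)-1)M^2+d_B^2\ge cut(B)\,d_A^2$ uniformly over the admissible $(\alpha,\beta)$ under $m\le\tfrac12(n^2-n+2)$ is where the real work lies, and I expect it to be the hardest step.
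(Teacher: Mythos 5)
The paper itself ``proves'' this lemma only with the sentence ``Each items are given by straightforward computations,'' so there is no argument of the authors' to compare yours against; judged on its own terms, your proposal is correct and essentially complete for parts 1--3 but has a genuine gap at part 4. For parts 1--2 your difference formula $f(\alpha)-f(\alpha-1)=2\bigl(M-4(\alpha-1)\bigr)$ with $f(t)=(M+1-2t)(2t-1)$ and $M=vol(LP_{n,m})$ is right, and it correctly exposes that the stated threshold $m>\frac{1}{2}(n^2-n+4)$ is exactly the $\alpha=m$ instance of the criterion $M<4(\alpha-1)$ (equivalently, the condition under which $c$ fails to be non-increasing on $[1,m]$, which is what the lemma is used for downstream); your observation that part 2 as literally written collapses to the always-true inequality $(n^2-n+1)(2m-1)\ge 2$, so that the stated threshold can only be governing $c(m)\le c(m-1)$, is also correct and worth flagging as a defect of the statement. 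Part 3 is fully verified: the reduction to $(n-1)(2m+\beta(n-1))\le\beta(n^2-n+1)(2m-1)$, the monotonicity in $m$ and in $\beta$, and the terminal check $2n^2-5n+6>0$ all check out.

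The gap is part 4. You correctly reduce $c(m)\le Ncut(B(\alpha,\beta),V\setminus B(\alpha,\beta))$ to $(cut(B)-1)M^2+d_B^2\ge cut(B)\,d_A^2$ with $d_A=n^2-n+2-2m\ge 0$, and you correctly diagnose that the crude bound $M^2-d_B^2\le M^2$ combined with $cut(B)\ge n+1$ is insufficient (it would require roughly $2m-1\ge (n^2-n+1)/(n+1)$, which fails for small $m$ and large $n$). But you then stop: the two-parameter inequality is announced as ``where the real work lies'' and is never established. Since this is precisely the substantive content of item 4 --- the other three items are one-line algebra by comparison --- the proposal does not prove the lemma. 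To close it you would need to carry out the dichotomy you sketch: for fixed $\beta$, $Ncut(B)$ is minimized at the admissible $\alpha$ nearest the balance point $vol(B)=M/2$ (an interior point or an endpoint of $[1,m-2]$), and the resulting one-variable inequality in $\beta$ must then be verified using both the size of $cut(B)=\beta(n-\beta)+2$ and the residual imbalance $d_B$. As written, the hardest quarter of the lemma is asserted rather than proved.
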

\begin{proof}
Each items are given by straightforward computations.
\end{proof}

Since $cut(A_1(\alpha),V\setminus A_1(\alpha))=1$,
if $vol(A_1(m)) \ge \frac{1}{2}vol(LP_{n,m})$ then
there exists some $\alpha$ such that
$Mcut(LP_{n,m})=Ncut(A_1(\alpha),V\setminus A_1(\alpha))$.
To find the $\alpha$,
we solve
$$
vol(A_1(\alpha))-1 < \frac{1}{2}vol(LP_{n,m}) \le vol(A_1(\alpha))+1.
$$
That is
$$
\alpha - \frac{1}{2} < \frac{n^2-n+2m+2}{4} \le \alpha + \frac{1}{2}
$$
which means $\alpha$ is the nearest integer of $\frac{n^2-n+2m+2}{4}$.
We consider two cases
$(K \in \mathbf{Z})$ and $(K \not\in \mathbf{Z})$,
where
$K=\frac{n^2-n+2m+2}{4}$.
If $K \in \mathbf{Z}$ then $\alpha=K$.
If $K \not\in \mathbf{Z}$ then
$K+\frac{1}{2}$ is an integer and $\alpha=K+\frac{1}{2}$
or $\alpha=K-\frac{1}{2}$.
Since $c(K+\frac{1}{2})=c(K-\frac{1}{2})$,
$Mcut(LP_{n,m})$ will be
{\small
\begin{eqnarray*}
c(K)&=&\frac{4}{n^2-n+2m}, \ or \\
c(K+\frac{1}{2})&=&
\frac{4(n^2-n+2m)}{(n(n-1)+2(m-1))(n(n-1)+2(m+1))}
\end{eqnarray*}
}

By Lemma~\ref{lemma:lp}, if 
$m \le \frac{1}{2}(n^2-n+4)$
then
$Mcut(LP_{n,m})=Ncut(A_1(m),V\setminus A_1(m))$.
That is
$$
Ncut(A_1(m),V\setminus A_1(m))=\frac{n^2-n+2m}{(2m-1)(n^2-n+1)}.
$$
If $m=1$ then
it is easy to verify
$Mcut(LP_{n,1})=Ncut(A_2(1),V\setminus A_2(1))$
$=\frac{n^2-n+2}{(n+1)(n-1)}$.
\begin{theorem}
\label{prop316}
For the graph $LP_{n,m},(n \geq 3$ and $m \geq 1)$,
$Mcut(LP_{n,m})$ is given by,
\[\left \{ \begin{array}{cc}
 \frac{n^2-n+2m}{(2m-1)(n^2-n+1)} & \mbox{$(2\leq  m \leq \frac{n^2-n+4}{2})$,} \\
\frac{4}{(n^2-n+2m)}& \mbox{$(o_1 \wedge m >  \frac{n^2-n+4}{2})$,}\\ 
\frac{4(n^2-n+2m)}{(n(n-1)+2(m-1))(n(n-1)+2(m+1))}
& \mbox{$(o_2 \wedge m >  \frac{n^2-n+4}{2})$,}\\
\frac{n^2-n+2}{(n+1)(n-1)} & \mbox{$(m=1)$,}
\end{array} \right. \]
where
\begin{eqnarray*}
o_1 &=& (\frac{n^2-n+2m+2}{4} \in \mathbf{Z}), \\
o_2 &=& (\frac{n^2-n+2m+2}{4} \not\in \mathbf{Z}).
\end{eqnarray*}
\end{theorem}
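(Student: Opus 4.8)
The plan is to assemble the pieces already prepared above. By the reduction lemma established just before the statement, a subset realizing $Mcut(LP_{n,m})$ may be taken from one of the three families $A_1(\alpha)$, $A_2(\beta)$, or $B(\alpha,\beta)$, so that $Mcut(LP_{n,m})$ equals the smallest $Ncut$ over these families. It therefore suffices to (a) minimize the one-parameter quantity $c(\alpha)=Ncut(A_1(\alpha),V\setminus A_1(\alpha))$ over integers $1\le\alpha\le m$, and (b) show that this minimum never exceeds the $Ncut$ of an $A_2$ or a $B$ cut. The value in each row of the statement will then be precisely the optimal $c(\alpha)$, the degenerate case $m=1$ being handled on its own.

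First I would carry out the $A_1$ optimization. With $s=vol(LP_{n,m})=n^2-n+2m$ and $c(t)=\frac{s}{(s+1-2t)(2t-1)}$, the substitution $u=2t-1$ makes the denominator $u(s-u)$ a downward parabola, maximal at $u=s/2$; hence $c$ is minimized near the point $K=\frac{n^2-n+2m+2}{4}$ and its discrete minimizer is the nearest integer to $K$ whenever this lies in $[1,m]$. The exact discrete dichotomy is supplied by Lemma~\ref{lemma:lp}(1),(2): for $2\le m\le\frac{1}{2}(n^2-n+4)$ the sequence $c(1),\dots,c(m)$ is non-increasing, so the minimizer is $\alpha=m$, giving $c(m)=\frac{n^2-n+2m}{(2m-1)(n^2-n+1)}$, which is row one. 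For $m>\frac{1}{2}(n^2-n+4)$ the nearest integer to $K$ yields $c(K)=\frac{4}{n^2-n+2m}$ when $K\in\mathbf{Z}$ (row two, condition $o_1$) and $c(K\pm\frac{1}{2})=\frac{4s}{(s-2)(s+2)}=\frac{4(n^2-n+2m)}{(n(n-1)+2(m-1))(n(n-1)+2(m+1))}$ when $K\notin\mathbf{Z}$ (row three, condition $o_2$); this is exactly the parity split recorded in the running text.

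Next I would show the optimal $A_1$ cut dominates the other two families. Since the optimal $A_1$ value is at most $c(m)$, Lemma~\ref{lemma:lp}(3), which gives $c(m)\le Ncut(A_2(\beta),V\setminus A_2(\beta))$ for all $\beta$, disposes of the whole $A_2$ family unconditionally. For the $B$ family I would split on $m$: when $m\le\frac{1}{2}(n^2-n+2)$ this is exactly Lemma~\ref{lemma:lp}(4), while for $m\ge\frac{1}{2}(n^2-n+4)$ I would use the crude lower bound coming from Lemma~\ref{lemma:mcut4}(i),
$$
Ncut(A,V\setminus A)=\frac{4 j s}{s^2-(vol(A)-vol(V\setminus A))^2}\ge\frac{4 j}{s},
$$
together with $j=\beta(n-\beta)+2\ge n+1$ for every $B$ cut. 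Because $n(n-1)$ is even, the two thresholds $\frac{1}{2}(n^2-n+2)$ and $\frac{1}{2}(n^2-n+4)$ are consecutive integers, so every $m\ge2$ falls into one of the two ranges and no value is left uncovered. Finally, $LP_{n,1}$ admits no $B$ cut and only the trivial $A_1(1)$, so a direct comparison with $A_2(1)$ gives $Mcut(LP_{n,1})=\frac{n^2-n+2}{(n+1)(n-1)}$, the last row.

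The main obstacle is the $B$-domination in the transitional range $\frac{1}{2}(n^2-n+4)\le m$, where the optimal $A_1$ value may still be $c(m)$ rather than the near-balanced $\frac{4}{s}$; there the crude bound $\frac{4(n+1)}{s}$ must be checked to beat $c(m)$ uniformly in $n$ and $m$, which reduces to a polynomial inequality of the shape $(n^2-n+2)^2\le(n+1)(n^2-n+1)(n^2-n+3)$ at the boundary and must be verified to persist as $m$ grows. Establishing this comparison cleanly, and confirming that the two $B$-cutoffs tile all admissible $m$, is where the real work lies; the remaining content is the parity bookkeeping for $o_1$ versus $o_2$ already performed in the discussion preceding the statement.
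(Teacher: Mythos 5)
Your proposal is correct and follows essentially the same route as the paper: the same reduction to the three families $A_1(\alpha)$, $A_2(\beta)$, $B(\alpha,\beta)$, the same nearest-integer-to-$K$ minimization of $c(\alpha)$ with the parity split giving $o_1$ versus $o_2$, and the same use of Lemma~\ref{lemma:lp} for the monotonicity and the $A_2$/$B$ comparisons. The only (cosmetic) divergence is in the large-$m$ regime, where the paper dismisses all cuts of size at least $2$ in one stroke by observing that a near-balanced cut of size one exists so that Lemma~\ref{genlem4} applies, whereas you bound the $B$-family directly via $Ncut \ge 4j/s$ with $j \ge n+1$ — which is the same inequality underlying Lemma~\ref{genlem4}, and your boundary check $(n^2-n+2)^2\le(n+1)(n^2-n+1)(n^2-n+3)$ does hold and persists as $m$ grows.
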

\hfill\qed


\section{Eigenvalues and eigenvectors of paths and cycles}
In this section,
we derive formulae for the eigenvalues and eigenvectors of cycles and paths
using circulant matrices and give an alternate proof for the eigenvalues
of adjacency matrix of cycles and paths using Chebyshev polynomials.

\subsection{Circulant matrices and eigenvalues of cycles and paths}

Let $\omega_n=\mathbf{e}^{-\frac{2\pi}{n}\mathbf{i}}$
$=\cos\frac{2\pi}{n}+\mathbf{i}\sin\frac{2\pi}{n}$
be a primitive $n$-th root of unity.

\begin{definition}
A circulant matrix $C=(c_{ij})$ is 
a matrix having a form $c_{ij}=c_{(j-i)\mod n}$.
$$
C=\left(
\begin{array}{cccccc}
c_0     & c_1     & c_2    & \cdots & \cdots & c_{n-1} \\
c_{n-1} & c_0     & c_1    & c_2    &        & c_{n-2} \\
\vdots  & c_{n-1} & c_0    & c_1    &        & \vdots \\
\vdots  &         & \ddots & \ddots & \ddots &\vdots \\
\vdots  &         &        & \ddots & \ddots & c_1 \\
c_1     & c_2     & \cdots & \cdots & c_{n-1}& c_0
\end{array}
\right).
$$
\end{definition}

\begin{proposition}
Let $C=(c_{ij})$ be a circulant matrix and $c_{ij}=c_{(j-i)\mod n}$.
For $k=0,\ldots,n-1$, 
we have
$$
C\mathbf{u}_k = \lambda_k \mathbf{u}_k,
$$
where $\displaystyle \lambda_k=\sum_{j=0}^{n-1} c_j (\omega_n^k)^j$,
$\mathbf{u}_k=(u_{ki})$ and $u_{ki}=(\omega_n^k)^i= \cos\frac{2k\pi i}{n}+\mathbf{i}\sin\frac{2k\pi i}{n}$.
\end{proposition}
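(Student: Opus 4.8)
The plan is to verify the eigenvalue equation entrywise by a direct computation, using nothing beyond the circulant structure $c_{ij}=c_{(j-i)\mod n}$ and the fact that $\omega_n^k$ is an $n$-th root of unity. First I would fix $k$ with $0 \le k \le n-1$ and write out the $i$-th entry of the product $C\mathbf{u}_k$ straight from the definitions, namely
$$(C\mathbf{u}_k)_i = \sum_{j=0}^{n-1} c_{ij}\, u_{kj} = \sum_{j=0}^{n-1} c_{(j-i)\mod n}\,(\omega_n^k)^j.$$

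The central step is a change of summation index. Setting $m=(j-i)\mod n$, I observe that as $j$ runs over a complete residue system modulo $n$ so does $m$, so the substitution is a bijection and merely reindexes the sum. Writing $j\equiv i+m \pmod n$ and invoking $(\omega_n^k)^n=\omega_n^{kn}=1$, I can replace $(\omega_n^k)^j$ by $(\omega_n^k)^{i+m}=(\omega_n^k)^i(\omega_n^k)^m$ without any concern for the modular reduction hidden in the exponent. This yields
$$(C\mathbf{u}_k)_i = \sum_{m=0}^{n-1} c_m\,(\omega_n^k)^i (\omega_n^k)^m = (\omega_n^k)^i \sum_{m=0}^{n-1} c_m (\omega_n^k)^m = u_{ki}\,\lambda_k,$$
which is precisely the $i$-th entry of $\lambda_k \mathbf{u}_k$. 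Since $i$ was arbitrary, $C\mathbf{u}_k = \lambda_k \mathbf{u}_k$ for every $k$.

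The one delicate point, and the step I would state most carefully, is the periodicity matching: the subscript of $c$ carries a modular reduction $(j-i)\mod n$, while the exponent of $\omega_n^k$ does not, and the two are reconciled only because $\omega_n^k$ has order dividing $n$, so that $(\omega_n^k)^{(i+m)\mod n}=(\omega_n^k)^{i+m}$. Everything else is bookkeeping, since the sum defining $\lambda_k$ is independent of $i$ and therefore factors out cleanly. I would also remark that the vectors $\mathbf{u}_0,\ldots,\mathbf{u}_{n-1}$ are the columns of the Fourier matrix and hence linearly independent, so these $n$ eigenpairs are in fact a complete eigenbasis, although the statement as given asserts only the eigen-relation.
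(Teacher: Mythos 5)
Your proof is correct and follows essentially the same route as the paper's: a direct entrywise computation of $(C\mathbf{u}_k)_i$ followed by reindexing the sum modulo $n$ and factoring out $(\omega_n^k)^i$. You are somewhat more explicit than the paper about why the modular reduction in the subscript of $c$ is compatible with the unreduced exponent of $\omega_n^k$, but the argument is the same.
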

\begin{proof}
\begin{eqnarray*}
(C \mathbf{u}_k)_i &= & \sum_{j=0}^{n-1} c_{ij} u_{kj} \\
&=& \sum_{j=0}^{n-1} c_{(j-i)\mod n} (\omega_n^k)^j \\
&=& (\omega_n^k)^i \sum_{j=0}^{n-1} c_{(j-i)\mod n} (\omega_n^k)^{j-i} \\
&=& (\omega_n^k)^i \sum_{j=0}^{n-1} c_j (\omega_n^k)^j \\
&=& \lambda_k u_{ki} \\
&=& (\lambda_k \mathbf{u}_k)_i.
\end{eqnarray*}\end{proof}

\begin{proposition}
\begin{enumerate}
\item The eigenvalues of the adjacency matrix of $C_n$ is given by $\displaystyle \lambda_k = 2 \cos (\frac{2k\pi}{n})$,
\item The eigenvalues of the difference Laplacian matrix of $C_n$ is given by $\displaystyle \lambda_k = 2-2 \cos (\frac{2k\pi}{n})$,
\item The eigenvalues of the normalized Laplacian matrix of $C_n$ is given by $\displaystyle \lambda_k = 1- \cos (\frac{2k\pi}{n})$,  and
\item The eigenvalues of the signless Laplacian matrix of $C_n$ is given by $\displaystyle \lambda_k = 2+2 \cos (\frac{2k\pi}{n})$,\\
where $k=(0,\ldots,n-1)$.
\end{enumerate}
\end{proposition}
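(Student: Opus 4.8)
The plan is to recognize that each of the four matrices associated with $C_n$ is a circulant matrix, and then apply the preceding proposition, which gives the eigenvalues of a circulant matrix $C=(c_{ij})$ with $c_{ij}=c_{(j-i)\bmod n}$ as $\lambda_k=\sum_{j=0}^{n-1}c_j(\omega_n^k)^j$. The cyclic symmetry of $C_n$ — namely that $v_i$ is adjacent precisely to $v_{i+1}$ and $v_{i-1}$ with indices taken modulo $n$ (the wraparound edge $(v_1,v_n)$ supplying the corner entries) — makes $A(C_n)$ circulant. Since $C_n$ is $2$-regular we have $D(C_n)=2I$, so the three Laplacian matrices are affine combinations of $A(C_n)$ and $I$ and hence also circulant.

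First I would read off the first-row data $(c_0,c_1,\ldots,c_{n-1})$ for each matrix. For the adjacency matrix $A(C_n)$ the only nonzero entries are $c_1=c_{n-1}=1$; for the difference Laplacian $L(C_n)=D-A$ they are $c_0=2$ and $c_1=c_{n-1}=-1$; for the normalized Laplacian $\mathcal{L}(C_n)$, using $d_i=2$ so that each diagonal entry is $1$ and each off-diagonal weight is $-1/\sqrt{2\cdot 2}=-1/2$, they are $c_0=1$ and $c_1=c_{n-1}=-1/2$; and for the signless Laplacian $SL(C_n)=D+A$ they are $c_0=2$ and $c_1=c_{n-1}=1$.

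Next I would substitute into the circulant formula. In every case only the $j=0$, $j=1$, and $j=n-1$ terms survive, so the one simplification I need is
\[
(\omega_n^k)^{n-1}=\omega_n^{kn}\,\omega_n^{-k}=(\omega_n^n)^k\,\omega_n^{-k}=\omega_n^{-k},
\]
using $\omega_n^n=1$. Combined with $\omega_n^k+\omega_n^{-k}=2\cos\frac{2k\pi}{n}$ (the real parts of $\omega_n^{\pm k}$ coincide while their imaginary parts cancel), this yields $\lambda_k=2\cos\frac{2k\pi}{n}$ for the adjacency matrix, $\lambda_k=2-2\cos\frac{2k\pi}{n}$ for the difference Laplacian, $\lambda_k=1-\cos\frac{2k\pi}{n}$ for the normalized Laplacian, and $\lambda_k=2+2\cos\frac{2k\pi}{n}$ for the signless Laplacian, for $k=0,\ldots,n-1$.

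There is no genuine obstacle: the substance of the argument is the single observation that the matrices of $C_n$ are circulant, after which each claim is a direct substitution. As an alternative for the normalized case I could invoke the earlier regular-graph lemma — since $C_n$ is $2$-regular the normalized eigenvalues are $\mu_k/2$, immediately converting $2-2\cos\frac{2k\pi}{n}$ into $1-\cos\frac{2k\pi}{n}$ — which avoids recomputing from scratch. The only point that demands a little care is the index arithmetic $(\omega_n^k)^{n-1}=\omega_n^{-k}$, applied consistently so that the two symmetric off-diagonal contributions combine into a real cosine rather than being mishandled as independent complex terms.
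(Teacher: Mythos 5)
Your proposal is correct and follows essentially the same route as the paper: identify each matrix as a circulant with first-row entries $c_0$, $c_1=c_{n-1}$, apply the circulant eigenvalue formula, and simplify using $(\omega_n^k)^{n-1}=(\omega_n^k)^{-1}$ and $\omega_n^k+\omega_n^{-k}=2\cos\frac{2k\pi}{n}$. The paper carries out exactly this substitution case by case, so there is nothing missing.
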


\begin{proof}
\begin{enumerate}
\item Let $A$ be an adjacency matrix of a cycle graph with $n$ vertices.
That is $A=(c_{ij})=c_{(j-i)\mod n}$ and $c_0=0$, $c_1=c_{n-1}=1$
and $c_i=0$ for $i=2,\ldots,n-2$.
\begin{eqnarray*}
\lambda_k &=& (\omega_n^k)^1 + (\omega_n^k)^{n-1} \\
&=& (\omega_n^k)^1 + (\omega_n^k)^{-1} \\
&=& 2 \cos (\frac{2k\pi }{n}).
\end{eqnarray*}\hfill\qed
\item Let $L(C_n)$ be the Laplacian matrix of a cycle graph with $n$ vertices.
That is $L(C_n)=(c_{ij})=c_{(j-i)\mod n}$ and $c_0=2$, $c_1=c_{n-1}=-1$
and $c_i=0$ for $i=2,\ldots,n-2$.
\begin{eqnarray*}
\lambda_k &=& 2 - (\omega_n^k)^1 - (\omega_n^k)^{n-1} \\
&=& 2 - ((\omega_n^k)^1 + (\omega_n^k)^{-1}) \\
&=& 2 - 2 \cos (\frac{2k\pi}{n}).
\end{eqnarray*}\hfill\qed
\item Let ${\cal L}(C_n)$ be the normalized 
Laplacian matrix of a cycle graph with $n$ vertices.
That is ${\cal L}(C_n)=(c_{ij})=c_{(j-i)\mod n}$ and $c_0=1$,
$c_1=c_{n-1}=-\frac{1}{2}$ and $c_i=0$ for $i=2,\ldots,n-2$.
\begin{eqnarray*}
\lambda_k &=& 1 - \frac{1}{2} (\omega_n^k)^1 - \frac{1}{2} (\omega_n^k)^{n-1} \\
&=& 1 - \frac{1}{2}( (\omega_n^k)^1 + (\omega_n^k)^{-1}) \\
&=& 1 - \cos (\frac{2k\pi}{n}).
\end{eqnarray*}\hfill\qed

\item 
Let $SL(C_n)$ be the signless Laplacian matrix of a cycle graph with $n$ vertices.
That is $SL(C_n)=(c_{ij})=c_{(j-i)\mod n}$ and $c_0=2$, $c_1=c_{n-1}=1$
and $c_i=0$ for $i=2,\ldots,n-2$.
\begin{eqnarray*}
\lambda_k &=& 2 + (\omega_n^k)^1 + (\omega_n^k)^{n-1} \\
&=& 2 + ((\omega_n^k)^1 + (\omega_n^k)^{-1}) \\
&=& 2 + 2 \cos (\frac{2k\pi}{n}).\hfill\qed
\end{eqnarray*}
\end{enumerate}
\end{proof}

\begin{proposition}
\label{propcycle}
Let $\lambda_k(0\leq k \leq n-1)$ be the $k^{th}$ eigenvalue of an adjacency matrix of $C_n$.
Then $\lambda_k =\lambda_{n-k}$,
for $k=1,\ldots,n-1$.
\end{proposition}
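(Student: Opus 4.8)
The plan is to lean directly on the eigenvalue formula already established in the preceding proposition, so that the statement reduces to a single elementary trigonometric identity. By that result, the $k$-th eigenvalue of the adjacency matrix $A(C_n)$ is $\lambda_k = 2\cos\!\left(\frac{2k\pi}{n}\right)$ for $k = 0,1,\ldots,n-1$. Hence it suffices to compare $\lambda_k$ with $\lambda_{n-k}$ using this closed form, rather than argue anything further about circulant structure.

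First I would write out the index $n-k$ inside the cosine: since $\lambda_{n-k} = 2\cos\!\left(\frac{2(n-k)\pi}{n}\right)$ and $\frac{2(n-k)\pi}{n} = 2\pi - \frac{2k\pi}{n}$, the argument differs from $\frac{2k\pi}{n}$ only by a full turn and a sign. Then I would invoke the identity $\cos(2\pi - \theta) = \cos\theta$ (equivalently, that cosine is even and $2\pi$-periodic) with $\theta = \frac{2k\pi}{n}$ to conclude
$$
\lambda_{n-k} = 2\cos\!\left(2\pi - \frac{2k\pi}{n}\right) = 2\cos\!\left(\frac{2k\pi}{n}\right) = \lambda_k,
$$
valid for every $k = 1,\ldots,n-1$ (and in fact the computation makes sense for all $k$).

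There is no real obstacle here: the entire content is carried by the preceding proposition and the evenness of cosine. If anything, the only point deserving a word of care is that the statement indexes eigenvalues by $k$ in the order dictated by the circulant formula, not by increasing magnitude; so I would phrase the conclusion as an equality of the $k$-th and $(n-k)$-th entries in that natural ordering. One may optionally remark that this symmetry explains the well-known double multiplicity of the adjacency eigenvalues of $C_n$ for $k \neq 0, n/2$, since $\lambda_k = \lambda_{n-k}$ pairs up distinct indices, but that observation is not needed for the proof itself.
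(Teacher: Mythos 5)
Your proposal is correct and follows essentially the same route as the paper: both start from the circulant formula $\lambda_k = 2\cos(2k\pi/n)$ of the preceding proposition and reduce the claim to the identity $\cos(2\pi - \theta) = \cos\theta$. The paper merely tabulates the values $\lambda_0,\ldots,\lambda_{n-1}$ and observes the symmetry, whereas you make the trigonometric step explicit, which is if anything a cleaner write-up of the same argument.
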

\begin{proof}
Eigenvalues of an adjacency matrix of cycle is given by $\displaystyle \lambda_k= 2\cos(\frac{2k\pi}{n})$,
where $k=0,\ldots,n-1$.
\begin{eqnarray*}
\lambda_0 &=& 2,\\
\lambda_1 &=& 2\cos (\frac{2\pi}{n}),\\
\lambda_2 &=& 2\cos (\frac{4\pi}{n}),\\
& \vdots &\\
\lambda_{n-2} &=& 2\cos (\frac{4\pi}{n}),  \\
\lambda_{n-1} &=& 2\cos (\frac{2\pi}{n}). 
\end{eqnarray*}
This shows that $\lambda_k =\lambda_{n-k}$ for $k=1,\ldots,n-1$. 
\end{proof}

\begin{proposition}
\label{prop:path}
\begin{enumerate}
\item The eigenvalues of an adjacency matrix of a path graph $P_n$ are given by $\displaystyle \lambda_k(A(P_n))= 2\cos(\frac{(k+1) \pi}{n+1}),(k=0, \ldots, n-1)$ and an eigenvector $\mathbf{u_k}$ is given by $\displaystyle (u_{ki})=\sin \frac{(i+1)(k+1) \pi}{n+1},(i=0,\ldots,n-1)$ and $(k=0,\ldots,n-1)$.
\item The eigenvalues of difference Laplacian matrix of $P_n$ are given by $\displaystyle \lambda_k(L(P_n))=2-2\cos(\frac{k \pi}{n})$,
$(k=0, \ldots, n-1)$ and its eigenvector $\mathbf{u_k}$ is given by $\displaystyle (u_{ki})=\cos\left(\frac{(2i+1)k \pi}{2n}\right)(i=0,\ldots,n-1)$.
\item The eigenvalues of normalized Laplacian matrix of a path $P_n$ are given by $\displaystyle \lambda_k(\mathcal{L}(P_n))=1-\cos(\frac{k \pi}{n-1})$
$(k=0, \ldots, n-1)$ and its eigenvector $\mathbf{u_k}$ is given by 
\[ u_{ki}=\left\{\begin{array}{cc}
 \sqrt{2} \cos\left(\frac{2 i \pi k}{2n-2}\right) & \mbox{$i=1,\ldots,n-2$,}\\
 \cos(\frac{2 i \pi k}{2n-2}) & \mbox{$i=0$ and $i=n-1$.}
 \end{array} \right. \]
\item The eigenvalues of signless Laplacian matrix of $P_n$ are given by $\displaystyle \lambda_k(SL(P_n))=2+2\cos(\frac{(k+1) \pi}{n})$,
$(k=0, \ldots, n-1)$ and its eigenvector $\mathbf{u_k}$ is given by $\displaystyle (u_{ki})= \sin(\frac{(2i+1)k \pi}{2n}),(i=0,\ldots,n-1)$.
\end{enumerate}
\end{proposition}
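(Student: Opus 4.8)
The plan is to treat all four matrices uniformly. Each of $A(P_n)$, $L(P_n)$, $SL(P_n)$ and $\mathcal{L}(P_n)$ is a symmetric tridiagonal matrix whose interior rows encode one and the same three–term recurrence and whose only irregularity sits in the two boundary rows. Writing an eigenvector as $(u_0,\ldots,u_{n-1})$, the interior equations reduce in every case to $u_{i-1}+u_{i+1}=\rho\, u_i$ for $1\le i\le n-2$, where $\rho=\lambda$ for $A(P_n)$, $\rho=2-\lambda$ for $L(P_n)$, and $\rho=\lambda-2$ for $SL(P_n)$. For $\mathcal{L}(P_n)$ I would first invoke Lemma~\ref{lema1} and pass to the equivalent generalized problem $A y=(1-\lambda)D y$, whose interior rows again give $y_{i-1}+y_{i+1}=2(1-\lambda)y_i$. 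Setting $\rho=2\cos\theta$, the general solution of this recurrence is a sinusoid, so I would try a one–parameter ansatz $u_i$ with a fixed phase tailored to each matrix.

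First I would fix the phase so that the \emph{left} boundary row holds identically. For $A(P_n)$ the endpoint equation $u_1=\lambda u_0$ is satisfied by $u_i=\sin\bigl((i+1)\theta\bigr)$; for $L(P_n)$ the Neumann–type equation $(1-\lambda)u_0=u_1$ by $u_i=\cos\bigl(\frac{(2i+1)\theta}{2}\bigr)$; for $SL(P_n)$ the equation $u_1=(\lambda-1)u_0$ by $u_i=\sin\bigl(\frac{(2i+1)\theta}{2}\bigr)$; and for the generalized normalized problem the equation $y_1=(1-\lambda)y_0$ by $y_i=\cos(i\theta)$. In each case the verification is a single application of a product–to–sum identity together with $\rho=2\cos\theta$, and the special phase is exactly what absorbs the modified diagonal entry at the degree–one endpoint.

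Next I would impose the \emph{right} boundary row, which is the step that quantizes $\theta$. Substituting the chosen ansatz and simplifying with the same product–to–sum identities collapses that endpoint equation to a clean condition: $\sin\bigl((n+1)\theta\bigr)=0$ for $A(P_n)$, $\sin(n\theta)=0$ for $L(P_n)$ and $SL(P_n)$, and $\sin\bigl((n-1)\theta\bigr)=0$ for the generalized normalized problem. These force $\theta_k=\frac{(k+1)\pi}{n+1}$, $\theta_k=\frac{k\pi}{n}$, and $\theta_k=\frac{k\pi}{n-1}$ respectively, from which the eigenvalue formulae follow by $\lambda_k=2\cos\theta_k$, $2-2\cos\theta_k$, $2+2\cos\theta_k$, and $1-\cos\theta_k$. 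For the sine ansatze I would discard the values of $k$ giving the zero vector (for $SL(P_n)$ this removes $\theta=0$ and forces the reindexing that produces the factor $k+1$), leaving exactly $n$ admissible indices. Since the $\theta_k$ lie in $(0,\pi)$ and are distinct, the resulting eigenvalues are distinct; an $n\times n$ symmetric matrix with $n$ distinct eigenvalues has these sinusoidal vectors as a complete eigenbasis. This also recovers Lemma~\ref{lemasimple} for $\mathcal{L}(P_n)$ as a special case.

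Finally, for the normalized Laplacian I would translate back: if $y$ solves $Ay=(1-\lambda)Dy$, then $x=D^{1/2}y$ is the corresponding eigenvector of $\mathcal{L}(P_n)=D^{-1/2}(D-A)D^{-1/2}$. Because $d_i=2$ at the $n-2$ interior vertices and $d_i=1$ at the two endpoints, multiplication by $D^{1/2}$ inserts precisely the factor $\sqrt{2}$ at the interior coordinates and $1$ at the endpoints, reproducing the stated piecewise form once one rewrites $\frac{2i\pi k}{2n-2}=\frac{ik\pi}{n-1}$. The main obstacle I anticipate is purely the boundary bookkeeping: the phase in each ansatz must be chosen so that one endpoint equation becomes an identity, and the trigonometric collapse of the other endpoint equation to a single sine must then be carried out separately for $A$, $L$, $SL$ and the degree–weighted normalized problem, since the boundary diagonal entries and hence the resulting quantization lengths $n+1$, $n$, $n$ and $n-1$ differ.
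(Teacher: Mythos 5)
Your argument is correct, but it takes a genuinely different route from the paper. The paper proves all four parts by \emph{folding a cycle onto the path}: it takes the explicit eigenvectors of $A(C_{2n+2})$, $L(C_{2n})$, $\mathcal{L}(C_{2n-2})$ and $SL(C_{2n})$ obtained from the circulant computation, observes that certain coordinates vanish (adjacency), coincide (difference Laplacian), or are negatives of each other (signless), and checks that the restriction to the first $n$ coordinates --- after an ad hoc $\tfrac{1}{\sqrt{2}}$ rescaling of the interior entries in the normalized case --- satisfies the path equations; completeness then follows from distinctness of the $n$ eigenvalues, exactly as in your last step. You instead solve the three-term recurrence directly with a phase-shifted sinusoidal ansatz and let the two boundary rows do the work: the left row fixes the phase, the right row quantizes $\theta$, and the lengths $n+1$, $n$, $n$, $n-1$ appear transparently as boundary effects rather than as a choice of which cycle to fold. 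Your treatment of $\mathcal{L}(P_n)$ via the generalized problem $Ay=(1-\lambda)Dy$ and the substitution $x=D^{1/2}y$ also \emph{explains} the $\sqrt{2}$ factor that the paper introduces by hand, and your method is closer in spirit to the paper's own tridiagonal/Chebyshev machinery in the following subsections. Two small points of care: for $L$, $SL$ and $\mathcal{L}$ the admissible $\theta_k$ include the endpoints $0$ and/or $\pi$ (not only the open interval $(0,\pi)$), though distinctness of the eigenvalues on $[0,\pi]$ still holds; and for $SL(P_n)$ the right boundary row actually collapses to $\sin(n\theta)\cos(\theta/2)=0$, which on $(0,\pi]$ yields the same set $\theta=j\pi/n$, $j=1,\dots,n$, as your stated condition, so your count of $n$ admissible indices is unaffected.
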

\begin{proof}
\begin{enumerate}
\item 
Let $\mathbf{u}=(u_i), (i=0,\ldots,n-1)$ be an eigenvector for an eigenvalue $\lambda$ of path $P_n$.
Then, we can write 
$$
P_n \mathbf{u} = 
\left(
\begin{array}{cccccc}
0   & 1    &0    & \cdots & \cdots & 0 \\
1 & 0    & 1    & 0   &        & \vdots \\
0 & 1 & 0    & 1    &        & \vdots \\
\vdots  &         & \ddots & \ddots & \ddots &\vdots \\
\vdots  &         &        & \ddots & \ddots & 1 \\
0    & 0     & \cdots & \cdots & 1& 0
\end{array}
\right)
\left(
\begin{array}{c} u_0 \\ u_1\\ u_2\\ \vdots\\\vdots\\u_{n-1} 
\end{array}
\right)
= \lambda \mathbf{u}.
$$
Then we have the following equations:
\begin{eqnarray}
\label{eqcy1}
u_1 &=& \lambda u_0, \nonumber\\
u_0+u_2&=&\lambda u_1, \nonumber\\
u_1+u_3 &=& \lambda u_2,\\
& \vdots &  \nonumber\\
u_{n-2}&=&\lambda u_{n-1}.\nonumber
\end{eqnarray}

Let $\mathbf{u}'=(u'_i), (i=0,\ldots,2n+1)$ be an eigenvector of $C_{2n+2}$,
where $\displaystyle (u'_i)=\sin \left(\frac{2(i+1)(k+1)\pi}{2n+2}\right), (i=0,\ldots,2n+1)$ and $(k=0,\ldots,2n+1)$.
The eigenvalues of an adjacency matrix of a cycle $C_{2n+2}$ are $\displaystyle \lambda_k=2\cos\left(\frac{(k+1) \pi}{n+1}\right),(k=0,\ldots,2n+1)$.
We note that $u'_n=u'_{2n+1}=0$.
Hence we can write the equation $C_{2n+2} \mathbf{u}' =\lambda_k \mathbf{u}'$ as

$$
\left(
\begin{array}{cccccccc}
 0 & 1 & 0 & \cdots & \cdots &\cdots& \cdots & 1 \\
 1 & 0 & 1 & \cdots & \cdots &\cdots&\cdots  &0 \\
0 & \ddots & \ddots& \ddots & &   & &\vdots \\
 \vdots &  &  1 & 0 & 1 &   & &\vdots\\
 \vdots &  &   &  1 & 0 & 1  & &\vdots\\
  \vdots &&      &  & \ddots & \ddots & \ddots & \vdots \\
 \vdots &  &  &    & &1 & 0 & 1  \\
 1 & 0 & \cdots & \cdots & \cdots & \cdots & 1 & 0
\end{array}
\right)
\left(
\begin{array}{c} u'_0 \\  \vdots \\ u'_{n-1} \\ 0 \\ u'_{n+1} \\ \vdots \\u'_{2n} \\ 0
\end{array}
\right)$$\\
$= \lambda_k \mathbf{u}'$.

Then we have the following equations:
\begin{eqnarray}
\label{eqcy2}
u'_1 &=& \lambda_k u'_0, \nonumber \\
u'_0+u'_2 &=& \lambda_k u'_1, \nonumber \\
 & \vdots & \\
u'_{n-2} &=& \lambda_k u'_{n-1} \nonumber.
\end{eqnarray}

Comparing Equation~\ref{eqcy1} with Equation~\ref{eqcy2},
we have $P_n \mathbf{u}=\lambda_k \mathbf{u}$,
 where $\mathbf{u}=(u'_i) (i=0,\ldots,n-1)$.
That is $\lambda_k, (k=0,\ldots,n-1)$ are eigenvalues of $P_n$ and $\mathbf{u}$ is an eigenvector of $\lambda_k$.
Since $\lambda_i \neq \lambda_j$ for ($ i \neq j$ and  $0 \leq i,j \leq n-1)$, 
we have $n$ different eigenvalues of $P_n$ and that is the complete set of eigenvalues of $P_n$.

\item 
Let $\mathbf{u}=(u_i), (i=0,\ldots,n-1)$ be an eigenvector for an eigenvalue $\lambda$ of difference Laplacian matrix $L(P_n)$.
Then we can write the equation $ L(P_n) \mathbf{u} = \lambda \mathbf{u}$ as 
\[\left(
\begin{array}{cccccc}
1  & -1    & 0    & \cdots & \cdots & 0 \\
-1 & 2     & -1   & 0   &        & 0 \\
0 & -1 & 2   & -1    &        & \vdots \\
\vdots  &         & \ddots & \ddots & \ddots &\vdots \\
\vdots  &         &        & -1 & 2 & -1 \\
0   & 0    & \cdots & \cdots & -1& 1
\end{array}
\right)
\left(
\begin{array}{c} u_0 \\ u_1 \\ \vdots\\ \vdots\\ u_{n-2}\\ u_{n-1}
\end{array}
\right)
= \lambda \mathbf{u}. \]

Then we have the following equations.
\begin{eqnarray}
\label{eqcy3}
u_0 - u_1 &=& \lambda u_0, \nonumber\\
-u_0 + 2u_1 - u_2 &=& \lambda u_1, \nonumber\\
& \vdots & \\
-u_{n-2} + u_{n-1} &=& \lambda u_{n-1}.\nonumber
\end{eqnarray}

Let $\mathbf{u}'=(u'_i), (i=0,\ldots,2n-1)$ be an eigenvector of difference Laplacian matrix of $C_{2n}$,
where $\displaystyle ({u}'_i)=\cos \left(\frac{(2i+1)k\pi}{2n}\right), (i=0,\ldots,2n-1)$ and $(k=0,\ldots,2n-1)$.
The eigenvalues of $L(C_{2n})$ are $\displaystyle \lambda_k=2-2\cos(\frac{k \pi}{n}),(k=0,\ldots,2n-1)$.
We note that $u'_0=u'_{2n-1}, u'_1=u'_{2n-2},\ldots, u'_{n-1}=u'_n$.

Then we can write the equation $L(C_{2n}) \mathbf{u}'=\lambda_k \mathbf{u}'$ as
$$
\left(
\begin{array}{cccccc}
2  & -1    & 0    & \cdots & \cdots & -1 \\
-1 & 2     & -1   & 0   &        & 0 \\
0 & -1 & 2   & -1    &        & \vdots \\
\vdots  &         & \ddots & \ddots & \ddots &\vdots \\
\vdots  &         &        & -1 & 2 & -1 \\
-1  & 0    & \cdots & \cdots & -1& 2
\end{array}
\right)
\left(
\begin{array}{c} u'_0 \\ u'_1 \\ \vdots  \\\vdots\\ u'_{2n-2} \\ u'_{2n-1}
\end{array}
\right)
= \lambda_k \mathbf{u}'.
$$

\begin{eqnarray}
\label{eqcy4}
2u'_0 -u'_1 -u'_{2n-1} &=& u'_0 - u'_1 = \lambda_k u'_0,  \nonumber\\
-u'_0 + 2u'_1 - u'_2 &=& \lambda_k u'_1,  \nonumber\\
& \vdots & \\
-u'_{n-2}+ 2u'_{n-1} - u'_{n} &=& -u'_{n-2} + u'_{n-1} = \lambda_k u'_{n-1}. \nonumber
\end{eqnarray}

Comparing Equation~\ref{eqcy3} and Equation~\ref{eqcy4},
we have $P_n \mathbf{u}=\lambda_k \mathbf{u}$,
where $\mathbf{u}=(u'_i), (i=0,\ldots,n-1)$.
That is $\lambda_k, (k=0,\ldots,n-1)$ are eigenvalues of $P_n$ and $\mathbf{u}$ is an eigenvector of $\lambda_k$.
Since $\lambda_i \neq \lambda_j$ for ($ i \neq j$ and  $0 \leq i,j \leq n-1$), 
we have $n$ different eigenvalues of $P_n$ and that is the complete set of eigenvalues of $P_n$.
\item
Let $\mathbf{u}=(u_i), (i=0,\ldots,n-1)$ be an eigenvector for an eigenvalue $\lambda$ of normalized Laplacian matrix of path $P_n$.
Then we can write the equation $ \mathcal{L}(P_n) \mathbf{u} = \lambda \mathbf{u}$ as 
$$
\left(
\begin{array}{cccccc}
1 & -\frac{1}{\sqrt{2}} & 0 &\cdots & \cdots & 0 \\
-\frac{1}{\sqrt{2}}& 1 & -\frac{1}{2} & 0 & &\vdots \\
0& \ddots &\ddots& \ddots & & \vdots\\
\vdots&  &\ddots& \ddots & \ddots  & \vdots\\
\vdots &  & & -\frac{1}{2} &  1 & -\frac{1}{\sqrt{2}} \\
\vdots & \cdots&\cdots & \cdots & -\frac{1}{\sqrt{2}} & 1
\end{array}
\right)
\left(
\begin{array}{c} u_0 \\ u_1 \\ \vdots\\ \vdots \\ u_{n-2} \\ u_{n-1}
\end{array}
\right)$$\\
$= \lambda \mathbf{u}.$

By expanding this we have the following equations.
\begin{eqnarray}
\label{eqcy5}
u_0 - \frac{1}{\sqrt{2}}u_1 &=& \lambda u_0, \nonumber\\
- \frac{1}{\sqrt{2}} u_0 + u_1 -\frac{1}{2} u_2 &=& \lambda u_1, \nonumber\\
& \vdots &\\
-\frac{1}{2} u_{n-3} + u_{n-2} - \frac{1}{\sqrt{2}} u_{n-1} &=& \lambda u_{n-2}, \nonumber\\
-\frac{1}{\sqrt{2}} u_{n-2} + u_{n-1} &=& \lambda u_{n-1}.\nonumber
\end{eqnarray}
Let $\mathbf{u}'=(u'_i), (i=0,\ldots,2n-3)$ be an eigenvector of normalized Laplacian matrix of $C_{2n-2}$,
where $(u'_i)=\cos (\frac{2 i k\pi}{2n-2}), (i=0,\ldots,2n-3)$ and $\lambda_k=1-\cos(\frac{2k \pi}{2n-2}),(k=0,\ldots,2n-3)$ be its eigenvalue.
We note that $u'_1=u'_{2n-3}, u'_2=u'_{2n-4},\ldots,u'_{n-2}=u'_n$.
Then we multiply each of these values by $\frac{1}{\sqrt{2}}$ and obtain the vector,
$u'_0, \frac{1}{\sqrt{2}} u'_1,\frac{1}{\sqrt{2}} u'_2,\ldots,\frac{1}{\sqrt{2}}u'_{n-2}, u'_{n-1},\frac{1}{\sqrt{2}}u'_{n},\ldots,\frac{1}{\sqrt{2}} u'_{2n-3}$.
We can write ${\cal L}(C_{2n-2}) \mathbf{u}' =\lambda_k \mathbf{u}'$ as 
$$
\left(
\begin{array}{cccccc}
1 & -\frac{1}{2} & \cdots & \cdots&\cdots&  -\frac{1}{2} \\
-\frac{1}{2}& 1  & -\frac{1}{2}&  &  &\vdots\\
\vdots & \ddots & \ddots & \ddots & &\vdots\\
\vdots & & \ddots & \ddots & \ddots & \vdots \\
\vdots &  &  &-\frac{1}{2} & 1 & -\frac{1}{2} \\
-\frac{1}{2} & \cdots & \cdots & \cdots & -\frac{1}{2} & 1
\end{array}
\right)
\left(
\begin{array}{c} u'_0 \\ \frac{1}{\sqrt{2}}u'_1 \\ \vdots\\
 \\  u'_{n-1} \\\vdots\\ \frac{1}{\sqrt{2}} u'_{2n-3}
\end{array}
\right)$$\\
$= \lambda_k \mathbf{u}'$.

By expanding we have,
{\small {\begin{eqnarray}
\label{eqcy6}
u'_0 - \frac{1}{2}\frac{1}{\sqrt{2}}u'_1 -
 \frac{1}{2}\frac{1}{\sqrt{2}}u'_1
&=& u'_0 - \frac{1}{\sqrt{2}}u'_1  \nonumber\\
&=& \lambda_k u'_0, \nonumber\\
-\frac{1}{2} u'_0 + \frac{1}{\sqrt{2}} u'_1
 -\frac{1}{2}\frac{1}{\sqrt{2}}u'_2
&=& \frac{1}{\sqrt{2}}(-\frac{1}{\sqrt{2}}u'_0 +  u'_1-\frac{1}{2}u'_2) \nonumber\\
&=& \lambda_k(\frac{1}{\sqrt{2}}u'_1), \\
& \vdots & \nonumber\\
-\frac{1}{2} \frac{1}{\sqrt{2}}u'_{n-3} + \frac{1}{\sqrt{2}} u'_{n-2}
 -\frac{1}{2}u'_{n-1}
& =&  \lambda_k(\frac{1}{\sqrt{2}}u'_{n-2}), \nonumber\\
-\frac{1}{2} \frac{1}{\sqrt{2}}u'_{n-2} + u'_{n-1} 
 -\frac{1}{2}\frac{1}{\sqrt{2}}u'_{n-2} 
&=& -\frac{1}{\sqrt{2}}u'_{n-2} + u'_{n-1} \nonumber\\
&=& \lambda_k u'_{n-1}. \nonumber
\end{eqnarray}}}
Comparing Equation~\ref{eqcy5} and Equation~\ref{eqcy6},
we have $P_n \mathbf{u}=\lambda_k \mathbf{u}$,
where $\mathbf{u}=(u'_0,\sqrt{2} u'_1,\ldots,\sqrt{2}u'_{n-2},u'_{n-1})$.
That is $\lambda_k, (k=0,\ldots,n-1)$ are eigenvalues of $P_n$ and $\mathbf{u}$ is an eigenvector of $\lambda_k$.
Since $\lambda_i \neq \lambda_j$ for ($ i \neq j$ and  $0 \leq i,j \leq n-1$),
we have $n$ different eigenvalues of $P_n$ and that is the complete set of eigenvalues of $P_n$.
\item
Let $\mathbf{u}=(u_i), (i=0,\ldots,n-1)$ be an eigenvector for an eigenvalue $\lambda$ of signless Laplacian matrix of path $P_n$.
Then we can write the equation $SL(P_n)\mathbf{u}=\lambda \mathbf{u}$ as

\[ 
\left(
\begin{array}{cccccc}
1  & 1    &0    & \cdots & \cdots & 0 \\
1 & 2   & 1    & 0   &        & \vdots \\
0  & 1 & 2    & 1    &        & \vdots \\
\vdots  &         & \ddots & \ddots & \ddots &\vdots \\
\vdots  &         &        & \ddots & \ddots & 1 \\
0    & 0     & \cdots & \cdots & 1& 1
\end{array}
\right)
\left(
\begin{array}{c}
u_0 \\  u_1  \\   \vdots  \\  \vdots \\ u_{n-2}\\  u_{n-1} 
\end{array}
\right)\\
= \lambda \mathbf{u}\].

\begin{eqnarray}
\label{eqcy7}
u_0 + u_1 &=& \lambda u_0, \nonumber\\
u_0 + 2u_1 + u_2 &=& \lambda u_1, \nonumber\\
& \vdots &\\
u_{n-2} + u_{n-1} &=& \lambda u_{n-1}.\nonumber
\end{eqnarray}
Let $\mathbf{u}'=(u'_i), (i=0,\ldots,2n-1)$ be an eigenvector of signless Laplacian matrix of $C_{2n}$,
where $\displaystyle (u'_i)=\sin \frac{(2i+1)k\pi}{2n}, (i=0,\ldots,2n-1)$ and $\displaystyle \lambda_k=2+2\cos(\frac{(k+1) \pi}{2n}),(k=0,\ldots,2n-1)$ be its eigenvalue.
We note that $u'_0=-u'_{2n-1}, u'_1=-u'_{2n-2},\ldots,u'_{n-1}=-u'_n$.
Then we can write the equation $ SL(C_{2n}) \mathbf{u}'=\lambda_k \mathbf{u}'$ as
$$
\left(
\begin{array}{cccccc}
2  & 1    & 0    & \cdots & \cdots & 1 \\
1 & 2     & 1   & 0   &        & 0 \\
0  & 1 & 2   & 1    &        & \vdots \\
\vdots  &         & \ddots & \ddots & \ddots &\vdots \\
\vdots  &         &        & 1 & 2 & 1 \\
1  & 0    & \cdots & \cdots & 1& 2
\end{array}
\right)
\left(
\begin{array}{c} u'_0 \\ u'_1 \\ \vdots  \\\vdots\\ u'_{2n-2} \\ u'_{2n-1}
\end{array}
\right)\\
= \lambda_k \mathbf{u}'.
$$

\begin{eqnarray}
\label{eqcy8}
2u'_0 +u'_1-u'_0 &=& u'_0+ u'_1 = \lambda_k u'_0, \nonumber\\
u'_0 + 2u'_1 + u'_2 &=& \lambda_k u'_1, \nonumber\\
& \vdots & \\
u'_{n-2}+ 2u'_{n-1} - u'_{n-1} &=& u'_{n-2} + u'_{n-1} = \lambda_k u'_{n-1}. \nonumber
\end{eqnarray}

Comparing Equation~\ref{eqcy7} and Equation~\ref{eqcy8},
we have $P_n \mathbf{u}=\lambda_k \mathbf{u}$,
where $\mathbf{u}=(u'_i), (i=0,\ldots,n-1)$.
That is $\lambda_k, (k=0,\ldots,n-1)$ are eigenvalues of $P_n$ and $\mathbf{u}$ is an eigenvector of $\lambda_k$.
Since $\lambda_i \neq \lambda_j$ for $( i \neq j$ and  $0 \leq i,j \leq n-1)$, 
we have $n$ different eigenvalues of $P_n$ and that is the complete set of signless eigenvalues of $P_n$. 
\end{enumerate}
\end{proof}

\subsection{Tridiagonal Matrices}

In this section,
 we derive eigenvalues of adjacency matrices of paths and cycles using Chebyshev polynomials.
\begin{definition}
Let $T_0(x)=1$ and $U_0(x)=0$.
For $n \in {\mathbf{N}}$, $T_n(x)$ and $U_n(x)$ are defined by
$$
\left(\begin{array}{c} T_{n+1}(x)\\U_{n+1}(x) \end{array} \right)
=
\left(\begin{array}{cc}
x & x^2-1 \\ 1 & x \end{array} \right)
\left(\begin{array}{c} T_{n}(x)\\U_{n}(x) \end{array} \right).
$$
We call $T_n(x)$ as 
the Chebyshev polynomials of the first kind,
and $U_n(x)$ as
the Chebyshev polynomials of the second kind.
\end{definition}

\begin{example}
By using the above definition we have,
\begin{eqnarray*}
\left(\begin{array}{c} T_{1}(x)\\U_{1}(x) \end{array} \right)
&=&
\left(\begin{array}{cc}
x & x^2-1 \\ 1 & x \end{array} \right)
\left(\begin{array}{c} 1 \\ 0 \end{array} \right)
=
\left(\begin{array}{c} x \\ 1 \end{array} \right), \\
\left(\begin{array}{c} T_{2}(x)\\U_{2}(x) \end{array} \right)
&=&
\left(\begin{array}{cc}
x & x^2-1 \\ 1 & x \end{array} \right)
\left(\begin{array}{c} x \\ 1 \end{array} \right)
=
\left(\begin{array}{c} 2x^2-1 \\ 2x \end{array} \right), \\
\left(\begin{array}{c} T_{3}(x)\\U_{3}(x) \end{array} \right)
&=&
\left(\begin{array}{cc}
x & x^2-1 \\ 1 & x \end{array} \right)
\left(\begin{array}{c} 2x^2-1 \\ 2x \end{array} \right)\\
&~&=
\left(\begin{array}{c} 4x^3-3x \\ 4x^2-1 \end{array} \right). \\
\end{eqnarray*}
\end{example}

\begin{proposition}
\label{propcheb}
$T_0(x)=1$, $T_1(x)=x$, $U_0(x)=0$, $U_1(x)=1$,
\begin{eqnarray*}
T_{n+1}(x)&=& 2xT_n(x)-T_{n-1}(x), \mbox{\ and\ } \\
U_{n+1}(x)&=& 2xU_n(x)-U_{n-1}(x).
\end{eqnarray*}
\end{proposition}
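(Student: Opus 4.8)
The plan is to read off the initial values directly and then obtain both three-term recurrences simultaneously from a single matrix identity. Writing $M=\left(\begin{array}{cc} x & x^2-1 \\ 1 & x \end{array}\right)$ for the transfer matrix, the defining relation reads $\left(\begin{array}{c} T_{n+1}(x) \\ U_{n+1}(x) \end{array}\right) = M \left(\begin{array}{c} T_n(x) \\ U_n(x) \end{array}\right)$, so iterating from the base vector $\left(\begin{array}{c} T_0 \\ U_0 \end{array}\right) = \left(\begin{array}{c} 1 \\ 0 \end{array}\right)$ gives $\left(\begin{array}{c} T_n \\ U_n \end{array}\right) = M^n \left(\begin{array}{c} 1 \\ 0 \end{array}\right)$. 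First I would verify $T_0=1$, $U_0=0$, $T_1=x$, $U_1=1$, which follow by substituting $n=0$ into the definition (this is already the content of the preceding Example).

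The key step is to compute the characteristic polynomial of $M$. Since $\det(M-\lambda I) = (x-\lambda)^2 - (x^2-1) = \lambda^2 - 2x\lambda + 1$, the Cayley--Hamilton theorem yields $M^2 - 2xM + I = 0$, that is $M^2 = 2xM - I$. Multiplying on the right by $M^{n-1}$ gives $M^{n+1} = 2xM^n - M^{n-1}$ for $n\ge 1$, and applying both sides to $\left(\begin{array}{c} 1 \\ 0 \end{array}\right)$ produces $\left(\begin{array}{c} T_{n+1} \\ U_{n+1} \end{array}\right) = 2x\left(\begin{array}{c} T_n \\ U_n \end{array}\right) - \left(\begin{array}{c} T_{n-1} \\ U_{n-1} \end{array}\right)$, which is exactly the two claimed recurrences read componentwise.

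Alternatively, I could avoid Cayley--Hamilton and argue purely by manipulating the two scalar equations coming from the matrix product, namely $T_{n+1} = xT_n + (x^2-1)U_n$ and $U_{n+1} = T_n + xU_n$. The crux of that route is the auxiliary identity $T_n = xU_n - U_{n-1}$, which I would establish by induction from the scalar equations (the base case $n=1$ reads $T_1 = x = xU_1 - U_0$). Once it is available, substituting into $U_{n+1}=T_n+xU_n$ gives the $U$-recurrence at once, and the parallel computation $xT_n-(x^2-1)U_n = U_n - xU_{n-1} = T_{n-1}$ gives the $T$-recurrence.

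Neither route presents a genuine obstacle: the only things to watch are the index bookkeeping (the recurrences hold for $n\ge 1$, with $n=0$ covered by the base cases) and the correct evaluation of the $2\times 2$ determinant that produces the coefficient $2x$ and the constant $+1$. I expect the characteristic-polynomial computation to be the single spot where a sign slip is easiest, so I would double-check that $(x-\lambda)^2-(x^2-1)$ indeed simplifies to $\lambda^2-2x\lambda+1$ and not to some variant before invoking Cayley--Hamilton.
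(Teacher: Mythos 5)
Your proof is correct. The paper states this proposition without proof (it is simply followed by a \qed), so there is no argument of record to compare against; either of your two routes settles it. The Cayley--Hamilton route is clean: the determinant computation $\det(M-\lambda I)=(x-\lambda)^2-(x^2-1)=\lambda^2-2x\lambda+1$ is right, so $M^2=2xM-I$, and right-multiplying by $M^{n-1}$ and applying both sides to $(1,0)^T$ gives both recurrences for $n\ge 1$, with the initial values read off from $n=0$. The scalar route also works, and is even slightly simpler than you suggest: the auxiliary identity $T_{n+1}=xU_{n+1}-U_n$ needs no induction, since $xU_{n+1}-U_n=x(T_n+xU_n)-U_n=xT_n+(x^2-1)U_n=T_{n+1}$ follows directly from the two defining scalar equations; substituting it into $U_{n+1}=T_n+xU_n$ gives the $U$-recurrence, and $T_{n-1}=U_n-xU_{n-1}$ (the second scalar equation shifted down) gives the $T$-recurrence as you describe.
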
\hfill\qed

\begin{proposition}
\begin{eqnarray*}
\cos n\theta &=& T_n(\cos \theta), \\
\sin n\theta &=& U_n(\cos \theta) \sin \theta.
\end{eqnarray*}
\end{proposition}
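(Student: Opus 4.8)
The plan is to establish both identities simultaneously by induction on $n$, carrying as the inductive object the pair $T_n(\cos\theta)=\cos n\theta$ together with $U_n(\cos\theta)\sin\theta=\sin n\theta$. The reason for pairing them is that the defining matrix recurrence couples $T_{n+1}$ and $U_{n+1}$ to both $T_n$ and $U_n$, so neither identity can be advanced in isolation. The key algebraic observation is that at $x=\cos\theta$ the entry $x^2-1$ of the defining matrix becomes $\cos^2\theta-1=-\sin^2\theta$, and the resulting $2\times 2$ matrix is exactly the one implementing the angle-addition formulas.

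For the base case $n=0$ I would simply check, from $T_0(x)=1$ and $U_0(x)=0$, that $T_0(\cos\theta)=1=\cos 0$ and $U_0(\cos\theta)\sin\theta=0=\sin 0$.

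For the inductive step, assume $T_n(\cos\theta)=\cos n\theta$ and $U_n(\cos\theta)\sin\theta=\sin n\theta$. The defining matrix recurrence gives $T_{n+1}(\cos\theta)=\cos\theta\,T_n(\cos\theta)+(\cos^2\theta-1)U_n(\cos\theta)$. I would rewrite the second term as $-\sin^2\theta\,U_n(\cos\theta)=-\sin\theta\,\bigl(U_n(\cos\theta)\sin\theta\bigr)=-\sin\theta\sin n\theta$, so that $T_{n+1}(\cos\theta)=\cos\theta\cos n\theta-\sin\theta\sin n\theta=\cos((n+1)\theta)$ by the cosine addition formula. Likewise, $U_{n+1}(\cos\theta)\sin\theta=\bigl(T_n(\cos\theta)+\cos\theta\,U_n(\cos\theta)\bigr)\sin\theta=\cos n\theta\sin\theta+\cos\theta\,\bigl(U_n(\cos\theta)\sin\theta\bigr)=\sin\theta\cos n\theta+\cos\theta\sin n\theta=\sin((n+1)\theta)$ by the sine addition formula. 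This closes the induction.

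The step requiring the most care is not any single computation but the bookkeeping choice to carry the second identity in the multiplied-through form $U_n(\cos\theta)\sin\theta=\sin n\theta$ rather than $U_n(\cos\theta)=\sin n\theta/\sin\theta$. Keeping everything polynomial in $\cos\theta$ and $\sin\theta$ avoids dividing by $\sin\theta$ (which vanishes at $\theta=k\pi$), and it is precisely the factor $\cos^2\theta-1=-\sin^2\theta$ that supplies the one power of $\sin\theta$ needed to turn $U_n(\cos\theta)$ into $\sin n\theta$. With this phrasing the two claims fall out immediately from the defining matrix recurrence and the standard addition formulas for sine and cosine.
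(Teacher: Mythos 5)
Your proof is correct and complete: the simultaneous induction on the pair $(T_n(\cos\theta),\,U_n(\cos\theta)\sin\theta)$ closes cleanly, since the entry $x^2-1=-\sin^2\theta$ of the defining matrix supplies exactly the factor needed to invoke the addition formulas, and keeping the second identity in the multiplied-through form correctly sidesteps any division by $\sin\theta$. The paper states this proposition without proof (it is cited as a standard fact), so there is no argument to compare against; your induction is the natural way to derive it directly from the paper's matrix definition of $T_n$ and $U_n$, and it is consistent with the paper's indexing convention $U_0=0$, $U_1=1$.
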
\hfill\qed

We note that the degree of the polynomial $T_n(x)$ is $n$
and the degree of the polynomial $U_n(x)$ is $n-1$ for $n \ge 2$.

\begin{proposition}
Let $x=\cos\theta$ and $n \ge 2$.
Then 
\begin{eqnarray*}
T_{n}(x)=0 &\Leftrightarrow&
x=\cos(\frac{(2k+1)\pi}{2 n}) \ \ (k=0,\ldots,n-1). \\
U_{n}(x)=0 &\Leftrightarrow&
x=\cos(\frac{k\pi}{n}) \ \ (k=1,\ldots,n-1). \\
\end{eqnarray*}
\end{proposition}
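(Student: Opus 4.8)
The plan is to read off both equivalences directly from the identities $\cos n\theta = T_n(\cos\theta)$ and $\sin n\theta = U_n(\cos\theta)\sin\theta$ established in the preceding proposition, and then to close the argument with a degree count so that no zeros are missed. The key facts I will lean on are that $\deg T_n = n$ and $\deg U_n = n-1$ (noted just before the statement), and that $\cos$ maps $[0,\pi]$ bijectively and strictly monotonically onto $[-1,1]$.

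First I would treat $T_n$. Writing $x=\cos\theta$, the identity gives $T_n(x)=\cos n\theta$, so $T_n(x)=0$ holds exactly when $\cos n\theta=0$, i.e. when $n\theta=\frac{(2k+1)\pi}{2}$ for some integer $k$, equivalently $\theta=\frac{(2k+1)\pi}{2n}$. Restricting $\theta$ to $[0,\pi]$ (harmless, by the bijectivity just recalled), the admissible integers are precisely $k=0,1,\ldots,n-1$, and the strict monotonicity of $\cos$ makes the resulting values $x=\cos\frac{(2k+1)\pi}{2n}$ pairwise distinct. This exhibits $n$ distinct real zeros of $T_n$; since $\deg T_n=n$, they are all of them, which proves the first equivalence.

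Next I would handle $U_n$ in the same spirit. On the open interval $\theta\in(0,\pi)$ we have $\sin\theta\neq0$, so the identity $\sin n\theta=U_n(\cos\theta)\sin\theta$ shows that $U_n(\cos\theta)=0$ iff $\sin n\theta=0$, i.e. $n\theta=k\pi$, i.e. $\theta=\frac{k\pi}{n}$ with $k=1,\ldots,n-1$. These give $n-1$ pairwise distinct values $x=\cos\frac{k\pi}{n}$ in $(-1,1)$, again by monotonicity of $\cos$. Since $\deg U_n=n-1$, these exhaust the zeros of $U_n$, giving the second equivalence.

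The only delicate point is the pair of endpoints $\theta=0,\pi$ (that is $x=\pm1$) in the $U_n$ case: there the identity degenerates to $0=U_n(\pm1)\cdot0$ and yields no information, so one must make sure $\pm1$ are not extra roots. The cleanest resolution is exactly the degree count just used: having already produced $n-1=\deg U_n$ distinct roots inside $(-1,1)$, no further root — in particular neither $+1$ nor $-1$ — can exist. Thus the degree argument does double duty, both certifying completeness and ruling out the boundary values, so no separate evaluation of $U_n(\pm1)$ is needed.
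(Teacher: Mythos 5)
Your proof is correct. The paper states this proposition without proof (it is closed with \qed immediately), and your argument — reading the zeros off from the identities $T_n(\cos\theta)=\cos n\theta$ and $U_n(\cos\theta)\sin\theta=\sin n\theta$ established just before, then using the degree counts $\deg T_n=n$, $\deg U_n=n-1$ to certify completeness and, in the $U_n$ case, to rule out the uninformative endpoints $x=\pm 1$ — is exactly the standard argument the paper implicitly relies on.
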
\hfill\qed

The determinant of tridiagonal matrices can be represented by using recurrence relations. 
We consider tridiagonal matrices with similar diagonal elements.
Then we derive a formula for eigenvalues of tridiagonal matrices.
\begin{definition}
A $n\times n$ tridiagonal matrix $A_n=(a_{ij})$ is a matrix which has the form
$$
A_n = \left(
\begin{array}{ccccc}
\alpha_1 & \beta_1 &   0     & \cdots & 0\\
\gamma_1 & \alpha_2 & \beta_2 & \ddots & \vdots \\
   0     & \gamma_2 & \ddots &  \ddots &  0 \\
 \vdots  & \ddots   & \ddots & \alpha_{n-1} & \beta_{n-1} \\
   0     & \cdots   &   0    & \gamma_{n-1} & \alpha_{n}
\end{array}
\right).
$$
\end{definition}

\begin{proposition}\label{prop:tridiagonal}
Let $n \ge 2$, $|A_0|=1$, and $|A_1|=\alpha_1$.
Then we have,
$$
|A_n| = \alpha_{n} |A_{n-1}| -\beta_{n-1}\gamma_{n-1}|A_{n-2}|.
$$
\end{proposition}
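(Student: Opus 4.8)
The plan is to compute $|A_n|$ by Laplace (cofactor) expansion along the last row, exploiting the fact that in a tridiagonal matrix this row has only two nonzero entries. First I would observe that the $n$-th row of $A_n$ consists of $\gamma_{n-1}$ in column $n-1$, $\alpha_n$ in column $n$, and zeros elsewhere. Expanding along this row therefore produces only two terms,
$$
|A_n| = (-1)^{n+n}\alpha_n M_{n,n} + (-1)^{n+(n-1)}\gamma_{n-1} M_{n,n-1},
$$
where $M_{n,n}$ and $M_{n,n-1}$ denote the minors obtained by deleting row $n$ together with column $n$ and column $n-1$, respectively.

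The first minor is immediate: deleting the last row and last column of $A_n$ leaves exactly the top-left tridiagonal block, so $M_{n,n}=|A_{n-1}|$, and since $(-1)^{2n}=1$ this term equals $\alpha_n|A_{n-1}|$. For the second term I would expand once more. Deleting row $n$ and column $n-1$ leaves an $(n-1)\times(n-1)$ matrix whose last column (inherited from column $n$ of $A_n$) contains a single nonzero entry $\beta_{n-1}$ in its bottom row, because the only nonzero entries of column $n$ were $\beta_{n-1}$ and the now-removed $\alpha_n$. Expanding this minor along its last column picks out $\beta_{n-1}$, sitting in the bottom-right corner with sign $(-1)^{2(n-1)}=1$, and leaves the top-left $(n-2)\times(n-2)$ tridiagonal block, so $M_{n,n-1}=\beta_{n-1}|A_{n-2}|$. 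Substituting this together with $(-1)^{2n-1}=-1$ into the displayed expansion yields the claimed recurrence $|A_n| = \alpha_n |A_{n-1}| - \beta_{n-1}\gamma_{n-1}|A_{n-2}|$.

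The main (and essentially only) obstacle is the sign bookkeeping across the two successive expansions; everything else is forced by the tridiagonal structure and requires no computation. I would present the stated base values $|A_0|=1$ and $|A_1|=\alpha_1$ as conventions, so that the two-step recurrence is valid for every $n\ge 2$; in particular the case $n=2$ reduces directly to $|A_2|=\alpha_2\alpha_1-\beta_1\gamma_1$, which matches the formula and can serve as a sanity check.
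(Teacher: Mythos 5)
Your proof is correct: the two-step cofactor expansion (first along the last row, then along the last column of the $(n,n-1)$ minor) is the standard argument, and your sign bookkeeping $(-1)^{2n}=1$, $(-1)^{2n-1}=-1$, $(-1)^{2(n-1)}=1$ is right. The paper states this proposition without proof, so there is nothing to compare against; your argument is exactly the canonical one and the $n=2$ sanity check confirms the base conventions $|A_0|=1$, $|A_1|=\alpha_1$.
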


\begin{proposition}
Eigenvalues of adjacency matrix of a path graph are given by $\displaystyle \lambda_k(A(P_n))=2 \cos(\frac{k \pi}{n+1}) (k=1,\ldots,n)$.
\end{proposition}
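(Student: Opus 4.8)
The plan is to compute the characteristic polynomial of $A(P_n)$ as a tridiagonal determinant and identify it with a Chebyshev polynomial of the second kind. The matrix $xI - A(P_n)$ is tridiagonal with every diagonal entry equal to $x$ and every off-diagonal entry equal to $-1$; in the notation of Proposition~\ref{prop:tridiagonal} this means $\alpha_i = x$ and $\beta_i = \gamma_i = -1$, so that $\beta_{n-1}\gamma_{n-1} = 1$. Writing $D_n(x) = \det(xI - A(P_n))$ and applying Proposition~\ref{prop:tridiagonal} gives the recurrence $D_n(x) = x D_{n-1}(x) - D_{n-2}(x)$ with initial values $D_0(x) = 1$ and $D_1(x) = x$.

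Next I would match this recurrence against the Chebyshev recurrence of Proposition~\ref{propcheb}. Setting $Q_n(x) = U_{n+1}(x/2)$ and using $U_{n+1}(p) = 2p\,U_n(p) - U_{n-1}(p)$ with $p = x/2$, one obtains $Q_n(x) = x\,Q_{n-1}(x) - Q_{n-2}(x)$, together with $Q_0(x) = U_1(x/2) = 1$ and $Q_1(x) = U_2(x/2) = x$. Since $D_n$ and $Q_n$ satisfy the same second-order recurrence and agree for $n = 0$ and $n = 1$, a straightforward induction yields the identity $\det(xI - A(P_n)) = U_{n+1}(x/2)$.

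Finally, the eigenvalues of $A(P_n)$ are precisely the roots of $D_n$. By the proposition on the roots of $U_m$ (applied with $m = n+1$), the equation $U_{n+1}(x/2) = 0$ holds exactly when $x/2 = \cos\!\left(\frac{k\pi}{n+1}\right)$ for $k = 1, \ldots, n$, that is, $\lambda_k = 2\cos\!\left(\frac{k\pi}{n+1}\right)$. Because $\frac{k\pi}{n+1} \in (0,\pi)$ for these values of $k$ and the cosine is injective there, these $n$ numbers are distinct; hence they form the complete spectrum of the symmetric $n \times n$ matrix $A(P_n)$.

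I expect the only delicate point to be the index bookkeeping: the shift by one between $D_n$ and $U_{n+1}$ and the halving of the argument, together with the verification that the two seed values align. Once the recurrences and their initial conditions are seen to coincide, both the polynomial identification and the extraction of the roots are routine, so no substantial obstacle remains.
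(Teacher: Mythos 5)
Your proposal is correct and follows essentially the same route as the paper: both expand $\det(\lambda I - A(P_n))$ via the tridiagonal recurrence of Proposition~\ref{prop:tridiagonal}, identify the result with $U_{n+1}(\lambda/2)$ by matching the recurrence and the seed values, and read off the eigenvalues from the known zeros of the Chebyshev polynomial of the second kind. The only addition beyond the paper's argument is your explicit remark that the $n$ roots are distinct, which is a harmless (and welcome) extra check.
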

\begin{proof}
The matrix $\lambda I - P_n$ is a tridiagonal matrix with
$\alpha_i=\lambda$,
 $\beta_i=\gamma_i=-1$.
Let $f_n(\lambda)=|\lambda I - P_n|$.
By Proposition~\ref{prop:tridiagonal}, 
$f_n(\lambda)$ is defined by 
$f_n(\lambda)=\lambda f_{n-1}(\lambda)-f_{n-2}(\lambda)$,
where $f_0(\lambda)=1$ and $f_1(\lambda)=\lambda$.
Let $\displaystyle g_n(\lambda)=U_{n+1}(\frac{\lambda}{2})$.
Then
\begin{eqnarray*}
g_0(\lambda) &=& U_{1}(\frac{\lambda}{2}) = 1, \\
g_1(\lambda) &=& U_{2}(\frac{\lambda}{2}) = 2(\frac{\lambda}{2}) =
 \lambda,  \\
g_n(\lambda) &=& U_{n+1}(\frac{\lambda}{2}) \\
&=&
 2\frac{\lambda}{2}U_n(\frac{\lambda}{2})-U_{n-1}(\frac{\lambda}{2}) \ (by  \ prop. ~\ref{propcheb} )\\
&=&
\lambda g_{n-1}(\lambda) -g_{n-2}(\lambda).
\end{eqnarray*}
Then we have $\displaystyle
f_n(\lambda)=g_n(\lambda)=U_{n+1}(\frac{\lambda}{2})$.
That is
\begin{eqnarray*}
f_n(\lambda)=0 & \Leftrightarrow& U_{n+1}(\frac{\lambda}{2})=0. \\
&\Leftrightarrow&  \frac{\lambda}{2}=\cos(\frac{k\pi}{n+1}) \ (k=1,\ldots,n).\\
&\Leftrightarrow&  \lambda = 2\cos(\frac{k\pi}{n+1}) \ (k=1,\ldots,n).\\
\end{eqnarray*}
Thus we obtain the result.\end{proof}

\begin{proposition}
Eigenvalues of adjacency matrix of a cycle are given by $\lambda_k(A(C_n)) =2\cos(\frac{2k\pi}{n})(k=1,\ldots,n)$.
\end{proposition}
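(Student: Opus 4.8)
The plan is to mimic the path proof given in Proposition~\ref{prop:path}: express the characteristic polynomial $D_n(\lambda)=\det(\lambda I - A(C_n))$ in terms of Chebyshev polynomials and read off its roots. The only new feature compared with $P_n$ is that $\lambda I - A(C_n)$ is \emph{not} tridiagonal; it is tridiagonal with two extra corner entries, namely $-1$ in positions $(1,n)$ and $(n,1)$, arising from the edge $(v_1,v_n)$. So the entire difficulty is to handle these two corners and reduce everything back to the path determinant $f_n(\lambda)=|\lambda I - P_n|=U_{n+1}(\lambda/2)$ established in the proof of Proposition~\ref{prop:path}.

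First I would expand $D_n(\lambda)$ by cofactors along the first row, whose only nonzero entries are $\lambda$ in column $1$ and $-1$ in columns $2$ and $n$. The column-$1$ cofactor is immediately the path determinant $f_{n-1}(\lambda)$, since deleting row $1$ and column $1$ removes both corners and leaves $\lambda I - P_{n-1}$. For the two corner cofactors (columns $2$ and $n$) I would expand each once more along the column that still carries the surviving corner entry; this splits each into a genuine path block of size $n-2$ (contributing $f_{n-2}$) and a triangular block whose diagonal is all $-1$ (contributing a pure sign). Tracking the cofactor signs carefully, both corner contributions turn out to equal $-f_{n-2}(\lambda)-1$, so that
\[
D_n(\lambda)=\lambda f_{n-1}(\lambda)-2f_{n-2}(\lambda)-2.
\]
Using the path recurrence $f_n=\lambda f_{n-1}-f_{n-2}$ from the proof of Proposition~\ref{prop:path}, this simplifies to $D_n(\lambda)=f_n(\lambda)-f_{n-2}(\lambda)-2=U_{n+1}(\lambda/2)-U_{n-1}(\lambda/2)-2$. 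The Chebyshev identity $U_{n+1}(\cos\theta)-U_{n-1}(\cos\theta)=\frac{\sin((n+1)\theta)-\sin((n-1)\theta)}{\sin\theta}=2\cos(n\theta)=2T_n(\cos\theta)$ then gives the compact form $D_n(\lambda)=2T_n(\lambda/2)-2$.

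Finally I would solve $D_n(\lambda)=0$. Writing $\lambda=2\cos\theta$ and using $T_n(\cos\theta)=\cos(n\theta)$, the equation becomes $\cos(n\theta)=1$, i.e.\ $\theta=\frac{2k\pi}{n}$, which yields the $n$ eigenvalues $\lambda_k=2\cos(\frac{2k\pi}{n})$ for $k=1,\ldots,n$ (the value $k=n$ recovering $\lambda=2$). I expect the genuine obstacle to be the bookkeeping in the two-step cofactor expansion of the corner minors: the triangular blocks carry $(-1)^{n}$-type signs that must combine \emph{exactly} so that all $n$-dependent signs cancel and leave the clean constant $-2$. A small verification at $n=3$, where direct expansion gives $D_3(\lambda)=\lambda^3-3\lambda-2$ and indeed $2T_3(\lambda/2)-2=\lambda^3-3\lambda-2$, is a convenient sanity check before asserting the general identity.
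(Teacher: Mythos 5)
Your proof is correct and follows essentially the same route as the paper: both reduce the problem to the identity $\vert\lambda I - A(C_n)\vert = 2\left(T_n\!\left(\tfrac{\lambda}{2}\right)-1\right)$ and then solve $T_n(\cos\theta)=1$ to obtain $\lambda_k = 2\cos\!\left(\tfrac{2k\pi}{n}\right)$. The only difference is that you actually derive this determinant identity (correctly --- the two corner cofactors each contribute $-f_{n-2}(\lambda)-1$, giving $D_n=\lambda f_{n-1}-2f_{n-2}-2=U_{n+1}(\tfrac{\lambda}{2})-U_{n-1}(\tfrac{\lambda}{2})-2=2T_n(\tfrac{\lambda}{2})-2$), whereas the paper simply asserts it without proof.
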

\begin{proof}
The matrix $\lambda I - C_n$ is not a tridiagonal matrix.
But we have
$\displaystyle |\lambda I - C_n|=2(T_n(\frac{\lambda}{2})-1)$.
Since
$T_n(x)=1$
$\Leftrightarrow$
$\cos n\theta=1$
$\Leftrightarrow$
$\displaystyle \theta = \frac{2k\pi}{n}$.
We obtain
$$
|\lambda I - C_n|=\prod_{k=1}^n(\lambda -  2\cos(\frac{2k\pi}{n})).
$$ 
\end{proof}

\begin{proposition}
Let $P_n=(V_n,E_n)$ be a path graph. 
If  $G=(V_n,E_n \cup \{(v_1,v_1)\},\{(v_n,v_n)\})$ then
 the eigenvalues of Laplacian matrix of $G$ are given by $\displaystyle \lambda_k = a+2\cos(\frac{k\pi}{n+1}),(k=1,\ldots,n)$.
\end{proposition}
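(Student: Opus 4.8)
The plan is to reduce the statement to the tridiagonal-determinant computation already carried out for the adjacency matrix of $P_n$, exploiting that $G$ is obtained from $P_n$ only by adjoining self-loops, so that its governing matrix $M$ is the path adjacency matrix with a constant $a$ installed along the diagonal. Writing $M$ out, the characteristic matrix $\lambda I - M$ is tridiagonal with every diagonal entry equal to $\lambda - a$ and every off-diagonal entry equal to $-1$; in particular the two boundary rows, which carry the self-loops at $v_1$ and $v_n$, must end up with the same diagonal value $\lambda - a$ and the same off-diagonal magnitude as the interior rows. This uniformity is exactly what lets a single three-term recurrence govern the determinant, in complete parallel with the earlier treatment of $A(P_n)$.

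First I would set $f_n(\lambda) = |\lambda I - M|$ and apply Proposition~\ref{prop:tridiagonal}. Since $\beta_i \gamma_i = (-1)(-1) = 1$ and $\alpha_i = \lambda - a$ throughout, this yields $f_n(\lambda) = (\lambda - a) f_{n-1}(\lambda) - f_{n-2}(\lambda)$ with base cases $f_0(\lambda) = 1$ and $f_1(\lambda) = \lambda - a$. Next I would compare this with the shifted Chebyshev polynomial $g_n(\lambda) = U_{n+1}(\frac{\lambda - a}{2})$. Using Proposition~\ref{propcheb} one checks $g_0 = U_1(\frac{\lambda-a}{2}) = 1$, $g_1 = U_2(\frac{\lambda-a}{2}) = \lambda - a$, and $g_n(\lambda) = (\lambda - a) g_{n-1}(\lambda) - g_{n-2}(\lambda)$, so $f_n$ and $g_n$ obey the same recurrence with the same initial data and therefore coincide. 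Hence $f_n(\lambda) = U_{n+1}(\frac{\lambda - a}{2})$.

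To finish, I would invoke the characterization of the zeros of $U_{n+1}$ established earlier, namely that $U_{n+1}(x) = 0$ holds exactly for $x = \cos(\frac{k\pi}{n+1})$ with $k = 1,\ldots,n$. Substituting $x = \frac{\lambda - a}{2}$ gives $\frac{\lambda - a}{2} = \cos(\frac{k\pi}{n+1})$, that is $\lambda_k = a + 2\cos(\frac{k\pi}{n+1})$; these $n$ values are distinct and so exhaust the spectrum. Equivalently, the entire argument is just the path-adjacency computation with $\lambda$ replaced by $\lambda - a$, i.e.\ the observation that the spectrum of $M$ is that of $A(P_n)$ translated by $a$.

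The step I expect to require the most care is the very first one: verifying that the self-loops at $v_1$ and $v_n$ really do leave $\lambda I - M$ with a genuinely constant diagonal $\lambda - a$ and unit off-diagonals all the way out to the two boundary rows. If the self-loop contributions failed to equalize the boundary diagonal entries with the interior ones, the determinant would no longer satisfy the single uniform recurrence above and would not collapse to $U_{n+1}$; once this boundary uniformity is confirmed, the Chebyshev identification and the conclusion are routine.
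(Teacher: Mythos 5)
Your proposal is correct and follows essentially the same route as the paper: reduce $\lambda I - M$ to a tridiagonal matrix with constant diagonal $\lambda-a$, derive the three-term recurrence via Proposition~\ref{prop:tridiagonal}, identify the determinant with $U_{n+1}\left(\frac{\lambda-a}{2}\right)$, and read off the roots. In fact you state the recurrence $f_n(\lambda)=(\lambda-a)f_{n-1}(\lambda)-f_{n-2}(\lambda)$ in the correct form (the paper's version contains a typographical slip), and your closing remark about checking that the self-loops equalize the boundary diagonal entries is exactly the point the paper's argument implicitly relies on.
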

\begin{figure}[htb]
\begin{center}
\includegraphics[scale=0.5]{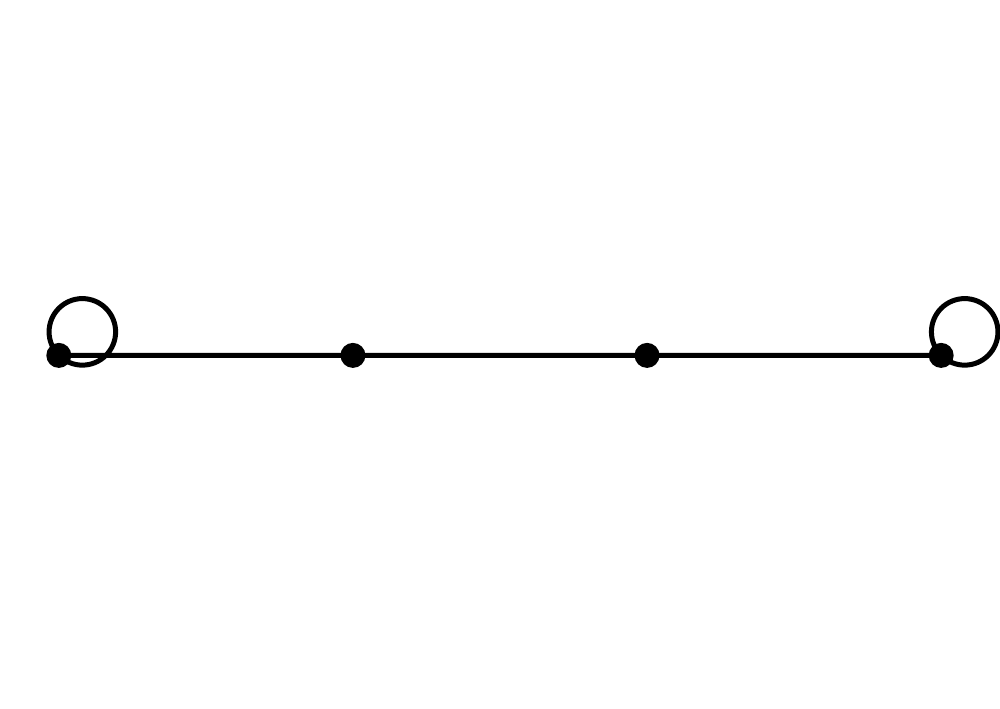}
\caption{Path graph with equal vertex degrees.}
\label{fig:fig1}
\end{center}
\end{figure}
\begin{proof} 
Let $L(P_n)$ the Laplacian matrix of a path graph with vertex weight $a$ on $n$ vertices.
The matrix $\lambda I - L(P_n)$ is a tridiagonal matrix with
$\alpha_i=\lambda-a$, $\beta_i=\gamma_i=-1$.
Let $f_n(\lambda)=|\lambda I - L(P_n)|$ and
$f_n(\lambda)$ is defined by 
$f_n(\lambda)=\lambda f_{n-1}(\lambda-a)-f_{n-2}(\lambda)$,
where $f_0(\lambda)=1$ and $f_1(\lambda)=\lambda-a$.
Let $\displaystyle g_n(\lambda)=U_{n+1}(\frac{\lambda-a}{2})$.
Since
\begin{eqnarray*}
g_0(\lambda) &=& U_{1}(\frac{\lambda-a}{2}) = 1, \\
g_1(\lambda) &=& U_{2}(\frac{\lambda-a}{2}) = 2(\frac{\lambda-a}{2}) =\lambda-a
 , and \\
g_n(\lambda) &=& U_{n+1}(\frac{\lambda-a}{2}) \\
&=&
 2(\frac{\lambda-a}{2})U_n(\frac{\lambda-a}{2})-U_{n-1}(\frac{\lambda-a}{2}) \\
&=&
(\lambda-a) g_{n-1}(\lambda) -g_{n-2}(\lambda),
\end{eqnarray*}
we have $\displaystyle
f_n(\lambda)=g_n(\lambda)=U_{n+1}(\frac{\lambda-a}{2})$.
That is
\begin{eqnarray*}
f_n(\lambda)=0 &\Leftrightarrow& U_{n+1}(\frac{\lambda-a}{2})=0 \\
&\Leftrightarrow&  \frac{\lambda-a}{2}=\cos(\frac{k\pi}{n+1}) \ (k=1,\ldots,n)\\
&\Leftrightarrow&  \lambda = a+2\cos(\frac{k\pi}{n+1}) \ (k=1,\ldots, n).\\
\end{eqnarray*}\end{proof}
\subsection{Determinant of tridiagonal matrices}
Let $n \ge 2$. 
We define a $n \times n$ matrix $A_n(a,b)$ as follows:
$$
A_n(a,b)=\left(
\begin{array}{ccccccc}
a      & b & 0 & \cdots & \cdots & \cdots& 0 \\
b      & a & b & 0      &        &          & \vdots  \\
0      & b & a & b      &   0    &          & \vdots  \\
\vdots & \ddots  & \ddots  & \ddots & \ddots  & \ddots  & \vdots  \\
\vdots &   & 0 &   b    &   a    &   b      & 0 \\
\vdots &   &   &   0    &   b    & a        & b \\
 0      & \cdots  & \cdots  & \cdots &   0    & b        & a
\end{array}
\right).
$$
\begin{lemma}
Let $n \ge 2$ and $a \not= 2b$.
$$
|A_n(a,b)|=b^n \cdot \frac{\sin(n+1)\theta}{\sin \theta},
$$
where $\displaystyle \frac{a}{2b}=\cos \theta$ ($0 < \theta < 2\pi$).
\end{lemma}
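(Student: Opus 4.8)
The plan is to set $D_n = |A_n(a,b)|$ and first reduce the determinant to a simple linear recurrence, then recognize that recurrence as the one satisfied by the Chebyshev polynomials of the second kind. Since $A_n(a,b)$ is tridiagonal with $\alpha_i = a$ and $\beta_i = \gamma_i = b$, so that $\beta_{n-1}\gamma_{n-1} = b^2$, Proposition~\ref{prop:tridiagonal} immediately gives
$$
D_n = a\,D_{n-1} - b^2\,D_{n-2}, \qquad D_0 = 1,\quad D_1 = a.
$$

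Next I would write down the candidate closed form and show it obeys the same recurrence and the same initial data. Putting $x = \frac{a}{2b} = \cos\theta$ and using the identity $\sin(n+1)\theta = U_{n+1}(\cos\theta)\sin\theta$, the claimed value rewrites purely polynomially as
$$
b^n\,\frac{\sin(n+1)\theta}{\sin\theta} = b^n\,U_{n+1}\!\left(\frac{a}{2b}\right) =: E_n.
$$
The initial conditions match at once: $U_1(x)=1$ gives $E_0 = 1 = D_0$, and $U_2(x)=2x$ gives $E_1 = b\cdot\frac{a}{b} = a = D_1$. For the recurrence I would substitute the Chebyshev relation $U_{n+1}(x) = 2xU_n(x) - U_{n-1}(x)$ from Proposition~\ref{propcheb} at $x=\frac{a}{2b}$, multiply through by $b^n$, and collect powers of $b$ to get $E_n = a\,E_{n-1} - b^2\,E_{n-2}$, which is identical to the recurrence for $D_n$. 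A straightforward induction on $n$ then yields $D_n = E_n$ for all $n\ge 2$, and hence for all $n\ge 0$.

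The computation itself is routine, so the only point needing care — the part I would treat as the real (if mild) obstacle — is the passage between the polynomial form $b^n U_{n+1}(\frac{a}{2b})$ and the trigonometric form $b^n\frac{\sin(n+1)\theta}{\sin\theta}$. The hypothesis $a \neq 2b$ forces $\cos\theta \neq 1$, so $\theta \neq 0$ and the ratio $\frac{\sin(n+1)\theta}{\sin\theta}$ is well defined; the restriction $0 < \theta < 2\pi$ merely selects a branch, and the two values of $\theta$ corresponding to a given $\cos\theta$ yield the same ratio because numerator and denominator change sign together. Accordingly I would run the entire induction at the level of the polynomial identity $D_n = b^n U_{n+1}(\frac{a}{2b})$, which holds unconditionally, and invoke the trigonometric identity $\sin(n+1)\theta = U_{n+1}(\cos\theta)\sin\theta$ only at the last step to recover the stated form.
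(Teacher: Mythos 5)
Your proof is correct and follows essentially the same route as the paper: both rest on the tridiagonal recurrence $D_n = aD_{n-1} - b^2 D_{n-2}$ from Proposition~\ref{prop:tridiagonal} together with the Chebyshev recurrence $U_{n+1}(x)=2xU_n(x)-U_{n-1}(x)$, yielding $|A_n(a,b)|=b^nU_{n+1}\!\left(\frac{a}{2b}\right)$ before converting to the trigonometric form. The only cosmetic difference is that the paper first reduces to the case $b=1$ via the scaling $|A_n(a,b)|=b^n|A_n(\frac{a}{b},1)|$ and runs the recurrence there, whereas you carry the general $b$ through the induction directly; your added remarks on well-definedness of the ratio $\frac{\sin(n+1)\theta}{\sin\theta}$ are a small bonus the paper omits.
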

\noindent
\begin{proof}
By Proposition~17, we have
$|A_n(a,1)| = a |A_{n-1}(a,1)| - |A_{n-2}(a,1)|$.
Let $|A_0(a,1)|=1$ and $|A_1(a,1)|=a$.
Since
$U_1(x)=1$, $U_2(x)=2x$ and $U_{n+1}=2xU_n(x)-U_{n-1}(x)$,
we have
$|A_n(a,1)|$ $\displaystyle =U_{n+1}\left(\frac{a}{2}\right)$
and $\displaystyle A_n(a,1)=\frac{\sin(n+1)\theta}{\sin \theta}$,
where $\displaystyle \frac{a}{2}$ $=\cos \theta$.
Since $\displaystyle A_n(a,b)=b^n \cdot A_n\left(\frac{a}{b},1\right)$,
we have
$$
|A_n(a,b)| = b^n \cdot \left\vert
        A_n\left(\frac{a}{b},1\right)\right\vert \\
= b^n \cdot U_{n+1}\left(\frac{a}{2b}\right) \\
= b^n \frac{\sin(n+1)\alpha}{\sin \alpha}
$$
where $\displaystyle \frac{a}{2b}=\cos \alpha$.
\end{proof}

\begin{example}
\begin{description}
\item[(i)]
$\displaystyle \left\vert A_n\left(\lambda-1,\frac{1}{2}\right)
\right\vert =
\left(\frac{1}{2}\right)^n \frac{\sin (n+1) \alpha}{\sin \alpha}$,
where $\displaystyle \lambda=1+\cos \alpha$.
\item[(ii)]
$\displaystyle \left\vert
     A_n\left(\eta-\frac{2}{3},\frac{1}{3}\right)
\right\vert =
\left(\frac{1}{3}\right)^n \frac{\sin (n+1) \beta}{\sin \beta}$,
where $\displaystyle \eta= \frac{2}{3}(1 +  \cos \beta)$.
\end{description}
\end{example}

Let $n \ge 3$.
We define a $n \times n$ matrix $B_n(a_0,b_0,a,b)$
and $C_n(a,b,a_0,b_0)$ as follows:
\begin{eqnarray*}
B_n(a_0,b_0,a,b) &=&
\left(
\begin{array}{c|cccc}
a_0 & b_0 & 0  & \cdots & 0 \\
\hline
b_0 &     &      &    &   \\
0   &     & \multicolumn{2}{c}{A_{n-1}(a,b)} & \\
\vdots &  &            &&  \\
0    &    &           & &  
\end{array}
\right) \\
C_n(a,b,a_0,b_0) &=&
\left(
\begin{array}{cccc|c}
 &  &   &  & 0 \\
 & \multicolumn{2}{c}{A_{n-1}(a,b)}  &  & \vdots \\
 &  & & & 0 \\
 &  &    & & b_0  \\
\hline
0  & \cdots   & 0 & b_0 & a_0  
\end{array}
\right) \\
\end{eqnarray*}

We note that
\begin{eqnarray*}
\vert B_n(a_0,b_0,a,b) \vert &=&
a_0 \vert A_{n-1}(a,b) \vert - b_0^2 \vert A_{n-2}(a,b)\vert, \mbox{\ and } \\
\vert C_n(a,b,a_0,b_0) \vert &=&
a_0 \vert A_{n-1}(a,b) \vert - b_0^2 \vert A_{n-2}(a,b)\vert . \\
\end{eqnarray*}

We define functions
\begin{eqnarray*}
g_{n}(\beta)&=&2 \sin ((n+1)\beta) + \sin n\beta -\sin ((n-1)\beta),
 \mbox{\ and}\\
h_{n}(\gamma)&=&2 \sin ((n+1)\gamma) - \sin n\gamma -\sin ((n-1)\gamma)
\end{eqnarray*}
before introducing the next Lemma.

\begin{lemma}
Let $n \ge 3$. 
\begin{enumerate}
\item
$\displaystyle
\left\vert B_n(\lambda-1,\frac{1}{\sqrt{2}},\lambda-1,\frac{1}{2})
\right\vert = \frac{1}{2^{n-1}} \cos n \alpha$,
where $\lambda=1+\cos \alpha$.
\item
$\displaystyle
\left\vert C_n(\eta-\frac{2}{3}, \frac{1}{3},\eta-\frac{1}{2},
\frac{1}{\sqrt{6}}) \right\vert = \frac{1}{2\cdot 3^{n} \cdot \sin
\beta} g_{n}(\beta)$,
where $\displaystyle \eta = \frac{2}{3}(1+\cos \beta)$.
\item
$\displaystyle
\left\vert C_n(\mu-\frac{4}{3}, \frac{1}{3},\mu-\frac{3}{2},
\frac{1}{\sqrt{6}}) \right\vert = \frac{1}{2\cdot 3^{n} \cdot \sin
\gamma} h_{n}(\gamma)$,
where $\displaystyle \mu = \frac{2}{3}(2+\cos \gamma)$.
\end{enumerate}
\end{lemma}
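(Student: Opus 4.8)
The plan is to treat all three identities uniformly, reducing each to the closed form $\vert A_n(a,b)\vert = b^n \sin((n+1)\theta)/\sin\theta$ (with $a/(2b)=\cos\theta$) established in the preceding lemma, together with the two cofactor expansions $\vert B_n(a_0,b_0,a,b)\vert = a_0\vert A_{n-1}(a,b)\vert - b_0^2\vert A_{n-2}(a,b)\vert$ and the identical one for $C_n$ recorded just above. The observation that makes every case collapse is that each parametrization is arranged so that $a/(2b)$ equals the cosine of the designated angle: in part 1, $a=\lambda-1=\cos\alpha$ and $b=\frac{1}{2}$ give $a/(2b)=\cos\alpha$; in parts 2 and 3, $a=\frac{2}{3}\cos\beta$ (respectively $\frac{2}{3}\cos\gamma$) and $b=\frac{1}{3}$ give $a/(2b)=\cos\beta$ (respectively $\cos\gamma$). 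Hence both $\vert A_{n-1}(a,b)\vert$ and $\vert A_{n-2}(a,b)\vert$ carry the same angle, and the whole expression becomes a combination of $\sin n\theta$ and $\sin(n-1)\theta$ divided by $\sin\theta$.

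First I would substitute into the $B_n$ expansion for part 1. With $a_0=\cos\alpha$, $b_0^2=\frac{1}{2}$, $\vert A_{n-1}\vert = 2^{-(n-1)}\sin(n\alpha)/\sin\alpha$ and $\vert A_{n-2}\vert = 2^{-(n-2)}\sin((n-1)\alpha)/\sin\alpha$, the determinant becomes $2^{-(n-1)}(\sin\alpha)^{-1}$ times $\cos\alpha\,\sin n\alpha - \sin(n-1)\alpha$. The product-to-sum identity $2\cos\alpha\,\sin n\alpha = \sin(n+1)\alpha + \sin(n-1)\alpha$ collapses the bracket to $\frac{1}{2}(\sin(n+1)\alpha - \sin(n-1)\alpha) = \cos(n\alpha)\sin\alpha$; the $\sin\alpha$ cancels and leaves $2^{-(n-1)}\cos n\alpha$, as claimed.

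The same mechanism handles parts 2 and 3 with $b=\frac{1}{3}$. For part 2 the constants are $a_0=\eta-\frac{1}{2}=\frac{1}{6}+\frac{2}{3}\cos\beta$ and $b_0^2=\frac{1}{6}$; writing $\vert A_{n-1}\vert$ and $\vert A_{n-2}\vert$ over the common denominator $3^n$ and pulling out $3^{-n}(\sin\beta)^{-1}$ leaves $(\frac{1}{2}+2\cos\beta)\sin n\beta - \frac{3}{2}\sin(n-1)\beta$, which by $2\cos\beta\,\sin n\beta=\sin(n+1)\beta+\sin(n-1)\beta$ equals $\sin(n+1)\beta+\frac{1}{2}\sin n\beta-\frac{1}{2}\sin(n-1)\beta = \frac{1}{2}g_n(\beta)$. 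Part 3 is identical with $a_0=\mu-\frac{3}{2}=-\frac{1}{6}+\frac{2}{3}\cos\gamma$, the sign flip in $a_0$ changing the coefficient of $\sin n\gamma$ and yielding $\frac{1}{2}h_n(\gamma)$. I do not expect a genuine obstacle: once the angle is matched everything is forced, and the only care needed is the bookkeeping of the powers of $b$ and the correct product-to-sum step. The one subtlety worth flagging is that $a_0\neq a$ in every case, so the matrix is not itself of $A_n$ type; the extra contribution from $a_0-a$ is precisely what turns the second-kind Chebyshev combination into the stated $\cos n\alpha$, $g_n$, or $h_n$.
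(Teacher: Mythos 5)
Your proposal is correct and follows essentially the same route as the paper: expand $\vert B_n\vert$ and $\vert C_n\vert$ via the recorded cofactor identity $a_0\vert A_{n-1}(a,b)\vert - b_0^2\vert A_{n-2}(a,b)\vert$, substitute $\vert A_m(a,b)\vert = b^m\sin((m+1)\theta)/\sin\theta$ with the matched angle, and finish with product-to-sum identities (the paper uses the angle-subtraction form of the same identity, which is an immaterial difference). All the constants you report ($a_0=\eta-\tfrac12=\tfrac16+\tfrac23\cos\beta$, $b_0^2=\tfrac16$, the bracket $(\tfrac12+2\cos\beta)\sin n\beta-\tfrac32\sin(n-1)\beta=\tfrac12 g_n(\beta)$, and the sign flip giving $\tfrac12 h_n(\gamma)$) check out.
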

\begin{proof}
\begin{enumerate}
\item
\begin{eqnarray*}
\left\vert B_n(\lambda-1,\frac{1}{\sqrt{2}},\lambda-1,\frac{1}{2})
\right\vert &=&
(\lambda-1)\left\vert A_{n-1}(\lambda-1,\frac{1}{2}) \right\vert\\
&~&- \frac{1}{2} \left\vert A_{n-2}(\lambda-1,\frac{1}{2}) \right\vert 
\end{eqnarray*}
\begin{align*}
&= (\lambda-1)\left(\frac{1}{2}\right)^{n-1}
\frac{\sin n \alpha}{\sin \alpha}
- \frac{1}{2}\left(\frac{1}{2}\right)^{n-2}\frac{\sin(n-1)\alpha}{\sin
\alpha}\\
&= \left(\frac{1}{2}\right)^{n-1} \cdot
\frac{1}{\sin \alpha}(
(\lambda-1 ) \sin n \alpha -
\sin (n-1) \alpha) \\
&= \left(\frac{1}{2}\right)^{n-1} \cdot
\frac{1}{\sin \alpha}(
\cos \alpha \sin n \alpha -\sin (n\alpha - \alpha )) \\
&= \left(\frac{1}{2}\right)^{n-1} \cdot
\frac{1}{\sin \alpha} (\cos n\alpha \sin \alpha) \\
&=\left(\frac{1}{2}\right)^{n-1} \cos n \alpha
\end{align*}
\item 
\begin{align*}
\left\vert C_n(\eta-\frac{2}{3}, \frac{1}{3},\eta-\frac{1}{2},
\frac{1}{\sqrt{6}}) \right\vert &=
\left(\eta - \frac{1}{2}\right)
\left\vert A_{n-1}\left(\eta-\frac{2}{3},\frac{1}{3}\right)
\right\vert\\
&-\frac{1}{6}
\left\vert A_{n-2}\left(\eta-\frac{2}{3},\frac{1}{3}\right)
\right\vert 
\end{align*}
\begin{align*}
&=\left(\eta-\frac{1}{2}\right)
\left(\frac{1}{3}\right)^{n-1}\frac{\sin n \beta}{\sin \beta}- \frac{1}{6} \left(\frac{1}{3}\right)^{n-2}
\frac{\sin (n-1) \beta}{\sin\beta}\\
&= \left(\frac{1}{3}\right)^{n-1}\left(\left(\eta-\frac{1}{2}\right)\frac{\sin n \beta}{\sin \beta}-\frac{1}{2}\frac{\sin (n-1) \beta}{\sin \beta}\right) \\
&= \left(\frac{1}{3}\right)^{n-1}\left(\left(\frac{1}{6} + \frac{2}{3}\cos\beta\right)
\frac{\sin n \beta}{\sin \beta}  -\frac{1}{2}\frac{\sin (n-1) \beta}{\sin \beta} \right) \\
&= \left(\frac{1}{3}\right)^{n-1}
\frac{1}{6 \sin \beta}
( \sin n \beta + 4 \cos \beta \sin n \beta - 3 \sin (n\beta -\beta )) \\
&= \left(\frac{1}{3}\right)^{n-1}
\frac{1}{6 \sin \beta}
( \sin n \beta + 4 \cos \beta \sin n \beta - \sin(n\beta-\beta)\\
&-2 \sin n\beta \cos \beta + 2 \cos n\beta \sin \beta ) 
\end{align*}
\begin{align*}
&= \left(\frac{1}{3}\right)^{n-1}
\frac{1}{6 \sin \beta}
( \sin n \beta + 2 \cos n\beta \sin \beta - \sin(n\beta-\beta)\\
& +  2 \sin n\beta \cos \beta ) \\
&=\left(\frac{1}{3}\right)^{n-1}\frac{1}{6 \sin \beta}
( \sin n \beta + 2 \sin (n \beta + \beta ) - \sin (n\beta-\beta ) ) \\
&= \left(\frac{1}{3}\right)^{n-1}
\frac{1}{6 \sin \beta}
(2 \sin (n+1) \beta + \sin n \beta - \sin (n-1) \beta ) \\
&= \left(\frac{1}{3}\right)^{n}
\frac{1}{2 \sin \beta} g_n(\beta)
\end{align*}

\item
\begin{align*}
\left\vert C_n(\mu-\frac{4}{3}, \frac{1}{3},\mu-\frac{3}{2},
\frac{1}{\sqrt{6}}) \right\vert &=
\left(\mu - \frac{3}{2}\right)
\left\vert A_{n-1}\left(\mu-\frac{4}{3},\frac{1}{3}\right)
\right\vert\\
&-\frac{1}{6}
\left\vert A_{n-2}\left(\mu-\frac{4}{3},\frac{1}{3}\right)
\right\vert 
\end{align*}
\begin{align*}
&=\left(\mu-\frac{3}{2}\right)
\left(\frac{1}{3}\right)^{n-1}\frac{\sin n \gamma}{\sin \gamma}- \frac{1}{6} \left(\frac{1}{3}\right)^{n-2}
\frac{\sin (n-1) \gamma}{\sin\gamma}\\
&=\left(\frac{1}{3}\right)^{n-1}\left(\left(\mu-\frac{3}{2}\right)\frac{\sin n \gamma}{\sin \gamma}-\frac{1}{2}\frac{\sin (n-1) \gamma}{\sin \gamma}\right) \\
&= \left(\frac{1}{3}\right)^{n-1}\left(\left(-\frac{1}{6} +
\frac{2}{3}\cos\gamma\right)
\frac{\sin n \gamma}{\sin \gamma}  -\frac{1}{2}\frac{\sin (n-1) \gamma}{\sin \gamma} \right)\\ 
&= \left(\frac{1}{3}\right)^{n-1}
\frac{1}{6 \sin \gamma}
( -\sin n \gamma + 4 \cos \gamma \sin n \gamma - 3 \sin (n\gamma -\gamma )) \\
&= \left(\frac{1}{3}\right)^{n-1}
\frac{1}{6 \sin \gamma}
(2 \sin (n+1) \gamma - \sin n \gamma - \sin (n-1) \gamma ) \\
&= \left(\frac{1}{3}\right)^{n}
\frac{1}{2 \sin \gamma} h_n(\gamma)
\end{align*}
\end{enumerate}
\end{proof}

\subsection{Eigenvalues of $\mathcal{L}(P_n)$}
\begin{example}
The adjacency matrix and the normalized Laplacian matrix of a 
path graph $P_{5}$.
\begin{eqnarray*}
A(P_5)&=&
\left(
\begin{array}{ccccc}
 0 & 1 & 0 & 0 & 0 \\
 1 & 0 & 1 & 0 & 0 \\
 0 & 1 & 0 & 1 & 0 \\
 0 & 0 & 1 & 0 & 1 \\
 0 & 0 & 0 & 1 & 0
\end{array}
\right) \\
\mathcal{L}(P_5) &=&
\left(
\begin{array}{ccccc}
 1 & -\frac{1}{\sqrt{2}} & 0 & 0 & 0 \\
 -\frac{1}{\sqrt{2}} & 1 & -\frac{1}{2} & 0 & 0 \\
 0 & -\frac{1}{2} & 1 & -\frac{1}{2} & 0 \\
 0 & 0 & -\frac{1}{2} & 1 & -\frac{1}{\sqrt{2}} \\
 0 & 0 & 0 & -\frac{1}{\sqrt{2}} & 1 \\
\end{array}
\right)
\end{eqnarray*}
\end{example}
Let $n \ge 4$.
We define $n \times n$ matrix $Q_n(a_0,b_0,a,b)$ as the following.
\begin{eqnarray*}
Q_n(a_0,b_0,a,b) &=&
\left(
\begin{array}{c|ccc|c}
a_0 & b_0 & 0  & \cdots & 0 \\
\hline
b_0 &     &      &      & \vdots \\
0   &     \multicolumn{3}{c|}{A_{n-2}(a,b)} & 0 \\
\vdots    &           & &  & b_0 \\
\hline
0  &  \cdots & 0 & b_0 & a_0
\end{array}
\right)
\end{eqnarray*}

We note that
$$
\vert Q_n(a_0,b_0,a,b) \vert = a_0 \vert C_{n-1}(a,b,a_0,b_0) \vert
- b_0^2 \vert C_{n-2}(a,b,a_0,b_0) \vert.
$$
\begin{proposition}
Let $n \ge 4$.
The characteristic polynomial of
${\mathcal L}(P_n)$ is
$$
 \vert \lambda I_n - \mathcal{L}(P_n) \vert =
 -\left(\frac{1}{2}\right)^{n-2} (\sin \alpha \sin ((n-1)\alpha)),
$$
where $\lambda=1 + \cos \alpha$. 
That is $\lambda = 1 - \cos(\frac{k \pi}{n-1})$ \,
($k=0, \ldots, n-1$).
\end{proposition}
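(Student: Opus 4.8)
The plan is to recognize $\lambda I_n - \mathcal{L}(P_n)$ as a matrix of the form $Q_n$ introduced above and then to reduce its determinant using the lemmas already proved for $A_n$, $B_n$ and $C_n$. Reading off the normalized Laplacian of $P_n$, the two end vertices have degree $1$ and their neighbours degree $2$, so the first and last off-diagonal entries are $-\frac{1}{\sqrt{1\cdot 2}}=-\frac{1}{\sqrt2}$, while every interior edge joins two degree-$2$ vertices and contributes $-\frac{1}{\sqrt{2\cdot 2}}=-\frac12$; the diagonal is constantly $1$. Hence $\lambda I_n - \mathcal{L}(P_n) = Q_n\!\left(\lambda-1,\frac{1}{\sqrt2},\lambda-1,\frac12\right)$, with inner block $A_{n-2}\!\left(\lambda-1,\frac12\right)$ and the four corner parameters recording the heavier $\frac{1}{\sqrt2}$ couplings at the two ends.

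First I would apply the cofactor identity $\vert Q_n(a_0,b_0,a,b)\vert = a_0\,\vert C_{n-1}(a,b,a_0,b_0)\vert - b_0^2\,\vert C_{n-2}(a,b,a_0,b_0)\vert$ recorded just before the statement. The key simplification is that $\vert C_m(a,b,a_0,b_0)\vert$ and $\vert B_m(a_0,b_0,a,b)\vert$ are both equal to $a_0\vert A_{m-1}(a,b)\vert - b_0^2\vert A_{m-2}(a,b)\vert$, so they coincide; therefore the evaluation $\vert B_m(\lambda-1,\frac{1}{\sqrt2},\lambda-1,\frac12)\vert = \frac{1}{2^{m-1}}\cos m\alpha$ from part~1 of the previous Lemma gives, with $\lambda-1=\cos\alpha$,
\begin{equation*}
\left\vert C_m\!\left(\lambda-1,\tfrac12,\lambda-1,\tfrac{1}{\sqrt2}\right)\right\vert = \frac{1}{2^{m-1}}\cos m\alpha .
\end{equation*}
Substituting $m=n-1$ and $m=n-2$ then yields
\begin{equation*}
\vert Q_n\vert = \cos\alpha\cdot\frac{1}{2^{n-2}}\cos (n-1)\alpha - \frac{1}{2^{n-2}}\cos (n-2)\alpha = \frac{1}{2^{n-2}}\bigl(\cos\alpha\cos (n-1)\alpha - \cos (n-2)\alpha\bigr).
\end{equation*}

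The remaining work is purely trigonometric: using $\cos\alpha\cos (n-1)\alpha = \tfrac12(\cos n\alpha + \cos (n-2)\alpha)$ and then $\cos n\alpha - \cos (n-2)\alpha = -2\sin (n-1)\alpha\,\sin\alpha$, the bracket collapses to $-\sin\alpha\,\sin (n-1)\alpha$, which is exactly $-\left(\frac12\right)^{n-2}\sin\alpha\,\sin (n-1)\alpha$. To extract the eigenvalues I would rewrite $\sin (n-1)\alpha = U_{n-1}(\cos\alpha)\sin\alpha$ via the identity $\sin n\theta = U_n(\cos\theta)\sin\theta$, so that the characteristic polynomial equals $-\left(\frac12\right)^{n-2}(1-\cos^2\alpha)\,U_{n-1}(\cos\alpha)$, a genuine degree-$n$ polynomial in $\lambda=1+\cos\alpha$. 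Its roots are $\cos\alpha=\pm1$, giving $\lambda=0,2$, together with the $n-2$ zeros of $U_{n-1}$ at $\cos\alpha=\cos\frac{k\pi}{n-1}$ $(k=1,\dots,n-2)$; collecting these as $\lambda=1+\cos\frac{k\pi}{n-1}$ $(k=0,\dots,n-1)$ and reindexing $k\mapsto n-1-k$ rewrites the spectrum as $\lambda = 1-\cos\frac{k\pi}{n-1}$.

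The main obstacle I anticipate is not the algebra but the bookkeeping of the two distinct couplings $\frac{1}{\sqrt2}$ and $\frac12$: one must check that the end contributions enter only through the corner parameters $a_0,b_0$ of $Q_n$ and that the argument orders in $B$ versus $C$ are matched correctly, so that the cited evaluation of $\vert B_m\vert$ legitimately transfers to $\vert C_m\vert$. A secondary subtlety is the boundary behaviour at $\alpha=0,\pi$, where $\sin\alpha=0$; passing through the polynomial form $(1-\cos^2\alpha)U_{n-1}(\cos\alpha)$ rather than the quotient $\sin(n-1)\alpha/\sin\alpha$ is what guarantees that $\lambda=0$ and $\lambda=2$ are counted as genuine eigenvalues, consistent with $P_n$ being connected and bipartite.
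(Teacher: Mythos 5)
Your proof is correct and follows essentially the same route as the paper: the identification $\lambda I_n-\mathcal{L}(P_n)=Q_n(\lambda-1,\frac{1}{\sqrt2},\lambda-1,\frac12)$, the cofactor expansion of $\vert Q_n\vert$ into $\vert C_{n-1}\vert$ and $\vert C_{n-2}\vert$, the identification $\vert C_m\vert=\vert B_m\vert=\frac{1}{2^{m-1}}\cos m\alpha$, and the same trigonometric collapse to $-\left(\frac12\right)^{n-2}\sin\alpha\sin((n-1)\alpha)$. Your final step of rewriting the result as $-\left(\frac12\right)^{n-2}(1-\cos^2\alpha)U_{n-1}(\cos\alpha)$ to justify counting $\lambda=0$ and $\lambda=2$ as genuine roots is a small refinement the paper glosses over, but it does not change the argument.
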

\begin{proof}
First, we note 
$\displaystyle \mathcal{L}(P_n) = Q_n\left(1, -\frac{1}{\sqrt{2}}, 1, -\frac{1}{2}\right)$
and\\
$\displaystyle \vert \lambda I_n - \mathcal{L}(P_n) \vert =
\left\vert Q_n\left(\lambda-1,\frac{1}{\sqrt{2}},\lambda-1,\frac{1}{2}\right) \right\vert$.
{\small{
 \begin{align*}
& \left\vert
    Q_n\left(\lambda-1,\frac{1}{\sqrt{2}},\lambda-1,\frac{1}{2}\right)
   \right\vert \\
&=
(\lambda-1) \left\vert
C_{n-1}\left(\lambda-1,\frac{1}{2},\lambda-1,\frac{1}{\sqrt{2}}\right)
	    \right\vert \\
&-\frac{1}{2} \left\vert
C_{n-2}\left(\lambda-1,\frac{1}{2},\lambda-1,\frac{1}{\sqrt{2}}\right)
       \right\vert \\
&= (\lambda-1) \left\vert
B_{n-1}\left(\lambda-1,\frac{1}{\sqrt{2}},\lambda-1,\frac{1}{2}\right)
		\right\vert\\
&-\frac{1}{2} \left\vert
B_{n-2}\left(\lambda-1,\frac{1}{\sqrt{2}},\lambda-1,\frac{1}{2}\right)
       \right\vert \\
&=
(\lambda-1)\left(\frac{1}{2}\right)^{n-2} \cos((n-1)\alpha)
-\frac{1}{2}\left(\frac{1}{2}\right)^{n-3} \cos((n-2)\alpha) \\
&=
\left(\frac{1}{2}\right)^{n-2}(\cos \alpha \cdot \cos ((n-1)\alpha) 
- \cos ((n-2)\alpha))\\
&=
\left(\frac{1}{2}\right)^{n-2}(\cos \alpha \cdot \cos((n-1)\alpha) 
- (\cos((n-1)\alpha) \cdot \cos \alpha +\\
& \sin((n-1)\alpha) \cdot \sin \alpha))\\
&=
- \left(\frac{1}{2}\right)^{n-2} \sin((n-1)\alpha) \cdot \sin \alpha.
 \end{align*}}}

We have $\displaystyle \alpha=\frac{k\pi}{n-1}$ ($k=0,\ldots, n-1$).
Since $\displaystyle \cos\left(\frac{k\pi}{n-1}\right)
=\cos\left(\frac{(k+(n-1))\pi}{n-1}\right)$,
we have $\displaystyle \lambda=1+\cos\left(\frac{k\pi}{n-1}\right)$ ($k=0,\ldots, n-1$).
The set is equal to
$\displaystyle \lambda=1-\cos\left(\frac{k\pi}{n-1}\right)$ ($k=0,\ldots, n-1$).
\end{proof}

\subsection{Eigenvalues of weighted paths and $\mathcal{L}(R_{n,k})$}

\begin{example}
The adjacency matrix and the normalized Laplacian matrix of a weighted path graph $P_{4,3}$.
 \begin{eqnarray*}
A(P_{4,3})&=&
\left(
\begin{array}{ccccccc}
 0 & 1 & 0 & 0 & 0 & 0 & 0 \\
 1 & 0 & 1 & 0 & 0 & 0 & 0 \\
 0 & 1 & 0 & 1 & 0 & 0 & 0 \\
 0 & 0 & 1 & 0 & 1 & 0 & 0 \\
 0 & 0 & 0 & 1 & 1 & 1 & 0 \\
 0 & 0 & 0 & 0 & 1 & 1 & 1 \\
 0 & 0 & 0 & 0 & 0 & 1 & 1
\end{array}
\right) \\
\mathcal{L}(P_{4,3})&=&
\left(
\begin{array}{ccccccc}
 1 & -\frac{1}{\sqrt{2}} & 0 & 0 & 0 & 0 & 0 \\
 -\frac{1}{\sqrt{2}} & 1 & -\frac{1}{2} & 0 & 0 & 0 & 0 \\
 0 & -\frac{1}{2} & 1 & -\frac{1}{2} & 0 & 0 & 0 \\
 0 & 0 & -\frac{1}{2} & 1 & -\frac{1}{\sqrt{6}} & 0 & 0 \\
 0 & 0 & 0 & -\frac{1}{\sqrt{6}} & \frac{2}{3} & -\frac{1}{3} & 0 \\
 0 & 0 & 0 & 0 & -\frac{1}{3} & \frac{2}{3} & -\frac{1}{\sqrt{6}} \\
 0 & 0 & 0 & 0 & 0 & -\frac{1}{\sqrt{6}} & \frac{1}{2} \\
\end{array}
\right)
\end{eqnarray*}
\end{example}

Let $n \ge 3$ and $k \ge 3$. Then
$$
\mathcal{L}(P_{n,k})
= \left(
\begin{array}{c|c}
B_n(1,-\frac{1}{\sqrt{2}},1,-\frac{1}{2}) &
X_{n,k} \\
\hline
X_{n,k}^t &
C_k(\frac{2}{3},-\frac{1}{3},\frac{1}{2},-\frac{1}{\sqrt{6}})
\end{array}
\right),
$$
where $X_{n,k}$ is the $n \times k$ matrix defined by

\[ X_{n,k}=
\left(
\begin{array}{cccc}
0      & \cdots & \cdots & 0  \\
\vdots &        &        & \vdots \\
0      &  0     &        & \vdots \\
-\frac{1}{\sqrt{6}} & 0 & \cdots & 0
\end{array}
\right).\]

\begin{theorem}\label{prop:pnk}
Let $n\ge 3$ and $k \ge 3$.
The characteristic polynomial of
${\mathcal L}(P_{n,k})$ is
$$
\left\vert \lambda I_{n+k} - \mathcal{L}(P_{n,k}) \right\vert = p_{n,k}(\lambda),
$$
where
\begin{eqnarray*}
p_{n,k}(\lambda) &=&
\frac{1}{2^n3^k\sin \beta}
(g_k(\beta)\cos(n\alpha)) - g_{k-1}(\beta)\cos((n-1)\alpha)),
\end{eqnarray*}
$\lambda = 1 + \cos \alpha$ and
$\displaystyle\lambda = \frac{2}{3}(1 + \cos \beta)$.
\end{theorem}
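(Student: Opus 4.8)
The plan is to exploit the block decomposition of $\mathcal{L}(P_{n,k})$ displayed just above, together with the closed-form determinants of $B_m$ and $C_m$ established in the preceding Lemma. First I would pass to $\lambda I_{n+k}-\mathcal{L}(P_{n,k})$ and read off its two diagonal blocks: the leading $n\times n$ block is $P=B_n(\lambda-1,\frac{1}{\sqrt2},\lambda-1,\frac12)$ and the trailing $k\times k$ block is $R=C_k(\lambda-\frac23,\frac13,\lambda-\frac12,\frac{1}{\sqrt6})$. These are precisely the two shapes covered by the Lemma once we set $\lambda=1+\cos\alpha$ (so $\lambda-1=\cos\alpha$) and $\lambda=\frac23(1+\cos\beta)$ (so $\frac{\lambda-2/3}{2/3}=\cos\beta$). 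The decisive structural point is that $X_{n,k}$ has a single nonzero entry in its bottom-left corner, so $\mathcal{L}(P_{n,k})$ is in fact tridiagonal, with exactly one off-diagonal link of value $\frac{1}{\sqrt6}$ joining index $n$ to index $n+1$.

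Next I would apply the continuant split identity for a tridiagonal matrix whose only coupling between the first $n$ and the last $k$ coordinates is the entry $e$ at positions $(n,n+1)$ and $(n+1,n)$, namely
\[
\left\vert \lambda I_{n+k}-\mathcal{L}(P_{n,k})\right\vert
= \vert P\vert\,\vert R\vert - e^{2}\,\vert P'\vert\,\vert R'\vert ,
\]
where $P'$ is $P$ with its last row and column removed, $R'$ is $R$ with its first row and column removed, and $e^{2}=\frac16$. This can be obtained either from the standard continuant recursion or by a single Laplace expansion along row $n$, which yields exactly the diagonal term $\vert P\vert\,\vert R\vert$ and one correction term carrying the factor $e^{2}$. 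I would then check from the definitions of $B_n$ and $C_k$ that $P'=B_{n-1}(\lambda-1,\frac{1}{\sqrt2},\lambda-1,\frac12)$ and $R'=C_{k-1}(\lambda-\frac23,\frac13,\lambda-\frac12,\frac{1}{\sqrt6})$, so that all four determinants fall directly under the Lemma.

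Substituting $\vert P\vert=\frac{1}{2^{n-1}}\cos n\alpha$, $\vert P'\vert=\frac{1}{2^{n-2}}\cos((n-1)\alpha)$, $\vert R\vert=\frac{1}{2\cdot 3^{k}\sin\beta}g_k(\beta)$, and $\vert R'\vert=\frac{1}{2\cdot 3^{k-1}\sin\beta}g_{k-1}(\beta)$, the prefactors collapse: the first product gives $\frac{1}{2^{n}3^{k}\sin\beta}$, and the second product multiplied by $e^{2}=\frac16$ gives the same $\frac{1}{2^{n}3^{k}\sin\beta}$. Factoring it out leaves $g_k(\beta)\cos(n\alpha)-g_{k-1}(\beta)\cos((n-1)\alpha)$, which is exactly $p_{n,k}(\lambda)$.

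The main obstacle is bookkeeping rather than any deep idea: one must confirm that the half-weight entries $\frac{1}{\sqrt2}$ and $\frac{1}{\sqrt6}$ occupy the correct corners so that $P$ and $R$ match the Lemma's normalizations, correctly identify the reduced blocks as $B_{n-1}$ and $C_{k-1}$ (not plain $A$-blocks), and verify that $e^{2}=\frac16$ makes the two prefactors coincide. Some care is also needed because $\alpha$ and $\beta$ are two distinct parametrizations of the same $\lambda$, so the two halves of the answer are linked only through $\lambda=1+\cos\alpha=\frac23(1+\cos\beta)$.
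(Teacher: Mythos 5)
Your proposal is correct and follows essentially the same route as the paper: both exploit the block-tridiagonal structure with the single coupling entry $-\frac{1}{\sqrt{6}}$ at position $(n,n+1)$, reduce the determinant to products of $\vert B_m\vert$ and $\vert C_m\vert$ via Laplace expansion along that row, and substitute the closed forms from the preceding Lemma. The only cosmetic difference is that you package the expansion as $\vert P\vert\vert R\vert - \tfrac{1}{6}\vert P'\vert\vert R'\vert$ directly, whereas the paper writes three terms and recombines $-\cos((n-2)\alpha)+2\cos\alpha\cos((n-1)\alpha)=\cos(n\alpha)$ by a trigonometric identity; these are equivalent.
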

\begin{proof}
Since
$\displaystyle
|B_{n}(\lambda-1,\frac{1}{\sqrt{2}},\lambda-1,\frac{1}{2})|=
\frac{\cos (n\alpha)}{2^{n-1}}$
and\\
$\displaystyle
|C_k(\lambda-\frac{2}{3},\frac{1}{3},\lambda-\frac{1}{2},\frac{1}{\sqrt{6}})|=
\frac{g_k(\beta)}{2\cdot 3^k \cdot \sin \beta}$,
we have
{\small{
$$
\left\vert \lambda I_{n+k} - \mathcal{L}(P_{n,k}) \right\vert
=\left\vert
\begin{array}{c|c}
B_n(\lambda-1,\frac{1}{\sqrt{2}},\lambda-1,\frac{1}{2}) &
X_{n,k} \\
\hline
X_{n,k}^t &
C_k(\lambda-\frac{2}{3},\frac{1}{3},\lambda-\frac{1}{2},\frac{1}{\sqrt{6}})
\end{array}
\right\vert 
$$
\begin{eqnarray*}
&=&
-\frac{1}{4}
 |B_{n-2}(\lambda-1,\frac{1}{\sqrt{2}},\lambda-1,\frac{1}{2})|
\cdot
|C_{k}(\lambda-\frac{2}{3},\frac{1}{3},\lambda-\frac{1}{2},\frac{1}{\sqrt{6}})| \\
&&
+ (\lambda -1)
|B_{n-1}(\lambda-1,\frac{1}{\sqrt{2}},\lambda-1,\frac{1}{2})|\cdot|C_{k}(\lambda-\frac{2}{3},\frac{1}{3},\lambda-\frac{1}{2},\frac{1}{\sqrt{6}})| \\
&&-\frac{1}{6} 
|B_{n-1}(\lambda-1,\frac{1}{\sqrt{2}},\lambda-1,\frac{1}{2})|
\cdot
|C_{k-1}(\lambda-\frac{2}{3},\frac{1}{3},\lambda-\frac{1}{2},\frac{1}{\sqrt{6}})| \\
&=&
-\frac{1}{4}\cdot \frac{\cos ((n-2)\alpha)}{2^{n-3}} \cdot
\frac{g_{k}(\beta)}{2 \cdot 3^k \cdot \sin \beta}
+ \cos \alpha \cdot \frac{\cos ((n-1)\alpha)}{2^{n-2}} \cdot\\
&&\frac{g_{k}(\beta)}{2 \cdot 3^k \cdot \sin \beta}
-\frac{1}{6}  \frac{\cos ((n-1)\alpha)}{2^{n-2}} \cdot
\frac{g_{k-1}(\beta)}{2 \cdot 3^{k-1} \cdot \sin \beta}
\\
&=&
\frac{1}{2^n\cdot 3^k \cdot \sin \beta}
(-\cos((n-2)\alpha)g_k(\beta)+2 \cos \alpha \cos((n-1)\alpha)
g_k(\beta) \\
&& - \cos((n-1)\alpha)g_{k-1}(\beta))\\
&=&
\frac{1}{2^n\cdot 3^k \cdot \sin \beta}
(\cos(n\alpha)g_k(\beta) - \cos((n-1)\alpha)g_{k-1}(\beta))
\\
&=& p_{n,k}(\lambda)
\end{eqnarray*}}}
We note that
\begin{eqnarray*}
&&-\cos((n-2)\alpha)+2\cos \alpha \cos((n-1)\alpha) \\
&=&
-\cos \alpha \cos ((n-1)\alpha) 
-\sin \alpha\sin((n-1)\alpha)\\
&&+2\cos \alpha \cos((n-1)\alpha)\\
&=&\cos \alpha \cos ((n-1)\alpha) - 
\sin \alpha \sin((n-1)\alpha)\\
&=&\cos (n\alpha).
\end{eqnarray*}
\end{proof}

\begin{lemma}\label{lemma:gk}
Let $k \ge 3$.
\begin{enumerate}
\item If 
$\displaystyle \frac{(4k-2)\pi}{4k-1} < \alpha < \pi$,
$0 < \beta < \pi$ and
$\displaystyle 1+\cos \alpha = \frac{2}{3}(1+\cos \beta)$
then
$\displaystyle \frac{(2k+1)\pi}{2(k+1)} < \beta$.
\item If
$\displaystyle \frac{(2k+1)\pi}{2(k+1)} < \beta < \pi$ then
$g_k(\beta) \not= 0$ and 
$\displaystyle \frac{g_{k-1}(\beta)}{g_k(\beta)}<-1$.
\item If $n=2k$ ($k \ge 0$),
$\displaystyle \frac{(2n-2)\pi}{2n-1} < \alpha < \pi$,
$0 < \beta < \pi$ and
$\displaystyle 1+\cos \alpha = \frac{2}{3}(1+\cos \beta)$
then\\
$\displaystyle 
g_{k}(\beta)\cos(n \alpha)-g_{k-1}(\beta)\cos((n-1)\alpha) \not=0.
$
\end{enumerate}
\end{lemma}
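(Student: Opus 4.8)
The plan is to prove the three items in order, using items~1 and~2 as the two ingredients that combine to give item~3; throughout I will pass to the complementary angles $\alpha=\pi-s$ and $\beta=\pi-\eta$, which turns every quantity in the statement into a sine or cosine of a \emph{small} positive angle and makes all signs transparent. For item~1, the substitution turns the relation $1+\cos\alpha=\frac23(1+\cos\beta)$ into $\cos\eta=\frac{3\cos s-1}{2}$. Since $s\mapsto\cos s$ and $\eta\mapsto\cos\eta$ are both strictly decreasing on $(0,\pi)$, this makes $\eta$ a strictly increasing function of $s$, so the hypothesis $s<\frac{\pi}{4k-1}$ (which is $\alpha>\frac{(4k-2)\pi}{4k-1}$) forces $\eta<\eta_0$, where $\cos\eta_0=\frac{3\cos\frac{\pi}{4k-1}-1}{2}$. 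The desired conclusion $\beta>\frac{(2k+1)\pi}{2(k+1)}$ is exactly $\eta<\frac{\pi}{2(k+1)}$, so by monotonicity it suffices to prove the single inequality $3\cos\frac{\pi}{4k-1}\ge 1+2\cos\frac{\pi}{2k+2}$. I would establish this with the elementary bounds $\cos x\ge 1-\frac{x^2}{2}$ and $\cos x\le 1-\frac{x^2}{2}+\frac{x^4}{24}$, which reduce it (after controlling the quartic remainder) to the polynomial inequality $2k^2-4k-1>0$, valid for all $k\ge 3$. This endpoint estimate is the main obstacle of the whole lemma: everything else is sign-chasing, whereas here one must compare the two different angle scales $\frac{\pi}{4k-1}$ and $\frac{\pi}{2k+2}$ quantitatively.

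For item~2, I write $\beta=\pi-\eta$, so that the interval $\frac{(2k+1)\pi}{2(k+1)}<\beta<\pi$ becomes $0<\eta<\frac{\pi}{2(k+1)}$. Substituting this into the definition of $g_k$ and applying product-to-sum identities, one obtains $g_k=2(-1)^k\sin\frac\eta2\,D$ with $D=2\cos\frac{(2k+1)\eta}{2}+\cos\frac{(2k-1)\eta}{2}$, and likewise $g_{k-1}=2(-1)^{k-1}\sin\frac\eta2\,N$ with $N=2\cos\frac{(2k-1)\eta}{2}+\cos\frac{(2k-3)\eta}{2}$. On this interval each of the angles $\frac{(2k+1)\eta}{2},\frac{(2k-1)\eta}{2},\frac{(2k-3)\eta}{2}$ is smaller than $\frac\pi2$ (for instance $\frac{(2k+1)\eta}{2}<\frac{(2k+1)\pi}{4(k+1)}<\frac\pi2$), so $D>0$; since $\sin\frac\eta2>0$ this already gives $g_k\ne 0$. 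Finally $\frac{g_{k-1}}{g_k}=-\frac ND$, and a product-to-sum computation gives $N-D=2\sin(k\eta)\sin\frac\eta2+2\sin\frac{(2k-1)\eta}{2}\sin\eta$, a sum of products of sines of angles in $(0,\pi)$ and hence strictly positive. Therefore $N>D>0$, so $\frac{g_{k-1}}{g_k}=-\frac ND<-1$, which proves both assertions.

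For item~3, set $n=2k$; then the hypothesis $\frac{(2n-2)\pi}{2n-1}<\alpha<\pi$ is precisely the hypothesis of item~1, so item~1 places $\beta$ in the interval treated in item~2 and both conclusions of item~2 are available. Writing $\alpha=\pi-s$ with $s\in(0,\frac{\pi}{4k-1})$ gives $\cos(n\alpha)=\cos(2ks)$ and $\cos((n-1)\alpha)=-\cos((2k-1)s)$, so the target expression equals $g_k\cos(2ks)+g_{k-1}\cos((2k-1)s)$. Dividing by $g_k\ne 0$ and using $\cos((2k-1)s)>0$ (because $(2k-1)s<\frac{(2k-1)\pi}{4k-1}<\frac\pi2$) together with $\frac{g_{k-1}}{g_k}<-1$ yields $\frac{1}{g_k}\bigl(g_k\cos(2ks)+g_{k-1}\cos((2k-1)s)\bigr)<\cos(2ks)-\cos((2k-1)s)=-2\sin\frac{(4k-1)s}{2}\sin\frac s2<0$, the final inequality holding because $\frac{(4k-1)s}{2}<\frac\pi2$. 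Hence the expression is nonzero, which is the claim. The one place where genuine quantitative work is unavoidable is the endpoint inequality in item~1; items~2 and~3, once the complementary-angle substitution is in place, are purely matters of reading off signs from sines and cosines of acute angles.
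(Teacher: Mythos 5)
Your proof is correct, and while it follows the same overall architecture as the paper's (complementary angles $\alpha=\pi-s$, $\beta=\pi-\eta$; item~1 and item~2 feeding into item~3, which both arguments finish by showing $\cos(n\alpha)/\cos((n-1)\alpha)>-1>g_{k-1}/g_k$ in disguise), the two items that carry the real work are executed quite differently. For item~1 the paper runs a chain of sine estimates, $\sin\frac{\pi-\beta}{2}=\sqrt{3/2}\,\sin\frac{\pi-\alpha}{2}\le\sqrt{3/2}\,\frac{\pi-\alpha}{2}<\cdots\le\sin\frac{\pi}{4(k+1)}$, whose last link relies on the piecewise bound $\frac{3x}{2}\le\sin\frac{\pi x}{2}$ together with the numerical estimate $\frac{4k-1}{k+1}\ge\frac{11}{4}$; you instead use monotonicity of $\cos\eta=\frac{3\cos s-1}{2}$ to reduce everything to the single endpoint inequality $3\cos\frac{\pi}{4k-1}\ge1+2\cos\frac{\pi}{2k+2}$ and settle it with Taylor bounds, which isolates the quantitative content more cleanly (the reduction to $2k^2-4k-1>0$ is the correct leading-order comparison, and the quartic remainder is indeed dominated for $k\ge3$, e.g.\ $60(2k^2-4k-1)(2k+2)^2\ge\pi^2(4k-1)^2$ holds with large margin). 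For item~2 the paper splits on the parity of $k$ and applies convexity of $\sin$ on $(0,\frac{\pi}{2})$ to the combination $g_{k-1}(\beta)+g_k(\beta)$; your factorizations $g_k=2(-1)^k\sin\frac{\eta}{2}\,D$ and $g_{k-1}=2(-1)^{k-1}\sin\frac{\eta}{2}\,N$ (which I checked are correct, as is your product-to-sum expression for $N-D$) make $g_k\ne0$ and $\frac{g_{k-1}}{g_k}=-\frac{N}{D}<-1$ immediate and eliminate the parity case analysis entirely. Item~3 is essentially the paper's argument in a slightly different arrangement (you divide by $g_k$ and bound the quotient above by $\cos(2ks)-\cos((2k-1)s)<0$, the paper factors out $g_k\cos((n-1)\alpha)$). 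On balance your item~2 is tidier than the paper's; your item~1 trades the paper's ad hoc sine inequalities for a single clean endpoint estimate at the cost of a Taylor-remainder check.
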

 
\begin{proof}
{1.\ } Since $k \ge 3$, we have
$\displaystyle \frac{4k-1}{k+1}$
$\displaystyle = 4-\frac{5}{k+1}$
$\displaystyle \ge \frac{11}{4}$
and 
$\displaystyle \frac{1}{4k-1} \le \frac{4}{11(k+1)}$.

Since $\displaystyle \frac{33}{8}\sqrt{\frac{2}{3}}$
$>3.36>\pi$,
we have 
$\displaystyle \sqrt{\frac{3}{2}}\cdot \frac{1}{2} \cdot
  \frac{4\pi}{11}$
$\displaystyle <\sqrt{\frac{3}{2}}\cdot \frac{1}{2} \cdot
  \frac{4}{11} \cdot \frac{33}{8} \sqrt{\frac{2}{3}}$
$\displaystyle = \frac{3}{4}$.

Since $\displaystyle \frac{3x}{2} \le \sin(\frac{\pi x}{2})$
($\displaystyle 0 \le x \le \frac{1}{3}$),
we have
$\displaystyle \frac{3}{4(k+1)} \le \sin \frac{\pi}{4(k+1)}$.
Since $\displaystyle 1+\cos \alpha=\frac{2}{3}(1+\cos \beta)$,
we have
$\displaystyle \cos^2 \frac{\alpha}{2}=\frac{2}{3}\cos^2 \frac{\beta}{2}$
and 
$\displaystyle \cos \frac{\beta}{2}=\sqrt{\frac{3}{2}}\cos \frac{\alpha}{2}$.

\begin{eqnarray*}
\sin (\frac{\pi-\beta}{2})=\cos \frac{\beta}{2}
&=& \sqrt{\frac{3}{2}} \cos \frac{\alpha}{2} \\
&=& \sqrt{\frac{3}{2}} \sin (\frac{\pi-\alpha}{2}) \\
&\le& \sqrt{\frac{3}{2}}(\frac{\pi-\alpha}{2}) \\
&<& \sqrt{\frac{3}{2}} \cdot \frac{1}{2} \cdot (\frac{\pi}{4k-1}) \\
&<& \sqrt{\frac{3}{2}} \cdot \frac{1}{2} \cdot (\frac{4\pi}{11(k+1)} \\
&<& \frac{3}{4(k+1)} \\
&=& \frac{1}{2}\cdot \frac{3}{2(k+1)} \\
&\le& \sin \frac{\pi}{4(k+1)}
\end{eqnarray*}

Then $\displaystyle \frac{\pi-\beta}{2} < \frac{\pi}{4(k+1)}$
and $\displaystyle \frac{(2k+1)\pi}{2(k+1)} < \beta$.

\noindent
{2.\ }Let $\beta'=\pi-\beta$. Then
$\displaystyle 0 < \beta' < \frac{\pi}{2(k+1)}$.
We note that if $k$ is even then $\sin(k\beta)=-\sin(k\beta')$
and if $k$ is odd then $\sin(k\beta)=\sin(k\beta')$.
Since $y=\sin x$ is convex on $\displaystyle 0<x <\frac{\pi}{2}$,
$\sin(t x_1+(1-t) x_2)> t\sin x_1+(1-t)\sin x_2$
for $\displaystyle 0<x_1<x_2<\frac{\pi}{2}$
and $0<t<1$.
Since $\displaystyle 0<(k-2)\beta'<k\beta'<(k+1)\beta'<\frac{\pi}{2}$
and $\displaystyle \frac{1}{3}(k-2)+(1-\frac{1}{3})(k+1)=k$,
we have
$\displaystyle \sin(k\beta')>\frac{1}{3}\sin((k-2)\beta')
+\frac{2}{3}\sin((k+1)\beta')$.
\begin{eqnarray*}
g_{k-1}(\beta)+g_{k}(\beta)
&=& 2\sin(k\beta)+\sin((k-1)\beta)-\sin((k-2)\beta)\\
&~&+2\sin((k+1)\beta)+\sin(k\beta)-\sin((k-1)\beta)\\
&=& -\sin((k-2)\beta) +3\sin(k\beta) + 2 \sin((k+1)\beta)
\end{eqnarray*}
If $k$ is even then
$g_k(\beta)$
$= 2 \sin((k+1)\beta)+\sin(k\beta)-\sin((k-1)\beta)$ 
$= 2 \sin((k+1)\beta')-\sin(k\beta')-\sin((k-1)\beta)>0$.

\begin{eqnarray*}
g_{k-1}(\beta)+g_{k}(\beta)
&=& -\sin((k-2)\beta) +3\sin(k\beta) + 2 \sin((k+1)\beta)\\
&=& \sin((k-2)\beta) -3\sin(k\beta) + 2 \sin((k+1)\beta)\\
&=&
 3(\frac{1}{3}\sin((k-2)\beta')+\frac{2}{3}\sin((k+1)\beta')\\
&~&-\sin(k\beta'))
<0.
\end{eqnarray*}
Since $g_k(\beta)>0$,
$\displaystyle \frac{g_{k-1}(\beta)}{g_k(\beta)}+1<0$.

If $k$ is odd then
$g_k(\beta)$
$= 2 \sin((k+1)\beta)+\sin(k\beta)-\sin((k-1)\beta)$ 
$= -2 \sin((k+1)\beta')+\sin(k\beta')+\sin((k-1)\beta)>0$.

\begin{eqnarray*}
g_{k-1}(\beta)+g_{k}(\beta)
&=& -\sin((k-2)\beta) +3\sin(k\beta) + 2 \sin((k+1)\beta)\\
&=& -\sin((k-2)\beta) +3\sin(k\beta) - 2 \sin((k+1)\beta)\\
&=&
 3(\sin(k\beta'))-\frac{1}{3}\sin((k-2)\beta')\\
&~&-\frac{2}{3}\sin((k+1)\beta')
>0.
\end{eqnarray*}
Since $g_k(\beta)<0$,
$\displaystyle \frac{g_{k-1}(\beta)}{g_k(\beta)}+1<0$.

\noindent
{3.\ } Let $\alpha'=\pi-\alpha$.
Then $\displaystyle 0 < \alpha' < \frac{\pi}{4k-1}$.
We note that if $n$ is even then $\cos(n\alpha)=\cos(n\alpha')$
and if $n$ is odd then $\cos(n\alpha)=-\cos(n\alpha')$.

{\small{
\begin{eqnarray*}
g_k(\beta)\cos(n\alpha)-g_{k-1}(\beta)\cos((n-1)\alpha)
&=&g_k(\beta)\cos((n-1)\alpha)\\
&~&(\frac{\cos(n\alpha)}{\cos((n-1)\alpha)}- \frac{g_{k-1}(\beta)}{g_k(\beta)}).
\end{eqnarray*}}}

Since $n=2k$,
we have $\cos(n\alpha)=\cos(n\alpha')$
and $\cos((n-1)\alpha)=-\cos((n-1)\alpha')<0$.
Since $0<(n-1)\alpha'<n\alpha'<\pi$
and $\displaystyle (n-1)\alpha' < \frac{(n-1)\pi}{2n-1}< \frac{\pi}{2}$,
we have
$\cos((n-1)\alpha')>\cos(n\alpha')$,
$-\cos((n-1)\alpha)>\cos(n\alpha)$
and
$\displaystyle \frac{\cos(n\alpha)}{\cos((n-1)\alpha)} >-1$.
Since $\displaystyle \frac{g_{k-1}(\beta)}{g_k(\beta)}<-1$,
we have
$\displaystyle \frac{\cos(n\alpha)}{\cos((n-1)\alpha)} -
\frac{g_{k-1}(\beta)}{g_k(\beta)} > 0$.

If $k$ is even then $g_k(\beta)>0$ and $\cos((n-1)\alpha)<0$,
then 
$$
g_k(\beta)\cos(n\alpha)-g_{k-1}(\beta)\cos((n-1)\beta)<0.
$$
If $k$ is odd then $g_k(\beta)<0$ and $\cos((n-1)\alpha)<0$,
then 
$$
g_k(\beta)\cos(n\alpha)-g_{k-1}(\beta)\cos((n-1)\beta)>0.
$$
\end{proof}

\begin{proposition}
\label{propO3}
If $k \ge 3$ and $\lambda_2({\mathcal L}(P_{2k,k}))$
the second eigenvalue of ${\mathcal L}(P_{2k,k})$ then
$$
1 - \cos \frac{\pi}{4k-1} \le \lambda_2(\mathcal{L}(P_{2k,k})).
$$
\end{proposition}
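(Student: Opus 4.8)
The plan is to reduce the claimed lower bound to a statement about the absence of eigenvalues in a small interval, and then to read that statement off directly from the factorization of the characteristic polynomial in Theorem~\ref{prop:pnk} together with Lemma~\ref{lemma:gk}(3). Since $P_{2k,k}$ is connected, $\mathcal{L}(P_{2k,k})$ has smallest eigenvalue $\lambda_1 = 0$ with eigenvector $D^{1/2}\vec{1}$, and every other eigenvalue is strictly positive. Hence it suffices to show that $\mathcal{L}(P_{2k,k})$ has no eigenvalue $\lambda$ with $0 < \lambda < 1 - \cos\frac{\pi}{4k-1}$; this immediately forces $\lambda_2 \ge 1 - \cos\frac{\pi}{4k-1}$.

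Next I would introduce the substitution $\lambda = 1 + \cos\alpha$ used in Theorem~\ref{prop:pnk}. Since $\cos$ is strictly decreasing on $[0,\pi]$, the map $\alpha \mapsto 1 + \cos\alpha$ is a decreasing bijection from $[0,\pi]$ onto $[0,2]$, and a short computation shows that $\lambda = 1 - \cos\frac{\pi}{4k-1}$ corresponds to $\alpha = \frac{(4k-2)\pi}{4k-1} = \pi - \frac{\pi}{4k-1}$, while $\lambda = 0$ corresponds to $\alpha = \pi$. Consequently the interval $0 < \lambda < 1 - \cos\frac{\pi}{4k-1}$ is exactly the image of $\frac{(4k-2)\pi}{4k-1} < \alpha < \pi$; writing $n = 2k$ this is $\frac{(2n-2)\pi}{2n-1} < \alpha < \pi$. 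For each such $\lambda$ the second parameter $\beta$ determined by $\lambda = \frac{2}{3}(1+\cos\beta)$ lies in $(0,\pi)$, because $0 < \lambda < 1 - \cos\frac{\pi}{4k-1} < \frac{4}{3}$, so $\cos\beta = \frac{3\lambda}{2}-1 \in (-1,1)$.

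Then I would invoke Theorem~\ref{prop:pnk}, which gives
$$
\left\vert \lambda I_{n+k} - \mathcal{L}(P_{n,k}) \right\vert
= \frac{1}{2^n 3^k \sin\beta}\bigl(g_k(\beta)\cos(n\alpha) - g_{k-1}(\beta)\cos((n-1)\alpha)\bigr).
$$
For $\lambda$ in the interval above we have $\sin\beta \ne 0$, so $\lambda$ is an eigenvalue if and only if the bracketed expression vanishes. But Lemma~\ref{lemma:gk}(3), applied with $n = 2k$ and precisely the hypotheses $\frac{(2n-2)\pi}{2n-1} < \alpha < \pi$, $0 < \beta < \pi$, and $1 + \cos\alpha = \frac{2}{3}(1+\cos\beta)$, asserts that
$$
g_k(\beta)\cos(n\alpha) - g_{k-1}(\beta)\cos((n-1)\alpha) \ne 0.
$$
Hence no $\lambda$ in $(0,\, 1 - \cos\frac{\pi}{4k-1})$ is an eigenvalue, and combined with $\lambda_1 = 0$ this yields $\lambda_2(\mathcal{L}(P_{2k,k})) \ge 1 - \cos\frac{\pi}{4k-1}$.

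The bulk of the work, namely establishing the nonvanishing of the bracketed trigonometric expression on this interval, has already been carried out in Lemma~\ref{lemma:gk}, which in turn rests on parts (1) and (2) that bound $\beta$ away from $\pi$ and control the ratio $g_{k-1}(\beta)/g_k(\beta)$. So the only genuinely new step here is the bookkeeping that matches the interval of $\alpha$ to the claimed numerical bound and the observation that $\lambda_1=0<\lambda_2$ for the connected graph $P_{2k,k}$; once the substitution is set up correctly, the proposition follows essentially as a corollary of Theorem~\ref{prop:pnk} and Lemma~\ref{lemma:gk}(3).
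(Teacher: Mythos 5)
Your proof is correct and follows essentially the same route as the paper's: translate the claimed bound into the non-vanishing of $p_{2k,k}(\lambda)$ on the interval $(0,\,1-\cos\frac{\pi}{4k-1})$ via the substitution $\lambda=1+\cos\alpha$, and read that non-vanishing off from Theorem~\ref{prop:pnk} together with Lemma~\ref{lemma:gk}(3). If anything, your bookkeeping is more careful than the paper's terse write-up: you correctly identify the relevant range as $\frac{(4k-2)\pi}{4k-1}<\alpha<\pi$, which is exactly the hypothesis of Lemma~\ref{lemma:gk}(3), whereas the paper records only the weaker inequality $\frac{\pi}{4k-1}<\alpha$.
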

\begin{proof}
Let $0<\alpha<\pi$.
If $\displaystyle 1+\cos\alpha < 1-\cos\frac{\pi}{4k-1}$
then $\displaystyle \cos\alpha<-\cos\frac{\pi}{4k-1}$
and $\displaystyle \frac{\pi}{4k-1}<\alpha$.
By Theorem~\ref{prop:pnk} and Lemma~\ref{lemma:gk},
we have 
$p_{n,k}(\lambda) \not=0$
if $\displaystyle \frac{\pi}{4k-1}<\alpha<\pi$
and $\lambda=1+\cos\alpha$.
This shows that
$$
1 - \cos \frac{\pi}{4k-1} \le \lambda_2(\mathcal{L}(P_{2k,k})).
$$
\end{proof}

\begin{example}
If $k=3$ then $n=6$ and $\displaystyle
 1-\cos\frac{\pi}{4k-1}=0.0405\cdots$.
If $k=4$ then $n=8$ and $\displaystyle
 1-\cos\frac{\pi}{4k-1}=0.02185\cdots$.
The blue curve in the Figure~\ref{fig:fig22} is $y=p_{6,3}(x)$ and the red curve is $y=p_{8,4}(x)$.
\begin{figure}
\begin{center}
\includegraphics[scale=0.8]{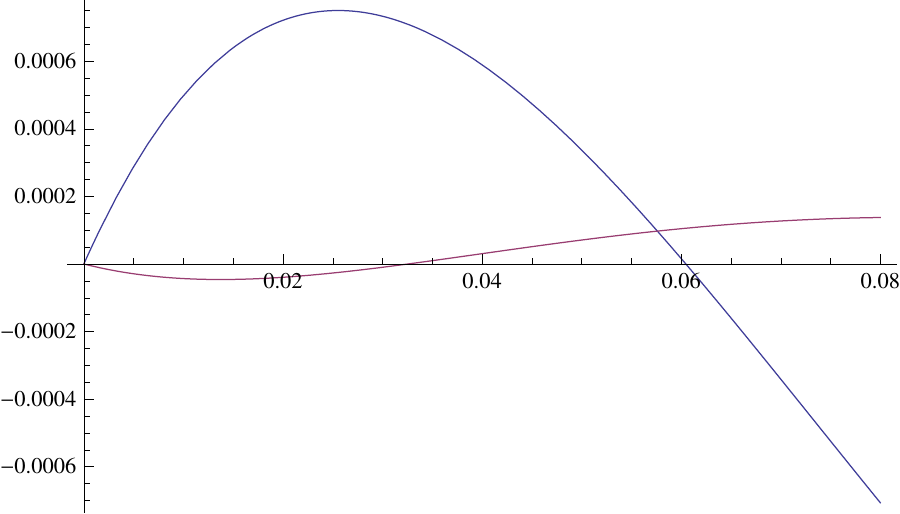}
\end{center}
\caption{Eigenvalues of $P_{6,3}$ and $P_{8,4}$}
\label{fig:fig22}
\end{figure}
\end{example}


\subsection{Eigenvalues of $\mathcal{L}(R_{n,k})$}
\begin{example}
The adjacency matrix and the normalized Laplacian matrix of a graph $R_{5,5}$.\\
 $
A(R_{5,5})=
\left(
\begin{smallmatrix}
 0 & 1 & 0 & 0 & 0 & 0 & 0 & 0 & 0 & 0 & 0 & 0 & 0 & 0 & 0 & 0 & 0 & 0 & 0 & 0 \\
 1 & 0 & 1 & 0 & 0 & 0 & 0 & 0 & 0 & 0 & 0 & 0 & 0 & 0 & 0 & 0 & 0 & 0 & 0 & 0 \\
 0 & 1 & 0 & 1 & 0 & 0 & 0 & 0 & 0 & 0 & 0 & 0 & 0 & 0 & 0 & 0 & 0 & 0 & 0 & 0 \\
 0 & 0 & 1 & 0 & 1 & 0 & 0 & 0 & 0 & 0 & 0 & 0 & 0 & 0 & 0 & 0 & 0 & 0 & 0 & 0 \\
 0 & 0 & 0 & 1 & 0 & 1 & 0 & 0 & 0 & 0 & 0 & 0 & 0 & 0 & 0 & 0 & 0 & 0 & 0 & 0 \\
 0 & 0 & 0 & 0 & 1 & 0 & 1 & 0 & 0 & 0 & 0 & 0 & 0 & 0 & 0 & 1 & 0 & 0 & 0 & 0 \\
 0 & 0 & 0 & 0 & 0 & 1 & 0 & 1 & 0 & 0 & 0 & 0 & 0 & 0 & 0 & 0 & 1 & 0 & 0 & 0 \\
 0 & 0 & 0 & 0 & 0 & 0 & 1 & 0 & 1 & 0 & 0 & 0 & 0 & 0 & 0 & 0 & 0 & 1 & 0 & 0 \\
 0 & 0 & 0 & 0 & 0 & 0 & 0 & 1 & 0 & 1 & 0 & 0 & 0 & 0 & 0 & 0 & 0 & 0 & 1 & 0 \\
 0 & 0 & 0 & 0 & 0 & 0 & 0 & 0 & 1 & 0 & 0 & 0 & 0 & 0 & 0 & 0 & 0 & 0 & 0 & 1 \\
 0 & 0 & 0 & 0 & 0 & 0 & 0 & 0 & 0 & 0 & 0 & 1 & 0 & 0 & 0 & 0 & 0 & 0 & 0 & 0 \\
 0 & 0 & 0 & 0 & 0 & 0 & 0 & 0 & 0 & 0 & 1 & 0 & 1 & 0 & 0 & 0 & 0 & 0 & 0 & 0 \\
 0 & 0 & 0 & 0 & 0 & 0 & 0 & 0 & 0 & 0 & 0 & 1 & 0 & 1 & 0 & 0 & 0 & 0 & 0 & 0 \\
 0 & 0 & 0 & 0 & 0 & 0 & 0 & 0 & 0 & 0 & 0 & 0 & 1 & 0 & 1 & 0 & 0 & 0 & 0 & 0 \\
 0 & 0 & 0 & 0 & 0 & 0 & 0 & 0 & 0 & 0 & 0 & 0 & 0 & 1 & 0 & 1 & 0 & 0 & 0 & 0 \\
 0 & 0 & 0 & 0 & 0 & 1 & 0 & 0 & 0 & 0 & 0 & 0 & 0 & 0 & 1 & 0 & 1 & 0 & 0 & 0 \\
 0 & 0 & 0 & 0 & 0 & 0 & 1 & 0 & 0 & 0 & 0 & 0 & 0 & 0 & 0 & 1 & 0 & 1 & 0 & 0 \\
 0 & 0 & 0 & 0 & 0 & 0 & 0 & 1 & 0 & 0 & 0 & 0 & 0 & 0 & 0 & 0 & 1 & 0 & 1 & 0 \\
 0 & 0 & 0 & 0 & 0 & 0 & 0 & 0 & 1 & 0 & 0 & 0 & 0 & 0 & 0 & 0 & 0 & 1 & 0 & 1 \\
 0 & 0 & 0 & 0 & 0 & 0 & 0 & 0 & 0 & 1 & 0 & 0 & 0 & 0 & 0 & 0 & 0 & 0 & 1 & 0
\end{smallmatrix}
\right)$\\

$\mathcal{L}(R_{5,5})$ can be written as \\
{\tiny
$\left(
\begin{smallmatrix}
 1 & -\frac{1}{\sqrt{2}} & 0 & 0 & 0 & 0 & 0 & 0 & 0 & 0 & 0 & 0 & 0 & 0 & 0 & 0 & 0 & 0 & 0 & 0 \\
 -\frac{1}{\sqrt{2}} & 1 & -\frac{1}{2} & 0 & 0 & 0 & 0 & 0 & 0 & 0 & 0 & 0 & 0 & 0 & 0 & 0 & 0 & 0 & 0 & 0 \\
 0 & -\frac{1}{2} & 1 & -\frac{1}{2} & 0 & 0 & 0 & 0 & 0 & 0 & 0 & 0 & 0 & 0 & 0 & 0 & 0 & 0 & 0 & 0 \\
 0 & 0 & -\frac{1}{2} & 1 & -\frac{1}{2} & 0 & 0 & 0 & 0 & 0 & 0 & 0 & 0 & 0 & 0 & 0 & 0 & 0 & 0 & 0 \\
 0 & 0 & 0 & -\frac{1}{2} & 1 & -\frac{1}{\sqrt{6}} & 0 & 0 & 0 & 0 & 0 & 0 & 0 & 0 & 0 & 0 & 0 & 0 & 0 & 0 \\
 0 & 0 & 0 & 0 & -\frac{1}{\sqrt{6}} & 1 & -\frac{1}{3} & 0 & 0 & 0 & 0 & 0 & 0 & 0 & 0 & -\frac{1}{3} & 0 & 0 & 0 & 0 \\
 0 & 0 & 0 & 0 & 0 & -\frac{1}{3} & 1 & -\frac{1}{3} & 0 & 0 & 0 & 0 & 0 & 0 & 0 & 0 & -\frac{1}{3} & 0 & 0 & 0 \\
 0 & 0 & 0 & 0 & 0 & 0 & -\frac{1}{3} & 1 & -\frac{1}{3} & 0 & 0 & 0 & 0 & 0 & 0 & 0 & 0 & -\frac{1}{3} & 0 & 0 \\
 0 & 0 & 0 & 0 & 0 & 0 & 0 & -\frac{1}{3} & 1 & -\frac{1}{\sqrt{6}} & 0 & 0 & 0 & 0 & 0 & 0 & 0 & 0 & -\frac{1}{3} & 0 \\
 0 & 0 & 0 & 0 & 0 & 0 & 0 & 0 & -\frac{1}{\sqrt{6}} & 1 & 0 & 0 & 0 & 0 & 0 & 0 & 0 & 0 & 0 & -\frac{1}{2} \\
 0 & 0 & 0 & 0 & 0 & 0 & 0 & 0 & 0 & 0 & 1 & -\frac{1}{\sqrt{2}} & 0 & 0 & 0 & 0 & 0 & 0 & 0 & 0 \\
 0 & 0 & 0 & 0 & 0 & 0 & 0 & 0 & 0 & 0 & -\frac{1}{\sqrt{2}} & 1 & -\frac{1}{2} & 0 & 0 & 0 & 0 & 0 & 0 & 0 \\
 0 & 0 & 0 & 0 & 0 & 0 & 0 & 0 & 0 & 0 & 0 & -\frac{1}{2} & 1 & -\frac{1}{2} & 0 & 0 & 0 & 0 & 0 & 0 \\
 0 & 0 & 0 & 0 & 0 & 0 & 0 & 0 & 0 & 0 & 0 & 0 & -\frac{1}{2} & 1 & -\frac{1}{2} & 0 & 0 & 0 & 0 & 0 \\
 0 & 0 & 0 & 0 & 0 & 0 & 0 & 0 & 0 & 0 & 0 & 0 & 0 & -\frac{1}{2} & 1 & -\frac{1}{\sqrt{6}} & 0 & 0 & 0 & 0 \\
 0 & 0 & 0 & 0 & 0 & -\frac{1}{3} & 0 & 0 & 0 & 0 & 0 & 0 & 0 & 0 & -\frac{1}{\sqrt{6}} & 1 & -\frac{1}{3} & 0 & 0 & 0 \\
 0 & 0 & 0 & 0 & 0 & 0 & -\frac{1}{3} & 0 & 0 & 0 & 0 & 0 & 0 & 0 & 0 & -\frac{1}{3} & 1 & -\frac{1}{3} & 0 & 0 \\
 0 & 0 & 0 & 0 & 0 & 0 & 0 & -\frac{1}{3} & 0 & 0 & 0 & 0 & 0 & 0 & 0 & 0 & -\frac{1}{3} & 1 & -\frac{1}{3} & 0 \\
 0 & 0 & 0 & 0 & 0 & 0 & 0 & 0 & -\frac{1}{3} & 0 & 0 & 0 & 0 & 0 & 0 & 0 & 0 & -\frac{1}{3} & 1 & -\frac{1}{\sqrt{6}} \\
 0 & 0 & 0 & 0 & 0 & 0 & 0 & 0 & 0 & -\frac{1}{2} & 0 & 0 & 0 & 0 & 0 & 0 & 0 & 0 & -\frac{1}{\sqrt{6}} & 1 \\
\end{smallmatrix}
\right)$
}
\end{example}

\begin{theorem}
Let $n \ge 3$, $k \ge 3$.
The characteristic polynomial of
${\mathcal L}(R_{n,k})$ is
$$
\vert \lambda I_{2(n+k)} - \mathcal{L}(R_{n,k}) \vert 
= p_{n,k}(\lambda) \cdot q_{n,k}(\lambda),
$$
where
\begin{eqnarray*}
p_{n,k}(\lambda) &=&
\frac{1}{2^n3^k\sin \beta}
(g_k(\beta)\cos(n\alpha)) - g_{k-1}(\beta)\cos((n-1)\alpha)), \\
q_{n,k}(\lambda) &=&
\frac{1}{2^n3^k\sin\gamma}
(h_{k}(\gamma)\cos(n\alpha) - h_{k-1}(\gamma)\cos((n-1)\alpha)),
\end{eqnarray*}
and 
$\displaystyle \lambda = 1 + \cos \alpha= \frac{2}{3}(1 + \cos \beta)=\frac{2}{3}(2 + \cos \gamma)$.
\end{theorem}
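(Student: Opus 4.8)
The plan is to exploit the order-two automorphism $\phi$ of $R_{n,k}$ that exchanges $x_i$ and $y_i$ for every $i$, i.e.\ the reflection of the graph across its rungs $E_3$. First I would order the vertices as $x_1,\ldots,x_{n+k},y_1,\ldots,y_{n+k}$ and record that, since $d_{x_i}=d_{y_i}$ for all $i$ and the only edges joining the two halves are the rungs $(x_i,y_i)$ ($n+1\le i\le n+k$), the normalized Laplacian has the block form
$$
\mathcal{L}(R_{n,k})=\begin{pmatrix} M & N \\ N & M\end{pmatrix},
$$
where $M$ is the $(n+k)\times(n+k)$ tridiagonal matrix carrying the intra-$x$ entries of $\mathcal{L}$, and $N$ is the diagonal matrix with $N_{ii}=-1/d_{x_i}$ at the rung indices $n+1\le i\le n+k$ and $N_{ii}=0$ otherwise. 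By Proposition~\ref{prop1} and the remark following it, $\mathcal{L}(R_{n,k})$ commutes with the permutation matrix of $\phi$, which is exactly the assertion that the two diagonal blocks coincide and the two off-diagonal blocks are equal.

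Next I would split the block structure by the orthogonal change of basis $\frac{1}{\sqrt2}\left(\begin{smallmatrix} I & I\\ I & -I\end{smallmatrix}\right)$, which separates the even part ($v_{x_i}=v_{y_i}$) from the odd part ($v_{x_i}=-v_{y_i}$) and gives
$$
\vert \lambda I_{2(n+k)}-\mathcal{L}(R_{n,k})\vert
=\vert \lambda I_{n+k}-(M+N)\vert\cdot\vert \lambda I_{n+k}-(M-N)\vert .
$$
The key observation is that adding $N$ to the diagonal of $M$ changes each rung entry from $1$ to $1-\frac13=\frac23$ (and the terminal one to $1-\frac12=\frac12$), so $M+N$ is precisely $\mathcal{L}(P_{n,k})$; hence $\vert\lambda I_{n+k}-(M+N)\vert=p_{n,k}(\lambda)$ by Theorem~\ref{prop:pnk}. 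In block notation $M+N=\left(\begin{smallmatrix} B_n(1,-\frac1{\sqrt2},1,-\frac12) & X_{n,k}\\ X_{n,k}^t & C_k(\frac23,-\frac13,\frac12,-\frac1{\sqrt6})\end{smallmatrix}\right)$, matching the decomposition used just before Theorem~\ref{prop:pnk}.

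It remains to evaluate the odd factor. Subtracting $N$ instead flips each rung diagonal entry to $1+\frac13=\frac43$ (and the terminal one to $1+\frac12=\frac32$), so
$$
M-N=\begin{pmatrix} B_n(1,-\frac1{\sqrt2},1,-\frac12) & X_{n,k}\\ X_{n,k}^t & C_k(\frac43,-\frac13,\frac32,-\frac1{\sqrt6})\end{pmatrix}.
$$
I would then repeat verbatim the cofactor expansion of the proof of Theorem~\ref{prop:pnk}: the $B_n$-block contribution still produces $\cos(n\alpha)$ and $\cos((n-1)\alpha)$ with $\lambda=1+\cos\alpha$, but now I invoke the $h_k$ case of the determinant lemma, $\vert C_k(\lambda-\frac43,\frac13,\lambda-\frac32,\frac1{\sqrt6})\vert=\frac{h_k(\gamma)}{2\cdot3^k\sin\gamma}$ with $\lambda=\frac23(2+\cos\gamma)$, in place of the $g_k(\beta)$ identity. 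This yields $\vert\lambda I_{n+k}-(M-N)\vert=q_{n,k}(\lambda)$ and completes the factorization. I expect the main obstacle to be bookkeeping rather than anything conceptual: one must check that the single coupling entry $-1/\sqrt6$ of $X_{n,k}$ is untouched by $\phi$ (it is the internal edge $x_n$–$x_{n+1}$, not a rung), so that the $B_n$ block and the coupling term are identical in both factors and only the $C_k$ block, hence only $g$ versus $h$, distinguishes $p_{n,k}$ from $q_{n,k}$.
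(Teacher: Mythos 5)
Your proof is correct and follows essentially the same route as the paper: the same block decomposition of $\mathcal{L}(R_{n,k})$ into $\left(\begin{smallmatrix} M & N\\ N & M\end{smallmatrix}\right)$, the same identification of the even factor $M+N$ with $\mathcal{L}(P_{n,k})$ (hence $p_{n,k}$ via Theorem~\ref{prop:pnk}), and the same cofactor expansion of the odd factor using the $B_n$ and $C_k$ determinant lemmas with $h_k(\gamma)$ in place of $g_k(\beta)$. If anything, your explicit conjugation by $\frac{1}{\sqrt{2}}\left(\begin{smallmatrix} I & I\\ I & -I\end{smallmatrix}\right)$ supplies the justification for the factorization $\vert\lambda I-\mathcal{L}(R_{n,k})\vert=\vert\lambda I_{n+k}-(M+N)\vert\cdot\vert\lambda I_{n+k}-(M-N)\vert$ that the paper's proof asserts without derivation.
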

\begin{proof}
Since
$\displaystyle
|B_{n}(\lambda-1,\frac{1}{\sqrt{2}},\lambda-1,\frac{1}{2})|=
\frac{\cos (n\alpha)}{2^{n-1}}$
and \\
$\displaystyle
|C_k(\lambda-\frac{4}{3},\frac{1}{3},\lambda-\frac{3}{2},\frac{1}{\sqrt{6}})|=
\frac{h_k(\gamma)}{2\cdot 3^k \cdot \sin \gamma}$,
we have
\noindent
{\small{
\begin{eqnarray*}
&&\left\vert
\begin{array}{c|c}
B_n(\lambda-1,\frac{1}{\sqrt{2}},\lambda-1,\frac{1}{2}) &
X_{n,k} \\
\hline
X_{n,k}^t &
C_k(\lambda-\frac{4}{3},\frac{1}{3},\lambda-\frac{3}{2},\frac{1}{\sqrt{6}})
\end{array}
\right\vert 
\\&=&
-\frac{1}{4}
 |B_{n-2}(\lambda-1,\frac{1}{\sqrt{2}},\lambda-1,\frac{1}{2})|
\cdot
|C_{k}(\lambda-\frac{4}{3},\frac{1}{3},\lambda-\frac{3}{2},\frac{1}{\sqrt{6}})| \\
&&
+ (\lambda -1)
|B_{n-1}(\lambda-1,\frac{1}{\sqrt{2}},\lambda-1,\frac{1}{2})|
\cdot 
|C_{k}(\lambda-\frac{4}{3},\frac{1}{3},\lambda-\frac{3}{2},\frac{1}{\sqrt{6}})| \\
&&
-\frac{1}{6} 
|B_{n-1}(\lambda-1,\frac{1}{\sqrt{2}},\lambda-1,\frac{1}{2})|
\cdot
|C_{k-1}(\lambda-\frac{4}{3},\frac{1}{3},\lambda-\frac{3}{2},\frac{1}{\sqrt{6}})| \\
&=&
-\frac{1}{4}\cdot \frac{\cos ((n-2)\alpha)}{2^{n-3}} \cdot
\frac{h_{k}(\gamma)}{2 \cdot 3^k \cdot \sin \gamma}\\
&~&
+ \cos \alpha \cdot \frac{\cos ((n-1)\alpha)}{2^{n-2}} \cdot
\frac{h_{k}(\gamma)}{2 \cdot 3^k \cdot \sin \gamma}\\
&~&
-\frac{1}{6}  \frac{\cos ((n-1)\alpha)}{2^{n-2}} \cdot
\frac{h_{k-1}(\gamma)}{2 \cdot 3^{k-1} \cdot \sin \gamma}\\
&=&
\frac{1}{2^n\cdot 3^k \cdot \sin \gamma}
(-\cos((n-2)\alpha)h_k(\gamma)+2 \cos \alpha \cos((n-1)\alpha)\\
&~&h_k(\gamma) - \cos((n-1)\alpha)h_{k-1}(\gamma))\\
&=&
\frac{1}{2^n\cdot 3^k \cdot \sin \gamma}
(\cos(n\alpha)h_k(\gamma) - \cos((n-1)\alpha)h_{k-1}(\gamma))
\\
&=&
q_{n,k}(\lambda).
\end{eqnarray*}}}
We note that
\begin{eqnarray*}
&& -\cos((n-2)\alpha)+2\cos \alpha \cos((n-1)\alpha)\\
&=&
-\cos \alpha \cos ((n-1)\alpha) 
- \sin \alpha \sin((n-1)\alpha)\\
&&+2\cos \alpha \cos((n-1)\alpha)\\
&=&
\cos \alpha \cos ((n-1)\alpha) 
- \sin \alpha \sin((n-1)\alpha)
\\
&=&
\cos (n\alpha).
\end{eqnarray*}
So we have,
{\small
\begin{eqnarray*}
&&\left\vert \lambda I_{2(n+k)} - \mathcal{L}(R_{n,k}) \right\vert\\
&=& 
\left\vert
\begin{array}{c|c}
B_n(\lambda-1,\frac{1}{\sqrt{2}},\lambda-1,\frac{1}{2}) &
X_{n,k} \\
\hline
X_{n,k}^t &
C_k(\lambda-\frac{2}{3},\frac{1}{3},\lambda-\frac{1}{2},\frac{1}{\sqrt{6}})
\end{array}
\right\vert  \\
&& \times 
\left\vert
\begin{array}{c|c}
B_n(\lambda-1,\frac{1}{\sqrt{2}},\lambda-1,\frac{1}{2}) &
X_{n,k} \\
\hline
X_{n,k}^t &
C_k(\lambda-\frac{4}{3},\frac{1}{3},\lambda-\frac{3}{2},\frac{1}{\sqrt{6}})
\end{array}
\right\vert  \\
&=& p_{n,k}(\lambda) 
\times q_{n,k}(\lambda),
\end{eqnarray*}
}
where 
$\displaystyle \lambda = 1 + \cos \alpha
= \frac{2}{3}(1 + \cos \beta) = \frac{2}{3}(2 + \cos \gamma)$.
\end{proof}


\begin{definition}
Let $n \ge 1$.
we define two matrices
$T((a_i)_{1\le i \le n}, (b_i)_{1\le i \le n-1}, (c_i)_{2\le i \le n})$
and $F$ as follows:
\begin{eqnarray*}
&&
T((a_i)_{1\le i \le n}, (b_i)_{1\le i \le n-1}, (c_i)_{2\le i \le n}) \\
&=&
\left(
\begin{array}{ccccccc}
 a_1 & b_1 & 0 & 0 & \text{...} & 0 & 0 \\
 c_2 & a_2 & b_2 & 0 & \text{...} & 0 & 0 \\
 0 & c_3 & a_3 & b_3 & \text{...} & 0 & 0 \\
 \vdots & \vdots & \vdots & \vdots & \vdots & \vdots & \vdots \\
 0 & \text{...} & 0 & c_{n-2} & a_{n-2} & b_{n-2} & 0 \\
 0 & \text{...} & 0 & 0 & c_{n-1} & a_{n-1} & b_{n-1} \\
 0 & \text{...} & 0 & 0 & 0 & c_n & a_n
\end{array}
\right), \mbox{\ and}
\end{eqnarray*}
$$
F=(f_{ij})_{1\le i, j \le n}, \mbox{\ \ where \ \ }
	  f_{ij} = \begin{cases}
(-1)^i & (i=j), \\
0 & (\mbox{otherwise}).
		   \end{cases}
$$
\end{definition}

\begin{lemma}\label{lemma:ftf}
$$
F^{-1}\cdot T((a_i),(b_i),(c_i))\cdot F
= T((a_i),(-b_i),(-c_i)).
$$
\end{lemma}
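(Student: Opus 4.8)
The plan is to exploit the fact that $F$ is a \emph{diagonal} matrix, so conjugating by it acts on $T$ entrywise rather than mixing entries. Concretely, $F=\mathrm{diag}\bigl((-1)^1,(-1)^2,\ldots,(-1)^n\bigr)$, and since each diagonal entry is $\pm 1$ we have $F^2=I_n$, hence $F^{-1}=F$. Thus $F^{-1}\cdot T\cdot F = F\,T\,F$, and the whole argument reduces to a single entrywise identity.

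First I would record how conjugation by a diagonal matrix behaves. If $D=\mathrm{diag}(d_1,\ldots,d_n)$ is invertible and $M=(m_{ij})$ is any matrix, then the $(i,j)$ entry of $D^{-1}MD$ is $d_i^{-1}m_{ij}d_j=(d_j/d_i)\,m_{ij}$. Applying this with $d_i=(-1)^i$ gives
$$
\bigl(F^{-1}\,T\,F\bigr)_{ij}=(-1)^{\,j-i}\,T_{ij}.
$$
Now I would read off the sign factor $(-1)^{j-i}$ on each of the three occupied diagonals of the tridiagonal matrix $T((a_i),(b_i),(c_i))$: on the main diagonal $j=i$, so the factor is $(-1)^0=1$ and each $a_i$ is preserved; on the superdiagonal $j=i+1$, so the factor is $(-1)^{1}=-1$ and each $b_i$ becomes $-b_i$; on the subdiagonal $j=i-1$, so the factor is $(-1)^{-1}=-1$ and each $c_i$ becomes $-c_i$. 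Every remaining entry of $T$ is zero and stays zero under multiplication by the scalar $(-1)^{j-i}$.

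Collecting these three cases, the conjugated matrix is tridiagonal with the same diagonal $(a_i)$, superdiagonal $(-b_i)$, and subdiagonal $(-c_i)$, which is precisely $T((a_i),(-b_i),(-c_i))$, completing the proof. There is no genuine obstacle here: the computation is elementary. The only points demanding a little care are verifying $F^{-1}=F$ (so that one need not worry about $(-1)^{-i}$ versus $(-1)^i$, though these agree anyway) and keeping the orientation of the sign factor straight, namely that both off-diagonals pick up $-1$ because $|j-i|=1$ in each case.
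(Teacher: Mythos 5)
Your proof is correct and follows essentially the same route as the paper's: both exploit $F^{-1}=F$ and the fact that conjugation by a diagonal $\pm1$ matrix multiplies the $(i,j)$ entry by $(-1)^{i+j}$, which fixes the diagonal and negates the two off-diagonals. Your entrywise formula $(F^{-1}TF)_{ij}=(-1)^{j-i}T_{ij}$ is just a cleaner statement of the paper's row/column sign-change argument.
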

\begin{proof}
First, we note that $F^{-1}=F$.
Each element of $b_i$ or $c_i$ is in odd row and even column
or even row and odd column.
The right multiplication of $F$ changes the sign of an odd row
and the left multiplication of $F$ changes the sing of an odd column.
The sign of $a_i$ is changed twice and the sign of $b_i$ or $c_i$
is changed once. So we have
$F^{-1}\cdot T((a_i),(b_i),(c_i))\cdot F$
$= T((a_i),(-b_i),(-c_i))$.
\end{proof}

\begin{proposition}\label{prop:evenpnk}
Let $n \ge 1$, $k \ge 2$,
$$
P = 
\left(
\begin{array}{c|c}
B_n(1,-\frac{1}{\sqrt{2}},1,-\frac{1}{2}) &
X_{n,k} \\
\hline
X_{n,k}^t &
C_k(\frac{2}{3},-\frac{1}{3},\frac{1}{2},-\frac{1}{\sqrt{6}})
\end{array}
\right) \mbox{\ \ and \ \ }
$$
$$
Q = 
\left(
\begin{array}{c|c}
B_n(1,-\frac{1}{\sqrt{2}},1,-\frac{1}{2}) &
X_{n,k} \\
\hline
X_{n,k}^t &
C_k(\frac{4}{3},-\frac{1}{3},\frac{3}{2},-\frac{1}{\sqrt{6}})
\end{array}
\right).
$$
\begin{enumerate}
\item Let $\lambda\in \Re$ and $u\in R^{n+k}$.
Then $Pu=\lambda u$ if and only if $Q(Fu)=(2-\lambda)(Fu)$.
\item
An eigenvalue $\lambda\not=0$ of $P$ is simple.
\item
An eigenvalue $\lambda\not=0$ of $Q$ is simple.
\item
Let $\lambda \in \Re$,
$u=(u_i)_{1\le i\le 2(n+k)} \in R^{2(n+k)}$
and $u_i=u_{n+k+i}$ ($1 \le i \le n+k$).
Then ${\mathcal L}(R_{n,k})u=\lambda u$
if and only if $Pu=\lambda u$,
where
$u=(u_i)_{1\le i \le n+k}$.
\item
Let $\lambda \in \Re$,
$u=(u_i)_{1\le i\le 2(n+k)} \in R^{2(n+k)}$
and $u_i=-u_{n+k+i}$ ($1 \le i \le n+k$).
Then ${\mathcal L}(R_{n,k})u=\lambda u$
if and only if $Qu=\lambda u$ where
$u=(u_i)_{1\le i \le n+k}$.
\end{enumerate}
\end{proposition}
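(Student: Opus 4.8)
The plan is to reduce all five items to a single structural observation about $\mathcal{L}(R_{n,k})$. Ordering the vertices as $x_1,\dots,x_{n+k},y_1,\dots,y_{n+k}$, the swap automorphism $x_i\leftrightarrow y_i$ forces the block form $\mathcal{L}(R_{n,k})=\left(\begin{smallmatrix} M & N\\ N & M\end{smallmatrix}\right)$, where $M$ is the $(n+k)\times(n+k)$ tridiagonal block carrying the $x$-path (all diagonal entries equal to $1$, since $R_{n,k}$ has no self-loops) and $N$ is the diagonal rung matrix with $N_{ii}=-\frac13$ for $n+1\le i\le n+k-1$, $N_{n+k,n+k}=-\frac12$, and $N_{ii}=0$ otherwise. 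Comparing entry by entry with the definitions of $P$ and $Q$ yields the two identities $P=M+N$ and $Q=M-N$, which I would record first and then use throughout.

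For items 4 and 5 I would insert the even ansatz $u=(w,w)$ and the odd ansatz $u=(w,-w)$, $w\in\Re^{n+k}$, into $\mathcal{L}(R_{n,k})u=\lambda u$. Because $N$ is symmetric, both block rows collapse to one equation: $(M+N)w=\lambda w$ in the even case and $(M-N)w=\lambda w$ in the odd case. By the identities of the previous paragraph these are exactly $Pw=\lambda w$ and $Qw=\lambda w$, which is the claim. The only points needing care are the symmetry of $N$ and the fact that the coupling block $X_{n,k}$ contributes a single nonzero entry lying on the sub-/super-diagonal of the full matrix, so that $M$ really is a tridiagonal block.

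Item 1 I would deduce from the same decomposition together with Lemma~\ref{lemma:ftf}. Since $M$ is tridiagonal with unit diagonal, Lemma~\ref{lemma:ftf} gives $FMF=2I_{n+k}-M$ (conjugation by $F$ fixes the diagonal and negates every off-diagonal, and $2-1=1$ reproduces the unit diagonal), while $FNF=N$ because $F$ and $N$ are both diagonal. Hence $FQF=F(M-N)F=(2I_{n+k}-M)-N=2I_{n+k}-(M+N)=2I_{n+k}-P$. Using $F^2=I$, the equivalence is then purely algebraic: $QFu=(2-\lambda)Fu\Leftrightarrow FQFu=(2-\lambda)u\Leftrightarrow (2I_{n+k}-P)u=(2-\lambda)u\Leftrightarrow Pu=\lambda u$.

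For items 2 and 3 I would invoke that $P=M+N$ and $Q=M-N$ are real symmetric irreducible tridiagonal (Jacobi) matrices: adding the diagonal $N$ to $M$ leaves every sub-/super-diagonal entry nonzero. For such a matrix any eigenvector is determined up to a scalar by its first coordinate through the three-term recurrence read off from successive rows (the same non-vanishing mechanism recorded for the weighted path in Proposition~\ref{prop5}), so every eigenspace is one-dimensional and every eigenvalue, in particular every $\lambda\ne0$, is simple. I expect the real work to be bookkeeping rather than conceptual: the one place errors can enter is the precise diagonal of $M+N$ and $M-N$ at the degree transitions $i=n,\,n+1$ and at the terminal vertex $i=n+k$, which is exactly where the arithmetic $\frac23+\frac43=2$ and $\frac12+\frac32=2$ underlying item 1 must be matched against the degree pattern of $R_{n,k}$.
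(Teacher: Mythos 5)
Your proof is correct and follows essentially the same route as the paper's: item 1 rests on Lemma~\ref{lemma:ftf} (the paper applies it directly to obtain $Q=F^{-1}(2I-P)F$, while you factor it through $FMF=2I-M$ and $FNF=N$), items 2--3 use the standard unreduced-Jacobi-matrix argument that the first coordinate determines an eigenvector via the three-term recurrence, and items 4--5 are exactly the block computation that the paper leaves as ``direct computations'' and that your identities $P=M+N$, $Q=M-N$ make explicit. The differences are purely presentational; if anything your version of 2--3 proves the slightly stronger fact that every eigenvalue (not only the nonzero ones) is simple.
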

\begin{proof}
\begin{enumerate}
\item First, 
we note that $Q=F^{-1}(2I-P)F$ by Lemma~\ref{lemma:ftf}.
So $Q$ and $2I-P$ have same eigenvalues and $Fu$ is an eigenvector
of $Q$ if and only if $u$ is an eigenvector of $P$.

\item If $\lambda$ is not simple,
we can have an eigenvector $u=(u_i)_{1\le i \le n+k}$,
where $u_1=0$.
By $Pu = \lambda u$ and $\lambda\not=0$,
we have $u=0$ and it contradict that $u$ is an eigenvector of $P$.
So we have $\lambda(\not=0)$ is simple.

\item It is similar to 2.

\item Assume ${\mathcal L}(R_{n,k})u=\lambda u$,
then we have $Pu=\lambda u$ by direct computations.
The converse is also hold.

\item It is similar to 4.
\end{enumerate}
\end{proof}

\begin{example}
Let $n=k=2$.
Then
{\small
$$
{\mathcal L}(R_{2,2})=
\left(
\begin{array}{cccccccc}
 1 & -\frac{1}{\sqrt{2}} & 0 & 0 & 0 & 0 & 0 & 0 \\
 -\frac{1}{\sqrt{2}} & 1 & -\frac{1}{\sqrt{6}} & 0 & 0 & 0 & 0 & 0 \\
 0 & -\frac{1}{\sqrt{6}} & 1 & -\frac{1}{\sqrt{6}} & 0 & 0 & -\frac{1}{3} & 0 \\
  0 & 0 & -\frac{1}{\sqrt{6}} & 1 & 0 & 0 & 0 & -\frac{1}{2} \\
 0 & 0 & 0 & 0 & 1 & -\frac{1}{\sqrt{2}} & 0 & 0 \\
 0 & 0 & 0 & 0 & -\frac{1}{\sqrt{2}} & 1 & -\frac{1}{\sqrt{6}} & 0 \\
 0 & 0 & -\frac{1}{3} & 0 & 0 & -\frac{1}{\sqrt{6}} & 1 & -\frac{1}{\sqrt{6}} \\
 0 & 0 & 0 & -\frac{1}{2} & 0 & 0 & -\frac{1}{\sqrt{6}} & 1
\end{array}
\right),
$$
}
$$
P=
\left(
\begin{array}{cccc}
 1 & -\frac{1}{\sqrt{2}} & 0 & 0 \\
 -\frac{1}{\sqrt{2}} & 1 & -\frac{1}{\sqrt{6}} & 0 \\
 0 & -\frac{1}{\sqrt{6}} & \frac{2}{3} & -\frac{1}{\sqrt{6}} \\
 0 & 0 & -\frac{1}{\sqrt{6}} & \frac{1}{2}
\end{array}
\right), \mbox{\ \ and \ \ }
$$
$$
Q=
\left(
\begin{array}{cccc}
 1 & -\frac{1}{\sqrt{2}} & 0 & 0 \\
 -\frac{1}{\sqrt{2}} & 1 & -\frac{1}{\sqrt{6}} & 0 \\
 0 & -\frac{1}{\sqrt{6}} & \frac{4}{3} & -\frac{1}{\sqrt{6}} \\
 0 & 0 & -\frac{1}{\sqrt{6}} & \frac{3}{2}
\end{array}
\right).
$$

Eigenvalues of $R_{2,2}$ are
$2.$, $1.79533$, $1.62867$, $1$, $1$,
$0.371333$, $0.204666$ and $0$.
Corresponding eigenvectors are
{\tiny
$$
\left(
\begin{array}{cccccccc}
 0.707107 & -1. & 1.22474 & -1. & -0.707107 & 1. & -1.22474 & 1. \\
 -6.90985 & 7.772 & -3.17291 & 1. & -6.90985 & 7.772 & -3.17291 & 1. \\
 -0.868326 & 0.772002 & 0.315168 & -1. & 0.868326 & -0.772002 & -0.315168 & 1. \\
 0.707107 & 0 & -1.22474 & 1 & 0.707104 & 0 & -1.22474 & 1\\
 -0.707107 & 0  & 1.22474 & 1 & 0.707104 & 0 & -1.22474 & -1\\
  -0.868326 & -0.772002 & 0.315168 & 1. & -0.868326 & -0.772002 & 0.315168 & 1. \\
 -6.90985 & -7.772 & -3.17291 & -1. & 6.90985 & 7.772 & 3.17291 & 1. \\
 0.707107 & 1. & 1.22474 & 1. & 0.707107 & 1. & 1.22474 & 1.
\end{array}
\right).
$$
}
Eigenvalues of $P$ are
$1.79533$, $1$, $0.371333$, and $0$.
Corresponding eigenvectors are
$$
\left(
\begin{array}{cccc}
 -6.90985 & 7.772 & -3.17291 & 1. \\
 0.707107 & 0. & -1.22474 & 1. \\
 -0.868326 & -0.772002 & 0.315168 & 1. \\
 0.707107 & 1. & 1.22474 & 1.
\end{array}
\right).
$$
Eigenvalues of $Q$ are
$2$, $1.62867$, $1$, and $0.204666$.
Corresponding eigenvectors are
$$
\left(
\begin{array}{cccc}
 -0.707107 & 1. & -1.22474 & 1. \\
 0.868326 & -0.772002 & -0.315168 & 1. \\
 -0.707107 & 0. & 1.22474 & 1. \\
 6.90985 & 7.772 & 3.17291 & 1.
\end{array}
\right).
$$
Each eigenvector of $P$ is corresponding to an even eigenvector of
${\mathcal L}(R_{2,2})$ and $Q$ an odd eigenvector of ${\mathcal
 L}(R_{2,2})$.
Even though eigenvalues of $P$ and $Q$ are simple,
an eigenvalue $1$ of ${\mathcal L}(R_{2,2})$ is not simple.

\end{example}


\section{counter examples for $Mcut(G) \neq Lcut(G)$}
This section present counter example graphs, on which spectral methods and minimum normalized cut produce different clusters.


\subsection{$Mcut(G)$ and $Lcut(G)$}

\begin{definition}[$Lcut(G)$]
Let $G=(V,E)$ be a connected graph,
$\lambda_2$ the second smallest eigenvalue of $\mathcal{L}(G)$,
$U_2=((U_2)_i)$ $(1 \le i \le |V|)$ a second eigenvector of
$\mathcal{L}(G)$ with $\lambda_2$.
We assume that $\lambda_2$ is simple.
Then $Lcut(G)$ is defined as
 $\displaystyle Lcut(G)= Ncut(V^+(U_2)\cup V^0(U_2), V^-(U_2))$.
\end{definition}

\begin{example}
Figure~\ref{fig:lmcut} shows some examples, 
where $Mcut(G)=Lcut(G)$ and $Mcut(G) \neq Lcut(G)$.
\begin{figure}[htb]
\begin{center}
\subfigure[$Lcut(G)=Mcut(G)$]{\label{fig:lmcut-a}\includegraphics[scale=0.4]{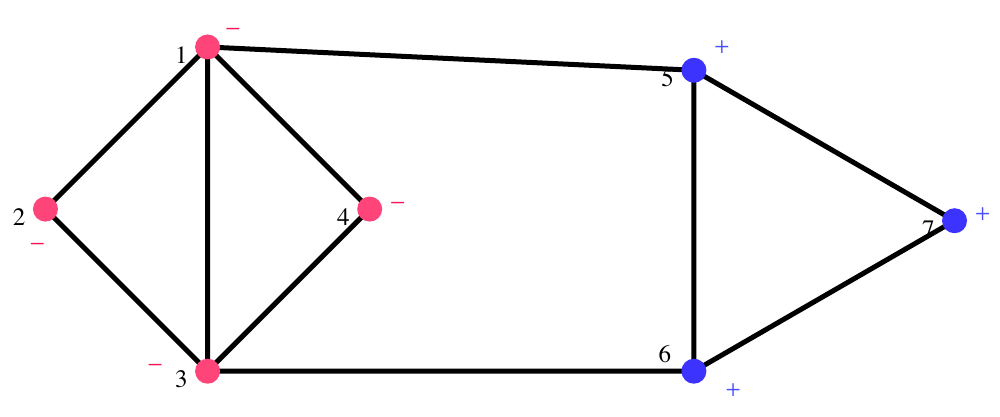}}\subfigure[$Lcut(R_{4,7})=Mcut(R_{4,7})$]{\label{fig:lmcut-b}\includegraphics[scale=0.4]{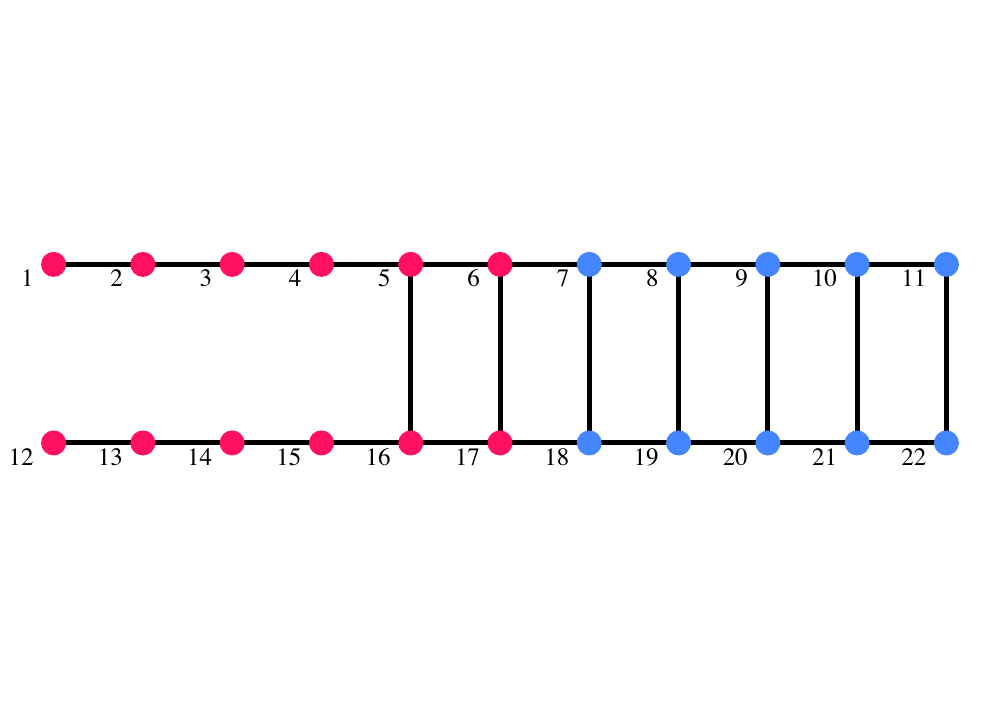}}
\subfigure[$Mcut(R_{6,4})$]{\label{fig:lmcut-c}\includegraphics[scale=0.4]{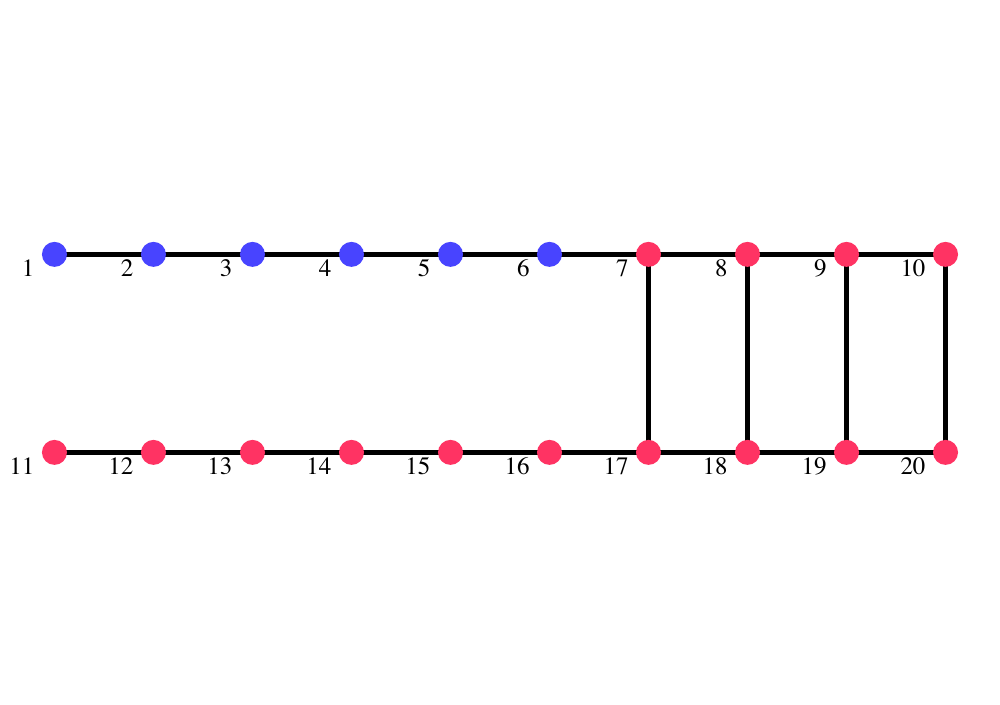}}\subfigure[$Lcut(R_{6,4})$]{\label{fig:lmcut-d}\includegraphics[scale=0.4]{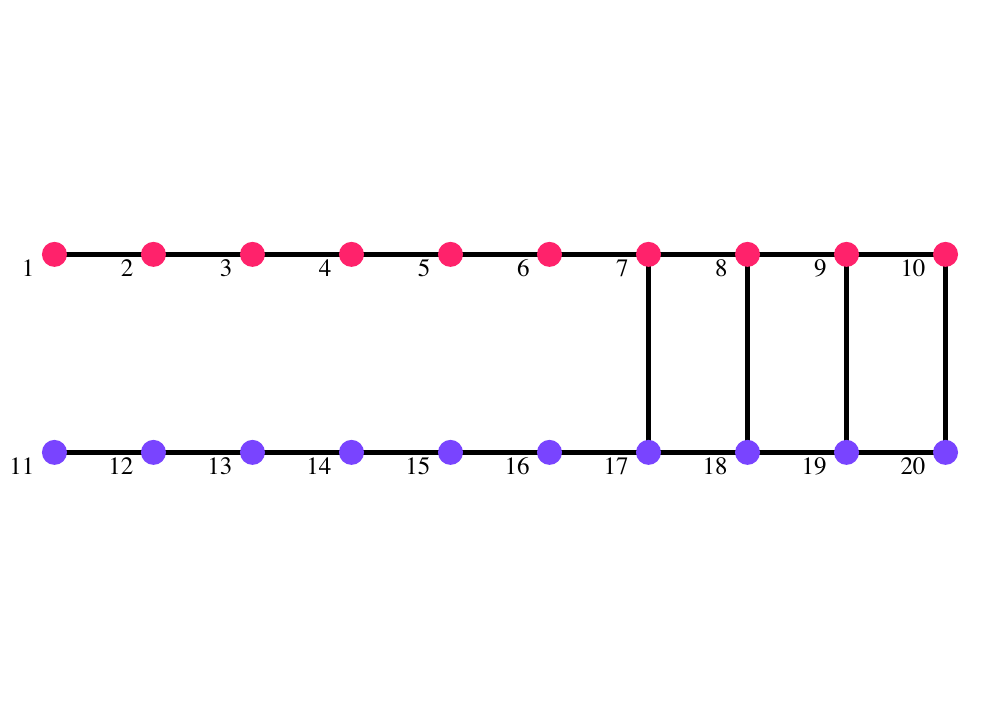}}
\caption{$Mcut(G)$ and $Lcut(G)$.}
\label{fig:lmcut}
\end{center}
\end{figure}
\end{example}

\begin{proposition}[\cite{luxburg:2007}]
\label{prop25}
Let $G=(V,E,w)$ be a weighted graph,
$W$ the weighted adjacency matrix of $G$, 
$L$ the weighted difference Laplacian $L(G)$ of $G$,
and $A$ a subset of $V$.
If vector $\displaystyle y=(y_1,\ldots,y_n)^T \in \Re^n$ is defined as 
\[ y=
\left \{ \begin{array}{ll} \sqrt{\frac{vol(V \setminus A)}{vol(A)}} & \mbox{if $v_i \in A$,}\\
 -\sqrt{\frac{vol(A)}{vol(V \setminus A)}} & \mbox{if $v_i \in V \setminus A$,}
\end{array}
\right. \] then
\begin{enumerate}
\item $y^TLy= vol(V)\cdot Ncut(A, V \setminus A)$,
\item $y^TDy=vol(V)$ and 
\item $(Dy)^T\vec{1}=0$.
\end{enumerate}
\end{proposition}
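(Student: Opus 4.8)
The plan is to verify the three identities by direct computation, abbreviating $s_A = vol(A)$, $s_{\bar{A}} = vol(V\setminus A)$ and $s = vol(V) = s_A + s_{\bar{A}}$, so that $y_i = \sqrt{s_{\bar{A}}/s_A}$ for $v_i \in A$ and $y_i = -\sqrt{s_A/s_{\bar{A}}}$ for $v_i \in V\setminus A$. Items (2) and (3) are immediate once the sums are split over the two blocks $A$ and $V\setminus A$ using $\sum_{v_i\in A} d_i = s_A$ and $\sum_{v_i\notin A} d_i = s_{\bar{A}}$. For (3) I would compute $(Dy)^T\vec{1} = \sum_i d_i y_i = \sqrt{s_{\bar{A}}/s_A}\,s_A - \sqrt{s_A/s_{\bar{A}}}\,s_{\bar{A}} = \sqrt{s_A s_{\bar{A}}} - \sqrt{s_A s_{\bar{A}}} = 0$, and for (2) I would compute $y^T D y = \sum_i d_i y_i^2 = (s_{\bar{A}}/s_A)\,s_A + (s_A/s_{\bar{A}})\,s_{\bar{A}} = s_{\bar{A}} + s_A = s = vol(V)$.

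For item (1) the key step is the Laplacian quadratic-form identity $y^T L y = \frac{1}{2}\sum_{i=1}^n \sum_{j=1}^n (y_i - y_j)^2 w_{ij}$, which is exactly the computation already carried out in the proof of the first Lemma of Section~2 (there written for $D - A$ with $A$ the weighted adjacency matrix). Since $y$ is constant on each block, the summand $(y_i - y_j)^2$ vanishes whenever $v_i$ and $v_j$ lie on the same side of the cut, so only the crossing edges survive. On a crossing pair I would evaluate $y_i - y_j = \sqrt{s_{\bar{A}}/s_A} + \sqrt{s_A/s_{\bar{A}}} = (s_A + s_{\bar{A}})/\sqrt{s_A s_{\bar{A}}} = s/\sqrt{s_A s_{\bar{A}}}$, hence $(y_i - y_j)^2 = s^2/(s_A s_{\bar{A}})$, a value independent of which crossing pair is chosen.

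The remaining work is bookkeeping of the double sum. The two ordered orientations of each crossing edge contribute equally (the factor $(y_i-y_j)^2$ is symmetric and $w_{ij}=w_{ji}$), so $\sum_{i,j}(y_i-y_j)^2 w_{ij} = 2\,\frac{s^2}{s_A s_{\bar{A}}}\sum_{v_i\in A,\,v_j\in V\setminus A} w_{ij} = 2\,\frac{s^2}{s_A s_{\bar{A}}}\,cut(A,V\setminus A)$, and dividing by $2$ gives $y^T L y = \frac{s^2}{s_A s_{\bar{A}}}\,cut(A,V\setminus A)$. I would then match this against the definition of the normalized cut, $Ncut(A,V\setminus A) = cut(A,V\setminus A)\big(\tfrac{1}{s_A}+\tfrac{1}{s_{\bar{A}}}\big) = cut(A,V\setminus A)\,\frac{s}{s_A s_{\bar{A}}}$, so that $vol(V)\cdot Ncut(A,V\setminus A) = s\cdot cut(A,V\setminus A)\,\frac{s}{s_A s_{\bar{A}}} = \frac{s^2}{s_A s_{\bar{A}}}\,cut(A,V\setminus A) = y^T L y$, which is the claim. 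The only place needing real care — and where an off-by-a-factor-of-two error is easiest — is reconciling the orientation count in the double sum with the $\frac{1}{2}$ coming from the quadratic-form identity; once that is pinned down, the rest is a one-line substitution.
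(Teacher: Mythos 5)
Your proposal is correct and follows essentially the same route as the paper: items (2) and (3) by splitting the sums over the two blocks, and item (1) via the quadratic-form identity $y^TLy=\frac{1}{2}\sum_{i,j}(y_i-y_j)^2w_{ij}$ with only crossing pairs surviving. The sole (cosmetic) difference is that you simplify $y_i-y_j$ to $vol(V)/\sqrt{vol(A)\,vol(V\setminus A)}$ before squaring, whereas the paper expands the square into $\frac{vol(A)}{vol(V\setminus A)}+\frac{vol(V\setminus A)}{vol(A)}+2$ and rearranges afterward; your orientation count resolving the factor of two is handled correctly.
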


\begin{proof}
\begin{enumerate} 
 \item {\small {\begin{eqnarray*}
y^TLy&=&y^TDy-y^TWy\\
&=&\sum_{i=1}^n d_iy_i^2 -\sum_{i,j} y_iw_{ij}y_j\\
&=&\frac{1}{2} \left( \sum_{i=1}^n d_iy_i^2 -2 \sum_{i,j} y_iy_jw_{ij} +\sum_{j=1}^n d_jy_j^2 \right )\\
&=& \frac{1}{2} \sum_{i,j=1}^n w_{ij}(y_i-y_j)^2
\end{eqnarray*}}}
This can be further reduced to,\\
{\small{ 
\begin{eqnarray*}
& =&  \frac{1}{2}\sum_{i\in A, j\in (V \setminus A)} w_{ij}\left( \sqrt{\frac{vol(V \setminus A)}{vol(A)}}+
\sqrt{\frac{vol(A)}{vol(V \setminus A)}}\right)^2+\\
&&\frac{1}{2}\sum_{i\in (V \setminus A), j\in A} w_{ij}\left( -\sqrt{\frac{vol(A)}{vol(V \setminus A)}}-
\sqrt{\frac{vol(V \setminus A)}{vol(A)}} \right)^2\\
&=& cut(A,V \setminus A) \left(\frac{vol(A)}{vol(V \setminus A)}+\frac{vol(V \setminus A)}{vol(A)}+2 \right)\\
&=&cut(A,V \setminus A)\left( \frac{vol(A)+vol(V \setminus A)}{vol(V \setminus A)}+
\frac{vol(A)+vol(V \setminus A)}{vol(A)}\right)\\
&=& vol(V).Ncut(A, V \setminus A).
\end{eqnarray*}}}

\item {\small{\begin{eqnarray*}
y^TDy &=& \sum_{i=1}^n d_{i}y_i^2 = \sum_{i\in A} d_iy_i^2+ \sum_{i\in V \setminus A} d_iy_i^2\\
&=& \sum_{i\in A}d_i \left( \frac{vol(V \setminus A)}{vol(A)}\right)+
 \sum_{i \in (V \setminus A)}d_i \left(\frac{vol(A)}{vol(V \setminus A)} \right)\\
&=& vol(A)\frac{vol(V \setminus A)}{vol(A)}+vol(V \setminus A)\frac{vol(A)}{vol(V \setminus A)}\\
&=& vol(V).
\end{eqnarray*}}}\hfill\qed
\item {\small{ \begin{eqnarray*}
(Dy)^T\vec{1}&=& \sum_{i=1}^n d_i y_i \\
&=& \sum_{i \in A} d_i \sqrt{\frac{vol(V \setminus A)}{vol(A)}}-
\sum_{i \in V \setminus A} d_i \sqrt{\frac{vol(A)}{vol(V \setminus A)}}\\
&=& vol(A) \sqrt{\frac{vol(V \setminus A)}{vol(A)}}-
vol(V \setminus A)\sqrt{\frac{vol(A)}{vol(V \setminus A)}}\\
&=& 0.
\end{eqnarray*} }}
\end{enumerate}
\end{proof}

\noindent
By Proposition~\ref{prop25}, we have
{\small
\begin{eqnarray*}
Ncut(A,V\setminus A)&=& \frac{y^T L y}{y^T D y}\\
&=&
\frac{y^T (D-W) y}{y^T D y} \\
&=&
\frac{(D^{1/2}y)^T(I-D^{-1/2}WD^{-1/2})(D^{1/2}y)}
{(D^{1/2}y)^T (D^{1/2}y)}\\
&=& \frac{z^T \mathcal{L}(G) z}{z^T z},
\end{eqnarray*}
}
\noindent
where $z=D^{1/2}y$ and $\mathcal{L}(G)=I-D^{-1/2}WD^{-1/2}$.
The least eigenvalue of $\mathcal{L}(G)$
is $0$ and an eigenvector is $D^{1/2}\vec{1}$.
Let $\lambda_2$ be the second eigenvalue of $\mathcal{L}(G)$.
It is well known
$$
\lambda_2 = \min\{ \frac{z^T \mathcal{L}(G) z}{z^T z}\ |\ 
z \in \Re^n, \ z \perp D^{1/2}\vec{1} \}.
$$
If $z$ is a second eigenvector, then
$\displaystyle \lambda_2=\frac{z^T \mathcal{L}(G) z}{z^T z}$
and $z \perp D^{1/2}\vec{1}$.
These results guide to consider relations
between a set $A$ attaining $Mcut(G)=Ncut(A,V\setminus A)$
and a set $V^+(U)$,
where $U$ is a second eigenvector
of $\mathcal{L}(G)$.
The set $V^+(U)$ is a good approximation of $A$.



\subsection{The graph $R_{n,k}$}
In this section,
we review the formulae of
$Mcut(R_{n,k})$ and conditions in Theorem~\ref{propmcutg},
consider some properties of subsets $A$ of $V(R_{n,k})$,
which attains
$Lcut(R_{n,k})=Ncut(A,V\setminus A)$,
and assign a condition of $n$ and $k$ to cause
$Mcut(R_{n,k})\not=Lcut(R_{n,k})$.

Let $R_{n,k}=(V,E)$, $V=\{v_i\ |\ 1\le i \le 2(n+k)\}$,
where
$$
v_i= \begin{cases} x_i & (1 \le i \le n+k), \\
y_{i-(n+k)} & (n+k+1 \le i \le 2(n+k)). \end{cases}
$$
We review subsets $A_1$, $A_2$ and $A_4(\alpha)$ defined in the proof
of Theorem~\ref{propmcutg}. That is
\begin{eqnarray*}
A_1 &=& \{v_i \ |\ 1 \le i \le n+k\}, \\
A_2 &=& \{v_i \ |\ 1 \le i \le n \}, \mbox{\ and} \\
A_4(\alpha) &=&\{v_i, v_{i+n+k} \ |\ 1 \le i \le n+ \alpha \} \\
&& (1 \le \alpha <k). \\
\end{eqnarray*}
For a vector $U=(u_1,u_2,\ldots,u_{2(n+k)}) \in \Re^{2(n+k)}$,
we write $\bar{U}=(u_1,u_2,\ldots,u_{n+k}) \in \Re^{n+k}$.
For a vector $\bar{U}=(u_1,u_2,\ldots,u_{(n+k)}) \in \Re^{n+k}$,
we write $(\bar{U},\bar{U}) \in \Re^{2(n+k)}$ as a
vector $U=(u_1,u_2,\ldots,u_{2(n+k)}) \in \Re^{2(n+k)}$
such that $u_{i+(n+k)}=u_i \ (1\le i \le n+k)$.
In this section,
we consider an automorphism $\phi$,
where $\phi(v_i)=v_{i+n+k}$ to consider even and odd vectors.

\begin{proposition}
\label{prop34}
If $U=(u_1,u_2,\ldots,u_{2(n+k)})$ is
an eigenvector of $\mathcal{L}(R_{n,k})$ with an eigenvalue $\lambda$,
then
$\bar{U}$ is an eigenvector of $\mathcal{L}(P_{n,k})$
with an eigenvalue $\lambda$.
Conversely, if $\bar{U}=(u_1,u_2,\ldots,u_{n+k})$ is an eigenvector
of $\mathcal{L}(P_{n,k})$ with an eigenvalue $\lambda$,
then $U=(\bar{U},\bar{U})$ is an eigenvector of $\mathcal{L}(R_{n,k})$.
\end{proposition}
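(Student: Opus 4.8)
The plan is to deduce both implications from the block structure of $\mathcal{L}(R_{n,k})$ together with the already-established Proposition~\ref{prop:evenpnk}. First I would record that, in the vertex ordering $x_1,\dots,x_{n+k},y_1,\dots,y_{n+k}$, the order-$2$ automorphism $\phi$ that interchanges the two copies forces the symmetric block form
\[
\mathcal{L}(R_{n,k})=\begin{pmatrix} M & N\\ N & M\end{pmatrix},
\]
where $M$ is the $(n+k)\times(n+k)$ block describing the $x$-path with its normalized degrees and $N$ is the diagonal matrix carrying the rung couplings $-1/\sqrt{d_{x_i}d_{y_i}}$, nonzero only for $n+1\le i\le n+k$. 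The single piece of bookkeeping on which everything rests is the identity $M+N=\mathcal{L}(P_{n,k})$ (equivalently the matrix $P$ of Proposition~\ref{prop:evenpnk}): a rung in $R_{n,k}$ raises the degree of $x_i$ exactly as the self-loop of weight $1$ does in $P_{n,k}$, so the normalized diagonal entry $1$ of $M$ plus the rung entry of $N$ reproduces the diagonal values $2/3$ at interior rung vertices and $1/2$ at the degree-$2$ endpoint $x_{n+k}$ seen in $\mathcal{L}(P_{n,k})$. I would likewise note $M-N=Q$.

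For the converse I would simply substitute $U=(\bar U,\bar U)$. Then the first block of $\mathcal{L}(R_{n,k})U$ is $M\bar U+N\bar U=(M+N)\bar U=\mathcal{L}(P_{n,k})\bar U=\lambda\bar U$, and the second block equals the first by the $(M,N;N,M)$ symmetry, so $\mathcal{L}(R_{n,k})U=\lambda U$. This is precisely item~4 of Proposition~\ref{prop:evenpnk} once $P$ is identified with $\mathcal{L}(P_{n,k})$, so the converse costs nothing beyond the identity above.

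For the forward direction the same computation shows that if $U=(\bar U,\bar U)$ is an even eigenvector then its first block equation reads $(M+N)\bar U=\lambda\bar U$, that is $\mathcal{L}(P_{n,k})\bar U=\lambda\bar U$. The step I expect to be the genuine obstacle is justifying that the eigenvector of $\mathcal{L}(R_{n,k})$ at hand may be taken even: a general eigenvector decomposes into an even and an odd part under $\phi$, and by item~5 of Proposition~\ref{prop:evenpnk} the odd part restricts to an eigenvector of $Q=M-N$ rather than of $\mathcal{L}(P_{n,k})=M+N$. When $\lambda$ is simple, Proposition~\ref{prop2} guarantees the eigenvector is odd or even and the forward claim is clean; in the non-simple case this genuinely fails (for instance the eigenvalue $1$ of $R_{2,2}$ has both an even and an odd eigenvector, whose restrictions solve $P$ and $Q$ respectively), so I would phrase the forward implication on the even eigenspace. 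Apart from verifying $M+N=\mathcal{L}(P_{n,k})$ at the two exceptional indices $i=n+1$ and $i=n+k$, the whole argument is just this even/odd reduction.
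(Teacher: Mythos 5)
Your proof is correct and takes essentially the same route as the paper's: the symmetric two-by-two block form of $\mathcal{L}(R_{n,k})$ with diagonal block $M$ and off-diagonal block $N$, the key identity $M+N=\mathcal{L}(P_{n,k})$ (the paper writes $\mathcal{L}_1+C=\mathcal{L}(P_{n,k})$), and direct substitution of $(\bar U,\bar U)$. Your caveat on the forward direction also matches the paper exactly --- its proof opens with ``if $U$ is an even vector'' and concludes only the even case --- and your restriction to the even eigenspace correctly flags that the proposition's literal wording overstates what is proved, since an odd eigenvector restricts to an eigenvector of $Q=M-N$ rather than of $\mathcal{L}(P_{n,k})$.
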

\begin{proof}
If $U$ is an even vector then we can write $U=(\bar{U},\bar{U})$.
The matrix $\mathcal{L}(R_{n,k})$ can be written as 
\[ \mathcal{L}(R_{n,k})=\left( \begin{array}{cc}
\mathcal{L}_1 & C \\
C^T & \mathcal{L}_1 
\end{array} \right), \]
 where $\mathcal{L}_1$ is the $(n+k) \times (n+k)$ principal sub matrix of $\displaystyle \mathcal{L}(R_{n,k})$ and $C=(c_{ij})$ is the $(n+k) \times (n+k)$ matrix such that \[ c_{ij}= \left\{\begin{array}{cc}
-\frac{1}{d_i} &\mbox{ if $n+1 \le i \le n+k $ and $i=j$,}\\
0 &\mbox{otherwise.}
\end{array} \right. \]
We notice that $\displaystyle \mathcal{L}_1+C=\mathcal{L}(P_{n,k})$.
If $\lambda$ is an eigenvalue of $\mathcal{L}(R_{n,k})$ then 
$\mathcal{L}(R_{n,k}) U=\lambda U $ can be written as,
\[\left( \begin{array}{cc} \mathcal{L}_1 & C \\ C^T & \mathcal{L}_1 \end{array} \right) \left( \begin{array}{c} 
\bar{U} \\
\bar{U} \end{array} \right) = \lambda \left( \begin{array}{c} \bar{U} \\
\bar{U} \end{array}\right) \]
This gives
\begin{eqnarray*}
\mathcal{L}_1\bar{U} + C \bar{U} &=& \lambda \bar{U}.
\end{eqnarray*}
This can be written as, 
$\displaystyle (\mathcal{L}_1+C )\bar{U} =\mathcal{L}(P_{n,k})\bar{U}=
\lambda \bar{U}$.
Therefore $\lambda$ is an eigenvalue of $\mathcal{L}(P_{n,k})$ and $\bar{U}$ is an eigenvector.
Thus if $U$ is an even vector of $\displaystyle \mathcal{L}(R_{n,k})$ with eigenvalue $\lambda$,
then $\bar{U}$ is an eigenvector of $\displaystyle \mathcal{L}(P_{n,k})$ with the same eigenvalue.
The converse also holds.\end{proof}

\begin{proposition}
\label{lemma16}
Let $U=(u_1,u_2,\ldots,u_{(n+k)})$ be an eigenvector of $\mathcal{L}(P_{n,k})$ with a second smallest eigenvalue $\lambda_2$. 
Then there exists some $\alpha \in \mathbf{Z^+}$ such that $\displaystyle u_i \geq 0 \ (1 \leq i \leq \alpha)$ and $\displaystyle u_i <0 \ (\alpha +1 \leq i \leq n+k)$ or $\displaystyle u_i <0 \ (1 \leq i \leq \alpha)$ and $\displaystyle u_i \ge 0 \ (\alpha +1 \leq i \leq n+k)$.
\end{proposition}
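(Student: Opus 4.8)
The plan is to exploit that $\mathcal{L}(P_{n,k})$ is a symmetric, irreducible tridiagonal (Jacobi) matrix: since $P_{n,k}$ is a path on $x_1,\dots,x_{n+k}$ (with loops only on the last $k$ vertices), every off-diagonal entry $\mathcal{L}(P_{n,k})_{i,i+1}=-w_{i,i+1}/\sqrt{d_id_{i+1}}$ is strictly negative and all remaining off-diagonal entries vanish. By Proposition~\ref{prop:evenpnk}, where the matrix $P$ there equals $\mathcal{L}(P_{n,k})$, every eigenvalue $\lambda\neq 0$ is simple, so $\lambda_2>0$ is simple and $U$ is unique up to scaling; by Lemma~\ref{lemma4}, $U$ has at least one strictly positive and one strictly negative entry. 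The target statement is exactly the assertion that $U$ changes sign only once along the path, so the whole proof reduces to a one-sign-change (oscillation) property for the second eigenvector.

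First I would record a ``no vanishing'' lemma, the $P_{n,k}$-analogue of Proposition~\ref{prop5}. Reading the eigenvalue equation $\mathcal{L}(P_{n,k})U=\lambda_2 U$ row by row gives a three-term recurrence $b_{i-1}u_{i-1}+(a_i-\lambda_2)u_i+b_iu_{i+1}=0$ with every $b_i\neq0$. Hence $u_1=0$ forces $u_2=0$ and then, inductively, $U=0$; the symmetric argument at the far end shows $u_{n+k}\neq0$, and $u_i=u_{i+1}=0$ likewise forces $U=0$. Consequently no two consecutive entries vanish and both endpoints are nonzero, so the number of sign changes of $U$ (counted across any isolated zero) is well defined and, by Lemma~\ref{lemma4}, at least one.

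The core step is to show there is \emph{exactly} one sign change. I would solve the recurrence explicitly, as in Section~4.3: setting $\hat p_i(\lambda)=\det(\lambda I_i-\mathcal{L}(P_{n,k})_i)$ for the leading $i\times i$ block and using Proposition~\ref{prop:tridiagonal}, one obtains $u_{i+1}=\big(\hat p_i(\lambda_2)/(b_1\cdots b_i)\big)u_1$. Because every $b_i<0$ we have $\mathrm{sign}(b_1\cdots b_i)=(-1)^i$, so the sign pattern of $U$ is governed by the Sturm sequence $\hat p_0(\lambda_2),\dots,\hat p_{n+k}(\lambda_2)$. Classical Sturm oscillation, namely that the number of eigenvalues $\le\lambda$ equals the number of sign agreements in this sequence, then yields that the eigenvector of the $m$-th smallest eigenvalue of such a negatively weighted Jacobi matrix has precisely $m-1$ sign changes; for $m=2$ this is exactly one. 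Equivalently, one may invoke the discrete nodal-domain theorem for the normalized Laplacian cited in the Introduction, or Fiedler's theorem for trees after transporting to the generalized problem $Ly=\lambda Dy$ via Lemma~\ref{lema1}, whose positive diagonal change of variables preserves signs.

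Finally I would translate ``one sign change, nonzero endpoints, no two consecutive zeros'' into the stated form: letting $\alpha$ be the largest index for which $u_\alpha$ has the sign of $u_1$ (absorbing a possible single transitional zero into the nonnegative block), the entries are $\ge 0$ on $\{1,\dots,\alpha\}$ and $<0$ on $\{\alpha+1,\dots,n+k\}$, or the mirror image, which is precisely the claim. The main obstacle is the oscillation step: making the sign bookkeeping rigorous in the presence of the negative off-diagonals and of possible interior zeros of the Sturm polynomials at $\lambda_2$, which is why I would lean on the explicit minor formula for $u_{i+1}$ together with the simplicity from Proposition~\ref{prop:evenpnk}, rather than on a purely variational nodal-domain count.
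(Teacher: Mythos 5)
Your proposal is correct, but it takes a genuinely different route from the paper. The paper's proof is two lines: since $\lambda_2$ is simple, it invokes the discrete nodal domain theorem of Davies et al.\ to conclude that the induced subgraphs on $V^+(U)\cup V^0(U)$ and $V^-(U)\cup V^0(U)$ are connected, and on a path ``connected'' means ``an interval,'' which is the claim. Your primary argument instead treats $\mathcal{L}(P_{n,k})$ as a Jacobi matrix with strictly negative off-diagonals and runs Sturm oscillation through the explicit minor formula $u_{i+1}=\bigl(\hat p_i(\lambda_2)/(b_1\cdots b_i)\bigr)u_1$; you name the nodal-domain route only as an equivalent alternative. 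What your route buys: it is self-contained modulo classical Sturm theory, it meshes with the tridiagonal determinant machinery the paper already builds in Section~4, it gives the exact sign-change count $m-1$ for \emph{every} eigenvector rather than just the second, and it is more careful than the paper about where zero entries can sit (the paper's appeal to weak nodal domains does not by itself exclude a zero inside the strictly signed block). What it costs is length and sign bookkeeping, including the interlacing argument needed to see that the Sturm count over $\hat p_0,\dots,\hat p_{n+k-1}$ at $\lambda_2$ is exactly one. One small step worth making explicit in your write-up: the three-term recurrence $b_{i-1}u_{i-1}+(a_i-\lambda_2)u_i+b_iu_{i+1}=0$ with $u_i=0$ and both $b$'s negative forces $u_{i-1}$ and $u_{i+1}$ to have opposite signs, so every zero is a transitional zero and, with exactly one sign change, there is at most one of them; this is what licenses ``absorbing a possible single transitional zero into the nonnegative block'' at the end.
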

\begin{proof}
If $U=(u_1,u_2,\ldots,u_{n+k})$ is the second eigenvector of $\mathcal{L}(P_{n,k})$,
then $\displaystyle U \perp  D^{1/2}\vec{1}$.
Then by Lemma~\ref{lemma4},
$V^+(U) \neq \emptyset $ and $V^-(U) \neq \emptyset $.
Since $\lambda_2$ is simple,
induced subgraphs by $V^+(U)$, $V^-(U)$, $V^+(U)\cup V^0(U)$
and $V^-(U)\cup V^0(U)$ are connected
by the nodal domain theorem \cite{Davis:2001}.
Thus there exists some $\alpha \in \mathbf{Z^+}$ as given in the
proposition.
\end{proof}

\begin{corollary}
If $\bar{U}=(u_1,u_2,\ldots,u_{n+k})$ is
a first eigenvector of $\mathcal{L}(P_{n,k})$,
then
$U=(\bar{U},\bar{U})$ is a first eigenvector of $\mathcal{L}(R_{n,k})$.
\end{corollary}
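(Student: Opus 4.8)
The plan is to recognize that a \emph{first} eigenvector is precisely an eigenvector belonging to the smallest eigenvalue, and that for the normalized Laplacian of a connected graph this smallest eigenvalue is always $0$, with eigenvector $D^{1/2}\vec{1}$ (as already used in the proof of Lemma~\ref{lemma4}). Thus the statement reduces to transporting the eigenvalue-$0$ eigenvector of $\mathcal{L}(P_{n,k})$ to an eigenvalue-$0$ eigenvector of $\mathcal{L}(R_{n,k})$, and then checking that $0$ remains the smallest eigenvalue on the larger graph.

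First I would note that $P_{n,k}$ is connected, so $\lambda_1(\mathcal{L}(P_{n,k}))=0$ and the hypothesis ``$\bar{U}$ is a first eigenvector of $\mathcal{L}(P_{n,k})$'' means exactly $\mathcal{L}(P_{n,k})\bar{U}=0\cdot\bar{U}$. Applying the converse direction of Proposition~\ref{prop34} with $\lambda=0$ immediately yields that $U=(\bar{U},\bar{U})$ is an eigenvector of $\mathcal{L}(R_{n,k})$ with eigenvalue $0$. Since $R_{n,k}$ is also connected, $0$ is the least eigenvalue of $\mathcal{L}(R_{n,k})$, so any eigenvector for eigenvalue $0$ is automatically a first eigenvector; hence $U$ is a first eigenvector of $\mathcal{L}(R_{n,k})$, as claimed.

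The step carrying the real content is the eigenvalue preservation in Proposition~\ref{prop34}, which rests on the block identity $\mathcal{L}_1+C=\mathcal{L}(P_{n,k})$ established there; intuitively the self-loops built into $P_{n,k}$ simulate the cross-edges $E_3$ of $R_{n,k}$, so the $x$-vertices carry identical degrees in both graphs. For that reason I expect no genuine obstacle here: the only point needing care is confirming that $0$ is the \emph{smallest} eigenvalue (not merely \emph{some} eigenvalue) on both sides, which is guaranteed by connectivity. As an explicit cross-check one may observe that the first eigenvector of $\mathcal{L}(P_{n,k})$ is a scalar multiple of $D_{P}^{1/2}\vec{1}$, and the degree matching gives $(D_{P}^{1/2}\vec{1},\,D_{P}^{1/2}\vec{1})=D_{R}^{1/2}\vec{1}$, which is exactly the first eigenvector of $\mathcal{L}(R_{n,k})$.
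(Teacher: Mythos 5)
Your proposal is correct and follows the same route the paper intends: the corollary is stated as an immediate consequence of Proposition~\ref{prop34} applied with $\lambda=0$, combined with the standard fact that $0$ is the smallest eigenvalue of the normalized Laplacian of a connected (weighted) graph with eigenvector $D^{1/2}\vec{1}$. Your explicit cross-check that the degrees of the $x$-vertices agree in $P_{n,k}$ and $R_{n,k}$ (so that $(D_P^{1/2}\vec{1},D_P^{1/2}\vec{1})=D_R^{1/2}\vec{1}$) is a nice confirmation but adds nothing beyond what Proposition~\ref{prop34} already supplies.
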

\hfill\qed
 
\begin{proposition}
\label{propeven}
Let $\lambda_2$ be
the second smallest eigenvalue of $\mathcal{L}(R_{n,k})$,
$\lambda'_2$ the second smallest eigenvalue of $\mathcal{L}(P_{n,k})$,
and $U=(u_1,u_2,\ldots,u_{2(n+k)})$ 
an eigenvector of $\mathcal{L}(R_{n,k})$ with $\lambda_2$.
If $U$ is an even vector then $\lambda_2=\lambda'_2$.
That is $\bar{U}=(u_1,u_2,\ldots,u_{n+k})$ is
an second eigenvector of $\mathcal{L}(P_{n,k})$ with $\lambda'_2$.
\end{proposition}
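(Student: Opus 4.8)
The plan is to combine the eigenvector correspondence of Proposition~\ref{prop34} with the connectivity of $R_{n,k}$ and $P_{n,k}$ to fix the location of $\lambda_2$ inside the spectrum of $\mathcal{L}(P_{n,k})$.

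First I would note that, writing $U=(\bar{U},\bar{U})$ for the even eigenvector, Proposition~\ref{prop34} shows directly that $\bar{U}$ is an eigenvector of $\mathcal{L}(P_{n,k})$ with the same eigenvalue $\lambda_2$. Hence $\lambda_2$ belongs to the spectrum of $\mathcal{L}(P_{n,k})$, and it remains only to show that it is the \emph{second smallest} eigenvalue $\lambda'_2$ rather than some larger one.

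Next I would compare $\lambda_2$ with $\lambda'_2$. Since $R_{n,k}$ is connected, its least eigenvalue $0$ is simple, so $\lambda_2>0$; likewise $P_{n,k}$ is connected, so its least eigenvalue $\lambda'_1=0$ is simple and every positive eigenvalue of $\mathcal{L}(P_{n,k})$ is at least $\lambda'_2$. As $\lambda_2>0$, this already gives $\lambda_2\ge \lambda'_2$. To rule out a strict inequality I would argue by contradiction: if $\lambda'_2<\lambda_2$, choose an eigenvector $\bar{W}$ of $\mathcal{L}(P_{n,k})$ for $\lambda'_2$ and apply the converse half of Proposition~\ref{prop34}, obtaining the even eigenvector $W=(\bar{W},\bar{W})$ of $\mathcal{L}(R_{n,k})$ with eigenvalue $\lambda'_2$. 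Then $0<\lambda'_2<\lambda_2$ would exhibit an eigenvalue of $\mathcal{L}(R_{n,k})$ strictly between its least eigenvalue $0$ and its second smallest eigenvalue $\lambda_2$, which is impossible by the definition of $\lambda_2$. Therefore $\lambda_2=\lambda'_2$, and $\bar{U}$ is a second eigenvector of $\mathcal{L}(P_{n,k})$.

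The only point requiring care is that the converse direction of Proposition~\ref{prop34} genuinely produces an eigenvector of $\mathcal{L}(R_{n,k})$ with the \emph{identical} eigenvalue; this is exactly the second half of that proposition and uses the block form of $\mathcal{L}(R_{n,k})$ with its symmetric diagonal coupling block $C$. Beyond invoking connectivity of both graphs to guarantee that the eigenvalue $0$ is simple (and hence that $\lambda_2$ and $\lambda'_2$ are strictly positive), no substantial obstacle arises; the whole argument is a short spectral comparison between the two families of eigenvectors.
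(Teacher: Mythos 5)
Your argument is correct and follows essentially the same route as the paper: both directions of Proposition~\ref{prop34} are used to transfer eigenvectors between $\mathcal{L}(R_{n,k})$ and $\mathcal{L}(P_{n,k})$, giving the two inequalities that sandwich $\lambda_2=\lambda'_2$. The only cosmetic difference is that you justify each inequality via connectivity (simplicity of the eigenvalue $0$, so any positive eigenvalue is at least the second smallest), whereas the paper notes $\bar{U}\perp D^{1/2}(P_{n,k})\vec{1}$ and $(U',U')\perp D^{1/2}(R_{n,k})\vec{1}$ and appeals to the variational characterization of $\lambda_2$; both justifications are valid.
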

\begin{proof}
Since $U$ is an even vector,
$\bar{U}$ is an eigenvector of $\mathcal{L}(P_{n,k})$ with $\lambda_2$.
So we have $\lambda'_2 \le \lambda_2$.
We note $\displaystyle U \perp D^{1/2}(R_{n,k})\vec{1}$
and $\bar{U} \perp D^{1/2}(P_{n,k})\vec{1}$.

Let $U'=(u'_1,u'_2,\ldots,u'_{n+k})$ be a second eigenvector
of $\mathcal{L}(P_{n,k})$ with $\lambda'_2$.
Since $(U',U')$ is an eigenvector of $\mathcal{L}(R_{n,k})$
with $\lambda'_2$,
we have $\lambda_2 \le \lambda'_2$ and $\lambda_2=\lambda'_2$.
\end{proof}


Let $\lambda_2$ be the second eigenvalue of $R_{n,k}$,
$U$ an eigenvector of $R_{n,k}$ with $\lambda_2$.
Since $\lambda_2$ is simple,
induced subgraphs by $V^-(U)$ and $V^+(U)\cup V^0(U)$
are connected
by the nodal domain theorem \cite{Davis:2001}.
Since $U$ is an odd vector or an even vector, 
it is easy to show 
Lemma~\ref{lemma15} and Lemma~\ref{lemma17}.

\begin{lemma}
\label{lemma15}
Let $U=(u_1,\ldots,u_{2(n+k)})$ be a second eigenvector
of $\mathcal{L}(R_{n,k})$.
If $U$ is an odd vector then 
$$
Lcut(R_{n,k})=Ncut(A_1,V\setminus A_1).
$$
\end{lemma}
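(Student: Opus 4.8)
The plan is to exploit the even/odd decomposition of $\Re^{2(n+k)}$ with respect to the order-$2$ automorphism $\phi$ and to reduce the odd second eigenvector to the ground-state eigenvector of the tridiagonal matrix $Q$ of Proposition~\ref{prop:evenpnk}. First I would note that, since $U$ is odd, $u_{i+(n+k)}=-u_i$ for $1\le i\le n+k$, so by Proposition~\ref{prop:evenpnk}(5) the restriction $\bar U=(u_1,\dots,u_{n+k})$ satisfies $Q\bar U=\lambda_2\bar U$. The spectrum of $\mathcal{L}(R_{n,k})$ splits as the union of the spectra of $P$ (even part) and $Q$ (odd part); the kernel of $\mathcal{L}(R_{n,k})$ is spanned by $D^{1/2}\vec 1$, which is $\phi$-invariant and hence even, so $0\notin\mathrm{spec}(Q)$ and $0$ is simple. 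Consequently the only eigenvalue of $\mathcal{L}(R_{n,k})$ below $\lambda_2$ is $0\in\mathrm{spec}(P)$, whence $\lambda_2$ is in fact the smallest eigenvalue of $Q$.

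Next I would pin down the sign pattern of $\bar U$. The matrix $Q$ is symmetric and tridiagonal with every off-diagonal entry strictly negative (the values $-\frac{1}{\sqrt2},-\frac12,-\frac{1}{\sqrt6},-\frac13$), hence irreducible. Writing $M=cI-Q$ for large $c$ makes $M$ a nonnegative irreducible matrix whose Perron eigenvalue corresponds to the smallest eigenvalue $\lambda_2$ of $Q$. By the Perron--Frobenius theorem the associated eigenvector $\bar U$ is, up to sign, strictly positive; in particular every coordinate of $\bar U$ is nonzero. Choosing the sign so that $u_i>0$ for $1\le i\le n+k$, oddness gives $u_{i+(n+k)}=-u_i<0$, so $V^+(U)=\{v_1,\dots,v_{n+k}\}=A_1$, $V^-(U)=\{v_{n+k+1},\dots,v_{2(n+k)}\}=V\setminus A_1$ and $V^0(U)=\emptyset$. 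Then $Lcut(R_{n,k})=Ncut(V^+(U)\cup V^0(U),V^-(U))=Ncut(A_1,V\setminus A_1)$, as claimed.

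The main obstacle is ruling out zero coordinates, i.e.\ showing $V^0(U)=\emptyset$. A more combinatorial route partly avoids $Q$: by the nodal domain theorem $V^-(U)$ is connected, and since $\phi$ is an automorphism with $V^+(U)=\phi(V^-(U))$, the class $V^+(U)$ is connected as well; because the only edges joining the two paths are the rungs $(x_i,y_i)$ ($n+1\le i\le n+k$), a pure sign class cannot contain both endpoints of a rung (they have opposite signs by oddness), so $V^+(U)$ and $V^-(U)$ lie on opposite paths. This fixes the gross structure, but a nonempty $V^0(U)$ — which is $\phi$-symmetric and hence straddles both halves — would still spoil the identification $V^+(U)\cup V^0(U)=A_1$. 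Closing this gap is precisely what the Perron--Frobenius ground-state argument above accomplishes, and I expect it to be the crux of a fully rigorous proof.
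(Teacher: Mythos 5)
Your argument is correct, but it is not the route the paper takes: the paper offers no written proof of this lemma, only the preceding remark that, $\lambda_2$ being simple, the nodal domain theorem makes $V^-(U)$ and $V^+(U)\cup V^0(U)$ induce connected subgraphs, after which the claim is declared easy given that $U$ is odd. Your reduction to the odd block $Q$ of Proposition~\ref{prop:evenpnk} and the Perron--Frobenius argument on $cI-Q$ is a genuinely different mechanism, and it buys something real: it proves that the odd second eigenvector has no zero coordinate, which the nodal-domain sketch does not settle on its own. Indeed, as you observe, a hypothetical zero at an end rung (say $u_{x_{n+k}}=0=u_{y_{n+k}}$) leaves both $V^-(U)$ and $V^+(U)\cup V^0(U)$ connected, so connectivity alone cannot force $V^+(U)\cup V^0(U)=A_1$; one needs either your ground-state positivity argument or the Jacobi-matrix fact (the analogue of Proposition~\ref{prop5}, that an eigenvector of an irreducible tridiagonal matrix cannot vanish at an endpoint) to exclude it. Your identification of $\lambda_2$ as the smallest eigenvalue of $Q$ is also sound: $\mathrm{spec}(\mathcal{L}(R_{n,k}))=\mathrm{spec}(P)\cup\mathrm{spec}(Q)$, the simple eigenvalue $0$ lives in $\mathrm{spec}(P)$ because its eigenvector $D^{1/2}\vec{1}$ is even, and no eigenvalue of $\mathcal{L}(R_{n,k})$ lies strictly between $0$ and $\lambda_2$. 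In short, the paper's approach is shorter and purely graph-theoretic but leaves the $V^0(U)=\emptyset$ point implicit; yours is self-contained and closes that gap, at the cost of invoking the block decomposition and Perron--Frobenius.
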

\hfill\qed
 
\begin{lemma}
\label{lemma17}
Let $U=(u_1,\ldots,u_{2(n+k)})$ be a second eigenvector of $\mathcal{L}(R_{n,k})$.
If $U$ is an even vector then
there exists $\alpha$ $(1 \le \alpha < k)$ such that
$$
Lcut(R_{n,k})=Ncut(A_4(\alpha),V\setminus A_4(\alpha)).
$$
\end{lemma}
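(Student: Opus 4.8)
The plan is to transfer the problem from $\mathcal{L}(R_{n,k})$ to $\mathcal{L}(P_{n,k})$ using the even symmetry, read off the sign pattern of the restricted eigenvector, and then recognize the resulting bipartition as an $A_4(\alpha)$. First, since $U$ is even I may write $U=(\bar U,\bar U)$ with $\bar U=(u_1,\ldots,u_{n+k})$; by Proposition~\ref{propeven}, $\bar U$ is a second eigenvector of $\mathcal{L}(P_{n,k})$ for the same eigenvalue $\lambda_2$. Replacing $U$ by $-U$ if necessary (this leaves the unordered partition in $Ncut$ unchanged), Proposition~\ref{lemma16} then gives a single sign change: there is an index $\alpha'$ with $u_i\ge 0$ for $1\le i\le\alpha'$ and $u_i<0$ for $\alpha'+1\le i\le n+k$, the monotone nodal structure being a consequence of the simplicity of $\lambda_2$ and the nodal domain theorem \cite{Davis:2001}.

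Because $U=(\bar U,\bar U)$, this sign pattern is repeated identically on the two copies, so $V^+(U)\cup V^0(U)=\{v_i,v_{i+n+k}\ |\ 1\le i\le\alpha'\}$ and $V^-(U)=\{v_i,v_{i+n+k}\ |\ \alpha'+1\le i\le n+k\}$. By the definition of $Lcut$, $Lcut(R_{n,k})$ equals the normalized cut of this partition, which severs exactly one path edge in each copy and hence has cut size $2$. Setting $\alpha=\alpha'-n$, the set $\{v_i,v_{i+n+k}\ |\ 1\le i\le\alpha'\}$ is exactly $A_4(\alpha)$, so it remains only to check that $\alpha$ lies in the admissible range $1\le\alpha<k$, i.e. that $n+1\le\alpha'\le n+k-1$.

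The upper bound $\alpha'\le n+k-1$ is immediate, since $V^-(U)\neq\emptyset$ by Lemma~\ref{lemma4}. The main obstacle is the lower bound $\alpha'\ge n+1$: I must rule out a sign change inside the antenna ($\alpha'\le n$). I would argue this in the regime forced by the hypothesis that the even $U$ realizes the \emph{second} eigenvalue of $\mathcal{L}(R_{n,k})$. One route is analytic: on the free end one has $(1-\lambda_2)u_1=\frac{1}{\sqrt2}u_2$ and on interior antenna vertices $(1-\lambda_2)u_i=\frac12(u_{i-1}+u_{i+1})$, so with $1-\lambda_2=\cos\theta$ the antenna entries obey a cosine recurrence that, for the small Fiedler value $\lambda_2$, produces no sign reversal before the junction. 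A cleaner route uses Proposition~\ref{prop:evenpnk}: a sign change confined to the antenna corresponds to a long-antenna regime in which the odd block $Q$ contributes an eigenvalue below $\lambda'_2(P_{n,k})$, so the second eigenvector of $\mathcal{L}(R_{n,k})$ would be odd rather than even, contradicting the hypothesis. Either way the sign change is localized to the ladder, giving $1\le\alpha<k$ and hence $Lcut(R_{n,k})=Ncut(A_4(\alpha),V\setminus A_4(\alpha))$.
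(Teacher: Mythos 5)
Your overall strategy is sound and, up to the last step, matches what the paper intends: pass to $\bar U$ via Proposition~\ref{propeven}, obtain the single sign change at some index $\alpha'$ from Proposition~\ref{lemma16}, use $V^-(U)\neq\emptyset$ (Lemma~\ref{lemma4}) for the upper bound $\alpha'\le n+k-1$, and identify the resulting $\phi$-symmetric partition with $A_4(\alpha'-n)$. The genuine gap is the step you yourself flag as the main obstacle, namely the lower bound $\alpha'\ge n+1$: neither of your two routes is an argument. The ``cosine recurrence'' route simply asserts that the smallness of $\lambda_2$ prevents a sign reversal before the junction; that is precisely the quantitative content that would need to be proved, and smallness alone does not give it (the second eigenvector of a plain path reverses sign near its midpoint, so a sign change at position $\le n$ is not a priori absurd). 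The route via Proposition~\ref{prop:evenpnk} is a heuristic: you never exhibit an implication from ``sign change inside the antenna'' to ``$Q$ has an eigenvalue below $\lambda'_2(\mathcal{L}(P_{n,k}))$'', so no contradiction is actually derived.

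The step closes with a tool you already invoke, but applied to the wrong graph: use the nodal domain theorem \cite{Davis:2001} on $\mathcal{L}(R_{n,k})$ itself rather than on $\mathcal{L}(P_{n,k})$. Since $\lambda_2$ is simple (this is assumed in the definition of $Lcut$), the induced subgraphs on $V^+(U)\cup V^0(U)$ and on $V^-(U)$ are connected. If $\alpha'\le n$, then the set $\{x_i,y_i \mid i\le\alpha'\}$ (which, depending on the sign convention in Proposition~\ref{lemma16}, is exactly $V^+(U)\cup V^0(U)$ or exactly $V^-(U)$) contains no rung edge, because every rung sits at a position $i\ge n+1$; it therefore splits into two components, one in each antenna, contradicting this connectedness. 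Hence $\alpha'\ge n+1$ and $1\le\alpha=\alpha'-n<k$. This connectivity observation is in effect the paper's entire (one-line) justification of the lemma; with it substituted for your two sketches, your proof is complete, though the detour through $P_{n,k}$ is longer than arguing on $R_{n,k}$ directly.
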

\hfill\qed

\begin{proposition}
Let $G=R_{n,k}(n \geq 1, k \ge 2)$.
If $n$ and $k$ belong to the following region $R$ then 
$Mcut(G) < Lcut(G)$.

\begin{eqnarray*}
 R &=& \{(n,k) \ | \ ((k \geq 4)\wedge (2 \mid k) \wedge (3 \mid n) \wedge \\
 &~&(1-\frac{1}{\sqrt{2}}-\frac{3 k}{2}+\frac{3 k}{\sqrt{2}} \leq n) )\vee\\ 
&~& (k=2 \wedge (n\geq 2))\vee  (k=3 \wedge (n \geq 3)) \}.
\end{eqnarray*}
 \end{proposition}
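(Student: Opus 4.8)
The plan is to play off the order-two automorphism $\phi$ of $R_{n,k}$ (with $\phi(v_i)=v_{i+n+k}$) against the shape of the optimal cut. The spectral bisection $Lcut(R_{n,k})$ is built from the sign pattern of the second eigenvector $U_2$ of $\mathcal{L}(R_{n,k})$, and by Proposition~\ref{prop2} (applicable because $Lcut$ is only defined when $\lambda_2$ is simple) $U_2$ is either odd or even with respect to $\phi$. Such a vector can only produce a $\phi$-symmetric partition, whereas on $R$ the true minimum is realized by a cut that is not $\phi$-symmetric; this mismatch is what I will turn into the strict inequality $Mcut(R_{n,k})<Lcut(R_{n,k})$.

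First I would record, from Theorem~\ref{propmcutg}, that on the whole region $R$ one has $Mcut(R_{n,k})=c_2=Ncut(A_2,V\setminus A_2)=\frac{6k+4n-4}{(2n-1)(6k+2n-3)}$, the normalized cut of the single-edge set $A_2=\{v_i\mid 1\le i\le n\}$. The set $A_2$ is asymmetric under $\phi$, since $\phi(A_2)=\{y_i\mid 1\le i\le n\}$ is neither $A_2$ nor $V\setminus A_2$. Next, using Lemma~\ref{lemma15} and Lemma~\ref{lemma17}, I would split into the two parities of $U_2$: if $U_2$ is odd then $Lcut(R_{n,k})=Ncut(A_1,V\setminus A_1)=c_1$, and if $U_2$ is even then $Lcut(R_{n,k})=Ncut(A_4(\alpha),V\setminus A_4(\alpha))=c_4(\alpha)$ for some $1\le\alpha<k$. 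Hence $Lcut(R_{n,k})\in\{c_1\}\cup\{c_4(\alpha)\mid 1\le\alpha<k\}$, and the spectral partition can never equal $A_2$.

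It then suffices to prove the strict inequalities $c_2<c_1$ and $c_2<c_4(\alpha)$ for every admissible $\alpha$ on $R$. The non-strict versions are already contained in the comparison performed inside the proof of Theorem~\ref{propmcutg}; the new content is strictness. For $c_4$, I would use that $c_4(\alpha)\ge c_4(\alpha_0)$ for all real $\alpha$ (the denominator of $c_4$ is a downward parabola maximized at $\alpha_0=\frac{3k-2n}{6}$), together with the observation that on $R$ one has $n\ge K_1=1-\frac{1}{\sqrt{2}}-\frac{3k}{2}+\frac{3k}{\sqrt{2}}$ with $K_1$ irrational, so in fact $n>K_1$ and the boundary equality $c_2=c_4(\alpha_0)$ cannot occur; this yields $c_2<c_4(\alpha_0)\le c_4(\alpha)$. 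The inequality $c_2<c_1$, and the two finite families $k=2$ ($n\ge 2$) and $k=3$ ($n\ge 3$), are settled by direct algebraic checks. Combining, $Lcut(R_{n,k})>c_2=Mcut(R_{n,k})$ on all of $R$.

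The main obstacle I anticipate is the strictness bookkeeping of the previous paragraph: one must confirm that the continuous optimizer $\alpha_0$ governs the gap even though the eigenvector's sign change may land at a different integer $\alpha$, which is exactly where the unimodality of $c_4$ is needed, and one must justify the irrationality/edge-case arguments that rule out $c_2=c_4(\alpha_0)$ on the boundary $n=K_1$. A secondary structural point, implicit throughout, is that $\lambda_2(\mathcal{L}(R_{n,k}))$ must be simple so that $U_2$ is well defined and genuinely odd or even; this can be read off the factorization $\vert\lambda I-\mathcal{L}(R_{n,k})\vert=p_{n,k}(\lambda)\,q_{n,k}(\lambda)$ of the characteristic polynomial into its even part $p_{n,k}$ and odd part $q_{n,k}$, and it is plausibly this simplicity requirement, together with the need for $Mcut=c_2$, that explains why $R$ is restricted to the subcase $*_1$ alongside $k\in\{2,3\}$.
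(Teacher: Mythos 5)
Your proposal follows essentially the same route as the paper: on $R$, Theorem~\ref{propmcutg} gives $Mcut(G)=c_2=Ncut(A_2,V\setminus A_2)$, the second eigenvector is odd or even with respect to $\phi$, Lemmas~\ref{lemma15} and~\ref{lemma17} force $Lcut(G)\in\{c_1\}\cup\{c_4(\alpha)\mid 1\le\alpha<k\}$, and the comparisons already carried out in Theorem~\ref{propmcutg} close the argument. The only difference is that you make explicit the strictness bookkeeping (unimodality of $c_4$ about $\alpha_0$, and $n>K_1$ because $K_1$ is irrational) that the paper leaves implicit in its bare appeal to Theorem~\ref{propmcutg}.
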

 
\begin{proof}
Let $G=(V,E)$, $K_1$, $K_2$, $K_3$ and $K_4$ are formulae defined
in the Theorem~\ref{propmcutg}.
If $k \ge 2$ then $K_2<K_3<K_4<K_1$.
So if $(n,k) \in R$ then $Mcut(G)=Ncut(A_2,V\setminus A_2)$
and denoted by $c_2$ in the Theorem~\ref{propmcutg}.

Let $U=(u_1,u_2,\ldots,u_{2(n+k)})$ be an eigenvector corresponding
to the second smallest eigenvalue of $\mathcal{L}(R_{n,k})$.
If $U$ is an odd vector,
then $Lcut(G)=Ncut(A_1,V\setminus A_1)$ by Lemma~\ref{lemma15}.
So we have $Mcut(G) < Lcut(G)$ by Theorem~\ref{propmcutg}.

If $U$ is an even vector,
then $Lcut(G)=Ncut(A_4(\alpha),V\setminus A_4(\alpha))$ 
for some $\alpha$ by Lemma~\ref{lemma17}.
So we have $Mcut(G) < Lcut(G)$ by Theorem~\ref{propmcutg}.
\end{proof}

\begin{theorem}
Let $k \ge 3$,
$\lambda_2({\mathcal L}(P_{2k,k}))$,
$\lambda_2({\mathcal L}(P_{4k}))$, and
$\lambda_2({\mathcal L}(R_{2k,k}))$
the second eigenvectors of 
${\mathcal L}(P_{2k,k})$,
${\mathcal L}(P_{4k})$, and
${\mathcal L}(R_{2k,k})$, respectively.

\begin{enumerate}
\item $\lambda_2({\mathcal L}(P_{4k}))
	   < \lambda_2({\mathcal L}(P_{2k,k}))$.
\item $\lambda_2({\mathcal L}(R_{2k,k}))
	   < \lambda_2({\mathcal L}(P_{4k}))$.
\item A second eigenvector $U$ of ${\mathcal L}(R_{2k,k})$
is an odd vector.
\item The second eigenvalue of ${\mathcal L}(R_{2k,k})$ is simple.
\item $Mcut(R_{2k,k})<Lcut(R_{2k,k})$.
\end{enumerate}
\end{theorem}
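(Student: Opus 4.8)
My plan is to pin down $\lambda_2(\mathcal{L}(R_{2k,k}))$ by splitting the spectrum into its even and odd parts, using $\lambda_2(\mathcal{L}(P_{4k}))=1-\cos\frac{\pi}{4k-1}$ (Proposition~\ref{prop:path}) as a pivot between them. Write $n=2k$. By the characteristic-polynomial factorization $\vert\lambda I_{2(n+k)}-\mathcal{L}(R_{n,k})\vert=p_{n,k}(\lambda)\,q_{n,k}(\lambda)$ established above, every eigenvalue of $\mathcal{L}(R_{2k,k})$ is a root of $p_{2k,k}$ (even eigenvectors, which by Proposition~\ref{prop34} are precisely the eigenvectors of $\mathcal{L}(P_{2k,k})$) or of $q_{2k,k}$ (odd eigenvectors). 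The eigenvalue $0$ is carried by the even null vector $D^{1/2}\vec{1}$, so the smallest positive even eigenvalue of $\mathcal{L}(R_{2k,k})$ is $\lambda_2(\mathcal{L}(P_{2k,k}))$, the smallest odd eigenvalue is the least positive root of $q_{2k,k}$, and $\lambda_2(\mathcal{L}(R_{2k,k}))$ is the smaller of the two. The whole theorem then amounts to showing that the odd side wins.

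For part (1) I would invoke Proposition~\ref{propO3}, which already yields $1-\cos\frac{\pi}{4k-1}\le\lambda_2(\mathcal{L}(P_{2k,k}))$, and sharpen it to a strict inequality. Putting $\alpha_*=\frac{(4k-2)\pi}{4k-1}$, so that $1-\cos\frac{\pi}{4k-1}=1+\cos\alpha_*$, Lemma~\ref{lemma:gk}(3) shows $p_{2k,k}$ has no root with $\alpha_*<\alpha<\pi$; hence it suffices to check that $\alpha_*$ itself is not a root, i.e. that $g_k(\beta)\cos(n\alpha)-g_{k-1}(\beta)\cos((n-1)\alpha)$ does not vanish at $\alpha_*$, a finite trigonometric evaluation. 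The next root of $p_{2k,k}$ then lies strictly below $\alpha_*$, giving $\lambda_2(\mathcal{L}(P_{4k}))=1-\cos\frac{\pi}{4k-1}<\lambda_2(\mathcal{L}(P_{2k,k}))$.

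Part (2) is the crux and the main obstacle: I must produce an odd eigenvalue strictly below $1-\cos\frac{\pi}{4k-1}$. Since this number is $O(k^{-2})$ whereas $\lambda=\frac{2}{3}(2+\cos\gamma)\ge\frac{2}{3}$ for real $\gamma$, the range $\lambda\in(0,\frac{2}{3})$ forces $\cos\gamma<-1$, so I would substitute $\gamma=\pi+is$ with $s>0$, converting $h_k(\gamma)$ and $\sin\gamma$ into hyperbolic functions and rewriting $q_{2k,k}$ as a genuine real function on $(0,\frac{2}{3})$, with $s$ tied to $\alpha$ by $\cos\alpha=\frac{1}{3}-\frac{2}{3}\cosh s$. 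Evaluating the sign of $q_{2k,k}$ at the endpoints $\lambda\to 0^+$ (where $s\to s_0$ with $\cosh s_0=2$ and $\alpha\to\pi$) and $\lambda=1-\cos\frac{\pi}{4k-1}$ and exhibiting a sign change would, by the intermediate value theorem, produce a root of $q_{2k,k}$ in this interval and hence $\lambda_2(\mathcal{L}(R_{2k,k}))<\lambda_2(\mathcal{L}(P_{4k}))$. The hard part is the sign evaluation at the upper endpoint, which is exactly the quantitative assertion that the long antennae of $R_{2k,k}$ let the odd sign-flip mode cost less Dirichlet energy than the Fiedler mode of $P_{4k}$; an alternative, more hands-on route is to bound the Rayleigh quotient $\frac{\sum_{(i,j)\in E}(y_i-y_j)^2}{\sum_i d_i y_i^2}$ of a concrete odd test vector concentrated on the antennae, but a naive linear ramp is not quite sharp enough, so the profile would have to be chosen with care.

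Parts (3)--(5) are then formal. Parts (1) and (2) give $\lambda_2(\mathcal{L}(R_{2k,k}))<1-\cos\frac{\pi}{4k-1}<\lambda_2(\mathcal{L}(P_{2k,k}))$, so the second eigenvalue of $\mathcal{L}(R_{2k,k})$ is strictly below the smallest positive even eigenvalue. Since $\phi$ has order two and commutes with $\mathcal{L}(R_{2k,k})$, the second eigenspace splits into even and odd parts; a nonzero even part would, by Proposition~\ref{prop34}, make $\lambda_2(\mathcal{L}(R_{2k,k}))$ an eigenvalue of $\mathcal{L}(P_{2k,k})$ lying strictly between $0$ and $\lambda_2(\mathcal{L}(P_{2k,k}))$, which is impossible, so any second eigenvector $U$ is odd, proving (3) (this is also the contrapositive of Proposition~\ref{propeven}). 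This value is the least positive root of $q_{2k,k}$ and hence simple as an eigenvalue of $Q$ by Proposition~\ref{prop:evenpnk}(3); being distinct from every even eigenvalue, it is simple in $\mathcal{L}(R_{2k,k})$, proving (4). Finally, $U$ odd together with Lemma~\ref{lemma15} gives $Lcut(R_{2k,k})=Ncut(A_1,V\setminus A_1)=\frac{2k}{7k-2}$, while the subset $A_2$ gives $Mcut(R_{2k,k})\le Ncut(A_2,V\setminus A_2)=\frac{14k-4}{(4k-1)(10k-3)}$ (Theorem~\ref{propmcutg}); comparing these, equivalently $(14k-4)(7k-2)<2k(4k-1)(10k-3)$ for all $k\ge 3$, yields $Mcut(R_{2k,k})<Lcut(R_{2k,k})$, which is (5).
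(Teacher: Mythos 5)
Your overall architecture (splitting the spectrum of ${\mathcal L}(R_{2k,k})$ into even and odd parts via the automorphism, Proposition~\ref{prop:evenpnk} and Proposition~\ref{propeven} for parts (3)--(4), Lemma~\ref{lemma15} plus the explicit comparison of $Ncut(A_1,V\setminus A_1)=\frac{2k}{7k-2}$ with $Ncut(A_2,V\setminus A_2)=\frac{14k-4}{(4k-1)(10k-3)}$ for part (5)) matches the paper, and those parts are sound. The problem is part (2), which you yourself call ``the crux'': you do not actually prove it. Your plan is to locate a root of $q_{2k,k}$ in $(0,\,1-\cos\frac{\pi}{4k-1})$ via a sign change, but the decisive step --- the sign of $q_{2k,k}$ at the upper endpoint --- is left as ``the quantitative assertion that the long antennae \dots cost less Dirichlet energy,'' i.e.\ as a restatement of what has to be shown. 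The hyperbolic substitution $\gamma=\pi+is$ is a sensible device for the range $\lambda<\frac{2}{3}$, but without that endpoint evaluation the intermediate value argument does not close, and with it (2) fails, so do (3)--(5).

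The alternative you mention and discard --- the Rayleigh quotient of a concrete odd test vector --- is exactly what the paper does, and the ``profile chosen with care'' is simply the second eigenvector $x=(x_i)$ of ${\mathcal L}(P_{4k})$ itself, transplanted onto the two antennae. Define $y$ on $R_{2k,k}$ by placing $x_1,\dots,x_{2k}$ on the free antenna of the $x$-row, $x_{4k},\dots,x_{2k+1}$ on the free antenna of the $y$-row, and $0$ on the $2k$ ladder vertices. On the support of $y$ the degrees of $R_{2k,k}$ coincide with those of $P_{4k}$, so $y^Ty=x^Tx=1$ and $y\perp e^{\frac{1}{2}}\vec{1}$ follows from $x\perp d^{\frac{1}{2}}\vec{1}$. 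In the edge sum, the single edge $(2k,2k+1)$ of $P_{4k}$ contributes $\bigl(2x_{2k}/\sqrt{d_{2k}}\bigr)^2$ because $x$ is odd with $x_{2k+1}=-x_{2k}\neq 0$ (Proposition~\ref{prop5} guarantees nonvanishing), and it is replaced in $R_{2k,k}$ by two edges running into zero-valued vertices, contributing only $2\bigl(x_{2k}/\sqrt{d_{2k}}\bigr)^2$; all other edges either match or contribute $0$. Hence $\lambda_2({\mathcal L}(R_{2k,k}))\le y^T{\mathcal L}(R_{2k,k})y=\lambda_2({\mathcal L}(P_{4k}))-2x_{2k}^2/d_{2k}<\lambda_2({\mathcal L}(P_{4k}))$. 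This is the missing step; once it is in place your parts (3)--(5) go through as written (and note that only $\lambda_2({\mathcal L}(R_{2k,k}))<\lambda_2({\mathcal L}(P_{2k,k}))$ is needed there, so the non-strict bound of Proposition~\ref{propO3} in part (1) suffices).
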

\begin{proof}
\begin{enumerate}
\item Since $\lambda_2({\mathcal L}(P_{4k}))$
$\displaystyle =1-\cos\left(\frac{\pi}{4k-1}\right)$
by Proposition~\ref{prop:path},
we have $\lambda_2({\mathcal L}(P_{4k}))$
$<\lambda_2({\mathcal L}(P_{2k,k}))$
by Proposition~\ref{propO3}.

\item Let
 $A=(a_{ij})_{1\le i,j\le 4k}$ be the adjacency matrix of
 $P_{4k}$,
 $B=(b_{ij})_{1\le i,j\le 6k}$ be the adjacency matrix of
 $R_{2k,k}$,
$d=(d_i)_{1\le i \le 4k},$
where $\displaystyle d_i=\sum_{j=1}^{4k}a_{ij}$,
$e=(e_i)$ where $\displaystyle e_i=\sum_{j=1}^{6k}b_{ij}$,
and $x=(x_i)_{1\le i \le 4k}$
an eigenvector of ${\mathcal L}(P_{4k})$ corresponding to 
$\lambda_2({\mathcal L}(P_{4k}))$ with $x^T x=1$.
We note that
$\displaystyle d^{\frac{1}{2}}\vec{1} \perp x$
and $\lambda_2({\mathcal L}(P_{4k})) = x^T{\mathcal L}(P_{4k})x$
$\displaystyle =\frac{1}{2}\sum_{i=1}^{4k} \sum_{j=1}^{4k}
\left(\frac{x_i}{\sqrt{d_i}}-\frac{x_j}{\sqrt{d_j}}\right)^2a_{ij}$.
Let
$$
y_i = \begin{cases}
       x_i & (1 \le i \le 2k) \\
       0   & (2k+1 \le i \le 3k, \ 5k+1 \le i \le 6k) \\
       x_{7k-i+1} & (3k+1 \le i \le 5k)
      \end{cases}
$$
and consider a vector $y=(y_i)_{1\le i \le 6k}$.
Since $x$ is a second eigenvector of ${\mathcal L}(P_{4k})$,
we have $\displaystyle \sum_{i=1}^{6k}y_i^2=\sum_{i=1}^{4k}x_i^2=1$,
$\displaystyle
 \sum_{i=1}^{6k}\sqrt{e_i}y_i=\sum_{i=1}^{4k}\sqrt{d_i}x_i=0$,
and $x_{2k}=-x_{2k+1}\not=0$.
So we have $y^T y=1$, $\displaystyle e^{\frac{1}{2}}\vec{1}\perp y$,
and
{\small
\begin{eqnarray*}
\displaystyle \lambda_2(R_{2k,k}) &
= & \inf_{e^{\frac{1}{2}}\vec{1}\perp u} \frac{u^T{\mathcal
 L}(R_{2k,k})u}{u^Tu} \\
& \le & y^T {\mathcal L}(R_{2k,k})y \\
& =  & \frac{1}{2}\sum_{i=1}^{6k} \sum_{j=1}^{6k}
\left(\frac{y_i}{\sqrt{e_i}}-\frac{y_j}{\sqrt{e_j}}\right)^2b_{ij} \\
& = &
\frac{1}{2}\sum_{i=1}^{4k} \sum_{j=1}^{4k}
\left(\frac{x_i}{\sqrt{d_i}}-\frac{x_j}{\sqrt{d_j}}\right)^2a_{ij}
-\left(\frac{x_{2k}}{\sqrt{d_{2k}}}-\frac{x_{2k+1}}{\sqrt{d_{2k+1}}}\right)^2 \\
&&
+\left(\frac{y_{2k}}{\sqrt{e_{2k}}}-\frac{y_{2k+1}}{\sqrt{e_{2k+1}}}\right)^2
+\left(\frac{y_{5k}}{\sqrt{e_{5k}}}-\frac{y_{5k+1}}{\sqrt{e_{5k+1}}}\right)^2 \\
&= & \lambda_2\left({\mathcal L}\left(P_{4k}\right)\right)
-\left(\frac{2x_{2k}}{\sqrt{d_{2k}}}\right)^2
+\left(\frac{y_{2k}}{\sqrt{e_{2k}}}\right)^2
+\left(\frac{y_{5k}}{\sqrt{e_{5k}}}\right)^2 \\
& = &\lambda_2\left({\mathcal L}\left(P_{4k}\right)\right)
-2\left(\frac{x_{2k}}{\sqrt{d_{2k}}}\right)^2 \\
& < & \lambda _2\left({\mathcal L}\left(P_{4k}\right)\right)
\end{eqnarray*}
}
\item If a second eigenvector $U$ of ${\mathcal L}(R_{2k,k})$
corresponding to $\lambda_2({\mathcal L}(R_{2k,k}))$ is an even vector,
then $\lambda_2({\mathcal L}(R_{2k,k}))=\lambda_2({\mathcal
 L}(P_{2k,k}))$
by Proposition~\ref{prop:evenpnk}.
But it contradicts that
$\lambda_2({\mathcal L}(R_{2k,k}))<\lambda_2({\mathcal L}(P_{2k,k}))$
induced by 1. and 2.
So we have a second eigenvector $U$ of ${\mathcal L}(R_{2k,k})$, 
which is an odd vector.

\item
By 3. and Proposition~\ref{prop:evenpnk}, 3. and 4.,
$\lambda_2({\mathcal L}(R_{2k,k}))$ is simple.

\item
Since the second eigenvector of $R_{2k,k}$
is an odd vector,
$Lcut(R_{2k,k})=Ncut(A_1,V\setminus A_1)$ by Lemma~\ref{lemma15}.
Thus we have $Mcut(R_{2k,k}) < Lcut(R_{2k,k})$
by Theorem~\ref{propmcutg}.
\end{enumerate}

\end{proof}

\section{Conclusion}
We presented a survey of the known results associated with difference,
normalized, and signless Laplacian matrices.
We also stated upper and lower bounds for the difference and normalized
Laplacian matrices
using isoperimetric numbers and the Cheeger constant.
We gave a uniform proof for the eigenvalues and eigenvectors of paths and
cycles
on the basis of all three Laplacian matrices using circulant matrices, and presented
an
alternate proof for finding the eigenvalues of the adjacency matrix of cycles and
paths
using Chebyshev polynomials.
We also introduced concrete formulae for $Mcut(G)$ for some classes of graphs. 
Then,
we established characteristic polynomials for the normalized Laplacian matrices ${\mathcal L}(P_{n,k})$ and ${\mathcal L}(R_{n,k})$.
Finally,
we presented counter example graphs based on $R_{n,k}$,
where $Mcut(G)$ and $Lcut(G)$ produce different clusters.
In particular, 
we established criteria for $Mcut(G)$ and $Lcut(G)$ to have different values.
 
\section*{Acknowledgments}
We would like to specially thank Professor Hiroyuki Ochiai for his ideas pertaining to computations and comparisons
of the second eigenvalues of ${\mathcal L}(P_{n,k})$,
which gave us useful hints
to finish this study.
We are also grateful to Dr. Tetsuji Taniguchi
for his helpful comments and encouragement
during the course of this study.
This research was partially supported 
by the Global COE Program "Educational-and-Research Hub for
Mathematics-for-Industry" at Kyushu University.


\bibliographystyle{plain}

\end{document}